\documentclass[11pt,letterpaper,reqno]{amsart}
\usepackage[
    top=2.7cm, bottom=2.1cm, inner=2.7cm, outer=2.7cm,
    marginparwidth=2cm 
    ]{geometry}

\usepackage{amssymb,latexsym, amsmath, amsxtra, mathrsfs, bm}
\usepackage[alphabetic]{amsrefs}
\usepackage[latin1]{inputenc}

\usepackage[dvips]{graphics}
\usepackage[all]{xy}
\usepackage{xcolor}
\usepackage{subcaption}
\usepackage{enumitem}
\usepackage{verbatim}
\usepackage[abs]{overpic}
\usepackage{hyperref}
\usepackage{mathtools}
\usepackage{todonotes}
\usepackage{tikz}           
\usepgflibrary{arrows.meta}                 
\usetikzlibrary{decorations.pathreplacing}  
\usepackage{tikz-cd}        
\usepackage{appendix}

\DeclareMathAlphabet{\mathbbold}{U}{bbold}{m}{n}

\allowdisplaybreaks[4]

\theoremstyle{plain}
        \newtheorem{theorem}{Theorem}[section]
        \newtheorem*{theorem*}{Theorem}
        \newtheorem*{conj*}{Conjecture}
        \newtheorem{lemma}[theorem]{Lemma}
        \newtheorem{fact}[theorem]{Fact}
        \newtheorem{prop}[theorem]{Proposition}
        \newtheorem{cor}[theorem]{Corollary}

        \newtheorem{thmx}{Theorem}

\theoremstyle{definition}
        \newtheorem{definition}[theorem]{Definition}
        \newtheorem{rem}[theorem]{Remark}

\theoremstyle{remark}

        \newtheorem*{notation}{Notation}

\numberwithin{equation}{section}
\numberwithin{theorem}{section}
\numberwithin{table}{section}
\numberwithin{figure}{section}

\makeatletter
\def\th@plain{%
	\thm@notefont{}
	\itshape 
}
\def\th@definition{%
	\thm@notefont{}
	\normalfont 
}
\makeatother

\newcommand{\id} {\operatorname{id}}

\newcommand{\R}{\mathbb{R}}

\newcommand{\Q}{\mathbb{Q}}
\newcommand{\N}{\mathbb{N}}
\newcommand{\Z}{\mathbb{Z}}

\providecommand{\abs}[1]{\lvert#1\rvert}
\providecommand{\Absbig}[1]{\bigl\lvert#1\bigr\rvert}

\providecommand{\AbsBig}[1]{\Bigl\lvert#1\Bigr\rvert}

\renewcommand{\:}{\colon}

\renewcommand{\le}{\leqslant}

\renewcommand{\ge}{\geqslant}

\newcommand{\X} {\mathbb{X}}

\newcommand{\E} {\mathbf{E}}

\newcommand{\vertiii}[1]{{\left\vert\kern-0.25ex\left\vert\kern-0.25ex\left\vert #1
    \right\vert\kern-0.25ex\right\vert\kern-0.25ex\right\vert}}

\renewcommand{\=}{\coloneqq}

\newcommand{\Cov}{\operatorname{Cov}}

\renewcommand{\E}{\mathbb{E}}

\renewcommand{\P}{\mathbb{P}}

\newcommand{\bL}{\bm{L}}

\newcommand{\cA}{\mathcal{A}}
\newcommand{\cB}{\mathcal{B}}
\newcommand{\cC}{\mathcal{C}}
\newcommand{\cD}{\mathcal{D}}
\newcommand{\cE}{\mathcal{E}}
\newcommand{\cF}{\mathcal{F}}
\newcommand{\cG}{\mathcal{G}}
\newcommand{\cH}{\mathcal{H}}
\newcommand{\cI}{\mathcal{I}}

\newcommand{\cL}{\mathcal{L}}
\newcommand{\cM}{\mathcal{M}}

\newcommand{\cP}{\mathcal{P}}
\newcommand{\cQ}{\mathcal{Q}}
\newcommand{\cR}{\mathcal{R}}
\newcommand{\cS}{\mathcal{S}}

\newcommand{\cU}{\mathcal{U}}

\newcommand{\fC}{\mathfrak{C}}

\newcommand{\fL}{\mathfrak{L}}

\newcommand{\hD}{\widehat{D}}
\newcommand{\hE}{\widehat{E}}

\newcommand{\oB}{\overline{B}}

\newcommand{\tD}{\widetilde{D}}

\newcommand{\trho}{\widetilde{\rho}}

\newcommand{\tphi}{\widetilde{\phi}}

\newcommand{\hd}{\operatorname{hd}}
\newcommand{\Nb}{B}
\newcommand{\Trans}{\operatorname{Trans}}
\newcommand{\wx}{\widehat{\X}}
\newcommand{\Int}{\operatorname{Int}}

\newcommand{\QIE}{\operatorname{QIE}}
\newcommand{\QIS}{\operatorname{QIS}}
\newcommand{\QI}{\operatorname{QI}}
\newcommand{\Ber}{\operatorname{Ber}}
\newcommand*{\boldone}{\text{\usefont{U}{bbold}{m}{n}1}}

\begin{document}
\title[Quasi-isometric rigidity]{Quasi-isometric rigidity\\ for random subsets in products of trees}
\author{Zhiqiang~Li \and Ranfeng~Yu \and Tianyi~Zheng}
\address{Zhiqiang~Li, School of Mathematical Sciences \& Beijing International Center for Mathematical Research, Peking University, Beijing 100871, China.}
\email{zli@math.pku.edu.cn}
\address{Ranfeng~Yu, School of Mathematical Sciences, Peking University, Beijing 100871, China.}
\email{rfyu@pku.edu.cn}
\address{Tianyi~Zheng, Department of Mathematics, University of California, San Diego, San Diego, CA 92093-0112}
\email{tzheng2@ucsd.edu}
	
\subjclass[2020]{Primary: 20F65; Secondary: 51F30, 60K35. }
\keywords{rank-two spaces, quasi-isometric rigidity, Bernoulli point processes. }

\begin{abstract}
    In this article, we prove a rigidity result for quasi-isometric embeddings from a random subset $D$ of the product $\X$ of two regular trees into $\X$ itself. This can be seen as an extension of Eskin's quasi-isometric rigidity of higher-rank nonuniform lattices to random subsets. As a consequence, we give a description of the self-quasi-isometric embeddings of a random sample. We also show that two independent samples are almost surely non-quasi-isometric, confirming that such a phenomenon occurs in the higher-rank setting, as suggested by Ab\'ert. This result contrasts with the result on quasi-isometric equivalence between random sequences by Basu and Sly.
\end{abstract}

\maketitle

\section{Introduction}     \label{s:Introduction}
\subsection{Quasi-isometries of point processes}\label{s1.1}

    The study of quasi-isometries between finitely generated groups has been a central topic in geometric group theory since the seminal works of Gromov \cites{gromov1987hyper, gromov1991asym}. An important question in the broad program for studying finitely generated groups as geometric objects is the classification up to quasi-isometry of certain classes of groups. A finitely generated group $\Gamma$ is quasi-isometric to any space on which $\Gamma$ admits an isometric action that is properly discontinuous and cocompact. Thus, natural objects in the program are groups that admit such actions on spaces where a good understanding of the geometry can be attained.  

    A point process $\Pi$ on a locally compact group $G$ is a random closed and discrete subset of $G$, and it is said to be invariant if its law is invariant under translation by $G$. As an important example, a lattice $\Gamma$ in $G$ corresponds to a point process with underlying probability space $(G/\Gamma,m_{G/\Gamma})$, where $m_{G/\Gamma}$ is the normalized Haar measure, under the natural map $x\in G/\Gamma\mapsto \Pi_x=x\Gamma$. That is, a random translate of a lattice $\Gamma$ can be viewed as a point process on $G$.

    In probability theory, the most studied families of point processes are Bernoulli point processes on countable sets and Poisson point processes on non-discrete locally compact spaces. These processes enjoy complete spatial independence: numbers of points in disjoint regions are independent. Such features make them easier to understand than lattice points in some sense. 

    Let $d$ be a left-invariant metric on $G$, and consider a random subset of $G$ equipped with the induced metric. Studying quasi-isometry of such random metric spaces is natural, since quasi-isometries are concerned with large-scale geometry, while ignoring local details. In this work, we focus on the quasi-isometry rigidity properties of Bernoulli point processes.  An interesting question in this context is asked by Ab\'ert \cite{abert2010}:
    \begin{quote}
        \emph{Are two independent random subsets of $\Z$ as metric spaces quasi-isometric almost surely? How about other Cayley graphs, like for $\operatorname{SL}_3(\Z)$?}
    \end{quote}
    
    The reader may find more on the background of this question in the work of Peled \cite{Peled2010}. The conjecture concerning the existence of quasi-isometries between independent Bernoulli percolations on $\Z$ was confirmed by Basu and Sly in \cite{basu2014lipschitz}. Later, it was generalized to higher-dimensional lattice $\Z^n$, $n\ge 2$, by Basu, Sidoravicius, and Sly \cite{basu2018lipschitz}. Recently announced work by Athreya and Troscheit shows that there are quasi-isometries between independent Galton--Watson trees under certain conditions on the offspring distribution.

    Here we consider the other side where the rigidity phenomenon is expected. As suggested by Ab\'ert \cite{abert2010}, lattices in higher-rank simple Lie groups, such as $\operatorname{SL}_3(\Z)$, are natural candidates where independent random subsets are expected to be non-quasi-isometric, as inherited from the rigidity properties of the lattices. We confirm this phenomenon, on the product of two regular trees, which is an example of {\it rank-two} spaces. For many reasons, such as the lack of Kazhdan's property (T), such a product of rank one groups is considered less rigid than $\operatorname{SL}_3(\Z)$, but can clearly demonstrate the distinction between higher-rank and rank-one phenomena. 
    
    Conjecturally the rigidity phenomenon holds for general higher-rank lattices, also Poisson point processes on higher-rank symmetric spaces, but for exposition reasons, we choose to focus on this example, which shows the mechanisms for rigidity, without requiring preliminaries on the structure of Lie groups and their symmetric spaces. 
    
    In percolation theory, an important topic is the large-scale geometry of percolation clusters. Note that here we do not consider connectivity properties of clusters, but take the metric to be the \emph{induced metric} from the ambient space, and ask to what extent the sample of Bernoulli points inherits the geometry of the ambient space. 
\subsection{Splitting structure of QI embeddings}
    In this subsection, we introduce some notation and state the main theorem of the article.
   \begin{notation}\label{assumption}
        We use the following notation throughout the article.
       \begin{enumerate}[label=\rm{(\roman*)}]
           \smallskip     	\item Let $q\ge 3$ be an integer and $0<p<1$ be a constant.
           \smallskip     	\item Let $T_1$ and $T_2$ be the vertex sets of two $(q+1)$-regular trees, each equipped with the graph metric. Let $\X\= T_1\times T_2$ be their product. Let $\pi_1$ and $\pi_2$ be the projections from $\X$ to $T_1$ and $T_2$, respectively. Let $\X$ be equipped with the metric $d(x,y)\=
           \bigl(d_{T_1}(\pi_1(x),\pi_1(y))^2+d_{T_2}(\pi_2(x),\pi_2(y))^2\bigr)^{1/2}$.\footnote{The metric $d$ is bi-Lipschitz equivalent to the natural graph metric $d'(x,y) \coloneqq
           d_{T_1}(\pi_1(x),\pi_1(y))+d_{T_2}(\pi_2(x),\pi_2(y))$ on $\X$.}
           \end{enumerate}
   \end{notation}

   As already mentioned, in this work we always equip a subset $D$ of $\X$ with the induced distance.
   
  Recall the notion of quasi-isometric embedding: let $(X,d_X)$ and $(Y,d_Y)$ be metric spaces and let $\kappa>1$ and $C>0$ be constants. A \emph{$(\kappa,C)$-quasi-isometric embedding} from $X$ to $Y$ is a map $\phi\:X\to Y$ such that
       \begin{equation*}
           \frac{1}{\kappa}d_X(x_1,x_2)-C\le d_Y(\phi(x_1),\phi(x_2))\le \kappa d_X(x_1,x_2)+C
       \end{equation*}
       for all $x_1,\ x_2\in X$. The set of all quasi-isometric embeddings from $X$ to $Y$ is denoted by $\QIE(X,Y)$.
       
       In addition, if there exists $\psi\in \QIE(Y,X)$ such that $\sup_{x\in X} d_X(x,\psi(\phi(x)))<+\infty$ and $\sup_{y\in Y} d_Y(y,\phi(\psi(y)))<+\infty$, we say that $\phi$ is a \emph{quasi-isometry} from $X$ to $Y$ and $\psi$ is a \emph{coarse inverse} of $\phi$ (and vice versa). Equivalently, $\phi$ is a quasi-isometry if there exists some constant $d_0>0$ such that $d_Y(y,\phi(X))<d_0$ holds for every $y\in Y$. Two metric spaces $(X,d_X)$ and $(Y,d_Y)$ are \emph{quasi-isometric} if there exists a quasi-isometry between them.
       
       Let $(X,d_X)$ and $(Y,d_Y)$ be metric spaces. Denote the set of $(\kappa,C)$-quasi-isometric embeddings from $X$ to $Y$ by $\QIE_{\kappa,C}(X,Y)$. Denote by $\QIS_{\kappa,C,d_0}(X,Y)$ the set of $(\kappa,C)$-quasi-isometries $\phi$ from $X$ to $Y$ with $d_Y(y,\phi(X))<d_0$ for every $y\in Y$. 

    Next, we recall the definition of Bernoulli point processes. Let $S$ be a countable set. Denote by $\omega=(\omega_v:v\in S)$ an element of $\{0,\,1\}^{S}$. An element $\omega$ can be identified with its support $D=D(\omega)$, that is, the subset of $S$ consisting of $v$ with $\omega_v=1$. Equip $\{0,\,1\}^{S}$ with the product topology and its Borel $\sigma$-algebra. We refer to a distribution on $\{0,\,1\}^{S}$ as the law of a point process on $S$. Denote by $\P_p$ the product measure on $\{0,\,1\}^{S}$ where in each coordinate the distribution of $\omega_i$ is Bernoulli with parameter $p$. We refer to this model as the \emph{$\Ber(p)$ point process} on $S$.  
    The main result of this article is on the splitting structure of quasi-isometric embeddings: more precisely, for almost every $\Ber(p)$ point process sample $D$ on $\X$, $\phi\in \QIE(D,\X)$ is close to the product of two quasi-isometric embeddings of the factors.     
    
\begin{thmx}\label{t1.1}
     Let $D$ be a random subset of $\X$ having the law of the $\Ber(p)$ point process on $\X$. Then almost surely for each $\epsilon\in (0,1)$, each $\phi\in \QIE(D,\X)$, and each $e=(e_1,e_2)\in \X$, there exist quasi-isometric embeddings $\psi_1\:T_1\to T_1$, $\psi_2\:T_2\to T_2$ (possibly composed with a permutation of the two factors $T_1$ and $T_2$), and a constant $M>0$ such that for every $x=(x_1,x_2)\in D$, 
     \begin{equation}\label{e1.2}
         d(\phi(x),(\psi_1(x_1),\psi_2(x_{2})))\le M+\epsilon d(x,e).
     \end{equation} 
\end{thmx}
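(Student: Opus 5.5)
We follow the Kleiner--Leeb / Eskin--Farb strategy for quasi-flats, adapted to a random subset that is only coarsely dense at sublinear scale. We first isolate the geometric input from the randomness. Almost surely, for every $\delta>0$ there is $R_0=R_0(\delta)$ such that every ball $B(y,r)$ with $r\ge R_0$ and $d(y,e)\le r^{K}$ (with $K$ fixed) meets $D$ inside $B(y,\delta r)$. Indeed, the probability that a ball of radius $\delta r$ misses $D$ is $(1-p)^{\abs{B(y,\delta r)}}$, which is doubly exponentially small in $r$. The number of relevant centers grows only exponentially, so Borel--Cantelli gives this. Consequently $D$ is not coarsely dense, but every point $x\in\X$ lies within $\epsilon' d(x,e)+M'$ of $D$ for any $\epsilon'>0$. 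In particular $D$ is ``asymptotically dense'': its asymptotic cones, taken with base point $e$ and any scaling sequence, coincide with those of $\X$. The cones of $\X$ are products $C_1\times C_2$ of two $\R$-trees, a Euclidean building of rank two.

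The main step is to pass to asymptotic cones. A map $\phi\in\QIE(D,\X)$ induces a bi-Lipschitz embedding $\phi_\omega\colon\operatorname{Cone}_\omega(D)=C_1\times C_2\to C_1'\times C_2'$ for every ultrafilter $\omega$ and scaling sequence. By the Kleiner--Leeb theorem on bi-Lipschitz embeddings between products of $\R$-trees, which rests on the rigidity of top-dimensional flats and Weyl chambers, $\phi_\omega$ maps flats to flats and splits as a product $(\psi_1^\omega,\psi_2^\omega)$, possibly after swapping the factors. The swap is constant because $C_1\times C_2$ is connected and the swap is locally constant. Returning to $D$ by the usual contradiction argument, we get the following. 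For every $\epsilon>0$ there is $R$ such that for $x,y\in D$ with $d(x,e),d(y,e)\le r$, $r\ge R$, the condition $d_{T_1}(x_1,y_1)\le \epsilon r$ implies $d_{T_1}(\pi_1\phi(x),\pi_1\phi(y))\le C\epsilon r$ (or the same statement for $\pi_2$, after the swap). In other words, $\phi$ coarsely preserves the horizontal and vertical foliations up to error $\epsilon r$.

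Next we build the factor maps. For $x_1\in T_1$, choose $z\in D$ with $d(z,(x_1,e_2))\le\epsilon' d((x_1,e_2),e)+M'$, which is possible by the density lemma, and set $\psi_1(x_1)\=\pi_1\phi(z)$. Define $\psi_2$ symmetrically. The foliation-preservation statement shows that $\pi_1\phi(x)$ is within $\epsilon d(x,e)+M$ of $\psi_1(x_1)$ for all $x\in D$. This holds because $x$ and $z$ lie in nearly the same $T_2$-slice relative to scale $d(x,e)$; to cover all scales, apply the argument on dyadic annuli. This gives \eqref{e1.2} up to replacing $\epsilon$ by a constant multiple. It remains to check that $\psi_i$ is a quasi-isometric embedding $T_i\to T_i$. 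The upper bound is immediate. For the lower bound, use that $\phi$ restricted to the points of $D$ near a flat $\ell_1\times\ell_2$ (with $\ell_i$ geodesics) is a quasi-isometric embedding of a sublinearly dense subset of $\R^2$. By the product structure already established, its first coordinate must then coarsely expand distances along $\ell_1$.

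The hardest step is the quantitative transfer from the cone statement back to $D$ uniformly over all $x\in D$, with only additive-plus-$\epsilon d(x,e)$ error. The cone argument controls a single scale at a time, and $\psi_1$ must be defined consistently across scales. I would first try fixing $\psi_1$ once, by the nearest-point construction above, and proving the estimate by contradiction. Suppose there were points $x^{(n)}$ violating it with $d(x^{(n)},e)=r_n\to\infty$. Rescaling by $r_n$ then produces a cone in which $\phi_\omega$ fails to be a product map, a contradiction. The exceptional possibility of bounded $r_n$ is absorbed into $M$.
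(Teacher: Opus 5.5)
There is a genuine gap, and it is not the step you flag as hardest. Your density lemma is correct. With base point $e$ fixed, the contradiction argument also does give the $\epsilon d(x,e)$ estimate, provided a cone-level splitting is available. The problem is that the theorem requires $\psi_1,\psi_2$ to be genuine quasi-isometric embeddings, with a \emph{bounded} additive constant. An ultralimit argument only controls errors of size $o(r)$ at scale $r$; a discrepancy of logarithmic size is invisible in every cone based at $e$.

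Your factor maps inherit exactly this weakness. The upper bound is not immediate. You set $\psi_1(x_1)=\pi_1\phi(z)$ with $z$ only within $\epsilon' d((x_1,e_2),e)+M'$ of $(x_1,e_2)$. So for adjacent $x_1,x_1'$ far from $e_1$, the chosen points can be at distance of order $\epsilon' d_{T_1}(x_1,e_1)$. Your foliation statement then bounds $d_{T_1}(\psi_1(x_1),\psi_1(x_1'))$ only by $C\epsilon r$. Taking $z$ to be a nearest point of $D$ does not help, since the holes of $D$ are unbounded (their radii grow logarithmically with the distance to $e$). Your lower-bound argument via ``a quasi-isometric embedding of a sublinearly dense subset of $\R^2$ plus the product structure'' has the same defect, because that product structure is itself known only up to $\epsilon r$.

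What is missing is a bounded-distance rigidity statement on a set projecting onto each factor. Concretely, you need a uniform $N$ with $d_{T_1}(\pi_1\phi(x_1,x_2),\pi_1\phi(x_1,y_2))\le N$ whenever both points lie in that set. The paper gets this as follows.
\begin{enumerate}
\item The quasiflats-with-holes theorem (Theorem~\ref{t3.4}) gives \emph{finite} Hausdorff distance $\Delta$ to finitely many flats, on flats where $D$ has the density regularity of Proposition~\ref{t3.5}.
\item Transverse flats upgrade this to flats-to-flats within $\Delta_1$ (Proposition~\ref{t4.8}).
\item Locating $\phi(x)$ as a near-intersection of two image flats yields Lemma~\ref{t6.0} and Corollary~\ref{t6.1}, the bounded-error splitting on the regular core $D'_{\epsilon,\rho}$.
\item Since $\pi_i(D'_{\epsilon,\rho})=T_i$, and any $x_1,y_1$ admit $z$ with $(x_1,z),(y_1,z)\in D'_{\epsilon,\rho}$ (Corollaries~\ref{t6.5} and~\ref{surjective}), the factors are $(\kappa,C+2N_\epsilon)$-quasi-isometric embeddings (Proposition~\ref{t6.2}).
\end{enumerate}
The term $\epsilon d(x,e)$ in \eqref{e1.2} enters only at the very end, when an arbitrary point of $D$ is replaced by a nearby point of the regular core (Lemma~\ref{t6.3}). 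In the paper, the sublinear error is a density effect, not a weakness of the rigidity statement. Your cone argument, as set up, cannot supply this bounded-error input.

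Two further points need proof rather than citation. First, the Kleiner--Leeb rigidity you invoke concerns quasi-isometries, i.e.\ homeomorphisms of the cones. For a non-surjective bi-Lipschitz embedding of the cone, a single flat may a priori go to a non-flat finite union of Weyl chambers, as in the Eskin--Farb example of a quasi-flat near several flats. Ruling this out for the whole map requires a transverse-flat argument like the one in Section~\ref{s:Step2}. Second, connectedness of one cone fixes the factor swap only for one scaling sequence. You still need to show the swap is the same at all scales, for instance by comparing scales $r$ and $2r$.
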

    We mention that such a splitting structure is known for quasi-isometries between products of irreducible symmetric spaces, see \cite[Theorem~1.1.2]{kleiner1997rigidity}; and for a quasi-isometry of an irreducible lattice in such a product, see \cite[Proposition~10.1]{eskin1998quasi}.

    Next, we state a refinement where the error term in the statement of Theorem~\ref{t1.1} depends only on the random subset $D$ seen from the point $x$. 
\begin{thmx}\label{t1.2}
    Let $D$ be a random subset of $\X$ having the law of the $\Ber(p)$ point process on $\X$. Given constants $\kappa>1$ and $C>0$, there exists a measurable map $\cR\: \{0,1\}^{\X}\times \X\to [0,+\infty]$ with the following properties:
    \begin{enumerate}[label=\rm{(\roman*)}]
        \smallskip
    	\item The map $\cR$ is translation invariant: for each $\gamma\in\operatorname{Aut}(\X)$, each $D_0\in \{0,1\}^{\X}$, and each $x\in \X$, we have $\cR(D_0,x)=\cR(\gamma(D_0),\gamma(x))$.
    	\smallskip
    	\item $\cR(D,x)$ is almost surely finite for each $x\in \X$. Moreover, for each $y\in \X$ and each pair of subsets $D_1\subseteq D_2\subseteq \X$, we have $\cR(D_1,y)\ge \cR(D_2,y)$.
        \smallskip
        
    	\item Almost surely for each $\phi\in \QIE_{\kappa,C}(D,\X)$, there exist quasi-isometric embeddings $\psi_1 \: T_1 \to T_1$ and $\psi_2\:T_{2}\to T_{2}$ such that, composing $\phi$ with a permutation of the two factors $T_1$ and $T_2$ if necessary, for every $x=(x_1,x_2)\in D$ we have
    	\begin{equation}\label{e1.3}
    		d(\phi(x),(\psi_1(x_1),\psi_2(x_{2})))\le \cR(D,x).
    	\end{equation}
    \end{enumerate}
\end{thmx}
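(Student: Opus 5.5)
The plan is to run Eskin's strategy for non-uniform lattices, with the cusp structure replaced by the \emph{holes} of the random set $D$, and to package all randomness into one local, monotone, translation-invariant scale. For $x\in\X$ let $r(D,x)$ be the least $r\ge 1$ such that for every $R\ge r$, every ball $B(y,R)$ with $d(x,y)\le R^2$ contains a point of $D$ within distance $\log^2 R$ of each of its points. Each ball has at most polynomially many such ``test'' points, and the $\Ber(p)$ probability that a given ball of radius $\log^2 R$ misses $D$ decays like $\exp(-c\,q^{\log^2 R})$, which is superpolynomially small in $R$. A union bound together with Borel--Cantelli therefore shows that $r(D,x)<\infty$ almost surely. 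The definition commutes with $\operatorname{Aut}(\X)$, and it can only decrease when points are added to $D$. We will take $\cR(D,x)=F(r(D,x))$ for an explicit increasing function $F$ depending only on $(\kappa,C)$; then properties (i) and (ii) are immediate, and (iii) carries all the content.

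For (iii), fix $\phi\in\QIE_{\kappa,C}(D,\X)$. We first extend $\phi$ locally: on any ball $B(x,R)$ with $R\ge r(D,x)$, the set $D$ is $\log^2 R$-dense, so $\phi$ extends to a $(\kappa,C')$-quasi-isometric embedding of $B(x,R)$ with $C'\lesssim \kappa\log^2 R$. Next we apply the Kleiner--Leeb/Eskin splitting at scale $R$. A quasi-flat in $\X$, meaning a product of two bi-infinite geodesics, restricted to $B(x,R)$, is carried to within $O(\epsilon R)$ of a union of flats, by the quantitative local version of the quasi-flat theorem in products of trees. Because the additive error is only $\log^2 R=o(R)$, this yields a coarse preservation of the product structure inside $B(x,R)$: up to $O(\epsilon R)$ error, horizontal leaves $T_1\times\{t\}$ map to horizontal or vertical leaves. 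The choice (horizontal or vertical) is locally constant, since switching would contradict the quasi-isometry lower bound. So after composing with a permutation it is globally constant.

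We then define the factor maps $\psi_i$ by projecting $\phi$ along leaves. Set $\psi_1(x_1)=\pi_1\phi(x_1,t)$ for a reference $t$. Leaf preservation forces $\pi_1\phi(x_1,t)$ to depend on $t$ only up to an error. To improve the relative error $\epsilon R$ to an error depending only on $r(D,x)$, we iterate over scales $R_k=2^k r(D,x)$ and compare consecutive scales. The discrepancy at scale $R_k$ is controlled by the rigidity of tree quasi-isometries: a quasi-isometric embedding of $T_1\times[0,R]$ into a tree product that coarsely preserves leaves has fibers $\pi_1\phi(\cdot,t)$ at bounded (Hausdorff) distance, independent of $R$, because geodesic rays in $T_1$ determine boundary points and the induced boundary maps must coincide. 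Summing these comparisons gives
\[
d\bigl(\phi(x),(\psi_1(x_1),\psi_2(x_2))\bigr)\le F\bigl(r(D,x)\bigr).
\]
Theorem~\ref{t1.1} then follows: for $\epsilon$ fixed, ergodicity gives $r(D,x)\le \epsilon d(x,e)$ for all but finitely many $x$.

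The main obstacle is the step making the splitting error depend only on the local scale $r(D,x)$ rather than growing like $\epsilon R$. This requires that the boundary maps induced on $\partial T_1$ from different base points agree. I would first try Eskin's argument via Busemann-function level sets, also called ``horoballs'' here. Random holes have size at most $\log^2$ of the distance, so they play the role of cusps that are sublinear in size.
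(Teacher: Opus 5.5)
You correctly identify the main obstacle, but your proposal does not overcome it. Properties (i) and (ii) for $\cR=F(r(D,x))$ are fine, with one correction. $\X$ has exponential growth, so $B(x,R^2+R)$ contains $q^{O(R^2)}$ test points, not polynomially many. Finiteness of $r(D,x)$ still holds only because $\exp(-c\,q^{\log^2 R})$ beats $q^{O(R^2)}$.

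\textbf{First gap: the errors never become scale-independent.} Extending $\phi$ on $B(x,R)$ by nearest points of $D$ costs additive error $\asymp\kappa\log^2 R$. A local quasi-flat theorem then gives control only up to $O(\epsilon R)$ at scale $R$. ``Comparing consecutive scales $R_k=2^k r(D,x)$ and summing'' cannot turn this into a bound depending only on $r(D,x)$: there are infinitely many scales, and the per-scale discrepancies you describe (either $O(\epsilon R_k)$ or a fixed constant) do not decay, so the sum diverges. Your claim that leaf-preserving embeddings of $T_1\times[0,R]$ have fibres at bounded distance independent of $R$ is the entire difficulty, and you only assert it.

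The paper gets scale-independent control differently. It does not fill holes at scale-dependent cost. It coarse-grains each flat by a fixed $n_\epsilon$-grid, so the grid map has additive constant $C+4\kappa n_\epsilon$, keeps empty cells as holes, and passes to the quasi-center $U_1(F)$. Wortman's quasi-flats-with-holes theorem (Theorem~\ref{t3.4}) then places $\phi_{F,\epsilon}(U_1(F))$ in a \emph{uniform} $\Delta$-neighborhood of finitely many flats, with no $\epsilon R$ loss. Transverse flats and the chamber analysis upgrade this in two ways, with constants depending only on $(\kappa,C,\epsilon,\rho)$:
\begin{itemize}
\item images of singular rays from a regular point $x$ are $\lambda_1\epsilon$-conically close to singular rays emanating exactly from $\phi(x)$ (Proposition~\ref{t4.5});
\item each regular flat is mapped within a uniform $\Delta_1$ of a single flat (Proposition~\ref{t4.8}).
\end{itemize}
Your instinct that the boundary maps must agree is right (Lemma~\ref{t5.1}, Remark~\ref{r5.11}), but the paper uses it to pin $\phi(x)$ down, not to sum scales. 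Take two flats with $F_1\cap F_2=\{x\}$. The projections $\pi_i$ of their image flats are geodesics fixed by the boundary map, and their closest-point set is bounded. This puts $\phi(x)$ within $\Delta_2(\rho)$ of a point determined by boundary data (Lemma~\ref{t6.0}). No multiscale summation occurs, and $\cR$ is a function of $\max\{\rho_\epsilon,\rho_0(D,x,\epsilon)\}$ alone.

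\textbf{Second gap: the construction of $\psi_i$.} Setting $\psi_1(x_1)=\pi_1\phi(x_1,t)$ for a fixed reference $t$ requires evaluating $\phi$ off $D$. The resulting error at $(x_1,t)$ depends on the irregularity along the whole leaf $T_1\times\{t\}$, which is almost surely unbounded. So $\psi_1$ need not be a quasi-isometric embedding with uniform constants, and the error at $x$ is not controlled by the environment at $x$.

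The paper instead lifts each $x_1$ to a point of the regular core $D'_{\epsilon,\rho_\epsilon}$. It then proves probabilistically that this core projects onto each $T_i$, and that any two points $x_1,y_1$ have lifts $(x_1,z),(y_1,z)$ in the core on a common leaf (Corollaries~\ref{t6.5} and~\ref{surjective}). This makes $\psi_i$ everywhere defined and satisfying the $(\kappa,C+2N_\epsilon)$ bounds of Proposition~\ref{t6.2}. Combined with Corollary~\ref{t6.1}, which compares two core points over the same $x_1$, it makes the bound at $x$ depend only on $\rho_0(D,x,\epsilon)$.
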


    \begin{rem}
        A careful analysis of \cite{eskin1997quasi} would imply that $\cR(\,\cdot\,,x)$ has the exponential tail distribution given in Lemma~\ref{t5.0b}.
    \end{rem}

Given constants $\kappa$ and $C$, we view $\cR$ as a field of radii that depends only on the environment $D$ seen from the particle $x$. We refer to $\cR$ as the \emph{field of additive-error radii}. It will appear in the description of self QI-embeddings of $D$ in Theorem \ref{t1.4}. 
The field $\cR$ is explicitly given in Corollary~\ref{t6.1}. It is mainly determined by the \emph{irregularity radius} $\rho_0(D,x,\epsilon)$ defined in Definition~\ref{d5.0} for a suitable choice of $\epsilon$ depending only on $\kappa$.

\subsection{Quasi-isometric rigidity properties}
We now derive some results based on the splitting structure given in Theorem~\ref{t1.2}. We first verify the following statement, which answers for the product of two trees the question of Ab\'ert mentioned in Subsection~\ref{s1.1}.

\begin{thmx}\label{t1.3}
    Let $D_1$ and $D_2$ be two independent random subsets of $\X$ having the law of the $\Ber(p)$ point process on $\X$. Then there is almost surely no quasi-isometric embedding from $D_1$ to $D_2$. In particular, $D_1$ and $D_2$ are almost surely not quasi-isometric. 
\end{thmx}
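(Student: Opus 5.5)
We argue by contradiction. We use Theorem~\ref{t1.2} to reduce a quasi-isometric embedding $\phi\:D_1\to D_2$ to a map that almost splits, and then exploit the independence of $D_1$ and $D_2$. Since $\QIE(D_1,D_2)=\bigcup_{\kappa,C\in\N}\QIE_{\kappa,C}(D_1,D_2)$ is a countable union, it suffices to fix $\kappa,C$ and show that almost surely $\QIE_{\kappa,C}(D_1,D_2)=\emptyset$. View $\phi$ as an element of $\QIE_{\kappa,C}(D_1,\X)$ with image in $D_2$. Theorem~\ref{t1.2}, applied to $D_1$, gives $\psi_1,\psi_2$ (after a possible swap of factors) with $d(\phi(x),\psi(x))\le\cR(D_1,x)$ for $x\in D_1$, where $\psi=\psi_1\times\psi_2$. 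By the ergodic theorem, or simply by stationarity and a Borel--Cantelli argument along a sequence, the set of $x\in D_1$ with $\cR(D_1,x)\le R_0$ has positive density in large balls for a suitable $R_0$. So on most of $D_1$, $\phi$ is within $R_0$ of a product map.

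The key step is a rigidity statement for product QI embeddings of trees. A $(\kappa',C')$-QI embedding $\psi_i\:T_i\to T_i$ is coarsely determined on a ball of radius $n$ by finitely much data: up to bounded error, by its restriction to a net. The number of essentially distinct restrictions of $\psi$ to the ball $B(e,n)$ that send a fixed point to within $n'$ of $e$ is at most $\exp(c\,q^{n})$ for some constant $c=c(\kappa,C,q)$. This is the entropy of coarse tree maps: one only needs to record where the roughly $q^{n}$ points of a coarse net go, each among a bounded set of choices relative to its parent. On the other hand, the product ball $B(e,n)\subset\X$ has about $q^{2n}$ points. The quantity $\#(D_1\cap B(e,n))$ is of order $p\,q^{2n}$, and the map $\phi$ is determined up to bounded error by $\psi$ on the good points. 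The requirement that the $\psi$-image of the good points of $D_1\cap B(e,n)$ lie within $R_0$ of $D_2$ must then hold. More decisively, $\phi$ must be \emph{coarsely surjective onto} a substantial part of $D_2$ near $\psi(B(e,n))$: points of $D_2$ far from $\phi(D_1)$ impose no condition, so we instead use the pattern comparison. The pattern of $D_1$ in a product of balls determines, via $\psi$, a predicted pattern in $D_2$ at scale $R_0$. Since $\psi$ is a product of tree QI embeddings, the image $\psi(B(e,n))$ is roughly a product of two quasi-trees of size $q^{n}$ each, containing about $q^{2n}/R_0^{O(1)}$ disjoint $R_0$-cells. By independence of $D_2$ from $D_1$, for a fixed $\psi$ the probability that every cell whose preimage cell contains a point of $D_1$ also contains a point of $D_2$ (and, for the reverse inclusion, using a coarse inverse on the image region) is at most $\exp(-c'q^{2n})$. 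To get such a bound we need a two-sided matching condition. I expect this to be the main obstacle: an embedding rather than a quasi-isometry gives only one-sided information, and cells of size $R_0$ are very likely nonempty. I would resolve it by choosing cells of fixed small size (single points, scale $1$) where emptiness has probability bounded away from $0$ and $1$. I would then use that $\phi$ is injective up to bounded multiplicity and that the density of $D_1$-points mapped into a region must match the density of $D_2$ there, by coarse volume comparison and the monotonicity in Theorem~\ref{t1.2}(ii). This yields a large-deviation event of probability $\exp(-c'q^{2n})$.

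Finally we take a union bound. The number of candidate $\psi$ on $B(e,n)$ is $\exp(O(q^{n}))$, and the choice of base image point costs polynomially many options after localizing with $\phi(e')$ for $e'\in D_1$ near $e$. Each candidate succeeds with probability $\exp(-c'q^{2n})$, so the probability that any $(\kappa,C)$-QI embedding exists with $\phi(e')$ in a fixed ball is summable in $n$ and tends to $0$. A countable union over base points and over $(\kappa,C)$ then gives the almost sure statement. The gap between $q^{n}$ and $q^{2n}$ is exactly where rank two is used: the splitting reduces the entropy of maps to that of the factors, while the randomness lives on the full product.
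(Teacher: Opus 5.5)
Your skeleton matches the paper's: split $\phi$ on good points into a product map, count product maps on a box at cost $\exp(O(q^{n}))$, beat this with an $\exp(-c\,q^{2n})$ bound from the independence of $D_2$, and take countable unions. There is a gap in the decisive probabilistic step. You discard the one-sided condition because $R_0$-cells are ``very likely nonempty'', and replace it with a two-sided matching and density comparison at scale $1$. That replacement does not work, for three reasons.
\begin{enumerate}
\item A QI embedding is not coarsely surjective. Already for trees, $\psi_1(B_1(x_1,n))$ can spread over a ball of volume about $q^{\kappa n}$ while hitting only about $q^{n}$ points. So there is no density of $D_2$ that the image of $D_1$ is forced to match.
\item The monotonicity in Theorem~\ref{t1.2}(ii) is monotonicity of $\cR$ in the set $D$. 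It is irrelevant here.
\item Passing to scale-$1$ cells means specifying $\phi(z)$ exactly rather than up to $R_0$. That costs $\abs{B(z,R_0)}$ extra choices per good point, i.e.\ entropy of the same order $q^{2n}$ as the probabilistic gain. Since $1-(1-p)^{\abs{B(z,R_0)}}\le p\abs{B(z,R_0)}$, this refinement never improves the union bound.
\end{enumerate}

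The missing observation is that the one-sided condition already suffices, and this is exactly how the paper argues. Take a maximal $r$-separated set $I$ in the box, with $r$ large compared with $R_0$ and the QI constants. Then for every candidate product map $\psi$ the balls $B(\psi(z),R_0)$, $z\in I$, are pairwise disjoint. For fixed $\psi$ (and conditionally on $D_1$), the events $\{B(\psi(z),R_0)\cap D_2\neq\emptyset\}$ are therefore independent. Each has probability $p_2=1-(1-p)^{\abs{B(z,R_0)}}<1$, a constant not depending on $n$. Every good $z$ must satisfy $\phi(z)\in D_2$ with $d(\phi(z),\psi(z))\le R_0$. Hence a fixed $\psi$ is compatible with probability at most $p_2^{m}$, where $m\ge c\,q^{2n}$ is the number of good points of $I$. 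However close $p_2$ is to $1$, $m\log(1/p_2)$ eventually dominates the $O(q^{n})$ entropy; this is precisely the $q^{n}$ versus $q^{2n}$ gap you point to at the end.

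Two further points need care.
\begin{itemize}
\item Theorem~\ref{t1.2} as stated gives no uniform bound on the QI constants of $\psi_1,\psi_2$, and your counting requires one. The paper instead uses Proposition~\ref{t6.2}(ii), which gives constants $(\kappa,C+2N_\epsilon)$, together with the uniform bound $N_\epsilon$ on the regular core from Corollary~\ref{t6.1}.
\item The lower bound $m\ge c\abs{I}$, holding with probability close to $1$ and determined by $D_1$ alone, comes from Lemma~\ref{t7.0a}. That lemma uses a second-moment estimate based on decay of correlations of the locally determined good events. An ergodic theorem along balls would be much more delicate in this nonamenable setting.
\end{itemize}
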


We also prove that a quasi-isometric embedding from $D$ to itself must be close to the identity map in a suitable sense. Note that since $D$ admits arbitrary local patterns, the self-quasi-isometry group of $D$ may not be trivial. For example, suppose that $R>0$, $x,\,y$ are two points in $D$ with $d(x,y)=R$, and $B(x,4\kappa R+C)\cap D=\{x,\,y\}$. Then the map $\phi$ on $D$ that exchanges $x$ and $y$ and maps each other point to itself is a $(\kappa,C)$-quasi-isometry from $D$ to itself with $d(\phi,\operatorname{id})=R$. In words, the next statement shows that a $(\kappa,C)$-quasi-isometric embedding of $D$ to itself is the identity map up to the field of additive-error radii $\cR$. 

\begin{thmx}\label{t1.4}
    Let $D$ be a random subset of $\X$ having the law of the $\Ber(p)$ point process on $\X$. 
    Given constants $\kappa>1$ and $C>0$, there exists a constant $\lambda>1$ such that almost surely for each $\phi\in \QIE_{\kappa,C}(D,D)$ and each $x\in D$, we have
    \begin{equation*}
        d(\phi(x),x)<\lambda+\cR (D,x),
    \end{equation*}
    where $\cR(D,x)$ is from Theorem~\ref{t1.2}.
\end{thmx}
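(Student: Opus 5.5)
We will derive Theorem~\ref{t1.4} from the splitting statement of Theorem~\ref{t1.2}, applied to $\phi\in\QIE_{\kappa,C}(D,D)\subseteq \QIE_{\kappa,C}(D,\X)$. After possibly composing with the factor swap, Theorem~\ref{t1.2} gives quasi-isometric embeddings $\psi_i\colon T_i\to T_i$ with $d(\phi(x),(\psi_1(x_1),\psi_2(x_2)))\le \cR(D,x)$ for all $x\in D$. It then suffices to prove two things: the swap does not occur, and each $\psi_i$ is within a bounded distance $\lambda_0=\lambda_0(\kappa,C)$ of the identity on the relevant part of $T_i$. Granting these, the triangle inequality gives $d(\phi(x),x)\le \cR(D,x)+\sqrt2\lambda_0$, and we set $\lambda\=\sqrt2\lambda_0+1$.

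\textbf{Step 1: $\psi_i$ is a QI embedding of the tree whose image lies almost in $\phi(D)\subseteq D$.} Since the error $\cR(D,x)$ is almost surely finite and has stretched-exponential tails (Lemma~\ref{t5.0b}), a Borel--Cantelli/ergodicity argument shows that, for every fixed large $R_0$, the set $D_{R_0}=\{x\in D:\cR(D,x)\le R_0\}$ is coarsely dense in $\X$ on large scales in every direction. Hence the product map $\Psi=\psi_1\times\psi_2$ restricted to a net of $\X$ coincides up to $R_0$ with $\phi$, and so takes values within $R_0$ of $D$. Combining this with the density of $D$ shows that $\psi_1\times\psi_2$ is coarsely surjective onto large regions, and that each $\psi_i$ is a quasi-isometry of $T_i$. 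It therefore extends to a boundary homeomorphism $\partial\psi_i$ of $\partial T_i$.

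\textbf{Step 2 (main obstacle): rigidity forcing $\psi_i\approx\id$.} The idea is that $D$ contains, almost surely, a dense family of \emph{large holes}: empty regions whose shape is a product of balls or sectors, with sizes growing, located everywhere. Near a point $x$ with small $\cR(D,x)$, the map $\phi$ sends $D$ into $D$. By Step~1, a hole $H$ near $x$ that $D$ sees at scale $r$ must be sent by $\Psi$ onto a region that is $\kappa$-comparable and also nearly empty of $D$-points, except possibly near points with large $\cR$. The independence of Bernoulli configurations makes it very unlikely that a QI image of a large product hole, at a definite displacement $t$ from $H$, is again a hole. Concretely, we would union-bound over the countably many coarse choices of the restriction of $\psi_i$ to a ball of radius $r$, which is controlled by the number of $(\kappa,C)$-maps of tree balls up to bounded error, roughly $e^{O(r)}$. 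The probability that the image region contains no points is $e^{-c\,\mathrm{vol}}$ with volume $\gtrsim q^{r/\kappa}$. This beats the entropy and gives $d(\psi_i(y),y)\le\lambda_0$ for $y$ near $x_i$, at least when $\cR(D,x)$ is small. I expect this counting to be the delicate step: one must choose the shapes of the test holes in the product so that their $\Psi$-images are forced to be disjoint from the originals once the displacement exceeds $\lambda_0$. The swap case is excluded the same way, since a swap sends the test configuration to an unrelated one.

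\textbf{Step 3: passing from good points to all points.} Since $\psi_i$ is a tree QI that equals the identity up to $\lambda_0$ on a coarsely dense set of $T_i$ (the projections of good points), it is within $\lambda_0'$ of the identity everywhere on $T_i$ by quasi-geodesic stability in trees. The bound for every $x\in D$ then follows from the splitting inequality~\eqref{e1.3} together with the triangle inequality, adjusting $\lambda$ accordingly.
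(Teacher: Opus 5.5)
Your outer reduction matches the paper: apply Theorem~\ref{t1.2}, show that $\Psi=\psi_1\times\psi_2$ is uniformly close to the identity, and finish with the triangle inequality. However, Steps~1 and~2 have genuine gaps.

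\textbf{Step~1.} Nothing makes $\phi$ or $\psi_i$ coarsely surjective. $\phi$ is only a quasi-isometric embedding $D\to D$, and the density of $D$ does not force surjectivity: a regular tree quasi-isometrically embeds into a half-tree of itself. Coarse surjectivity is essentially a consequence of the theorem you are proving. Also, $\{x\in D:\cR(D,x)\le R_0\}$ is not coarsely dense with any uniform constant, since $D$ itself has arbitrarily large holes. Only a sublinear density as in Lemma~\ref{t6.3} holds, so there is no net on which $\Psi\approx\phi$.

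\textbf{Step~2.} This uses the inclusion in the wrong direction. $\phi(D)\subseteq D$ says nothing about $D$-points inside $\Psi(H)$ for a hole $H$, because those points need not lie in $\phi(D)$. Even granting surjectivity, the sketch does not give a deterministic $\lambda$ at every point. Product holes of radius $r$ have probability $(1-p)^{\asymp q^{2r}}$, so they are very sparse and typically far from $x$. Moreover, a hole of radius $r\gg\lambda_0$ largely overlaps its own image under a map of displacement about $\lambda_0$, so it cannot detect bounded displacement.

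\textbf{Entropy count.} Your bound $e^{O(r)}$ is false. For any fixed error $b$, automorphisms of $T_i$ fixing $x_i$ already give $e^{\Theta(q^r)}$ restrictions to $B_i(x_i,r)$ that are pairwise more than $b$ apart. The count can only be won by using that the entropy of $\psi_1\times\psi_2$ is a \emph{sum} over the two factors, while the number of constrained sites is a \emph{product}. Your sketch does not identify this.

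\textbf{The missing idea.} The inclusion you note in Step~1, that $\Psi$ maps good points into a bounded neighborhood of $D$, is the right input. What is missing is to exploit it directly on the typical regular points of an \emph{elongated} product region.
\begin{enumerate}
\item The paper fixes $x,y$ with $d(x,y)\ge\lambda$. It lets $\cQ(x,y)$ be the event that some pair of $(\kappa,C+2N)$-quasi-isometric embeddings $\varphi_i\colon T_i\to T_i$ with $\varphi_i(x_i)=y_i$ maps the regular core $D'_{\epsilon,\rho}$ into $B(D,N)$. Here $N$ is the uniform splitting error on the regular core.
\item Say $d(x_1,y_1)\ge\lambda/2$. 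Take $A=B_1(x_1,R_1)\times B_2(x_2,R_2)$ with $R_1$ fixed and $R_2\to\infty$. Choosing $\lambda$ of order $\kappa R_1$ separates $A$ from its image.
\item An $r$-separated net $I\subseteq A$ has $\gtrsim q^{R_1+R_2}$ points. With probability at least $1-\zeta$, a $p(1-2\epsilon)$-fraction of them are regular (Lemma~\ref{t7.0a}).
\item The image of each regular point must lie in $B(D,N)$. These are independent events of probability $p_2<1$ once the self-dependence is removed. The $N$-neighborhood of the sparse image net barely affects regularity on $I$, and this is handled by nested regular cores.
\item Since $\varphi_1\times\varphi_2$ has at most $q^{O(q^{R_1}+q^{R_2})}$ restrictions to $A$, one gets $\P(\cQ(x,y))\le\zeta+q^{O(q^{R_1}+q^{R_2})}p_2^{c\,q^{R_1+R_2}}$ for some $c>0$. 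This is at most $2\zeta$ once $R_1$ is a large enough constant (which is what fixes $\lambda$) and $R_2$ is large.
\item As $\zeta>0$ is arbitrary, $\P(\cQ(x,y))=0$, and a countable union covers all pairs.
\end{enumerate}
The $\Psi$ from Theorem~\ref{t1.2} witnesses $\cQ(z,\Psi(z))$ for every $z\in D$, forcing $d(z,\Psi(z))<\lambda$ everywhere at once. No surjectivity, boundary maps, or Step~3 are needed, and the swap case is handled by the same count.
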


We remark on the difference between the statement of Theorem~\ref{t1.4} with known results on self-quasi-isometries of lattices. Recall that given a metric space $X$, $\QI(X)$ is defined as the set of self-quasi-isometries $X\to X$, modulo the equivalence relation that two maps are equivalent if they are within a bounded distance. A space with a small QI group has a strong form of QI rigidity. Let $\Gamma$ be a cocompact lattice in $\operatorname{Aut}(T_1)\times \operatorname{Aut}(T_2)$, then the QI group of $\Gamma$ is the same as $\QI(\X)$. By the splitting theorem in Kleiner--Leeb~\cite{kleiner1997rigidity}, up to permuting the two factors, $\QI(\X)$ is $\QI(T_1)\times \QI(T_2)$, and $\QI(T_i)$ is the group of quasi-symmetries of the Cantor set $\partial T_i$. In this case, $\QI(\Gamma)$ is a large infinite-dimensional group. 
Random irregularities in the subset $D$ of $\X$ prevent it from having such a large self-QI group.  Instead, we have the field $\cR$ that allows local additive errors.

\subsection{Background and motivations}

   The quasi-isometry rigidity properties of symmetric spaces, Euclidean buildings, and lattices in them have been extensively studied, see e.g., the survey~\cite{Farb1997Survey}. Recall that among rank-one symmetric spaces, real hyperbolic spaces, and complex hyperbolic spaces, each admits plenty of self-quasi-isometries; while Pansu's theorem states that a quasi-isometry of a quaternionic hyperbolic space, that is, the symmetric space of $\operatorname{Sp}(n,1)$, $n\ge 2$, lies within a bounded distance of an isometry \cite{pansu1989}. Kleiner and Leeb \cite{kleiner1997rigidity} proved rigidity theorems for quasi-isometries in the setting of higher-rank symmetric spaces and Euclidean buildings. The work of Kleiner and Leeb also established quasi-isometric classification of cocompact (uniform) lattices in higher-rank Lie groups. For nonuniform lattices, Schwartz~\cite{schwartz1995} obtained the first quasi-isometric classification results in a rank-one semisimple Lie group $G$, where $G$ is not $\operatorname{SL}(2,\mathbb{R})$. Such rigidity results were conjectured by Schwartz to hold for nonuniform lattices in higher rank in full generality; and for the Hilbert modular groups the classification was obtained by Farb and Schwartz in \cite{farbschwartz}. Eskin~\cite{eskin1998quasi} resolved Schwartz's conjecture positively and gave a complete characterization for quasi-isometries between nonuniform lattices in higher-rank Lie groups. A crucial input to Eskin's proof is the quasi-flats theorem from Eskin and Farb~\cite{eskin1997quasi}. The quasi-flats theorem was further extended to the case of reductive groups by Wortman \cite{wortman2006quasiflats}. 

   The notion of Measure Equivalence (ME) for countable groups, introduced by Gromov in \cite{gromov1991asym}, can be considered as a measure-theoretic analog of quasi-isometry between groups. An important example of measure equivalent groups are lattices in the same locally compact group $G$. As a consequence of the Ornstein--Weiss theorem, all countable amenable groups are measure equivalent to $\Z$. The work of Furman \cite{Furman1999ME} establishes ME rigidity for higher-rank lattices: any countable group that is ME to a lattice in a simple Lie group $G$ of higher rank, is commensurable to a lattice in $G$. 

   Recently in \cite{AbertMellick}, Ab\'ert and Mellick initiated the study of invariant point processes on locally compact groups from the point of view of Measure Equivalence.  Among their first results, it is shown that every free p.m.p.~action of $G$ can be realized by an invariant point process, see \cite[Theorem~1.1]{AbertMellick}.    
   
   Cost is a central notion in the study of measure equivalences, see the survey of Gaboriau \cite{GaboriauICM2010} and the references therein. Bernoulli point processes and Poisson point processes play special roles in the theory of the cost of p.m.p.~action in the works of Ab\'ert and Weiss \cite{AbertWeiss}, Ab\'ert and Mellick \cite{AbertMellick}. Poisson point processes play a crucial role in the recent resolution of the fixed price one conjecture for higher rank lattices by Fraczyk, Mellick, and Wilkens~\cite{fraczyk2023}. This important result motivates further investigation into point processes; their properties can provide insight into lattices beyond the reach of classical techniques.     

   We also mention that it is a yet to be explored topic to understand the interactions between QI-rigidity and ME-rigidity phenomena for point processes.

\subsection{Organization of the article} 
The strategy to prove the structural results in Theorems~\ref{t1.1} and~\ref{t1.2} follows the roadmap of the work of Eskin \cite{eskin1998quasi} on nonuniform lattices in higher-rank Lie groups. In our random point process setting, the geometry bears similarity to nonuniform lattices: a random subset $D$ admits arbitrarily large holes. 
    
   In Sections~\ref{step1} and~\ref{s:Step2}, we use quantitative estimates to demonstrate the coarse regularity of holes in Bernoulli samples and construct a well-defined boundary map induced by the QI embedding $\phi$. We first verify the regularity in one flat, in order to apply the quasiflats with holes theorem from Wortman~\cite{wortman2006quasiflats}. Note that in higher rank, a quasi-flat can be near a union of several flats. So to show the flat-to-flat result, one has to consider transverse flats and images of hyperplanes. We perform a careful analysis of the geometry of points in transverse flats and show that the embedding $\phi$ induces a boundary map in Section~\ref{s:Step2}.
    
    After the boundary map is defined, in Section~\ref{s5}, we use the geometry of the product of trees to prove its bi-H\"older continuity. In Section~\ref{s6}, we construct maps, which preserve each of the factor trees, as pullback maps of the boundary map and prove Theorems~\ref{t1.1} and~\ref{t1.2}. 
    
    In Section~\ref{s7}, we derive Theorems~\ref{t1.3} and~\ref{t1.4} via probabilistic arguments. For Theorem~\ref{t1.3}, we use a positive correlation between increasing events along with some other probabilistic estimates to rule out the existence of QI embedding between independent samples. Heuristically, if $\phi\:D_1\to D_2$ is a QI embedding, the splitting structure of $\phi$ imposes an upper bound on the possible number of choices of $\phi$. Under the independence assumption, such counting allows us to estimate the probability that a large region in $D_1$ can be QI embedded in $D_2$, and we conclude by the Borel--Cantelli lemma that the probability that such a map avoids holes in $D_2$ is $0$. A more careful argument along similar lines proves the statement on self QI-embeddings as stated in Theorem~\ref{t1.4}.
    
    \subsection*{Acknowledgments} 
We warmly thank Mikl\'os Ab\'ert for communicating the questions on quasi-isometries of random sets and for introducing us to the higher-rank rigidity aspects of the problem. We also want to thank Jinlong Liu for his careful reading and numerous comments. Z.~L.\ and R.~Y.\ were partially supported by Beijing Natural Science Foundation (JQ25001) and National Natural Science Foundation of China (12471083).

\section{Preliminaries}  \label{s:Preliminaries}

In this section, we recall some standard terminologies in metric geometry. We also recall some notions from symmetric spaces and give their descriptions in the setting of products of trees.

We use $\N$ to denote the set of positive integers. We always use the Euclidean metric to discuss subsets of $\Z^n$ and $\R^n$. Unless explicitly stated otherwise, the notation $B(\,\cdot\,,\,\cdot\,)$ always refers to a ball in $\X$ in this article.

\subsection{Definitions in metric spaces}
   
Graded quasi-isometric embeddings are generalizations of quasi-isometric embeddings where the additive constant is allowed to grow linearly with the specified rate $\epsilon$.

\begin{definition}[Graded quasi-isometric embeddings]
    For metric spaces $(Y_1,d_1),\ (Y_2,d_2)$ and real constants $\kappa>1,\ \epsilon\in (0,1),\ C>0$, we say that a map $\phi\:Y_1\to Y_2$ is a \emph{$(\kappa,\epsilon,C)$-graded quasi-isometric embedding} centered at $z_0\in Y_1$ if for all $x,\,y\in Y_1$ with $d_1(y,z_0)\ge d_1(x,z_0)$,
    \begin{equation*}
        \kappa^{-1}d_1(x,y)-\max\{C,\,\epsilon d_1(x,z_0)\}
        \le d_2(\phi(x),\phi(y))
        \le \kappa d_1(x,y)+\max\{C,\,\epsilon d_1(x,z_0)\}.
    \end{equation*}
    \end{definition}

\begin{definition}[Hausdorff distance and equivalence]\label{d2.4}
    Let $U$ and $V$ be subsets of a metric space $(X,d)$. Their \emph{Hausdorff distance} $\hd(U,V)$ is given by 
    \begin{equation*}
    \hd(U,V)\=\max \{\sup \{d(x,V): x\in U\},\,\sup \{d(U,y) : y\in V \} \}.
    \end{equation*} 
    If $\hd(U,V)<+\infty$, then we say that $U$ is \emph{(Hausdorff) equivalent} to $V$, denoted by $U\sim V$. 
\end{definition}

\begin{definition}[Annuli, conical neighborhoods, and $r$-interior]\label{d2.3}
    Suppose $(X,d)$ is a metric space, $U\subseteq X$, $x\in X$, and $0<r<R$ are constants. The \emph{annulus} $A(r,R ,x )$ is the set $\{y\in X : r<d(x,y)<R \}$. We simply write $A_r(x) \= A(r,2r ,x )$. 
    The \emph{conical neighborhood} of $U$ is defined as     
    \begin{equation*}
    U[r,x]\=\{ y\in X: d(y,U)\le r d(x,y)\}.
    \end{equation*}
    We often write $U[r]$ if the choice of the base point $x$ is clear. 
    The \emph{$r$-interior} of $U$ is defined as
    \begin{equation*}
    \Int_X(U,r)\= \{y\in U: d(y,U^c)\ge r\}.
    \end{equation*}
    We often write $\Int(U,r)$ if the ambient space $X$ is clear from the context.
\end{definition}

\begin{definition}[$\epsilon$-equivalence]\label{d2.5}
   Let $U$ and $V$ be subsets of a metric space $(X,d)$. Fix a constant $\epsilon\in (0,1)$. If there exist $x\in X$ and $R>0$ such that
   \begin{equation} \label{e:d2.5}
   U\smallsetminus B(x,R)\subseteq V[\epsilon]\quad \text{and}\quad V\smallsetminus B(x,R)\subseteq U[\epsilon],
   \end{equation}
   then we say that $U$ is \emph{$\epsilon$-equivalent} to $V$, denoted by $U\sim_{\epsilon} V$. Furthermore, we use the more quantitative notation $U\sim_{\epsilon,x,R}V$ if \eqref{e:d2.5} holds for $x$ and $R$.
\end{definition}
\begin{rem}\label{r2.5}
    Suppose $\epsilon\in(0,1/100)$. Suppose $R_0>0$ and $x\in X$. Suppose $U$ and $V$ are subsets of a metric space $(X,d)$ such that $\hd(U\cap B(x,R),U\cap B(x,R-\epsilon R))\le 2\epsilon R$ for all $R\ge R_0$. Then if $U\sim_{\epsilon,x,R_0}V$, then it is not hard to deduce $\hd(A_R(x_0)\cap U,A_R(x_0)\cap V)\le 6\epsilon R$ by geometric considerations.
\end{rem}

\subsection{Terminology from symmetric spaces}
To stay consistent with related works, we use terminology from the geometry of symmetric spaces. On the other hand, since we work in a special setting, our proof requires little knowledge of Lie groups. In this subsection, we recall these concepts in the setting of the product of two trees $\X\= T_1\times T_2$, equipped with the metric $d((x_1,x_2),(y_1,y_2))=\bigl(d_{T_1}(x_1,y_1)^2+d_{T_2}(x_2,y_2)^2\bigr)^{1/2}$. For general definitions, we refer the interested reader to \cite{eskin1998quasi} and references therein.

\begin{definition}[Flats and hyperplanes]\label{d2.9}
     For bi-infinite geodesics $L_1\subseteq T_1$ and $L_2\subseteq T_2$, we call their product $F \=  L_1\times L_2$ a\emph{ (maximal) flat} of $\X$. All flats are isomorphic to $\Z^2$ as metric spaces by identifying each geodesic with $\Z$. 
    For points $x_1\in T_1$, $x_2\in T_2$, the sets $L_1\times \{x_2\}$ and $\{x_1\}\times L_2$ are called \emph{hyperplanes}. In other words, hyperplanes are horizontal or vertical lines in flats. We say that $L_1\times \{x_2\}$ (resp.\ $\{x_1\}\times L_2$) is a hyperplane in $T_1$ (resp.\ $T_2$).

    With slight abuse of notation, we sometimes do not make a distinction between $L_1$ and $L_1\times \{x_2\}$ when the choice of $x_2$ is irrelevant. 
\end{definition}

\begin{definition}[Chambers and walls]
   Let $L_1\subseteq T_1$ and $L_2\subseteq T_2$ be geodesic rays emitting from $x_1\in T_1$ and $x_2\in T_2$, respectively. Then we call $L_1\times L_2$ a \emph{chamber} $\fC$ with origin at $(x_1,x_2)$. The rays $L_1\times\{x_2\}$ and $\{x_1\}\times L_2$ are called \emph{walls} of $\fC$. Two chambers are called \emph{neighboring chambers} if they share a wall.
\end{definition}

\begin{definition}[Polyhedra]\label{d2.10} 

    Suppose $L_i\subseteq T_i$ is a geodesic (line, ray, or segment) for each $i\in \{1,\,2\}$. Then we call $L_1\times L_2$ a \emph{polyhedron}. If $x_1\in L_1$ (resp.\ $x_2\in L_2$) is an endpoint of $L_1$ (resp.\ $L_2$), then we call $\{x_1\}\times L_2$ (resp.\ $L_1\times\{x_2\}$) a \emph{side} of $L_1\times L_2$.
\end{definition}
    
Take a reference point $e=(e_1,e_2)$ in $D$.
For simplicity of notation, we also refer to the copy $T_1\times \{e_2\}\subseteq \X$ as $T_1$ and $\{e_1\}\times T_2\subseteq \X$ as $T_2$.

\begin{definition}[Boundary]\label{d2.13}
    The \emph{tree boundary} $\partial T_i$ is the set of equivalence classes of geodesic rays in $T_i$ in the sense of Definition~\ref{d2.4}. The \emph{product boundary} $\widehat{\X}$ is the set of the equivalence classes of chambers in $\X$ in the sense of Definition~\ref{d2.4}. 
\end{definition}

Note that each chamber is equivalent to exactly one chamber with origin at $e$. Therefore, $\widehat{\X}$ is also viewed as the set of chambers with apex at $e$, where each chamber is given by a pair of points $(\xi,\eta)\in \partial T_1\times \partial T_2$.

\section{Flats to finite unions of flats}     \label{step1}
Throughout this section until the end of Section~\ref{s6}, we are in the setting of Theorem~\ref{t1.1}. Let $\phi\in \QIE(D,\X)$ be a quasi-isometric embedding. In the first step, we aim to prove that each flat is almost surely mapped near a finite union of flats (see Proposition~\ref{t3.5}).

The cardinality of a finite set $A$ is denoted as $\abs{A}$. In the case of an infinite set $A$, we always set $\abs{A} = +\infty$.

Let $L_1\subseteq T_1$ and $L_2\subseteq T_2$ be two bi-infinite geodesics. Then $F \=  L_1\times L_2$ is a flat in $\X$. 

In the remaining part of this section, we use $B(x,\,\cdot\,)$ to denote a ball in  $\Z^2$ equipped with the Euclidean metric for $x\in \Z^2$.
\begin{definition}\label{d3.1}
Let $0<\epsilon<1$ and $\rho >0$ be constants. For a subset $U\subseteq \Z^2$,  define $U_{(\epsilon,\rho)}$ as the set of $x\in U$ satisfying that for all $y\in \Z^2$ with $d(x,y)> \rho$, 
\begin{equation*}\label{e3.1a}
\abs{B(y,\epsilon d(x,y))\cap U}\ge (1-\epsilon)\abs{B(y,\epsilon d(x,y))}
\end{equation*}
holds. We call $U_{(\epsilon,\rho)}$ the \emph{$(\epsilon,\rho)$-quasi-center} of $U$.
\end{definition}

\begin{notation}
    Given $\epsilon\in (0,1)$, define the choice of $n_\epsilon$ to be the smallest integer that satisfies 
    \begin{equation}\label{e3.0a}
        n_\epsilon^2\ge 11\log_{1/(1-p)}(10/\epsilon).
    \end{equation}
    The reason for such a choice is explained in the proof of Lemma~\ref{t3.3}.
\end{notation}

\begin{notation}
    Let $\epsilon\in (0,1)$ be a constant and $D$ be a subset of $\X$. Let $F$ be a flat and identify $F$ with $\Z^2$. For each pair of $i,\,j\in \Z$, denote
    \begin{equation}\label{e3.0b}
        A_{ij}^{\epsilon} \= \bigl\{(x,y)\in \Z^2: in_\epsilon\le x<(i+1)n_\epsilon,\ jn_\epsilon\le y<(j+1)n_\epsilon\bigr\}.
    \end{equation}
    Define
    \begin{equation}\label{e3.0c}
        U_0^{\epsilon,D}(F) \= \bigcup_{i,j\in \Z:A_{ij}^{\epsilon}\cap D\neq \emptyset}A_{ij}^{\epsilon}\quad \text{ and }\quad V_0^{\epsilon,D}(F) \= \Z^2\smallsetminus U_0^{\epsilon,D}(F).
    \end{equation}
    For a constant $\rho>0$, define
    \begin{equation}\label{e3.0d}
        U_1^{\epsilon,D,\rho}(F) \= \bigl(U_0^{\epsilon,D}(F)\bigr)_{(\epsilon,\rho)} \quad\text{ and }\quad U_2^{\epsilon,D,\rho}(F) \= \bigl(U_1^{\epsilon,D,\rho}(F)\bigr)_{(\epsilon,\rho)}.
    \end{equation}
\end{notation}
\begin{definition}[Grid map]\label{d3.3}
    Let $\kappa>1$, $C>0$, and $\epsilon\in (0,1)$ be constants. Let $F$ be a flat, $D\subseteq\X$, and $\phi\in \QIE_{\kappa,C}(D,\X)$. Let $n_\epsilon$ be the constant defined in \eqref{e3.0a} and $ U_0^{\epsilon,D}(F)$ be the set defined in \eqref{e3.0c}. For each $A_{ij}^{\epsilon}\subseteq U_0^{\epsilon,D}(F)$, choose a point $y_{ij}\in A_{ij}^{\epsilon}\cap D$. Define the \emph{grid map} $\phi_{F,\epsilon}\: U_0^{\epsilon,D}(F)\to \X$ in the following way:
     \begin{enumerate}[label=\rm{(\roman*)}]
        \smallskip     	\item  define $\phi_{F,\epsilon}(x) \=  \phi(y_{ij})$ if $x\in A_{ij}^{\epsilon}\smallsetminus D$ for some $i,\,j\in\Z$;
        \smallskip     	\item define $\phi_{F,\epsilon}(x)\coloneqq\phi(x)$ if $x\in D\cap F$.
    \end{enumerate}     
    \begin{figure}
        \centering\begin{tikzpicture}[scale=0.5]

    \def\ne{2}

\fill[yellow!30] (0*\ne, 0*\ne) rectangle (1*\ne, 1*\ne);
\fill[yellow!30] (1*\ne, 0*\ne) rectangle (2*\ne, 1*\ne);
\fill[yellow!30] (2*\ne, 0*\ne) rectangle (3*\ne, 1*\ne);
\fill[yellow!30] (3*\ne, 0*\ne) rectangle (4*\ne, 1*\ne);

\fill[yellow!30] (1*\ne, 1*\ne) rectangle (2*\ne, 2*\ne); 
\fill[yellow!30] (2*\ne, 1*\ne) rectangle (3*\ne, 2*\ne); 
\fill[yellow!30] (3*\ne, 1*\ne) rectangle (4*\ne, 2*\ne);

\fill[yellow!30] (0*\ne, 2*\ne) rectangle (1*\ne, 3*\ne); 
\fill[yellow!30] (1*\ne, 2*\ne) rectangle (2*\ne, 3*\ne); 
\fill[yellow!30] (3*\ne, 2*\ne) rectangle (4*\ne, 3*\ne); 
\fill[yellow!30] (4.3*\ne, 1.5*\ne) rectangle (4.7*\ne, 1.9*\ne);
\node at (5.7*\ne, 1.7*\ne) {$U_0^{\epsilon,D}(F)$};
\fill[black] (4.5*\ne, 1.1*\ne) circle (1.2pt);
\node at (5.9*\ne, 1.1*\ne) {points in $D$};
    \foreach \x in {0,1,2,3} {
        \foreach \y in {0,1,2} {
            \ifnum\x<3
                \draw[black, thin] (\x*\ne+\ne, \y*\ne) -- (\x*\ne+\ne, \y*\ne+\ne);
            \fi
            \ifnum\y<2
                \draw[black, thin] (\x*\ne, \y*\ne+\ne) -- (\x*\ne+\ne, \y*\ne+\ne);
            \fi
        }
    }

    \draw[<->, thick, black] (0.5*\ne,\ne) -- (0.5*\ne,2*\ne) node[midway, left] {$n_\varepsilon$};

    \fill[black] (1.3*\ne, 1.4*\ne) circle (1.2pt);
    \fill[black] (0.23*\ne, 0.37*\ne) circle (1.2pt);
\fill[black] (0.67*\ne, 0.85*\ne) circle (1.2pt);
\fill[black] (0.45*\ne, 2.73*\ne) circle (1.2pt);
\fill[black] (0.89*\ne, 2.45*\ne) circle (1.2pt);
\fill[black] (1.52*\ne, 0.67*\ne) circle (1.2pt);
\fill[black] (1.78*\ne, 1.23*\ne) circle (1.2pt);
\fill[black] (1.34*\ne, 2.81*\ne) circle (1.2pt);
\fill[black] (1.96*\ne, 2.54*\ne) circle (1.2pt);
\fill[black] (3.12*\ne, 0.45*\ne) circle (1.2pt);
\fill[black] (2.56*\ne, 0.92*\ne) circle (1.2pt);
\fill[black] (3.27*\ne, 1.56*\ne) circle (1.2pt);
\fill[black] (3.78*\ne, 2.67*\ne) circle (1.2pt);
\fill[black] (0.34*\ne, 2.12*\ne) circle (1.2pt);
\fill[black] (2.67*\ne, 1.45*\ne) circle (1.2pt);
\fill[black] (3.89*\ne, 2.23*\ne) circle (1.2pt);

\end{tikzpicture}
        \caption{The set $U_0^{\epsilon,D}(F)$}
        \label{fig3.1}
    \end{figure}
\end{definition}
    It is clear that $\phi_{F,\epsilon}\in \QIE_{\kappa,C+4\kappa n_{\epsilon}}\bigl(U_0^{\epsilon,D}(F),\X\bigr)$, where $U_0^{\epsilon,D}(F)$ is equipped with the induced Euclidean metric.

The purpose of this section is to prove that certain images lie in the neighborhood of the union of finitely many flats, which relies on the quasi-flats with holes theorem from~\cite{wortman2006quasiflats}. The statements are summarized in Proposition~\ref{t3.5}.

\begin{prop}\label{t3.5}   
Let $D$ be a random subset of $\X$. For each constant $\kappa>1$, there exist constants $M_1\in \N$ and $\epsilon_0\in (0,1/100)$ such that for each $\epsilon\in (0,\epsilon_0)$, each constant $C>0$, and each constant $\rho>1/\epsilon$, there exists a constant $\Delta>0$ such that the following hold for each flat $F$:
    \begin{enumerate}[label=\rm{(\roman*)}]
        \smallskip
        \item The sets $U_1^{\epsilon,D,\rho}(F)$ and $U_2^{\epsilon,D,\rho}(F)$ defined in \eqref{e3.0d} are almost surely nonempty.
        \smallskip
        \item Almost surely, for each $x\in F$,
        \begin{equation}\label{e3.3x}
        \liminf_{r\to+\infty} \Absbig{B(x,r)\cap U_2^{\epsilon,D,\rho}(F)} \big/ \abs{B(x,r)\cap F} \ge 1-\epsilon.
        \end{equation}

        \item For each $\phi\in \QIE_{\kappa,C}(D,\X)$, there exist $M_1$ flats $F_1,\,\dots,\,F_{M_1}$ such that 
        \begin{equation}\label{e3.3x1}
            \phi_{F,\epsilon}\bigl(U_1^{\epsilon,D,\rho}(F)\bigr)\subseteq B\biggl(\bigcup_{j=1}^{M_1}F_j,\Delta\biggr).
        \end{equation}

        \item For each $x\in U_2^{\epsilon,D,\rho}(F)$ and each $y\in F$, 
        \begin{equation}\label{e3.3y}
            d\bigl(y,U_1^{\epsilon,D,\rho}(F)\bigr)\le \max\{\rho,\,\epsilon d(x,y)\}.
        \end{equation}
        Consequently, for every $\phi\in \QIE_{\kappa,C}(D,\X)$ there exists a map 
       $\tphi_{F,\epsilon}\colon F\to\phi_{F,\epsilon}\bigl(U_1^{\epsilon,D,\rho}(F)\bigr)\subseteq \Nb\bigl(\bigcup_{j=1}^{M_1}F_j,\Delta\bigr)$
       that agrees with $\phi_{F,\epsilon}$ on $U_1^{\epsilon,D,\rho}(F)$ and is a $(2\kappa,6\kappa\epsilon, 6\kappa\rho+2C+8\kappa n_\epsilon )$-graded quasi-isometric embedding centered at $x$ for each $x\in U_2^{\epsilon,D,\rho}(F)$. 
    \end{enumerate}
\end{prop}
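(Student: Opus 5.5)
The plan is to separate the probabilistic parts (i) and (ii) from the deterministic parts (iii) and (iv), which concern a fixed realisation of $D$.

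\textbf{Parts (i) and (ii).} Identify $F$ with $\Z^2$. A block $A_{ij}^\epsilon$ is missing from $U_0^{\epsilon,D}(F)$ exactly when it contains no point of $D$. This happens independently for different blocks, each with probability $(1-p)^{n_\epsilon^2}\le (\epsilon/10)^{11}$ by \eqref{e3.0a}.

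Call a ball $B(y,r)$ with $r\ge \epsilon\rho$ \emph{bad} if more than an $\epsilon$-fraction of it lies in $V_0^{\epsilon,D}(F)$.
\begin{itemize}
\item The expected fraction of $B(y,r)$ covered by missing blocks is at most $(\epsilon/10)^{11}$.
\item A Chernoff bound over the roughly $(r/n_\epsilon)^2$ independent blocks meeting $B(y,r)$ bounds the probability that $B(y,r)$ is bad by $\exp(-c r^2)$, with $c=c(\epsilon,p)>0$.
\item This estimate is where the choice of $n_\epsilon$ is used (Lemma~\ref{t3.3}).
\end{itemize}

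Now fix $x$. A point $x$ fails to lie in $U_1$ only if some ball $B(y,\epsilon d(x,y))$ with $d(x,y)>\rho$ is bad. Summing $\exp(-c\epsilon^2 d^2)$ over all $y$ with $d(x,y)>\rho$ gives a failure probability at most $\delta(\rho)$, where $\delta(\rho)\to 0$ as $\rho\to\infty$. It is finite for every $\rho>1/\epsilon$ and can be made small after shrinking $\epsilon_0$.

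For $U_2$ the same estimate applies, with the density of $U_0$ replaced by the density of $U_1$. Non-membership in $U_1$ is a local event with summable, rapidly decaying dependence, so a Chernoff bound of the same kind still holds, for instance by splitting into finitely many sub-lattices of far-apart points and controlling the tails.

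This gives two conclusions.
\begin{itemize}
\item $\P(x\in U_2)$ is close to $1$. Hence $U_2$ is nonempty almost surely, by ergodicity under the translations of $\Z^2$ preserving the block structure. This is (i).
\item The ergodic theorem applied to the indicator of $\{x\in U_2\}$ gives density at least $1-\epsilon$. This is (ii).
\end{itemize}

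\textbf{Part (iv).} This part is deterministic. Take $x\in U_2$ and $y\in F$.
\begin{itemize}
\item If $d(x,y)\le\rho$, use $x\in U_1$ to get $d(y,U_1)\le\rho$.
\item Otherwise $B(y,\epsilon d(x,y))$ meets $U_1$ in at least a $(1-\epsilon)$-fraction, so it is nonempty and $d(y,U_1)\le\epsilon d(x,y)$.
\end{itemize}

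To build $\tphi_{F,\epsilon}$, send each $y$ to $\phi_{F,\epsilon}(y')$, where $y'$ is a nearest point of $U_1$. The graded quasi-isometric embedding constants then follow from the triangle inequality:
\begin{itemize}
\item $\phi_{F,\epsilon}$ is a $(\kappa,C+4\kappa n_\epsilon)$-quasi-isometric embedding;
\item the two nearest-point moves cost $2\max\{\rho,\epsilon d\}$, and this cost is multiplied by $\kappa$;
\item if $d(y,x)\le d(z,x)$, then $d(x,y')\le 2d(x,y)$, which accounts for the factors $2\kappa$ and $6\kappa\epsilon$.
\end{itemize}

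\textbf{Part (iii): the main obstacle.} Here I would invoke Wortman's quasi-flats with holes theorem \cite{wortman2006quasiflats}. It states, roughly, that there are $\epsilon_0$ and $M_1$ depending only on $\kappa$ with the following property: if $U\subseteq\Z^2$ is such that every ball of radius $r$ around points of a suitable center is $\epsilon$-dense in $U$, then any $(\kappa,C')$-quasi-isometric embedding of $U$ into $\X$ has image within $\Delta=\Delta(\kappa,C',\epsilon,\rho)$ of at most $M_1$ flats.

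The work lies in checking that $U_1$, viewed as a subset of $U_0$, satisfies the hypothesis "holes small relative to distance from the center" in exactly the form Wortman requires. The definition of $U_{(\epsilon,\rho)}$ was designed to make this match, with $C'=C+4\kappa n_\epsilon$.

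The delicate points are:
\begin{itemize}
\item ensuring that $\Delta$ is uniform over $F$ and $\phi$, which holds because the hypothesis is uniform;
\item passing from Wortman's local statement (balls centered at quasi-center points) to the global inclusion \eqref{e3.3x1}. For this I would take an exhaustion by balls around a fixed $x\in U_2$, extract flats by compactness, and use the fact that $\X$ is a locally finite building, so the finitely many flats stabilise along a subsequence.
\end{itemize}
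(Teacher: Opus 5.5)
\textbf{Verdict.} There is a genuine gap in your treatment of (i) and (ii); parts (iii) and (iv) are fine.

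\textbf{Parts (iii) and (iv).} Part (iv) is exactly the paper's nearest-point argument. Part (iii) is Theorem~\ref{t3.4} applied to $\phi_{F,\epsilon}\in\QIE_{\kappa,C+4\kappa n_\epsilon}(U_0(F),\X)$, with quasi-center $U_1(F)=(U_0(F))_{(\epsilon,\rho)}$. That theorem, as quoted, is already a global statement about the image of the whole quasi-center. Your exhaustion-and-compactness step is therefore unnecessary, though harmless.

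\textbf{The gap: passing from $U_1$ to $U_2$.} You claim that non-membership in $U_1$ is a local event. This is false. Whether $z\in U_1$ depends on the balls $B(y,\epsilon d(z,y))$ for \emph{all} $y$ with $d(z,y)>\rho$. Worse, a single hole $B(w,h)\subseteq V_0^{\epsilon,D}(F)$ expels from $U_1$ every $z$ with $\rho<d(z,w)\le h/\epsilon$.
\begin{enumerate}
\item So the indicators of $\{z\notin U_1\}$ have no finite range of dependence, and splitting into sub-lattices of far-apart points gives nothing.
\item Yet to get $\P(x\in U_2)\ge 1-\epsilon$ you must sum, over all $y$ with $d(x,y)>\rho$, the probability that $B(y,\epsilon d(x,y))$ has $V_1$-density above $\epsilon$.
\item Markov's inequality from $\delta=\P(z\notin U_1)$ gives only the bound $\delta/\epsilon$ per ball, which is not summable over $y$.
\item A genuine concentration estimate for $\abs{B(y,r)\smallsetminus U_1}$ is therefore needed, and you do not supply one.
\end{enumerate}

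There is a second, smaller problem with the pointwise route. The statement must hold for every $\rho>1/\epsilon$, so the relevant radii $\epsilon d(x,y)$ start near $1$. In that range your Gaussian bound $\exp(-cr^2)$, with $c=c(\epsilon,p)$, is vacuous. Smallness there has to come from $(1-p)^{n_\epsilon^2}\le(\epsilon/10)^{11}$ through a union bound over blocks. "Shrinking $\epsilon_0$" does not help, because $c$ shrinks as well.

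\textbf{The missing idea: the paper's deterministic Lemma~\ref{t3.1}.}
\begin{enumerate}
\item It shows $\Z^2\smallsetminus W_{(\epsilon,\rho)}\subseteq(\Z^2\smallsetminus W)\cup H_1$, where $H_1$ is the union of the dilates $\overline{B}\bigl(x,\epsilon^{-1}\cL_W(\alpha,x)\bigr)$ over $x$ with $\cL_W(\alpha,x)\ge\epsilon\rho$.
\item A Vitali-type selection of disjoint balls then compares $\abs{H_1\cap B(z,r)}$ with $\abs{B(z,3r)\smallsetminus W}$.
\item Applied twice, this gives $\cL_{U_2(F)}(\epsilon,z)\le\cL_{U_0(F)}(10^{-9}\epsilon^{10},z)$, uniformly in $\rho>1/\epsilon$.
\item Consequently only the i.i.d.\ block field $U_0$ needs a probability estimate: Bernstein's inequality plus a union over radii, as in Lemma~\ref{t3.3}.
\end{enumerate}
Parts (i) and (ii) then follow with no ergodic theorem and no pointwise bound on $\P(x\in U_2)$. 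This deterministic comparison is also what the paper later uses to define the irregularity radius $\rho_0$.
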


In the following, we will call $\tphi_{F,\epsilon}$ the \emph{regularized grid map} of $\phi$ on $F$.

Note that the grid map $\phi_{F,\epsilon}$ and the regularized grid map $\tphi_{F,\epsilon}$ are constructed by extending the restrictions of $\phi$ to $D\cap F$ and $U_1^{\epsilon,D,\rho}(F)$, respectively. At this point we do not need to assume the existence of a global embedding.

In the remainder of this article, for $m=1,\,2$, we often refer to $U_m^{\epsilon,D,\rho}(F)$ as $U_m(F)$ and $U_0^{\epsilon,D}(F)$ as $U_0(F)$ when the choices of $\epsilon$, $D$, and $\rho$ are clear.

We will use the following straightforward volume bounds in $\Z^2$. 
\begin{fact}\label{t3.0}
    Suppose $y\in \Z^2$. Then for $R>0$ and $r>1$,
    \begin{align}
         R^2\le \abs{B(y,R)}&\le (2R+1)^2 \quad \text{ and}\label{e3.0} \\
         \max\bigl\{3(r-1)^2,\,2r^2\bigr\}\le \abs{B(y,r)}&\le 4r^2+1.\label{e3.0z}
    \end{align}
\end{fact}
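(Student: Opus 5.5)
The statement to prove is Fact~\ref{t3.0}. It is elementary lattice-point counting in $\Z^2$ with the Euclidean metric. By translation invariance of $\Z^2$, I take $y=0$ and count $N(R)\=\abs{\{(a,b)\in\Z^2: a^2+b^2\le R^2\}}$. (If balls are open the argument is the same up to boundary points, which only affect the lower bounds marginally; I will check that the constants survive.)

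\emph{Upper bounds.} The ball $B(0,R)$ is contained in the square $[-R,R]^2$, whose lattice points number at most $(2\lfloor R\rfloor+1)^2\le(2R+1)^2$. This gives the upper bound in \eqref{e3.0}. For the sharper bound $4r^2+1$ in \eqref{e3.0z}, I would use the standard area argument. Attach to each lattice point $(a,b)\neq 0$ in the ball the unit square $[a,a+1)\times[b,b+1)$ shifted toward the origin, i.e.\ placed in the quadrant of $(a,b)$ on the side nearer the origin. These squares are disjoint and lie in the disk of radius $r$. Points on the axes need separate care: the $4\lfloor r\rfloor$ nonzero axis points and the origin are counted directly. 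Then the off-axis points of the first quadrant are at most the area of the quarter disk minus the region covered by axis strips. Combining these gives roughly $\pi r^2+O(r)$, which is at most $4r^2+1$ because $4-\pi>0.85$ absorbs the linear term once $r>1$. Small $r$ (say $1<r<3$) I would verify by direct enumeration.

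\emph{Lower bounds.} Conversely, each unit square $[a,a+1]\times[b,b+1]$ meeting the disk of radius $r-\sqrt2$ has all its corners inside $B(0,r)$. Hence $N(r)\ge\pi(r-\sqrt2)^2$, but this is too weak near $r=1$. I would instead count quadrant by quadrant. In the closed first quadrant, column $a$ (for $0\le a\le\lfloor r\rfloor$) contains $\lfloor\sqrt{r^2-a^2}\rfloor+1$ lattice points. Summing, and comparing with an integral via monotonicity, gives at least the quarter-disk area $\pi r^2/4$. Symmetrizing over four quadrants and subtracting the double-counted axes yields $N(r)\ge\pi r^2-O(r)$, which gives both $2r^2$ and $3(r-1)^2$ for large $r$.

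The lower bound $R^2$ in \eqref{e3.0} follows from the same estimate when $R\ge1$ (note $\pi-1>0$). For $R<1$ it follows from the single point $y$ itself, since $1\ge R^2$. For $1<r$ small, the counts are explicit: $N=5$ for $1\le r<\sqrt2$, $N=9$ for $\sqrt2\le r<2$, and so on. These settle $2r^2\le N$ and $3(r-1)^2\le N$ on the initial range. For example, $2r^2<4<5$ on $(1,\sqrt2)$ and $2r^2<8<9$ on $[\sqrt2,2)$.

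The only real obstacle is bookkeeping. I need a clean threshold $r_0$ beyond which the asymptotic estimates $\pi r^2\pm O(r)$ provably beat $2r^2$, $3(r-1)^2$ and $4r^2+1$, and then a finite check of the explicit counts below $r_0$. I would choose $r_0=4$ and tabulate $N$ on $[1,4]$ at the jump radii $\sqrt{a^2+b^2}$.
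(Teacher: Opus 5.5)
The paper gives no proof of this fact; it is introduced only as a \emph{straightforward} volume bound, so there is no argument of the authors' to compare against. Your plan is the natural verification and is correct in outline: translate to the origin, compare lattice-point counts with areas via unit squares and column sums, and finish with a finite enumeration.

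It works for either ball convention. The paper's $B$ is the open ball: the proof of Lemma~\ref{t3.1} introduces $\overline{B}$ separately and invokes left continuity of $\abs{B(x,\cdot)}$. So your small-radius counts should be read on $(1,\sqrt2]$, $(\sqrt2,2]$, \dots, and the inequalities you check there still hold.

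Two bookkeeping points need repair.

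First, the claim that $4-\pi$ absorbs the linear term \emph{once $r>1$} is false, because the bound $4r^2+1$ is sharp at small radii. For open balls the count is $5$, $9$, $21$ just above $r=1$, $\sqrt2$, $\sqrt5$, while $4r^2+1$ decreases to exactly these values. So the enumeration is indispensable, not cosmetic.

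Second, your threshold $r_0=4$ is too small for the error terms your arguments actually produce. These are
\[
\pi r^2-4r-3\le \abs{B(0,r)}\le \pi r^2+4r+1.
\]
The upper estimate uses at most $4r$ nonzero axis points and at most $\pi r^2/4$ off-axis points per quadrant. The lower estimate counts four closed quadrants and subtracts the overcounted axes. These beat $4r^2+1$ only for $r\ge 4/(4-\pi)\approx 4.66$, and beat $2r^2$ only for $r$ above about $4.14$. So a table on $[1,4]$ leaves a gap.

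Take $r_0=5$ instead. The numbers of $(a,b)\in\Z^2$ with $a^2+b^2\le m$, for $m=1,2,4,5,8,9,10,13,16,17,18,20,25,26$, are $5,9,13,21,25,29,37,45,49,57,61,69,81,89$. These satisfy all the bounds on the corresponding radius intervals.

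Alternatively, for the lower bounds, note that unit squares centred at the lattice points of the ball cover the disk of radius $r-\sqrt2/2$. This gives $\abs{B(0,r)}\ge\pi(r-\sqrt2/2)^2$, which yields $3(r-1)^2$ for all $r\ge 1$ at once and $2r^2$ for $r\ge 3.5$.
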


\begin{definition}\label{d3.2}
    For a set $W\subseteq \Z^2$, a constant $\alpha\in (0,1)$, and a point $z\in \Z^2$, define the \emph{lower $(1-\alpha)$-quasi-solid radius} of $W$ at $z$ by
    \begin{equation*}
        \cL_W(\alpha,z) \= \inf\{r\in \R:\abs{B(z,R)\cap W}> (1-\alpha )\abs{B(z,R)} \text{ for all } R>r\}.
    \end{equation*}
    Here we follow the convention that the infimum of an empty set is $+\infty$.
\end{definition}
 
 We first show the following deterministic lemma, which will be applied to estimate the sizes of holes in $U_m(F)$.
\begin{lemma}\label{t3.1}
     Let $\epsilon\in (0,1)$, $\alpha\in (0,\epsilon]$, and $\rho>1/\epsilon$ be constants and $W\subseteq \Z^2$. Put $W_1 \=  W_{(\epsilon,\rho)}$. Assume that $\cL_W\bigl(10^{-3}\epsilon^2 \alpha^2,z\bigr)$ is finite for each $z\in \Z^2$. Then for each $z\in \Z^2$, 
    \begin{align}
        \cL_{W_1}(\alpha,z)\le \cL_W\bigl(10^{-3}\epsilon^2 \alpha^2,z\bigr).  \label{e3.1}
    \end{align} 
\end{lemma}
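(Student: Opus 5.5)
Set $\beta\=10^{-3}\epsilon^2\alpha^2$ and $r_0\=\cL_W(\beta,z)$. The task is to show that for every $R>r_0$ we have $\abs{B(z,R)\cap W_1}>(1-\alpha)\abs{B(z,R)}$. The complement $B(z,R)\cap W_1^c$ splits into two parts: $B(z,R)\cap W^c$, and the bad points $x\in W\cap B(z,R)$ that fail the quasi-center condition. The first part has size less than $\beta\abs{B(z,R)}$ by the choice of $r_0$. The whole argument therefore reduces to bounding the number of bad points by roughly $\tfrac{\alpha}{2}\abs{B(z,R)}$.

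Fix a bad point $x$. There is a witness $y$ with $d(x,y)>\rho$ such that the ball $B_x\=B(y,\epsilon d(x,y))$ has $\abs{B_x\cap W^c}>\epsilon\abs{B_x}$. Write $s=\epsilon d(x,y)$; since $\rho>1/\epsilon$, we have $s>1$, so Fact~\ref{t3.0} gives $\abs{B_x}\ge 2s^2$. The plan is to show such a hole cannot be too large or too far away, using the density of $W$ in balls centered at $z$.

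\emph{Case 1: $s$ large, $s\ge c\,\epsilon\alpha R$ for a suitable small constant $c$.} Then $B_x\subseteq B(z,R')$ with $R'\le d(z,y)+s\le R+d(x,y)+s=R+s(1+\epsilon)/\epsilon$, and $R'>r_0$. Hence $\abs{B(z,R')\cap W^c}<\beta\abs{B(z,R')}\lesssim \beta (R+2s/\epsilon)^2$. At the same time this quantity is at least $\epsilon\abs{B_x}\ge 2\epsilon s^2$. With $s\gtrsim\epsilon\alpha R$, the ratio $\beta(R/s+2/\epsilon)^2/(\epsilon)$ is of order $10^{-3}\epsilon^2\alpha^2\cdot(\epsilon\alpha)^{-2}\cdot\epsilon^{-1}$. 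This still has an $\epsilon^{-1}$, so I must tune $c$ and track constants. Writing $R+2s/\epsilon\le (1/(c\epsilon\alpha)+2/\epsilon)s$, one gets the contradiction $2\epsilon s^2<10^{-3}\epsilon^2\alpha^2\cdot 4(1/(c\epsilon\alpha)+2/\epsilon)^2s^2$, which holds when $c$ is of order $1$, for example $c=1/10$; this is where the constant $10^{-3}$ comes from. So every bad point has a witness ball of radius $s< c\epsilon\alpha R$. This ball lies in $B(z,(1+c')R)$, since $d(x,y)=s/\epsilon<c\alpha R$.

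\emph{Case 2: all bad points have small witness balls.} Apply a Vitali covering argument to the balls $\widehat B_x\=B(y,d(x,y)+s)\ni x$, which have radius comparable to $s/\epsilon$. Choose a disjoint subfamily of the inner balls $B_x$ whose threefold enlargements of $\widehat B_x$ cover all bad points; the inner balls must be disjoint, so I select using the enlarged balls and control the radius ratio $\approx 1/\epsilon$. Then
\[
\#\{\text{bad}\}\lesssim\sum \abs{3\widehat B_x}\lesssim \epsilon^{-2}\sum\abs{B_x}\le \epsilon^{-3}\sum\abs{B_x\cap W^c}\le \epsilon^{-3}\abs{B(z,2R)\cap W^c}\lesssim \epsilon^{-3}\beta R^2.
\]
This is $\lesssim 10^{-3}\alpha^2\epsilon^{-1}R^2$, which carries the wrong power of $\epsilon$. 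So I would refine the covering. Either disjointness of the $B_x$ themselves must be used to gain $\epsilon^2$, or bad points must be shown to lie only within distance $\approx s$ of holes. I expect this exponent bookkeeping to be the main obstacle. If it fails, I would replace the Vitali argument by a direct counting: cover each bad point by the hole-rich ball $B(y,d(x,y)(1+\epsilon))$ and use Case 1 dyadically at every scale around every point, since $\cL_W$ is finite at all centers and in particular at $y$, to show hole-rich balls are confined to scales below $\approx\epsilon\alpha R$. Finally, combining the two parts gives $\abs{B(z,R)\smallsetminus W_1}<(\beta+\alpha/2)\abs{B(z,R)}\le\alpha\abs{B(z,R)}$.
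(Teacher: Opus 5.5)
Your architecture is the paper's: split $B(z,R)\smallsetminus W_1$ into points of $W^c$ and bad points, rule out bad points with a large witness hole using the density of $W$ around $z$, and handle small witness holes by a greedy disjoint selection (by decreasing radius) whose $3\epsilon^{-1}$-enlargements cover the bad points. As written, however, neither case closes. In both, the missing ingredient is the hypothesis $\alpha\le\epsilon$, which you never use.

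\emph{Case~1.} Your displayed inequality simplifies to $2\epsilon s^2<4\cdot 10^{-3}(1/c+2\alpha)^2s^2$. For $c=1/10$ the right side lies between $0.4s^2$ and $0.58s^2$ independently of $\epsilon$, so it is not a contradiction once $\epsilon<1/5$. A threshold $c\epsilon\alpha R$ only works with $c$ of order $\epsilon^{-1/2}$. The factor $\alpha$ in the threshold is spurious: split instead at $s\ge \epsilon R/10$. That is all Case~2 needs, since it gives $d(x,y)=s/\epsilon<R/10$. Then $R'\le R+2s/\epsilon\le 12s/\epsilon$, and
$2\epsilon s^2<\beta\abs{B(z,R')}\le 0.58\,\alpha^2 s^2\le 0.58\,\epsilon^2s^2$,
which is impossible. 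This is exactly the paper's Case~(i), whose threshold is $\cL_W(\alpha,\cdot)\ge\epsilon r$ at a hole center, with no factor $\alpha$.

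\emph{Case~2.} The ``wrong power of $\epsilon$'' is not wrong: $10^{-3}\alpha^2\epsilon^{-1}\le 10^{-3}\alpha$ precisely because $\alpha\le\epsilon$. So your first chain already suffices, and no refinement or dyadic fallback is needed. Concretely:
\begin{enumerate}[label=\rm{(\alph*)}]
\item Choose the witness balls $B(y,s)$ disjoint, greedily by decreasing $s$. Every bad point then lies within $s/\epsilon+2s<3s/\epsilon$ of the center $y$ of some selected ball $B(y,s)$.
\item For each selected ball, $\abs{B(y,3s/\epsilon)}\le 37s^2/\epsilon^2\le 18.5\,\epsilon^{-2}\abs{B(y,s)}<18.5\,\epsilon^{-3}\abs{B(y,s)\smallsetminus W}$.
\item Each selected ball satisfies $B(y,s)\subseteq B(z,1.2R)$.
\end{enumerate}
Hence the number of bad points is less than $18.5\,\epsilon^{-3}\beta\abs{B(z,1.2R)}\le 0.02\,\alpha\abs{B(z,1.2R)}<0.06\,\alpha\abs{B(z,R)}$, using $R>10s/\epsilon>10$. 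Adding $\abs{B(z,R)\smallsetminus W}<\beta\abs{B(z,R)}$ finishes the proof. Your proposal stops short of this, so its final sentence is unsupported.

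For comparison, the paper replaces each witness radius by $\cL_W(\alpha,x)$ at the hole center. That step is where it uses $\alpha\le\epsilon$, and it is why the finiteness of $\cL_W$ is assumed at every point. All its counts then run at hole density $\alpha$, and the exponent $\epsilon^2\alpha^2$ appears directly. Your per-witness version runs at density $\epsilon$ and absorbs the extra $\epsilon^{-1}$ via $\alpha\le\epsilon$. As a small bonus, it only uses finiteness of $\cL_W(\beta,z)$ at the given $z$.
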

\begin{proof}
    Given $\epsilon\in(0,1)$, $\alpha\in (0,\epsilon]$, and $\rho>1/\epsilon$, let $W\subseteq\Z^2$ be a subset such that $\cL_W\bigl(10^{-3}\epsilon^2 \alpha^2,z\bigr)$ is finite for every $z\in \Z^2$. Write $\cL \=  \cL_W$ throughout the proof. 
    
    We argue by contradiction and assume that there exists a point $z\in \Z^2$ such that $\cL_{W_1}(\alpha,z)> \cL\bigl(10^{-3}\epsilon^2 \alpha^2,z\bigr)$. Then by the definition of $\cL$, there exists a radius $r>\cL\bigl(10^{-3}\epsilon^2 \alpha^2,z\bigr)$ such that 
    \begin{equation}
        \abs{B(z,r)\cap W_1}\le (1-\alpha)\abs{B(z,r)}.\label{e3.1d}
    \end{equation}

    Write $V \= \Z^2\smallsetminus W$ and $V_1 \= \Z^2\smallsetminus W_1$. For each $x\in \Z^2$, by the left continuity of $\abs{B(x,\,\cdot\,)}$ and $\abs{B(x,\,\cdot\,)\cap W}$, we have 
    \begin{equation}
        \abs{B(x,\cL(\alpha,x))\cap V}\ge \alpha\abs{B(x,\cL(\alpha,x))}.\label{e3.1c}
    \end{equation}
    
    For each $x\in \Z^2$ and each $R>0$, define $\overline{B}(x,R)\coloneqq \bigl\{y\in \Z^2:d(x,y)\le R\bigr\}$. Write 
     \begin{equation*}
     H_1 \= \bigcup_{x\in \Z^2:\cL(\alpha,x)\ge \epsilon\rho}\overline{B}\bigl(x,\epsilon^{-1}\cL(\alpha,x)\bigr).    
     \end{equation*}
     By the definition of $\cL$, for every $R\ge r$, we have
    \begin{equation}
        \abs{B(z,R)\cap V}\le 10^{-3}\epsilon^2\alpha^2\abs{B(z,R)}.\label{e3.1b}
    \end{equation}
    
      By the definition of $W_1$ (cf.~Definition~\ref{d3.1}), for each $y\in V_1\smallsetminus V$, there exists a point $x\in \Z^2$ with $d(x,y)>\rho$ such that 
      \begin{equation*}
          \abs{B(x,\epsilon d(x,y))\cap W}< (1-\epsilon)\abs{B(x,\epsilon d(x,y))}.
      \end{equation*}So by the definition of $\cL$ we have $\cL(\alpha,x)\ge \epsilon d(x,y)> \epsilon\rho$ since $\alpha\le \epsilon$, and thus we have $y\in \overline{B}\bigl(x,\epsilon^{-1}\cL(\alpha,x)\bigr)$. This implies 
     \begin{equation}
         V_1\subseteq V\cup H_1.\label{e3.2a}
     \end{equation}

    Let $S\subseteq \Z^2$ be a set with minimum cardinality such that
    \begin{equation}\label{e3.2}
    B(z,r)\cap H_1\subseteq \bigcup_{x\in S:\cL(\alpha,x)\ge \epsilon\rho}\overline{B}\bigl(x,\epsilon^{-1}\cL(\alpha,x)\bigr).
    \end{equation} 
    Then $S$ is a finite set since $\abs{B(z,r)\cap H_1}$ is finite. For each $x\in S$, $\overline{B}\bigl(x,\epsilon^{-1}\cL(\alpha,x)\bigr)\cap B(z,r)\cap H_1\neq \emptyset$ by the minimality of $S$. Hence,
    \begin{align}
        d(x,z)&\le r+\epsilon^{-1}\cL(\alpha,x)\quad \text{ and }\label{e3.3}\\
        \cL(\alpha,x)&\ge \epsilon\rho> 1.\label{e3.3a}
    \end{align}
    We finish the proof by discussing the following two cases:
    
     \smallskip \emph{Case (i).} There exists a point $x\in S$ such that $\cL(\alpha,x)\ge \epsilon r$. By \eqref{e3.3} we have $d(x,z)\le 2\epsilon^{-1}\cL(\alpha,x)$. Then since $B(x,\cL(\alpha,x))\subseteq B(z,d(x,z)+\cL(\alpha,x))$, by \eqref{e3.1c} and \eqref{e3.0} we have
         \begin{equation}  \label{e3.4}
             \abs{B(z,d(x,z)+\cL(\alpha,x))\cap V}
             \ge \abs{B(x,\cL(\alpha,x))\cap V}  \ge  \alpha  \cL(\alpha,x)^2  .  
         \end{equation} 
         
         If $d(x,z)+\cL(\alpha,x)>r$, then by $d(x,z)\le 2\epsilon^{-1}\cL(\alpha,x)$, \eqref{e3.0}, and \eqref{e3.3a}, we have
         \begin{equation*}
             \alpha  \cL(\alpha,x)^2\ge 9^{-1}\alpha\epsilon^2(d(x,z)+\cL(\alpha,x))^2\ge 100^{-1}\alpha\epsilon^2\abs{B(z,d(x,z)+\cL(\alpha,x))},
         \end{equation*}
         which contradicts \eqref{e3.1b} with $R$ set to be $d(x,z)+\cL(\alpha,x)$ by \eqref{e3.4}. 
         
        Assume $d(x,z)+\cL(\alpha,x)\le r$. Then $r\ge \cL(\alpha,x)>1$ by \eqref{e3.3a}. Thus, by \eqref{e3.0z} and $\cL(\alpha,x)\ge \epsilon r$, we have $\alpha \cL(\alpha,x)^2\ge \epsilon^2\alpha r^2\ge 0.1 \epsilon^2\alpha \abs{B(z,r)}$. So by \eqref{e3.4},
         \begin{equation*}
             \abs{B(z,r)\cap V}\ge\abs{B(z,d(x,z)+\cL(\alpha,x))\cap V}\ge\alpha \cL(\alpha,x)^2\ge 0.1 \epsilon^2\alpha \abs{B(z,r)},
         \end{equation*}
         which again contradicts \eqref{e3.1b} with $R$ set to be $r$.
    
    \smallskip \emph{Case (ii).} For all $x\in S$, we have $\cL(\alpha,x)< \epsilon r$. Then
    \begin{equation}\label{e3.3b}
        d(x,z)< 2r\quad \text{ and } \quad B(x,\cL(\alpha,x))\subseteq B(z,3r)
    \end{equation}
    for all $x\in S$ by \eqref{e3.3}. By \eqref{e3.3a} we have $r>\cL(\alpha,x)>1$.
         
         List $S=\{ x_1,\,\dots,\,x_n\}$, where the subscripts are arranged so that $\{\cL(\alpha,x_i)\}_{i=1}^{n}$ is a nonincreasing sequence. We now recursively construct a subset $S'$ of $S$. 
         Define $S_0\coloneqq\emptyset$. At step $k\ge 0$, we choose the smallest integer $1\le i\le n$ such that 
         \begin{equation}\label{e3.3c}
         B(x_i,\cL(\alpha,x_i))\cap B(x_j,\cL(\alpha,x_j))=\emptyset
         \end{equation} 
         for all $x_j\in S_k$ and define $S_{k+1} \=  S_k\cup \{x_i\}$ if such $i$ exists. Set $S' \=  S_k$ if such $i$ does not exist.
         
         By the monotonicity of $\cL(\alpha,x_i)$ in $i$, if $1\le j<i\le n$ and $B(x_i,\cL(\alpha,x_i))\cap B(x_j,\cL(\alpha,x_j))\neq\emptyset$, then
         $\overline{B}\bigl(x_i,\epsilon^{-1}\cL(\alpha,x_i)\bigr)
         \subseteq  \overline{B}\bigl(x_j,\epsilon^{-1}\cL(\alpha,x_i)+d  (x_i,x_j)\bigr)
         \subseteq B\bigl(x_j,3\epsilon^{-1}\cL(\alpha,x_j)\bigr).$
         Thus, by the construction of $S'$,
         \begin{equation}\label{e3.5}
         \bigcup_{x\in S}\overline{B}\bigl(x,\epsilon^{-1}\cL(\alpha,x)\bigr)\subseteq\bigcup_{x\in S'}B\bigl(x,3\epsilon^{-1}\cL(\alpha,x)\bigr).
         \end{equation}
        To complete the proof, we verify the following inequality, which also contradicts \eqref{e3.1b}:
        \begin{equation}\label{e3.5a}
            \abs{B(z,3r)\cap V} \ge \sum_{x\in S'}\abs{B(x,\cL(\alpha,x))\cap V}>\frac{\epsilon^2\alpha}{50} \AbsBig{\bigcup_{x\in S}\overline{B}\bigl(x,\epsilon^{-1}\cL(\alpha,x)\bigr)}>\frac{\epsilon^2\alpha^2}{1000}\abs{B(z,3r)}.
        \end{equation}
             
        By \eqref{e3.3b} and \eqref{e3.3c}, $\abs{B(z,3r)\cap V}\ge \sum_{x\in S'}\abs{B(x,\cL(\alpha,x))\cap V}$. 
        
        By \eqref{e3.5}, \eqref{e3.3a}, \eqref{e3.0z}, and \eqref{e3.1c},
         \begin{align*}
             \AbsBig{\bigcup_{x\in S}\oB\bigl(x,\epsilon^{-1}\cL(\alpha,x)\bigr)}
             &\le \AbsBig{\bigcup_{x\in S'}B\bigl(x,3\epsilon^{-1}\cL(\alpha,x)\bigr)}
             \le 45\epsilon^{-2}\sum_{x\in S'}\abs{B(x,\cL(\alpha,x))}\\
             &<50\epsilon^{-2}\alpha^{-1}\sum_{x\in S'}\abs{B(x,\cL(\alpha,x))\cap V}.
         \end{align*}
         
         By \eqref{e3.2}, \eqref{e3.2a}, \eqref{e3.1d}, \eqref{e3.1b}, and \eqref{e3.0z},
         \begin{align*}
             \AbsBig{\bigcup_{x\in S}\oB\bigl(x,\epsilon^{-1}\cL(\alpha,x)\bigr)}
             &\ge \abs{B(z,r)\cap H_1}
             \ge \abs{B(z,r)\cap V_1}-\abs{B(z,r)\cap V}\\
             &\ge \alpha\abs{B(z,r)}-10^{-3}\epsilon^2 \alpha^2\abs{B(z,r)}
             > 20^{-1}\alpha\abs{B(z,3r)}.
         \end{align*}

        The proof of \eqref{e3.5a} follows immediately from the above inequalities.
        
    Therefore, we conclude the proof by contradiction.
\end{proof}

Using Lemma~\ref{t3.1}, we have the following:
\begin{lemma}\label{t3.3}
    Let $D$ be a random subset of $\X$. For each constant $\epsilon\in (0,1)$, each $\rho>1/\epsilon$, and each flat $F$, the following hold almost surely:
    \begin{enumerate}[label=\rm{(\roman*)}]
        \smallskip     	
        \item The sets $U_1(F)$ and $U_2(F)$ defined in \eqref{e3.0d} are nonempty.
        \smallskip     	
        \item $\cL_{U_2(F)}(\epsilon,z)\le \cL_{U_0(F)}\bigl(10^{-9}\epsilon^{10},z\bigr)<+\infty$ for every $z\in F$. 
    \end{enumerate}
\end{lemma}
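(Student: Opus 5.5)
The plan is to apply Lemma~\ref{t3.1} twice, once to pass from $U_0(F)$ to $U_1(F)$ and once to pass from $U_1(F)$ to $U_2(F)$. All the probability enters through one estimate: for every $z\in F$ and every $\beta\in(0,1)$, we need $\cL_{U_0(F)}(\beta,z)<+\infty$ almost surely.

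\textbf{Step 1: the probabilistic input.} I would first prove the finiteness of $\cL_{U_0(F)}(\beta,z)$ for $\beta=10^{-9}\epsilon^{10}$. The blocks $A_{ij}^\epsilon$ are disjoint and each contains $n_\epsilon^2$ points. A block is therefore empty of $D$ independently of the others, with probability $(1-p)^{n_\epsilon^2}\le(\epsilon/10)^{11}$ by \eqref{e3.0a}. This is far smaller than $\beta/100$. The ball $B(z,R)$ meets about $R^2/n_\epsilon^2$ blocks, and the blocks cut by its boundary contribute only $O(Rn_\epsilon)=o(R^2)$ points. A Chernoff bound then gives
\[
\P\bigl(\abs{B(z,R)\cap V_0(F)}\ge\beta\abs{B(z,R)}\bigr)\le e^{-cR^2}
\]
for $R$ large. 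Summing over integer $R$ (monotonicity handles non-integer radii up to a constant) and applying Borel--Cantelli shows $\cL_{U_0(F)}(\beta,z)<+\infty$ almost surely. Since $\Z^2$ is countable, this holds simultaneously for all $z\in F$ and for any countable family of values of $\beta$.

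\textbf{Step 2: iterate Lemma~\ref{t3.1}.} Set $\alpha_2=\epsilon$ and $\alpha_1=10^{-3}\epsilon^2\alpha_2^2=10^{-3}\epsilon^4\le\epsilon$. Applying Lemma~\ref{t3.1} with $W=U_1(F)$ and $\alpha=\alpha_2$ gives
\[
\cL_{U_2(F)}(\epsilon,z)\le\cL_{U_1(F)}(\alpha_1,z),
\]
provided the right-hand side is finite for all $z$.

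Applying Lemma~\ref{t3.1} again with $W=U_0(F)$ and $\alpha=\alpha_1$ gives
\[
\cL_{U_1(F)}(\alpha_1,z)\le\cL_{U_0(F)}\bigl(10^{-3}\epsilon^2\alpha_1^2,z\bigr)=\cL_{U_0(F)}\bigl(10^{-9}\epsilon^{10},z\bigr).
\]
The hypothesis here is exactly the finiteness established in Step 1. This second bound also supplies the finiteness needed for the first application, so chaining the two inequalities proves (ii).

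\textbf{Step 3: nonemptiness.} Part (i) follows from (ii). Because $\cL_{U_2(F)}(\epsilon,z)<\infty$, for all large $R$ we have $\abs{B(z,R)\cap U_2(F)}>(1-\epsilon)\abs{B(z,R)}>0$, so $U_2(F)\neq\emptyset$. Since $U_2(F)\subseteq U_1(F)$ by the definition of the quasi-center, $U_1(F)$ is nonempty as well.

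\textbf{Main obstacle.} The only real work is the large-deviation estimate in Step 1. The two points to get right are the boundary blocks, which are handled because their contribution is $O(R n_\epsilon)$, and the choice of $n_\epsilon$, which must make the probability that a block is empty small compared with $\beta$. The constant $11$ in \eqref{e3.0a} is exactly what provides this margin: it gives $(\epsilon/10)^{11}$ against the target $10^{-9}\epsilon^{10}$.
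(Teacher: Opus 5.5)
Your proof is correct and follows the paper's route. A Bernstein/Chernoff bound on the number of empty $n_\epsilon$-blocks meeting $B(z,R)$ gives $\cL_{U_0(F)}(10^{-9}\epsilon^{10},z)<+\infty$ almost surely for all $z\in F$. Lemma~\ref{t3.1} is then applied with $(U_0(F),10^{-3}\epsilon^4)$ and with $(U_1(F),\epsilon)$, and (i) follows from (ii).

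The only slip is your opening claim that finiteness holds for \emph{every} $\beta\in(0,1)$. It fails for $\beta<(1-p)^{n_\epsilon^2}$. This does not affect the proof, since you only use $\beta=10^{-9}\epsilon^{10}$, where the bound $(1-p)^{n_\epsilon^2}\le(\epsilon/10)^{11}<\beta/100$ makes the estimate valid.
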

\begin{proof}
     Let $\epsilon\in (0,1)$, $\rho>1/\epsilon$, $F$ be a flat, $z\in F$, and $D$ be a random subset of $\X$. For each $r>0$, denote $W_r \=  \bigl\{(i,j)\in \Z^2:A_{ij}^{\epsilon}\cap B(z,r)\neq \emptyset\bigr\}$, where $A_{ij}^{\epsilon}$ is the $n_\epsilon\times n_\epsilon$ square in $\Z^2$ (identified with $F$) defined in \eqref{e3.0b}. Note that   
     $\abs{B(z,r)\cap F}\le n_\epsilon^2\abs{W_r}\le \abs{B(z,r+2n_\epsilon)\cap F}$. By the volume bounds on $\Z^2$ as in \eqref{e3.0}, $\abs{W_r}\ge r^2/n^2_{\epsilon}$; and if $r>5n_{\epsilon}$, then $\abs{W_r}\le 10r^2/n^2_{\epsilon}$. 
     
     Denote by $K_r \=  \Absbig{\bigl\{(i,j)\in W_r:A_{ij}^{\epsilon}\cap D= \emptyset\bigr\}}$. Then $K_r$ is the sum of $\abs{W_r}$ i.i.d.\ Bernoulli$(p_1)$ random variables, where $p_1\=(1-p)^{n_{\epsilon}^2}$. By the Bernstein inequality for Bernoulli random variables, for $a>2p_1$ we have
\begin{equation}\label{e3.8a}
         \P(K_r\ge a\abs{W_r})
         \le \exp (- a\abs{W_r} / 8).
     \end{equation}
     Recall the definition of $U_0(F)$ in \eqref{e3.0c}. For $r>5n_\epsilon$, since $n_{\epsilon}^2\abs{W_r}<10\abs{B(z,r)\cap F}$, we have that if $K_r< a\abs{W_r}$ and $2p_1<a<1/10$, then
     \begin{equation}\label{e3.8b}
          \abs{B(z,r)\cap U_0(F)}\ge \abs{B(z,r)\cap F}-n_\epsilon^2K_r> (1-10a)\abs{B(z,r)\cap F}.
      \end{equation}
      Then for $R>5n_\epsilon$ and $2p_1<a<1/10$, with $M\=\bigl\{r\ge R : r^2\in \N\bigr\}$, by \eqref{e3.8a} and \eqref{e3.8b},
     \begin{align*}
         \P\bigl(\cL_{U_0(F)}(10a,z)\ge R\bigr)
         &\le \sum_{r\in M}\P(\abs{B(z,r)\cap U_0(F)}
         \le (1-10a)\abs{B(z,r)\cap F})
         \le \sum_{r\in M}\P\bigl(K_r\ge a\abs{W_r}\bigr)\\
         &\le \sum_{r\in M} e^{- a\abs{W_r}/8}
         \le \sum_{r\in M} e^{- ar^2  n_\epsilon^{-2}/8}
         \le \frac{e^{- a(R^2-1) n_\epsilon^{-2}/8}}{e^{a n_\epsilon^{-2}/8}-1}.
     \end{align*} 
      Thus, $\lim_{R\to+\infty}\P\bigl(\cL_{U_0(F)}(10a,z)\ge R\bigr)=0$. So almost surely $\cL_{U_0(F)}(10a,z)<+\infty$ for all $z\in F$.

     Applying Lemma~\ref{t3.1} twice with $(W,\alpha)$ taken to be $(U_1(F),\epsilon)$ and $\bigl(U_0(F),10^{-3}\epsilon^4\bigr)$, respectively, we have $\cL_{U_2(F)}(\epsilon,z)\le \cL_{U_1(F)}\bigl(10^{-3}\epsilon^4,z\bigr)\le \cL_{U_0(F)}\bigl(10^{-9}\epsilon^{10},z\bigr)$. To apply the conclusion in the previous paragraph, we take $a\=10^{-10}\epsilon^{10}$. The condition $a>2p_1$ is then equivalent to $10^{-10}\epsilon^{10}>2(1-p)^{n_{\epsilon}^2}$, thus we specify the choice of $n_\epsilon$ to be the smallest positive integer that satisfies $n_\epsilon^2\ge 11\log_{1/(1-p)}(10/\epsilon)$.
\end{proof}

 In the next lemma, we verify that our notion of quasi-centers (cf.~Definition~\ref{d3.1}) is consistent with a related notion in \cite{wortman2006quasiflats}.

\begin{lemma}\label{t3.7}
    Suppose $U$ is a subset of $\Z^2$ with the induced metric, $\epsilon\in (0,1/100)$, and $\rho>1/\epsilon$. Then for each $x\in U_{(\epsilon,\rho)}$ and each $r>\rho/\epsilon$, we have
    \begin{equation*}
         \abs{B(x,r)\cap U} / \abs{B(x,r)} \ge 1-100\epsilon.
    \end{equation*}
\end{lemma}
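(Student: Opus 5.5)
The plan is to cover most of the ball $B(x,r)$ by small balls $B(y,\epsilon d(x,y))$ centered at points $y$ with $d(x,y)>\rho$. By the definition of the quasi-center, each such ball is at least $(1-\epsilon)$-full of $U$. Fix $x\in U_{(\epsilon,\rho)}$ and $r>\rho/\epsilon$, and write $V\=\Z^2\smallsetminus U$. We split $B(x,r)$ into the inner disc $B(x,\epsilon r)$ and the region $\{y: \epsilon r\le d(x,y)<r\}$.

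\emph{Inner disc.} We bound its contribution trivially by its full size. By Fact~\ref{t3.0}, $\abs{B(x,\epsilon r)}\le (2\epsilon r+1)^2$, while $\abs{B(x,r)}\ge 2r^2$. Since $\epsilon r>\rho>1/\epsilon>100$, the inner disc has relative size at most about $3\epsilon^2$, which is far below $\epsilon$.

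\emph{Outer region.} We decompose it dyadically into annuli $A_k=\{y:2^{-k-1}r\le d(x,y)<2^{-k}r\}$, for $k$ up to $\log_2(1/\epsilon)$. For the annulus at scale $s=2^{-k}r\ge\epsilon r>\rho$, we choose a maximal $\epsilon s/4$-separated set $\{y_j\}\subseteq A_k$. The balls $B(y_j,\epsilon s/8)$ are then disjoint, and the balls $B(y_j,\epsilon s/2)$ cover $A_k$. Each center satisfies $d(x,y_j)\ge s/2>\rho$ (up to adjusting $\rho$ versus $\epsilon r$ at the last dyadic level; if needed we stop the dyadic decomposition at scale $2\rho$ and absorb the remaining disc into the inner-disc bound, which is still $O(\epsilon)$-small because $\rho<\epsilon r$). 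The quasi-center condition gives $\abs{B(y_j,\epsilon d(x,y_j))\cap V}\le\epsilon\abs{B(y_j,\epsilon d(x,y_j))}$, with radius $\epsilon d(x,y_j)\in[\epsilon s/2,\epsilon s)$.

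Summing over $j$ gives $\abs{A_k\cap V}\le\epsilon\sum_j\abs{B(y_j,\epsilon s)}$. The issue is the overlap of the balls $B(y_j,\epsilon s)$. Since these balls have radius comparable to the separation, their overlap multiplicity is bounded by an absolute constant. Using \eqref{e3.0z}, $\sum_j\abs{B(y_j,\epsilon s)}\le c\,\abs{B(x,2s)\smallsetminus B(x,s/4)}$ for an absolute constant $c$ (say $c\le 100$). One could use a single annulus, but the dyadic splitting keeps all radii $\epsilon d(x,y)$ comparable within each piece.

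Summing over $k$, the enlarged annuli overlap boundedly, so $\abs{(B(x,r)\smallsetminus B(x,\epsilon r))\cap V}\le c'\epsilon\abs{B(x,2r)}\le 4c'\epsilon\abs{B(x,r)}$. Adding the inner disc gives the claim once the constants are tracked to stay below $100\epsilon$.

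The main obstacle is this constant bookkeeping: the covering multiplicity and the volume comparisons in $\Z^2$ at small radii. Since $\epsilon s\ge\epsilon^2 r>\epsilon\rho>1$, the estimate \eqref{e3.0z} applies, but the lattice effects enlarge the constants. If $100$ proves too tight, I will replace the dyadic covering by a Vitali-type selection: take disjoint balls $B(y_j,\epsilon d(x,y_j))$ whose triples cover, which gives multiplicity one at the cost of a factor of $9$ to $45$, as in the proof of Lemma~\ref{t3.1}.
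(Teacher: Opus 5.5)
The covering strategy is the paper's. You cover $B(x,r)$ by the controlled balls $B(y,\epsilon d(x,y))$, with disjointness imposed on shrunken copies, and then compare volumes. Done correctly, this gives $\abs{B(x,r)\cap V}\le K\epsilon\abs{B(x,r)}$ for some absolute $K$.

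The gap is the value $K=100$. It is part of the statement, you leave it to ``bookkeeping'', and the chain you actually write does not give it:
\begin{itemize}
\item Even granting your $c\le 100$, the enlarged annuli $\{s/4\le d(x,\cdot)<2s\}$ overlap with multiplicity $3$ over dyadic $s$. Combined with $\abs{B(x,2r)}\le 4\abs{B(x,r)}$, this yields roughly $1200\epsilon$.
\item Nor can $c$ be small with your radii. Disjoint radius $\epsilon s/8$ against controlled radius up to $\epsilon s$ costs a volume factor $8^2=64$ even asymptotically. Via \eqref{e3.0z} it is about $128$, and more at small radii.
\item Your Vitali fallback is set up backwards. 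The quasi-center condition controls $V$ only inside $B(y,\epsilon d(x,y))$. If those balls are the disjoint family and their triples are the cover, you know nothing about $V$ on the triples. The controlled balls must form the cover.
\end{itemize}
Any absolute constant would serve the later comparison with Wortman's notion, but the lemma asserts $100$.

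The paper reaches $100$ by dropping both the dyadic splitting and the enlargement.
\begin{itemize}
\item With $l(y)=\epsilon d(x,y)$, take $S\subseteq B(x,r)$ of maximal cardinality such that the balls $B(y,0.4\,l(y))$, $y\in S$, are pairwise disjoint.
\item These balls lie in $B(x,(1+0.4\epsilon)r)$, so $\sum_{y\in S}\abs{B(y,0.4\,l(y))}<5r^2$. By \eqref{e3.0}, $\abs{B(y,l(y))}\le 40\abs{B(y,0.4\,l(y))}+3$, hence $\sum_{y\in S}\abs{B(y,l(y))}<250r^2$.
\item For $y_1\in B(x,r)\smallsetminus S$, maximality gives $y_2\in S$ with $d(y_1,y_2)\le 0.4(l(y_1)+l(y_2))$. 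For $\epsilon<1/100$ this forces $d(x,y_2)\ge 0.9\,d(x,y_1)$, so $y_1\in B(y_2,l(y_2))$. Thus the balls $B(y,l(y))$, $y\in S$, cover $B(x,r)$.
\item Centers with $d(x,y)\le\rho$ are bounded trivially and contribute less than $500\rho^2<500\epsilon^2r^2$ in total. The remaining centers contribute at most $\epsilon$ times their volume.
\item Hence $\abs{B(x,r)\cap V}<250\epsilon r^2+500\epsilon^2r^2<100\epsilon\abs{B(x,r)}$, using $\abs{B(x,r)}\ge 3(r-1)^2$ and $r>10^4$.
\end{itemize}
What keeps the constant below $100$ is shrinking only by the factor $0.4$ and packing everything into a single ball of radius about $r$.
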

\begin{proof}   
    Assuming that $U_{(\epsilon,\rho)}\neq \emptyset$, let $r>\rho/\epsilon>10^4$ be a constant and $x\in U_{(\epsilon,\rho)}$ be a point. Write $l(y) \= \epsilon d(x,y)$ for $y \in \Z^2$. We construct a suitable set $S\subseteq B(x,r)$ such that $B(x,r)\subseteq \bigcup_{y\in S}B(y,l(y))$ to utilize the condition $x\in U_{(\epsilon,\rho)}$.
    
    Write $V \= \Z^2\smallsetminus U$. Choose a set $S\subseteq B(x,r)$ with the maximum cardinality such that $B(y_1,0.4 l( y_1))\cap B(y_2,0.4 l( y_2))=\emptyset$ for distinct points $y_1,\,y_2\in S$. This implies by \eqref{e3.0} that
    \begin{equation*}
        \sum_{y\in S}\abs{B(y,0.4 l( y))}\le \abs{B(x,(1+0.4\epsilon)r)}< 5r^2.
    \end{equation*}
    Thus by \eqref{e3.0} and $(2R+1)^2\le 6R^2+3$, we have 
    \begin{equation}\label{e3.6b}
        \sum_{y\in S}\abs{B(y, l(y))}\le 40\sum_{y\in S}\abs{B(y,0.4 l( y))}+3\abs{S}< 250r^2.
    \end{equation}
    Writing $S_> \= \{y\in S:d(x,y)> \rho\}$, an analogous argument with $\rho$ in place of $r$ shows that
    \begin{equation}\label{e3.6c}
        \sum_{y\in S\smallsetminus S_>}\abs{B(y, l( y))} < 40\abs{B(x,(1+0.4\epsilon)\rho)}+15\rho^2<500\rho^2<500\epsilon^2 r^2.
    \end{equation}
    
    Now we show that $B(x,r)\subseteq \bigcup_{y\in S} B(y, l( y))$. Let $y_1$ be an arbitrary point in $B(x,r)\smallsetminus S$. By the maximality of $S$, we have $B(y_1,0.4 l( y_1))\cap B(y_2,0.4 l( y_2))\neq \emptyset$ for some $y_2\in S$. This implies $d(y_1,y_2)\le 0.4   (l(y_1)+l(y_2))$.
    Since $d(y_1,y_2)\ge \abs{d(x,y_2)-d(x,y_1)}$ and $\epsilon\in (0,1/100)$, we can also deduce that $d(x,y_2)\ge 0.9 d(x,y_1)$. So $d(y_1,y_2)< l( y_2)$, which shows $y_1\in B(y_2,l( y_2))$. Thus,
    \begin{equation}\label{e3.6}
        B(x,r)\subseteq \bigcup_{y\in S} B(y, l( y)).
    \end{equation}
    
    Since $x\in U_{(\epsilon,\rho)}$ (cf.~Definition~\ref{d3.1}), for each $y\in S$ with $d(x,y)> \rho$, we have
    \begin{equation}\label{e3.6a}
        \abs{B(y, l( y))\cap V}\le \epsilon\abs{B(y, l( y))}.
    \end{equation}
    By \eqref{e3.6}, \eqref{e3.6a}, \eqref{e3.6b}, \eqref{e3.6c}, and \eqref{e3.0}, we have 
    \begin{align*}
        \abs{B(x,r)\cap V}
        &\le \sum_{y\in S}\abs{B(y,l( y))\cap V}
        = \sum _{y\in S_>}\abs{B(y,l( y))\cap V}+\sum_{y\in S \smallsetminus S_>}\abs{B(y,l( y))\cap V}\\
        &\le \epsilon\sum _{y\in S_>}\abs{B(y,l( y))}+\sum_{y\in S \smallsetminus S_>}\abs{B(y,l( y))}
        < 250 \epsilon r^2+500\epsilon^2 r^2
        < 100\epsilon\abs{B(x,r)}.
    \end{align*}
    This finishes our proof since $V=\Z^2\smallsetminus U$.
\end{proof}

 In the special case of quasiflats with holes in $\X$, \cite[Theorem~1.2]{wortman2006quasiflats} states: 
\begin{theorem}
\label{t3.4}
     For each constant $\kappa>1$, there exist constants $M_1\in \N$ and $\epsilon_1\in (0,1)$ such that for each $\epsilon\in (0,\epsilon_1)$, each $C>0$, and each $\rho>1$, there exists a constant $\Delta>0$ such that for every $U\subseteq \Z^2$ with $U_{(\epsilon,\rho)}\neq \emptyset$ and for every $\phi\in\QIE_{\kappa,C}(U,\X)$, $\phi\bigl(U_{(\epsilon,\rho)}\bigr)$ lies in the $\Delta$-neighborhood of the union of at most $M_1$ flats in $\X$.
\end{theorem}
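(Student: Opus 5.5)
The plan is to deduce Theorem~\ref{t3.4} from the general quasiflats-with-holes theorem \cite[Theorem~1.2]{wortman2006quasiflats}, specialized to the rank-two Euclidean building $\X=T_1\times T_2$. The work is in matching Wortman's hypotheses and constants with our notion of $(\epsilon,\rho)$-quasi-center from Definition~\ref{d3.1}. The geometric content, namely that a quasi-isometrically embedded, almost-full piece of $\Z^2$ in a rank-two building lies near finitely many flats, is taken from Wortman and is not reproved.

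\textbf{Step 1: translate the density hypothesis.} Wortman's hypothesis is a density condition. There is a point $x$ such that, for all radii $r$ beyond some threshold $r_0$, the set $U$ occupies at least a $(1-\epsilon_W)$-fraction of the ball $B(x,r)$. Moreover, his $\Delta$ is allowed to depend on $\kappa$, $C$, $\epsilon_W$ and $r_0$. Lemma~\ref{t3.7} supplies exactly this. For every $x\in U_{(\epsilon,\rho)}$ and every $r>\rho/\epsilon$ we have $\abs{B(x,r)\cap U}\ge(1-100\epsilon)\abs{B(x,r)}$. So I will set $\epsilon_W\=100\epsilon$ and $r_0\=\rho/\epsilon$. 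I then choose $\epsilon_1\le\min\{1/100,\ \epsilon_1^W/100\}$, where $\epsilon_1^W$ and $M_1$ are Wortman's constants for $\kappa$ in $\X$. The resulting $\Delta$ depends only on $\kappa$, $\epsilon$, $C$ and $\rho$, as required.

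\textbf{Step 2: reconcile the domain.} If Wortman's domain is a subset of $\R^2$ rather than $\Z^2$, nothing changes: $U\subseteq\Z^2\subseteq\R^2$ carries the induced Euclidean metric, and Lebesgue density and lattice-point density on balls of radius $r>r_0\gg1$ agree up to a factor $1+O(1/r)$. At worst this replaces $100\epsilon$ by $200\epsilon$, which is absorbed by shrinking $\epsilon_1$.

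\textbf{Step 3: uniformity over all centers.} The conclusion concerns the whole set $\phi(U_{(\epsilon,\rho)})$, not a single base point. Wortman's statement covers the union of his quasi-centers with a single finite family of at most $M_1$ flats and a uniform $\Delta$. Since every $x\in U_{(\epsilon,\rho)}$ satisfies the density bound of Step~1 with the same constants, $U_{(\epsilon,\rho)}$ is contained in his quasi-center set, and the theorem follows.

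\textbf{Main obstacle.} I expect the hardest point to be checking that Wortman's conclusion is genuinely uniform, with one family of $M_1$ flats for all of $U_{(\epsilon,\rho)}$ and $\Delta$ independent of $U$ and $\phi$. If the cited statement is only per-base-point, I would recover uniformity by the standard ultralimit argument. Suppose not; take counterexamples $(U_n,\phi_n)$ with $\Delta_n\to\infty$ and pass to asymptotic cones. The density bound makes the limit domain $O(\sqrt\epsilon)$-dense in $\R^2$ on every ball around the limit base point. The limiting bi-Lipschitz map then lands in a finite union of flats of the cone by the Kleiner--Leeb structure of top-dimensional quasiflats in the rank-two building. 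This contradicts $\Delta_n\to\infty$, and the bound $M_1$ comes from the number of Weyl chambers allowed by the bi-Lipschitz constant.
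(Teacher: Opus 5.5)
Your plan follows the paper's route. The paper does not prove Theorem~\ref{t3.4}; it quotes it as the specialization of \cite[Theorem~1.2]{wortman2006quasiflats} to $\X$, and Lemma~\ref{t3.7} plays exactly the role of your Step~1, checking that the paper's $(\epsilon,\rho)$-quasi-centers satisfy Wortman's large-ball density condition with constants $100\epsilon$ and $\rho/\epsilon$. So your ultralimit fallback is not needed, and as sketched it would not go through anyway: the limit domain still has holes, and Kleiner--Leeb's theorem applies to bi-Lipschitz flats without holes.

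One detail to add: Lemma~\ref{t3.7} assumes $\rho>1/\epsilon$, whereas the statement allows any $\rho>1$. This is harmless, because $U_{(\epsilon,\rho)}\subseteq U_{(\epsilon,\rho')}$ whenever $\rho\le\rho'$, so you may replace $\rho$ by $\max\{\rho,\,2/\epsilon\}$ before applying the lemma.
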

    In the introduction of \cite{eskin1997quasi}, an example is given where the constant $M_1$ in Theorem~\ref{t3.4} is strictly greater than one.
\begin{proof}[\bf Proof of Proposition~\ref{t3.5}]
    Let $\kappa>1$ and $C>0$ be constants. Let $\epsilon_1\in(0,1)$ and $M_1\in \N$ be the constants given in Theorem~\ref{t3.4}. Define $\epsilon_0\=\min\bigl\{\epsilon_1,\,10^{-5}\bigr\}$ and fix arbitrary constants $\epsilon\in (0,\epsilon_0)$ and $\rho>1/\epsilon$. Now let $\Delta>0$ be the constant given in Theorem~\ref{t3.4} with $(\kappa,\epsilon,C,\rho)$ in Theorem~\ref{t3.4} set to be $(\kappa,\epsilon,C+4\kappa n_{\epsilon},\rho)$. Let $F$ be a flat, $D \subseteq\X$, and $\phi\in \QIE_{\kappa,C}(D,\X)$. 
    
    Parts (i) and (ii) are consequences of Lemma~\ref{t3.3}. Recall that $\phi_{F,\epsilon}\in \QIE_{\kappa,C+4\kappa n_{\epsilon}}(U_0(F),\X)$, so (iii) can be obtained from Theorem~\ref{t3.4} by our choice of constants. 
    
    For (iv), consider $x\in U_2(F)$. By Definition~\ref{d3.1}, for each $y\in F$,
    \begin{equation}\label{e3.7a}
        d(y,U_1(F))\le \max\{\rho,\,\epsilon d(x,y)\}.
    \end{equation}
    For each $y_1\in F$, define $\tphi_{F,\epsilon}(y_1)$ to be $\phi_{F,\epsilon}(y_2)$ for some $y_2$ in $U_1(F)$ closest to $y_1$. We now verify that $\tphi_{F,\epsilon}$ is indeed a desired graded quasi-isometric embedding in (iv). 
    
    Let $y_1,\,z_1$ be points in $F$ with $d(x,y_1)\ge d(x,z_1)$ and $y_2,\,z_2$ be the points in $U_1(F)$ closest to $y_1,\,z_1$ chosen above, respectively. By \eqref{e3.7a} and $d(x,y_1)\le d(x,z_1)+d(y_1,z_1)$, we have
    \begin{align}
        d(y_1,y_2)+d(z_1,z_2)&\le \max\{\rho,\,\epsilon d(x,y_1)\}+\max\{\rho,\,\epsilon d(x,z_1)\}\notag\\
        &\le 0.1d (y_1,z_1)+2\max\{\rho,\,\epsilon d(x,z_1)\}.\label{e3.7b}
    \end{align}
    Thus,
    $d(y_2,z_2)
         \le d(y_1,z_1)+d(y_1,y_2)+d(z_1,z_2) 
         \le 1.1d(y_1,z_1)+2\max\{\rho,\,\epsilon d(x,z_1)\}$.
    Since $\phi_{F,\epsilon}\in \QIE_{\kappa,C+4\kappa n_{\epsilon}}(U_0(F),\X)$, we have
    \begin{align}      
    d\bigl(\tphi_{F,\epsilon}(y_1),\tphi_{F,\epsilon}(z_1)\bigr)&=d(\phi_{F,\epsilon}(y_2),\phi_{F,\epsilon}(z_2))\le \kappa d(y_2,z_2)+C+4\kappa n_{\epsilon}\notag\\
    &\le 2\kappa d(y_1,z_1)+3\kappa\max\{\rho,\,\epsilon d(x,z_1)\}+C+4\kappa n_\epsilon\notag\\
    &\le 2\kappa d(y_1,z_1)+\max\{6\kappa\rho+2C+8\kappa n_\epsilon,\,6\kappa\epsilon d(x,z_1)\}.\label{e3.7c}
    \end{align}
    Here we use the inequality $\max\{a,\,b\}\le a+b \le \max\{2a,\,2b\}$ for $a,b\ge 0$. 
    
    For the remaining inequality, by \eqref{e3.7b} we similarly have 
    \begin{align*}
         d(y_2,z_2)
         \ge d(y_1,z_1)-d(y_1,y_2)-d(z_1,z_2)
         \ge 0.9d(y_1,z_1)-2\max\{\rho,\,\epsilon d(x,z_1)\}.
    \end{align*}
    Thus, 
    \begin{align}      
    d\bigl(\tphi_{F,\epsilon}(y_1),\tphi_{F,\epsilon}(z_1)\bigr)&=d(\phi_{F,\epsilon}(y_2),\phi_{F,\epsilon}(z_2))\ge \kappa^{-1} d(y_2,z_2)-C-4\kappa n_{\epsilon}\notag\\
    &\ge 0.5\kappa^{-1} d(y_1,z_1)-\max\{6\kappa\rho+2C+8\kappa n_\epsilon,\,6\kappa\epsilon d(x,z_1)\}.\label{e3.7d}
    \end{align}
    Since the choices of $y_1$ and $z_1$ are arbitrary, we obtain that $\tphi_{F,\epsilon}$ is a $(2\kappa,6\kappa\epsilon, 6\kappa\rho+2C+8\kappa n_\epsilon )$-graded quasi-isometric embedding centered at $x$ from \eqref{e3.7c} and \eqref{e3.7d}.     
    \end{proof}

\section{Hyperplanes to hyperplanes}      \label{s:Step2}

The aim of this section is to prove that each hyperplane is almost surely mapped near another hyperplane, with uniform control of constants, as stated in Proposition~\ref{t4.5}. This statement is crucial for showing that a boundary map induced by $\phi$ is well defined and for verifying properties of the boundary map, see Section~\ref{s5}. 

The general strategy of this section follows \cite{eskin1998quasi}. We first utilize Proposition~\ref{t3.5} to show that each flat is almost surely mapped near a union of polyhedra. This result implies that each hyperplane is almost surely mapped near another hyperplane, but without uniform control of constants. We then verify that each flat is almost surely mapped near another flat, which implies the uniform control of constants for hyperplanes. Throughout this section, we use several known results on quasi-isometric embeddings between Euclidean spaces.

\subsection{Transverse flats}\label{s4.1}

Note that a linear map from $\Z^2$ into itself can be a quasi-isometry without the property that each hyperplane is mapped close to another hyperplane. This phenomenon suggests that one needs to treat hyperplanes as the intersection of two transverse flats. We first make some choices for such pairs of transverse flats. 

\begin{notation}[Choice of transverse flats]\label{d4.1}    
    
    We call two flats {\it transverse} if their intersection is a hyperplane. For each flat $F$ in $\X$ and each point $x$ in $F$, we now fix a choice of two flats transverse to $F$. Let $F=L_1\times L_2$ be a flat in $\X$ with $L_i\subseteq T_i$ for $i\in\{1,\,2\}$ and $x=(x_1,x_2)$ be a point in $F$. For such a pair $F$ and $x$, we fix a choice of a bi-infinite geodesic $L_2'$ in $T_2$ passing through $x_2$ such that $L_2\cap  L'_2=\{x_2\}$. Then we call the flat $F' \=  L_1\times L_2'$ the \emph{transverse flat for $F$ over $T_2$ at $x$}. The choice, for the pair $F$ and $x$, of the \emph{transverse flat for $F$ over $T_1$ at $x$} is made analogously for a fixed choice of $L'_1$. Denote the set of these two transverse flats of $F=L_1\times L_2$ at $x$ by $\Trans_F(x)$, which consists of two flats $L_1\times L_2'$ and $L_1'\times L_2$.     
    
\end{notation}\label{d4.1z}

    Suppose $D$ is a subset of $\X$, $\epsilon>0$, and $\rho>1/\epsilon$. We consider flats where $D\cap F$ satisfies certain regularity, namely the flats where the relation~\eqref{e3.3x} in Proposition~\ref{t3.5} holds. 
    \begin{definition}\label{d4.1b}
    Define $\mathcal{F}_{\epsilon,D,\rho}$ to be the set of all flats $F$ with the property that for each $x\in F$, 
    \begin{equation} \label{e4.1b}
        \liminf_{r\to+\infty} \Absbig{B(x,r)\cap U_2^{\epsilon,D,\rho}(F)} \big/ \abs{B(x,r)\cap F} \ge 1-\epsilon.
    \end{equation}
    \end{definition}

    Here $U_2^{\epsilon,D,\rho}(F)$ is defined in \eqref{e3.0d}. We now consider a subset of $U_2^{{\epsilon,D,\rho}}(F)$ that consists of the points in $F$ from which the regularity conditions are verified on $F$ and on the chosen transverse flats. 
    
    \begin{definition}\label{d4.1a}
     Suppose $D\subseteq \X$. Let $\epsilon\in (0,1)$ and $\rho>1/\epsilon$ be constants. For $F\in \mathcal{F}_{\epsilon,D,\rho}$, define
    \begin{equation*}
    U_3^{\epsilon,D,\rho}(F) \= 
    \{x\in U_2(F) : F'\in \mathcal{F}_{\epsilon,D,\rho}\text{ and } x\in U_2(F')\text{ for both } F'\text{ in } \Trans_F(x) \}.
    \end{equation*}
\end{definition}

In the remainder of this article, we often refer to $U_3^{\epsilon,D,\rho}(F)$ as $U_3(F)$ when the choices of $\epsilon$, $D$, and $\rho$ are clear.

\begin{rem} \label{r:U3_monotone}
It follows from the above definitions, Definition~\ref{d3.1}, \eqref{e3.0d}, and Proposition~\ref{t3.5} that $U_i^{\epsilon,D,\rho }(F)$ is increasing with respect to $\epsilon\in(0,1)$, $D$, and $\rho>1/\epsilon$ for each $i\in\{1,\,2,\,3\}$.
\end{rem}

We first prove that the set $U_3(F)$ has a large density.
\begin{lemma}\label{t4.1}
   Suppose $F$ is a flat, $\epsilon\in (0,1/100)$, and $D$ is a random subset of $\X$. Then almost surely for each $\rho>1/\epsilon$, we have $F\in \cF_{\epsilon,D,\rho}$, and for each $x\in F$,
    \begin{equation}\label{e4.0}
        \liminf_{r\to+\infty} \abs{B(x,r)\cap U_3(F)}  / \abs{B(x,r)\cap F} \ge 1-3\epsilon.
    \end{equation}
\end{lemma}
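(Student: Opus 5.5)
The plan is to handle the two quantifiers in turn, first over $\rho$ and then over the points of $F$, and to reduce the density statement to a law of large numbers on $F$ for the events $\{x\in U_2(F')\}$ with $F'\in\Trans_F(x)$.

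\emph{Reduction to countably many $\rho$ and flats.} By Remark~\ref{r:U3_monotone}, each $U_i^{\epsilon,D,\rho}(F)$ is increasing in $\rho$. So it suffices to prove the statement for a countable set of values $\rho_n>1/\epsilon$ that is dense near $1/\epsilon$ (for instance $\rho_n\downarrow 1/\epsilon$). A general $\rho>1/\epsilon$ dominates some $\rho_n$, hence \eqref{e4.1b} and \eqref{e4.0} pass from $\rho_n$ to $\rho$. For fixed $\rho_n$, the flats that matter are $F$ itself and the two flats of $\Trans_F(x)$ for each $x\in F$. This is a countable family, since $F\cong\Z^2$ is countable. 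Proposition~\ref{t3.5}(ii) gives \eqref{e4.1b} almost surely for each fixed flat, and a countable intersection of almost sure events yields: almost surely $F\in\cF_{\epsilon,D,\rho}$, and every $F'\in\Trans_F(x)$ lies in $\cF_{\epsilon,D,\rho}$ for every $x\in F$. Thus, almost surely, $U_3(F)=U_2(F)\cap G_1\cap G_2$, where
\[
G_i\=\{x\in F: x\in U_2(F'_i(x))\}
\]
and $F'_i(x)$ is the transverse flat over $T_i$ at $x$. Since $U_2(F)$ has lower density at least $1-\epsilon$ by Proposition~\ref{t3.5}(ii), it remains to show that each $G_i$ has lower density at least $1-\epsilon$ in $F$. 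Then \eqref{e4.0} follows by a union bound on the complements.

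\emph{Density of $G_i$.} The automorphism group of $\X$ acts transitively on pairs (flat, point in it) and preserves the law of $D$. Hence $p_0\=\P(x\in U_2(F'_i(x)))$ does not depend on $x$, and it equals $\P(o\in U_2(F_*))$ for any fixed pointed flat $(F_*,o)$. Applying Proposition~\ref{t3.5}(ii) to $F_*$ together with Fatou's lemma and translation invariance along $F_*$ should give $p_0\ge 1-\epsilon$. In fact one expects $1-p_0$ to be exponentially small in $n_\epsilon^2$, by the tail bound in the proof of Lemma~\ref{t3.3}. It then remains to upgrade the expectation bound to an almost sure density statement along $F$, via a law of large numbers for the indicators of $\{x\in G_i\}$, $x\in F$. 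The difficulty is that these indicators are not independent. The event $\{x\in U_2(F'_i(x))\}$ depends on $D\cap F'_i(x)$ at all scales, and the flats $F'_i(x)$ for different $x$ overlap. Moreover, the choice of transverse flats in Notation~\ref{d4.1} is arbitrary, so I cannot rely on equivariance and Birkhoff's ergodic theorem.

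\emph{Main obstacle and how I would attack it.} I would decompose the bad event $\{x\notin U_2(F'_i(x))\}$ by dyadic scales:
\[
\{x\notin U_2(F'_i(x))\}\subseteq \bigcup_{k\ge 0} B_k(x).
\]
Here $B_k(x)$ is the event that the defining density condition of $U_0$ (respectively $U_1$) fails at some witness point at distance in $[2^k,2^{k+1})$ from $x$. Unwinding Definition~\ref{d3.1} twice, $B_k(x)$ should be contained in a ``$D$-local'' event measurable with respect to $D\cap F'_i(x)\cap B(x,c2^k)$ for a constant $c=c(\epsilon)$. The estimate of Lemma~\ref{t3.3} (the Bernstein bound \eqref{e3.8a}) should give $\P(B_k(x))\le e^{-c'4^k}$, possibly after enlarging $B_k$ slightly. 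For each fixed $k$, the field $x\mapsto\boldone_{B_k(x)}$ on $F\cong\Z^2$ is finite-range dependent. If $d(x,y)>2c2^k$, the sets $F'_i(x)\cap B(x,c2^k)$ and $F'_i(y)\cap B(y,c2^k)$ are disjoint, because both flats pass through distinct points of $F$ and branch off transversally. So a strong law of large numbers for finite-range dependent fields (split into finitely many independent sublattices) gives almost surely that the density of $\{x: B_k(x)\}$ equals $\P(B_k)$. Summing over $k\le K$ and bounding the tail $k>K$ crudely by Borel--Cantelli over balls should give upper density of $F\smallsetminus G_i$ at most $\sum_k\P(B_k)\le\epsilon$, using the choice of $n_\epsilon$ in \eqref{e3.0a}. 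I expect the localization step to be the delicate part: showing that $B_k(x)$ is genuinely local despite the two nested quasi-center operations, and handling the tail over large $k$ uniformly in $r$.
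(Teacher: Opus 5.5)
Your reductions match the paper's: monotonicity in $\rho$, the countably many transverse flats, $U_3(F)=U_2(F)\cap G_1\cap G_2$ almost surely, and $p_0\ge 1-\epsilon$ from Proposition~\ref{t3.5}~(ii), Fatou, and transitivity of $\operatorname{Aut}(\X)$ on pointed flats. The gap is the law of large numbers for $G_i$, which you leave as a program without carrying out its crux.

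\textbf{Localization is not supplied.} Membership in $U_1$ and $U_2$ quantifies over witnesses at all distances (Definition~\ref{d3.1}), and the two operations are nested. So a point $z$ near $x$ can fail to lie in $U_1(F_i'(x))$ because of a hole at a scale far larger than $2^k$. Making $B_k(x)$ measurable on $D\cap B(x,c2^k)$ would need a comparison of the type in Lemma~\ref{t3.1} or Lemma~\ref{t5.0a}, which you do not give.

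\textbf{The final bound does not follow from your estimate.} Even granting localization, such comparisons force density thresholds $a$ of order $\epsilon^{8}$. The bound \eqref{e3.8a} is then $\exp(-a\abs{W_r}/8)$ with $\abs{W_r}\approx r^2/n_\epsilon^2$. This is close to $1$ for all $r$ from $\rho\approx 1/\epsilon$ up to about $n_\epsilon a^{-1/2}$, so summing it over scales gives a bound exceeding $1$, not $\epsilon$. Closing this would need sharper small-count tail estimates, or a comparison of the bad density with $1-p_0$ through local truncations increasing to $\{x\notin U_2(F_i'(x))\}$. You would also need uniform control of infinitely many scales inside $B(x,r)$. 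None of this is done.

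\textbf{The missing idea is that no localization is needed.} Your remark that the flats $F_i'(x)$ overlap is true only within a row. Write $F=L_1\times L_2$ and $x=(x_1,x_2)$. Then $F_1'(x)=L_1\times L_2'(x)$ with $L_2'(x)\cap L_2=\{x_2\}$. So $L_2'(x)$ lies in $\{x_2\}$ together with the branches of $T_2$ hanging off $L_2$ at $x_2$. For $x_2\neq x_2'$ these sets are disjoint, whatever the arbitrary choice of transverse flats.

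Now let $W=F\smallsetminus G_1$ and $B_r=B(x,r)\cap F$, and identify $F$ with $\Z^2$. The row counts $H_n(r)=\abs{W\cap B_r\cap\{y_2=n\}}$ are functions of $D$ on pairwise disjoint sets, hence independent in $n$. The arbitrary long-range dependence inside a row is absorbed by the trivial bound $0\le H_n(r)\le 2r+1$.

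The paper then bounds the fourth central moment of $\abs{B_r\cap W}=\sum_n H_n(r)$ by $700r^6$. Markov's inequality gives $\P(\abs{B_r\cap W}\ge\epsilon'\abs{B_r})\le 700(\epsilon'+p_0-1)^{-4}r^{-2}$ for every $\epsilon'>1-p_0$. Borel--Cantelli over integer $r$ then shows the upper density of $W$ is at most $1-p_0\le\epsilon$.
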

    
\begin{proof}
    Since $\cF_{\epsilon,D,\rho}$ and $U_3(F)$ are monotone increasing in $\rho$, it suffices to verify our statement for a fixed $\rho>1/\epsilon$. 
    
    Proposition~\ref{t3.5}~(ii) immediately implies $F\in \cF_{\epsilon,D,\rho}$ almost surely. For each $x\in F$ and each $i\in\{1,\,2\}$, let $F_i(x)\in \Trans_F(x)$ be the chosen transverse flat such that $F_i(x)\cap F$ is a hyperplane in $T_i$ (cf.~Definition~\ref{d2.9}). Write $W \=  \{x\in F: x\notin U_2(F_1(x))\}$ and $W'\=\{x\in F: x\notin U_2(F_2(x))\}$. Then by definition $U_3(F)=U_2(F)\smallsetminus (W\cup W')$ almost surely. Thus, by symmetry and \eqref{e4.1b} it suffices to prove that for each $x$ in $F$, $\limsup_{r\to+\infty}\frac{\abs{B(x,r)\cap W}}{\abs{B(x,r)\cap F}}\le \epsilon$ almost surely.
    
    Fix $x\in F$. By \eqref{e3.0d} and Definition~\ref{d3.1}, $\bigl\{E\subseteq\X:x\in U_2^{\epsilon,E,\rho}(F)\bigr\}$ is measurable. (Note that there are only countably many balls in $\X$.) Write $p_0\=\P(x\in U_2(F))$. By the translation invariance of $\P_p$, $p_0$ is independent of the choices of $F$ and $x$. By Proposition~\ref{t3.5}~(ii), 
    \begin{equation}\label{e4.1}
        1-\epsilon\le \liminf_{r\to+\infty} \E[\abs{B(x,r)\cap U_2(F)}] / \abs{B(x,r)\cap F} =p_0.
    \end{equation}
    In the remaining part of the proof, we parametrize the flat $F$ by $\Z^2$ and identify $x$ with the origin. Suppose $r>10$ is an integer. Write $B_r \=  B(x,r)\cap F$. For each $n\in \Z$, define 
    \begin{equation*}
    K_n \=  W\cap \{(y_1,y_2)\in F:y_2=n\} \quad \text{ and } \quad
    H_n(r) \=  \abs{K_n\cap B_r}.
    \end{equation*} 
    By our definition of transverse flats, $F_1(y_1,y_2)\cap F_1(y_1',y_2')=\emptyset$ for each pair of $(y_1,y_2)$, $(y_1',y_2')\in F$ with $y_2\neq y_2'$. Thus for $n\in\Z$, the random variables $H_n(r)$ are independent.

   Since $0\le H_n(r)\le 2r+1$ and $r>10$, for each $n\in \Z$ we have 
   \begin{equation*}
   \E\bigl[(H_n(r)-\E[H_n(r)])^2\bigr]\le 5r^2\quad \text{ and } \quad
   \E\bigl[(H_n(r)-\E[H_n(r)])^4\bigr]\le 20r^4. 
   \end{equation*}
   Noting that $\abs{B_r\cap W}=\sum_{n=-r+1}^{r-1}H_n(r)$, by the above inequalities we have 
   \begin{align}\label{e4.0b}
       &\E\bigl[(\abs{B_r\cap W}-\E[\abs{B_r\cap W}])^4\bigr]\\
       &\qquad=\sum_{n=-r+1}^{r-1}\E\bigl[(H_n(r)-\E[H_n(r)])^4\bigr] \notag \\
       &\qquad\qquad +6\sum_{n< m}\E\bigl[(H_n(r)-\E[H_n(r)])^2\bigr] \cdot \E\bigl[(H_m(r)-\E[H_m(r)])^2\bigr]
       \le 40r^5+600r^6 
       \le 700r^6. \notag
   \end{align}
   On the other hand, $\E[\abs{B_r\cap W}]=(1-p_0)\abs{B_r}$. So for each $\epsilon'> 1-p_0$, by \eqref{e4.0b} and \eqref{e3.0},
   \begin{equation*}
       \P(\abs{B_r\cap W}\ge \epsilon' \abs{B_r})\le 700r^6(\epsilon'+p_0-1)^{-4}\abs{B_r}^{-4}\le 700(\epsilon'+p_0-1)^{-4}r^{-2}.
   \end{equation*}
  Thus, by the Borel--Cantelli lemma, $
    \limsup_{r\in \Z,\, r\to+\infty}\frac{\abs{B(x,r)\cap W}}{\abs{B(x,r)\cap F}}\le \epsilon'$ almost surely. Since \linebreak$\limsup_{r\to+\infty}\frac{\abs{B(x,r+1)\cap F}}{\abs{B(x,r)\cap F}}=1$, this implies $\limsup_{t\in \R,\,t\to+\infty}\frac{\abs{B(x,t)\cap W}}{\abs{B(x,t)\cap F}}\le \epsilon'$ almost surely.
    This finishes our proof by \eqref{e4.1}.
    \end{proof}
    For each $D\subseteq \X$, define $\cF_{\epsilon,D,\rho}'\subseteq \cF_{\epsilon,D,\rho}$ to be the set of flats $F$ in $\cF_{\epsilon,D,\rho}$ with the property that \eqref{e4.0} holds for all $x\in F$. In the remainder of this section, we consider a flat $F\in \cF_{\epsilon,D,\rho}'$ and a point $x\in U_3(F)$. Lemma~\ref{t4.1} provides a justification for this choice. 
    
   We clarify the notation of singular geodesic rays.
   \begin{notation}
       Consider the infinite geodesic rays that are contained in hyperplanes of flats: these are rays of the form $L_1\times \{x_2\}$ or $\{x_1\}\times L_2$, where $L_i$ is a geodesic ray in $T_i$ for each $i\in \{1,\,2\}$. We refer to them as \emph{singular geodesic rays}, as they are contained in intersections of flats. We say that $L_1\times \{x_2\}$ (resp.\ $\{x_1\}\times L_2$) is a singular geodesic ray in $T_1$ (resp.\ $T_2$). 
   \end{notation}
   In this terminology, for each hyperplane $L\subseteq \X$ and each point $x\in L$, $L$ is equal to the union of two singular geodesic rays emanating from $x$.
   Now we are ready to state the main result of this section. 
   
   \begin{prop}[Hyperplanes to hyperplanes]\label{t4.5}
    Given $\kappa>1$, there exist constants $\epsilon'\in(0,1/100)$ and $\lambda_1>1$ such that if $C>0$, $\epsilon\in (0,\epsilon')$, and $\rho>1/\epsilon$, then there exists a constant $R_0>0$ such that for each $D\subseteq \X$, each $\phi\in \QIE_{\kappa,C}(D,\X)$, each $F\in \cF_{\epsilon,D,\rho}'$, each $x\in U_3(F)$, and each singular geodesic ray $L_1$ in $F$ emanating from $x$, there exists a unique singular geodesic ray $L_1'$ emanating from $\phi_{F,\epsilon}(x)=\tphi_{F,\epsilon}(x)$
    such that 
    \begin{equation}\label{e4.5t2}
    \tphi_{F,\epsilon}(L_1)\sim_{\lambda_1\epsilon,\phi_{F,\epsilon}(x),R_0}L_1'.
    \end{equation}
    Here $\sim_\epsilon$ is the $\epsilon$-equivalence defined in Definition~\ref{d2.5}. Furthermore, suppose $L\subseteq F$ is a hyperplane passing through $x$ and $L=L_1\cup L_2$, where $L_i$ is a singular geodesic ray emanating from $x$ for each $i\in\{1,\,2\}$. Let $L_i'$, $i\in\{1,\,2\}$, be the singular geodesic ray emanating from $\phi_{F,\epsilon}(x)$ given in \eqref{e4.5t2}. Then there exists some $j\in \{1,\,2\}$ such that $L_1'$ and $L_2'$ are both singular geodesic rays in $T_j$, and there exists a unique hyperplane $L'\subseteq L_1'\cup L_2'$ such that 
    \begin{equation}\label{e4.5t1}
    \tphi_{F,\epsilon}(L)\sim_{\lambda_1\epsilon,\phi_{F,\epsilon}(x),R_0}L'.
    \end{equation}
    Moreover, $\epsilon'$ and $\lambda_1$ can be chosen so that the choices of $L'$ and $L_1'$ are independent of the choice of $\rho$.
    Here $\tphi_{F,\epsilon}$ is the regularized grid map defined in Proposition~\ref{t3.5}.
\end{prop}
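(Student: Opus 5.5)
The plan is to follow Eskin's strategy in the product-of-trees setting, in three stages: (a) near $x$, the image of $F$ under $\tphi_{F,\epsilon}$ looks like a union of polyhedra (sectors of flats); (b) comparing $F$ with a transverse flat $F'\in\Trans_F(x)$ pins a singular ray of $F$ to a singular ray in the image; (c) the constants are made uniform.

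\emph{Stage (a): polyhedral structure.} Fix $F\in\cF'_{\epsilon,D,\rho}$ and $x\in U_3(F)\subseteq U_2(F)$. By Proposition~\ref{t3.5}~(iii),(iv), $\tphi_{F,\epsilon}$ is a $(2\kappa,6\kappa\epsilon,6\kappa\rho+2C+8\kappa n_\epsilon)$-graded quasi-isometric embedding centered at $x$, with image in $B(\bigcup_{j\le M_1}F_j,\Delta)$. Choose a large radius $R$ and look at the annulus $A_R(x)\cap F$. The map sends it into a neighbourhood, of size $O(\epsilon R+\Delta)$, of at most $M_1$ flats. Rescale by $1/R$ and use the standard fact that a bi-Lipschitz image of a Euclidean annulus in a finite union of planes, up to $O(\epsilon)$ error, is piecewise linear. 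From this I would deduce that $F\cap A_R(x)$ is partitioned into at most $N(\kappa,M_1)$ conical sectors with apex $x$. On each sector $\tphi_{F,\epsilon}$ is $O(\epsilon R)$-close to a map into one flat $F_j$, and each image sector is $O(\epsilon R)$-close to a cone in $F_j$ with apex near $\phi_{F,\epsilon}(x)$.

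\emph{Stage (b): the transverse flat.} Let $L_1\subseteq F$ be a singular ray from $x$, say in $T_1$, and let $F'=L_1\times L_2'\in\Trans_F(x)$. Then $F\cap F'\supseteq L_1$. Since $x\in U_2(F')$ and $F'\in\cF_{\epsilon,D,\rho}$, stage (a) applies equally to $F'$. Moreover $\tphi_{F,\epsilon}$ and $\tphi_{F',\epsilon}$ both extend $\phi$ on $D$; together with the density estimates (Lemma~\ref{t3.7} and \eqref{e3.3y}), this shows they agree up to $O(\epsilon d(x,\cdot)+\rho)$ on $L_1$.

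Next consider the union $F\cup F'$, which is three half-flats glued along $L_1$, i.e.\ a tripod times a line. Its image is, near scale $R$, $O(\epsilon R)$-close to a union of cones in flats. The key geometric claim is that three half-planes meeting along a ray cannot be bi-Lipschitz mapped into a finite union of Euclidean sectors inside $T_1\times T_2$ unless the common ray goes to a branching locus, i.e.\ to a singular ray. So the image of $L_1$ must be $O(\epsilon R)$-close to a wall, which is a singular ray $L_1'$ from $\phi_{F,\epsilon}(x)$. The reason is that in the image, three distinct half-flats meeting along a common ray must branch, and branching in $T_1\times T_2$ only happens along hyperplanes: a regular geodesic lies in a unique flat locally modulo trees branching at vertices, so the branching set of flats is singular. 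I would make this rigorous by a contradiction and compactness argument. Assume the ray image is $\lambda\epsilon R$-far from every singular ray at scale $R$, pass to asymptotic cones (a product of two $\R$-trees), and use the fact that three half-planes in a product of $\R$-trees sharing a nonsingular geodesic boundary must coincide. Uniqueness of $L_1'$ is then clear, since distinct singular rays from a point diverge linearly.

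For the hyperplane $L=L_1\cup L_2$: the images $L_1'$ and $L_2'$ are singular rays. Since $\tphi$ is quasi-isometric on $L$, they diverge linearly. If they lay in different factors, $L_1'\cup L_2'$ would be a right-angle broken line. I would rule this out with the other transverse flat $L_1'\times L_2$, or alternatively with $F$ itself: the sector of $F$ between $L_1$ and $L_2$ maps near a quarter-flat whose boundary is $L_1'\cup L_2'$, but a half-plane, with angle $\pi$, cannot be quasi-isometrically mapped onto a quarter-plane with matching boundary orientation at every scale $O(\epsilon)$. This forces $j$ to be common to both rays, and $L'=L_1'\cup L_2'$ is a hyperplane since the two rays leave $\phi_{F,\epsilon}(x)$ in different directions.

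\emph{Stage (c): uniformity, the main obstacle.} The difficulty is getting $R_0$ uniform over $D$, $F$, $x$ and $\phi$, and making $L_1'$ independent of $\rho$. I would obtain the uniform $R_0$ by a contradiction and compactness argument over the uniform constants $(\kappa,\epsilon,C,\rho,\Delta,M_1)$: if $R_0$ had to blow up, then rescaled limits would produce a counterexample in asymptotic cones. For $\rho$-independence, note that by Remark~\ref{r:U3_monotone} increasing $\rho$ enlarges $U_1$, so $\tphi$ changes only by $O(\epsilon d)$. Two singular rays within $\lambda_1\epsilon$-conical distance must then coincide, provided $\lambda_1\epsilon'$ is small. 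I expect the branching/non-planarity step in stage (b) to be the hardest part to quantify.
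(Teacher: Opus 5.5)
Your treatment of the singular rays follows the paper's route through Lemmas~\ref{t4.2}, \ref{t4.4} and~\ref{t4.9} in spirit: a polyhedral image of $F$, then branching of the image of $F\cup F'$ along the common hyperplane. The second half of the statement, however, has a genuine gap.

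\textbf{The same-factor step fails.} Your claim that a half-plane cannot be quasi-isometrically mapped onto a quarter-plane with boundary going to boundary is false. In polar coordinates, $(r,\theta)\mapsto(r,\theta/2)$ is a bi-Lipschitz homeomorphism of a half-plane onto a quarter-plane, taking the boundary line onto the boundary broken line. It is conical, so it is consistent at every scale.

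More fundamentally, everything you extract is local at $x$: the singular rays from $x$ and the flats in $\Trans_F(x)$. That information cannot force $L_1'$ and $L_2'$ into one factor. Take rays $a,b,c$ in $T_1$ and $u,v,w$ in $T_2$ issuing from $\phi_{F,\epsilon}(x)$. Form the cyclic fan of six chambers $a\times u$, $b\times u$, $b\times v$, $c\times v$, $c\times w$, $a\times w$. It has total angle $3\pi$ and lies in a union of three flats but in no single flat, which is exactly the phenomenon $M_1>1$ in Theorem~\ref{t3.4}. By angular stretching, send the quadrants of $F$ onto $a\times u$, $(b\times u)\cup(b\times v)$, $c\times v$, and $(c\times w)\cup(a\times w)$.
\begin{itemize}
\item All four singular rays from $x$ go to walls, namely $a,u,v,c$.
\item The transverse flats can be sent to analogous fans built from fresh rays. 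These meet this fan only along the image of the corresponding hyperplane, so your branching argument raises no objection.
\item Yet each hyperplane through $x$ goes to a $T_1$-ray union a $T_2$-ray: $a\cup v$ and $u\cup c$.
\end{itemize}

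\textbf{What the paper uses instead.} The missing input is global: a quadrant of $F$ cannot be spread over two chambers once \emph{all} hyperplanes through the density-$(1-3\epsilon)$ set $U_3(F)$ (Lemma~\ref{t4.1}) are known to map near finite unions of hyperplanes. The paper proceeds as follows.
\begin{enumerate}
\item The Eskin-type planar Lemma~\ref{t4.0c} gives Lemma~\ref{t4.9b} and chambers to chambers (Proposition~\ref{t4.9a}).
\item Hence the four image chambers at $\phi_{F,\epsilon}(x)$ form one flat. Wortman's limit-set description (Remark~\ref{ra.1}) then upgrades Proposition~\ref{t3.5}~(iii) to a single flat (Proposition~\ref{t4.8}).
\item Only then does the single-flat case of Lemma~\ref{t4.4} put $\tphi_{F,\epsilon}(L)$ near one hyperplane $L''$ containing $L_1'$ and $L_2'$ up to Hausdorff equivalence.
\end{enumerate}
Your proposal has no substitute for this chain.

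\textbf{The compactness scheme cannot give the stated bounds.} The scheme of stages (b) and (c) fails in both regimes.
\begin{itemize}
\item At fixed $\epsilon$, rescaled limits of the $(2\kappa,6\kappa\epsilon,\cdot)$-graded maps keep a multiplicative error $6\kappa\epsilon$. They are not bi-Lipschitz, or even continuous, so the topological branching argument does not run in the cone.
\item If instead $\epsilon\to0$ along the sequence, the assumed violation of size $\lambda\epsilon R$ vanishes after rescaling, and there is no contradiction.
\end{itemize}
Either way you get at best an $o(1)$ conical error, not $\lambda_1\epsilon$ with $\lambda_1=\lambda_1(\kappa)$ and $R_0$ uniform at fixed $\epsilon$. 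The paper gets linear control from the quantitative Lemma~\ref{t4.2} (via Lemmas~\ref{t4.0} and~\ref{t4.0b}) together with Lemma~\ref{t4.3} on neighborhoods of intersecting polyhedra. Lemma~\ref{t4.3} is the rigorous discrete form of your branching heuristic. Your literal claim about half-planes with nonsingular geodesic boundary presupposes images that are half-planes with geodesic boundary, which is not known.

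\textbf{Further omissions.}
\begin{itemize}
\item You never prove the reverse inclusion $L'\smallsetminus B(\phi_{F,\epsilon}(x),R_0)\subseteq\tphi_{F,\epsilon}(L)[\lambda_1\epsilon]$ required by $\sim_{\lambda_1\epsilon}$. The paper gets it from Lemma~\ref{t4.0} with $n=1$, applied to $\pi_{L'}\circ\tphi_{F,\epsilon}$.
\item $F\cup F'$ consists of four half-flats, not three, since $L_2\cap L_2'=\{x_2\}$.
\item The stage (a) decomposition of $F$ into conical sectors at $x$ is unjustified. Preimages of image walls are a priori only quasi-geodesic curves; that they are coarsely singular rays is Lemma~\ref{t4.9b}.
\end{itemize}
Your $\rho$-independence argument is fine.
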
    

The proof of Proposition~\ref{t4.5}, which involves a decent part of geometric considerations along with constant estimates, occupies the remainder of this section. Here we divide the proof into several geometric lemmas in a manner analogous to that of \cite{eskin1998quasi}. Although the proofs of these lemmas are complicated and lengthy, they are based on geometric ideas that are common in the study of QI embeddings and follow a similar pattern. 

In these proofs, we often consider the composition of a nearest-point projection to a nearby flat with the regularized grid map $\tphi_{F,\epsilon}$. By earlier results in this article (e.g., Proposition~\ref{t3.5}), such composition is close to $\tphi_{F,\epsilon}$ and thus also a QI embedding. Then we apply previously known properties of QI embeddings between Euclidean spaces (e.g., Lemma~\ref{t4.0} and Corollary~\ref{t4.0a}) to derive the desired result. 

Throughout these proofs, we also make use of other geometric properties without specifying them. For instance, since QI embeddings are bi-Lipchitz on large scales, additive constants are sometimes safely ignored. Other useful properties include certain distance estimates in $\X=T_1\times T_2$ (e.g., Lemma~\ref{t4.3}) and elementary estimates in $\Z^2$.

\subsection{Flats to finite unions of polyhedra}
    In this subsection, we verify that flats are mapped near finite unions of polyhedra with a detailed proof. The result is stated in Lemma~\ref{t4.2}, which is analogous to \cite[Lemma~3.6]{eskin1998quasi}. We first give a definition used in the statement of Lemma~\ref{t4.2}.
    \begin{definition}\label{d4.2}
        Let $n\in \N$, $x$ be a point in $\X$, and $F_1,\,\dots,\, F_n$ be distinct flats in $\X$. We define a set of disjoint polyhedra $\{P_1,\,\dots,\,P_{n'}\}$ in $\X$ such that $\bigcup_{ \ell=1 }^{n'} P_\ell=\bigcup_{i=1}^{n} F_i$, $P_\ell$ contains no hyperplane, and there exists a flat containing both $x$ and $P_\ell$ for each $1\le \ell\le n'$.

        For each pair of $1\le i<j \le n$, $F_i\cap F_j$ is a polyhedron if it is not empty. For each index $i\in \{1,\,\dots,\, n\}$, we view the flat $F_{i}$ as parametrized by $\Z^2$ as a subset of $\R^2$ and assume that the nearest-point projection of $x$ on $F_i$ is parametrized as $(0,0)$. For each $a\in \R$, denote by $L_v(a)\=\{(a,t):t\in \R\}$ and $L_h(a)\=\{(t,a):t\in \R\}$ the vertical and horizontal lines in $\R^2$, respectively. Let $K_v\=\{L_v(a):a-1/2\in \Z\}$ and $K_h\=\{L_h(a):a-1/2\in \Z\}$. Then, by the definition of polyhedra, for each index $j\in\{1,\,\dots,\, n\}$ with $j\neq i$, $F_i\cap F_j$ is bounded by at most $2$ lines in $K_v$ and $2$ lines in $K_h$ as a subset of $F_i$. All such lines for all possible choices of $j$ together with the lines $L_v(1/2)$ and $L_h(1/2)$ subdivide $F_{i}$ into at most $4n^2$ disjoint polyhedra.  Repeating this construction for all $i\in \{1,\,\dots,\,n\}$, we get a set of distinct nonempty polyhedra $\{P_1,\,\dots,\,P_{n'}\}$ with $n'\le 4n^3$ such that 
    \begin{equation*}
    \bigcup_{ \ell=1 }^{n'} P_\ell=\bigcup_{i=1}^{n} F_i.
    \end{equation*} 
    It is straightforward to verify that any two distinct polyhedra in the above set are disjoint and it satisfies our conditions given above.
    
    Note that the choice of $\{P_1,\,\dots,\,P_{n'}\}$ depends on the parametrization of the flats $F_i$. We call such a set $\{P_1,\,\dots,\,P_{n'}\}$ a \emph{regular partition} of $\{F_1,\,\dots,\, F_n\}$ with respect to $x$.
    \end{definition}
    
\begin{lemma}[Flats to finite unions of polyhedra]\label{t4.2}
     Given $\kappa>1$, there exist constants $\lambda_3>0$ and $\epsilon_3\in(0,1/100)$ such that if $C>0$, $\epsilon\in (0,\epsilon_3)$, and $\rho>1/\epsilon$, then there exists a constant $R_3>0$ such that if $D\subseteq \X$, $\phi\in \QIE_{\kappa,C}(D,\X)$, $F\in \cF_{\epsilon,D,\rho}$, $x\in U_2(F)$, $R>R_3$ is a real number, and $\bigl\{P_1',\,\dots,\,P_{m'}'\bigr\}$ is a regular partition of the set of flats $\{F_1,\,\dots,\,F_{M_1}\}$ given in Proposition~\ref{t3.5} with respect to $\phi_{F,\epsilon}(x)$, then there exist a subset $\sigma_R\subseteq \{1,\,\dots,\,m'\}$ and a collection of polyhedra $\{P_i:i\in\sigma_R\}$ such that
    \begin{enumerate}[label=\rm{(\roman*)}]
       \smallskip
       \item $\Int_{F_i'}\bigl(P_i',\lambda_3\epsilon R\bigr)\subseteq P_i\subseteq P_i'
        \subseteq B(P_i,2\lambda_3\epsilon R)$ for some flat $F_i'$ containing both $P_i' $ and $\phi_{F,\epsilon}(x)$.
       \smallskip
       \item $\tphi_{F,\epsilon}(F)\cap B(\phi_{F,\epsilon}(x),R) 
         \subseteq B\bigl(\bigcup_{i\in\sigma_R}P_i,\lambda_3\epsilon R\bigr)$.
        \smallskip
       \item $\bigcup_{i\in\sigma_R}P_i\cap B(\phi_{F,\epsilon}(x),R)  
        \subseteq B\bigl(\tphi_{F,\epsilon}(F),\lambda_3\epsilon R\bigr)$.
        \smallskip
       \item There exists a constant $R_1''\ge R_3$ such that the set $\theta_R\=\{i\in \sigma_R: P_i'\ \text{is a chamber}\}$ is independent of the choice of $R\ge R_1''$.
    \end{enumerate}
\end{lemma}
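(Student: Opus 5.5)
The plan follows the argument of \cite[Lemma~3.6]{eskin1998quasi}. The key tool is that $\tphi_{F,\epsilon}$ is a graded quasi-isometric embedding centered at $x$ (Proposition~\ref{t3.5}~(iv)) with image in $\Nb\bigl(\bigcup_{j}F_j,\Delta\bigr)$ (Proposition~\ref{t3.5}~(iii)). Throughout, write $y_0\=\phi_{F,\epsilon}(x)$ and fix a large $R$.

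\textbf{Step 1: decide which polyhedra to keep.} For each piece $P_i'$ of the regular partition, test how much of $P_i'\cap B(y_0,R)$ is near the image. Concretely, put $i\in\sigma_R$ exactly when some point of $\Int_{F_i'}(P_i',\lambda_3\epsilon R)\cap B(y_0,R)$ lies within $\lambda_3\epsilon R/2$ of $\tphi_{F,\epsilon}(F)$. Then set $P_i\=\Int_{F_i'}(P_i',\lambda_3\epsilon R)$, or a slightly enlarged polyhedron so that it stays a polyhedron. Here $F_i'$ is a flat containing $P_i'$ and $y_0$, which exists by Definition~\ref{d4.2}. With this choice, (i) holds by construction, because a polyhedron (product of geodesics) lies within $2\lambda_3\epsilon R$ of its $\lambda_3\epsilon R$-interior once the relevant side lengths are compared to $R$. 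Pieces that are too thin are simply discarded.

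\textbf{Step 2: prove (ii).} The image $\tphi_{F,\epsilon}(F)\cap B(y_0,R)$ lies within $\Delta$ of $\bigcup_j F_j=\bigcup_\ell P_\ell'$. A point within $\Delta$ of $P_\ell'$ is either near its interior, and hence near a kept $P_\ell$, or within $\lambda_3\epsilon R$ of the boundary of $P_\ell'$. In the boundary case it is still within $\lambda_3\epsilon R$ of a neighbouring piece, and that piece must be retained. To see this, take a point of $F$ near $x$-scale, map a ball of radius about $\epsilon R$ around it by $\tphi_{F,\epsilon}$, and use the lower quasi-isometry bound: the image of such a ball cannot be squeezed into a $\lambda_3\epsilon R$-neighbourhood of finitely many lines. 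Therefore it reaches the interior of some piece, and that piece is kept. The constants are handled by choosing $\lambda_3$ large compared to $\kappa$ and $M_1$ (through $4M_1^3$), $\epsilon_3$ small, and $R_3$ large compared to $\Delta$ and $\rho$.

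\textbf{Step 3: prove (iii), the main obstacle.} We must show that every kept piece is essentially covered by the image, not merely touched by it. The tool is coarse surjectivity of quasi-isometric embeddings between $2$-dimensional Euclidean spaces. Compose $\tphi_{F,\epsilon}$ with the nearest-point projection onto $F_i'$, restricted to the preimage region. This gives a graded QI embedding of part of $\Z^2$ into $\Z^2$. By an invariance-of-domain or degree argument (the coarse version for maps $\R^2\to\R^2$), its image contains every point of $P_i$ that is at distance $\gg\epsilon R$ from the projected images of the other pieces' preimages. The difficulty is controlling where the image ``leaves'' $F_i'$ through other flats. I would attempt it by arguing that the image of the frontier of the preimage of $P_i$ lies near $\partial P_i'$, so a degree count shows the interior is filled. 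If that fails, I would shrink $P_i$ further by removing the parts not covered; this keeps it a polyhedron because the uncovered set is bounded by singular lines up to $O(\epsilon R)$, by the earlier argument.

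\textbf{Step 4: prove (iv).} The set of kept pieces that are chambers is determined by the asymptotic cone. For $R$ beyond some $R_1''$, membership of a chamber is monotone in $R$: once an interior point at scale $R$ is near the image, scaling by the graded bound preserves this at larger scales. Since there are finitely many chambers, the set stabilises. Monotonicity should follow from (iii): a kept chamber covered at scale $R$ and the image of a ray through its interior together force coverage at scale $2R$.
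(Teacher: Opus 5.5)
Your outline has the same architecture as the paper's proof. You keep the pieces whose deep interior is met by the image. You get (iii) from coarse invariance of domain: this is Corollary~\ref{t4.0a}, applied to $\pi_{V_i'}\circ\tphi_{F,\epsilon}$ on the set $W_i'$ of points of $F$ whose image is $\Delta$-close to $V_i'=P_i'\cap B(y_0,R_1)$. The corollary applies because on $\tphi_{F,\epsilon}^{-1}(B(y_0,R_1+\Delta))\subseteq B(x,3\kappa R_1)$ the graded map is an honest $(2\kappa,C'(R_1))$-quasi-isometric embedding, with $C'(R)=18\kappa^2\epsilon R+3C_\epsilon$. You get (ii) by comparing the volume lower bound of Lemma~\ref{t4.0b} with the area of thin strips along partition lines. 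You get (iv) by propagating membership of chambers across comparable radii. Your ``frontier'' idea in Step~3 is precisely the paper's observation that the preimage of the deep interior of $V_i'$ lies in $\Int_F(W_i',3\eta^2C'(R_1))$.

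\textbf{The gap.} You use one scale, $\lambda_3\epsilon R$, both as the depth defining $P_i$ and $\sigma_R$ and as the error in (ii), and this cannot work. The lines $L_v(1/2),L_h(1/2)$ of Definition~\ref{d4.2} put every piece $P_i'$ inside a closed quadrant of $F_i'$ with apex $y_0$. Hence $d\bigl(y_0,\Int_{F_i'}(P_i',s)\bigr)\ge\sqrt2(s-1)$ for all $i$. Since $y_0=\tphi_{F,\epsilon}(x)$ lies in the image, the choice $P_i=\Int_{F_i'}(P_i',\lambda_3\epsilon R)$ violates (ii) at $y_0$ itself, whatever $\sigma_R$ is.

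The same mismatch breaks your Step~2. A ball of radius about $\epsilon R$ has image of diameter $O(\kappa\epsilon R)$, which fits inside the $\lambda_3\epsilon R$-neighbourhood of a single line. To beat the strip area $O(m'\lambda_3\epsilon R\cdot\kappa s)$ with the volume $\gtrsim s^2/\kappa^2$ you need $s\gtrsim m'\kappa^3\lambda_3\epsilon R$. But then the kept interior you reach lies at distance about $\kappa s\gg\lambda_3\epsilon R$. Enlarging $\lambda_3$ does not help, because both sides scale with it.

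\textbf{The repair.} This is what the paper does, and it is what the slack in (i) is for: decouple the two scales. Take $P_i=\Int_{F_i'}(P_i',r_1)$ with $r_1=(8\kappa\eta^2+2)C'(R_1)+2\Delta$, which is $O(\epsilon R)$ but independent of $\lambda_3$.
\begin{itemize}
\item This depth is just large enough that points of $F$ outside $W_i'$ cannot be mapped into the $r_1$-interior, since their images lie near other pieces or outside $B(y_0,R_1)$.
\item So Corollary~\ref{t4.0a} covers the whole connected interior as soon as one point is hit, which gives (iii) with error $O(r_1)$.
\item The volume count then gives (ii) with error about $150m'r_1/\zeta'$, where $\zeta'=3^{-4}2^{-6}\kappa^{-4}$.
\item Only after this is $\lambda_3$ fixed, of order $M_1^3\kappa^6(8\kappa\eta^2+4)$, so that both $r_1$ and that error are below $\lambda_3\epsilon R$. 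One also takes $\epsilon_3\le 1/\lambda_3$ and reparametrizes the radius as $R=R_1-v_1$ to absorb the effects near $\partial B(y_0,R)$.
\end{itemize}

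Finally, your Step~3 fallback of deleting uncovered parts of $P_i$ is not permitted, since (i) forces $P_i\supseteq\Int_{F_i'}(P_i',\lambda_3\epsilon R)$. With the two-scale choice it is not needed.
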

\begin{proof}
    Let $\kappa>1$ and $C>0$ be constants, $D$ be a subset of $\X$, and $\phi\in \QIE_{\kappa,C}(D,\X)$. Let $M_1\in \N$ and $\epsilon_0\in(0,1/100)$ be the constants depending only on $\kappa$ given in Proposition~\ref{t3.5}. 

    Let $\epsilon\in(0,\epsilon_0)$ be a constant whose exact choice is determined later and $\rho>1/\epsilon$. Consider a flat $F\in \cF_{\epsilon,D,\rho}$ and a point $x\in U_2(F)$. For $R>0$, let $B_R$ denote the ball $B(\phi_{F,\epsilon}(x),R)$ in $\X$. Let $F_1,\,\dots,\,F_{M_1}$ be the $M_1$ flats given in Proposition~\ref{t3.5} such that 
      \begin{equation}\label{e4.2g}
          \tphi_{F,\epsilon}(F)
          =\phi_{F,\epsilon}(U_1(F))\subseteq\Nb \biggl(\bigcup_{i=1}^{M_1}F_i,\Delta\biggr),
      \end{equation}
    where $\Delta$ is the constant and $\tphi_{F,\epsilon}$ is the regularized grid map given in Proposition~\ref{t3.5}. Let $\{P_{\ell}'\}_{1\le \ell\le m'}$ be a regular partition (cf.~Definition~\ref{d4.2}) of $\{F_1,\,\dots,\,F_{M_1}\}$ with respect to $\phi_{F,\epsilon}(x)$ with
    \begin{align}
        m'&\le 4M_1^3 \quad\text{ and}\label{e4.2b} \\
    \bigcup_{ \ell=1 }^{m'} P_\ell'&=\bigcup_{i=1}^{M_1} F_i.\label{e4.2}
    \end{align}

    \textbf{Step 1.} We first verify that $\tphi_{F,\epsilon}^{-1}\bigl(\tphi_{F,\epsilon}(F)\cap B_{R+\Delta}\bigr)\subseteq B(x,3\kappa R-2)$ for sufficiently large $R$. Thus, the regularized grid map $\tphi_{F,\epsilon}$ restricted on $\tphi_{F,\epsilon}^{-1}\bigl(\tphi_{F,\epsilon}(F)\cap B_{R+\Delta}\bigr)$ is a quasi-isometric embedding to $B\bigl(\bigcup_{ \ell=1}^{m'} P_\ell',\Delta\bigr)$.
    
    Define, for $R>0$,
    \begin{equation}\label{e4.2b2}
        C_\epsilon\coloneqq 2\kappa C+12\kappa\rho+4C+16\kappa n_{\epsilon}+2\Delta \quad \text{ and } \quad
        C'(R) \=  3\bigl(6\kappa^2\epsilon R+C_\epsilon\bigr),
    \end{equation}    
    where $n_\epsilon>0$ is defined in \eqref{e3.0a}.
    Then for $R>2C_{\epsilon}\big/\bigl(\kappa^2\epsilon\bigr)$, 
    we have  
    \begin{equation}\label{e4.2b1}
        C'(R)<20\kappa^2\epsilon R.
    \end{equation}
    If in addition $\epsilon\in\bigl(0,1\big/\bigl(200\kappa^2\bigr)\bigr)$, then $C'(R)<0.1R$. For such $R$ and $\epsilon$, if $y\in \X$ is a point with $d(x,y)\ge 3\kappa R-2$, then 
    \begin{equation}\label{e4.2b3}
        d\bigl(\tphi_{F,\epsilon}(x),\tphi_{F,\epsilon}(y)\bigr)
        \ge d(x,y)/(2\kappa)-6\kappa \epsilon d(x,y)-C_\epsilon
        > R+\Delta
    \end{equation}
    since $\tphi_{F,\epsilon}$ is a $(2\kappa,6\kappa\epsilon, 6\kappa\rho+2C+8\kappa n_\epsilon )$-graded quasi-isometric embedding. Then 
    \begin{equation}\label{e4.2b4}
        \tphi_{F,\epsilon}^{-1}\bigl(\tphi_{F,\epsilon}(F)\cap B_{R+\Delta}\bigr)\subseteq B(x,3\kappa R-2).
    \end{equation}
    So since graded quasi-isometric embeddings are quasi-isometric embeddings on bounded sets, we get from direct calculation that $\tphi_{F,\epsilon}$ restricted to $\tphi_{F,\epsilon}^{-1}\bigl(\tphi_{F,\epsilon}(F)\cap B_{R+\Delta}\bigr)$ is a $(2\kappa,C'(R)-4\Delta)$-quasi-isometric embedding to $B\bigl(\bigcup_{ \ell=1}^{m'} P_\ell',\Delta\bigr)$ by Proposition~\ref{t3.5} and \eqref{e4.2}.
    
    \smallskip
    
    \textbf{Step 2.} We now choose a suitable set $\sigma$ of indices $i$. In the following steps, we show that $P_i'\cap B_R$ is close enough to $\tphi_{F,\epsilon}(F)\cap B_R$ for each $i\in \sigma$. 
    
    Let $R_0>C_{\epsilon}\big/\bigl(\kappa^2\epsilon\bigr)$, $r_0>0$, $R_1\in (R_0,2R_0)$, and $r_1\in (r_0,2r_0)$ be constants whose exact choices are specified later. Let $F_\ell'$ be a flat containing both $\phi_{F,\epsilon}(x)$ and $P_\ell'$ for each $1\le \ell\le m'$. In the remainder of this proof, we denote $\Int_{F_l'}(U,\,\cdot\,)$ by $\Int(U,\,\cdot\,)$ when $U\subseteq F_l'$.
    
    For each index $1\le i\le m'$, consider the pair of sets $V_i' \=  P_i'\cap B_{R_1}$ and $V_i\= \Int(P_i',r_1)\cap B_{R_0}$, where $P_i'$ is always viewed as a subset of the flat $F_i'$ equipped with the Euclidean metric when we consider $\Int(P_i',\,\cdot\,)$ throughout this proof (cf.~Definition~\ref{d2.3}). Note that if $\Int(P_i',r_1)\neq \emptyset$, $\Int(P_i',r_1)$ is also a polyhedron and $P_i'\subseteq B(\Int(P_i',r_1),2r_1)$. A sketch displaying the relations between these sets is given below.
    \begin{figure}
        \centering
        \begin{tikzpicture}[scale=0.8]

    \def\n{1}      
    \def\r{0.35}    
    \def\R{5}

    \draw[->, thick] (-0.5,0) -- (7,0) ;
    \draw[->, thick] (0,-0.5) -- (0,7) ;

    \draw[red, thick, dashed] (0:\R) arc (0:90:\R);

    \draw[blue, thick, dashed] (0:\R+2*\r) arc (0:90:\R+2*\r);

    \fill[green!20, opacity=0.5] (\n,\n) -- (7,\n) -- (7,7) -- (\n,7) -- cycle;
    \draw[green, thick, ->] (\n,\n) -- (7,\n); 
    \draw[green, thick, ->] (\n,\n) -- (\n,7); 
    \node[green] at (0.5,6.2) {$P_i'$};

    \fill[orange!30, opacity=0.7] (\n+\r,\n+\r) -- (7,\n+\r) -- (7,7) -- (\n+\r,7) -- cycle;
    \draw[orange, thick, ->] (\n+\r,\n+\r) -- (7,\n+\r); 
    \draw[orange, thick, ->] (\n+\r,\n+\r) -- (\n+\r,7); 
    \node[orange] at (6.6,1.8) {$\Int(P_i',r_1)$};

    \begin{scope}
        \clip (0,0) -- (0:\R+2*\r) arc (0:90:\R+2*\r) -- cycle;
        \fill[blue!20, opacity=0.4] (\n,\n) -- (7,\n) -- (7,7) -- (\n,7) -- cycle;
    \end{scope}
    \node[blue] at (3,4.5) {$V_i'$};
    \node[blue] at (-0.5,5.6) {$R_1$};

    \begin{scope}
        \clip (0,0) -- (0:\R) arc (0:90:\R) -- cycle;
        \fill[red!20, opacity=0.6] (\n+\r,\n+\r) -- (7,\n+\r) -- (7,7) -- (\n+\r,7) -- cycle;
    \end{scope}
    \node[red] at (2.5,3) {$V_i$};
    \node[red] at (-0.5,5) {$R_0$};

    \node at (-0.8,-0.3) {$\phi_{F,\epsilon}(x)$};

\end{tikzpicture}
        \caption{The sets $P_i'$, $\Int(P_i',r_1)$, $V_i'$, and $V_i$}
        \label{fig4.1}
    \end{figure}
    For each index $1\le i\le m'$, define $\phi_i'\coloneqq \pi_{V_i'}\circ \tphi_{F,\epsilon}$, where $\pi_{V_i'}$ denotes a projection sending each point in $\X$ to its nearest point in $V_i'$. Define 
    \begin{equation}\label{e4.2a1}
        W_i' \= \bigl\{y\in F:d\bigl(\phi_i'(y),\tphi_{F,\epsilon}(y)\bigr)\le \Delta\bigr\}.
    \end{equation}
    Then by \eqref{e4.2b4}, $W_i'\subseteq B(x,3\kappa R_1-2)$ and thus $\phi_i'$ is a $(2\kappa,C'(R_1))$-quasi-isometric embedding on $W_i'$.     
     Consider the set of indices
     \begin{equation}\label{e4.2a}
         \sigma \=  \{i\in \{ 1,\dots,\,m'\}:\,\phi_i'(W_i')\cap \Int(V_i',r_1)\cap B_{R_0}\neq \emptyset\}.
     \end{equation}

     \textbf{Step 3.} We now apply Corollary~\ref{t4.0a} to show that $\bigcup_{j\in \sigma}V_j$ is contained in some neighborhood of $\tphi_{F,\epsilon}(F)$ for suitable choices of $R_0$, $r_0$, $R_1$, and $r_1$. 
     
     We first claim that we may choose $r_1$ in terms of $R_1$ so that 
     \begin{equation}\label{e4.2a2}
     \Int\bigl(V_j',r_1\bigr)\subseteq \Nb \bigl(\phi_j'\bigl(W_j'\bigr),\eta C'(R_1)\bigr)
     \end{equation}
     for each $j\in \sigma$, where $\eta>2\kappa$ is the constant given in Lemma~\ref{t4.0} with $(n,\kappa)$ set to be $(2,2\kappa)$. Suppose $j\in \sigma$ and $y_1$ is a point in $W_j'$ such that $\phi_j'(y_1)\in \Int\bigl(V_j',r_1-\eta C'(R_1)\bigr)$. By \eqref{e4.2g} and \eqref{e4.2}, for each $y_2\in (B(x,3\kappa R_1)\cap F)\smallsetminus W_j'$, there exists some $1\le j_1\le m'$ such that $d\bigl(\tphi_{F,\epsilon}(y_2),P_{j_1}'\bigr)\le \Delta$. Since $y_1\in W_j'$, we have $d\bigl(\tphi_{F,\epsilon}(y_1),\Int\bigl(V_j',r_1-\eta C'(R_1)\bigr)\bigr)\le d\bigl(\tphi_{F,\epsilon}(y_1),\phi_j'(y_1)\bigr)\le \Delta$. Since $ y_2\notin W_j'$, we have $d\bigl(\tphi_{F,\epsilon}(y_2),P_{j_1}'\smallsetminus V_j'\bigr)\le \Delta$. Then we have $d\bigl(\tphi_{F,\epsilon}(y_1),\tphi_{F,\epsilon}(y_2)\bigr)\ge d\bigl(\Int\bigl(V_j',r_1-\eta C'(R_1)\bigr),P_{j_1}'\smallsetminus V_j'\bigr)-2\Delta\ge r_1-\eta C'(R_1)-2\Delta$. Thus, 
    \begin{equation}\label{e4.2h}
        d(y_1,y_2)
        \ge d\bigl(\tphi_{F,\epsilon}(y_1),\tphi_{F,\epsilon}(y_2)\bigr)\big/(2\kappa)-C'(R_1)
        \ge (r_1-\eta C'(R_1)-2\Delta)/(2\kappa)-C'(R_1) .
    \end{equation}
    Define, for each $R>0$,
    \begin{equation}\label{e4.2h1}
        r(R)\= \bigl(8\kappa \eta^2+2\bigr)C'(R)+2\Delta \text{ and take }r_1\=r(R_1).
    \end{equation} Then since $W_j'\subseteq B(x,3\kappa R_1-2)$ and $\eta>2\kappa$, by \eqref{e4.2h}, our choice of $r_1$ implies 
    $d\bigl(y_1,F\smallsetminus W_j'\bigr)
        =d\bigl(y_1,(B(x,3\kappa R_1)\cap F)\smallsetminus W_j'\bigr) 
        \ge \bigl(\bigl(8\kappa \eta^2+2-\eta\bigr)\big/(2\kappa)-1\bigr)C'(R_1)>3\eta^2 C'(R_1)$. Thus,
    \begin{equation}\label{e4.2a3}
            \phi_j'^{-1}\bigl(\Int\bigl(V_j',r_1-\eta C'(R_1)\bigr)\bigr)\cap W_j'\subseteq \Int_{F} \bigl(W_j',3\eta^2 C'(R_1)\bigr).
    \end{equation}
    
    Then since $\Int\bigl(V_j',r_1\bigr)$ is path-connected and $B\bigl(\Int\bigl(V_j',r_1\bigr),\eta C'(R_1)\bigr)\subseteq \Int\bigl(V_j',r_1-\eta C'(R_1)\bigr)$, by Corollary~\ref{t4.0a} with $(\psi,U,V)$ taken to be $\bigl(\phi_j',W_j',\Int\bigl(V_j',r_1\bigr)\bigr)$, \eqref{e4.2a2} now follows from \eqref{e4.2a3}.

   We then choose suitable constants to derive \eqref{e4.2c} from \eqref{e4.2a2}. Take $r_0\coloneqq r(R_0)= \bigl(8\kappa \eta^2+2\bigr)C'(R_0)+2\Delta$ and $R_1\=R_0+4r(R_0)$. The restriction $R_1<2R_0$ now translates into $4r(R_0)<R_0$, that is,
    \begin{equation}\label{e4.2h4}
        12\bigl(8\kappa\eta^2+2\bigr)\bigl(6\kappa^2\epsilon R_0+C_\epsilon\bigr)+8\Delta<R_0.
    \end{equation} 
    Define 
    \begin{equation}\label{e4.2h3}
        \epsilon_3'
        \=\min\bigl\{\epsilon_0,\, \bigl(144\kappa^2\bigl(8\kappa\eta^2+2\bigr)\bigr)^{-1}\bigr\},
    \end{equation} 
    which depends only on $\kappa$. Define
    \begin{equation}\label{e4.2h2}
    R_3\= 2C_\epsilon \big/ \bigl(\kappa^2 \epsilon \bigr).
    \end{equation}
    Note that if $\epsilon\in (0,\epsilon_3')$, then $R_3>144\bigl(8\kappa\eta^2+2\bigr)C_\epsilon$.
    Then since $C_\epsilon>\Delta$, if $\epsilon\in(0,\epsilon_3')$ and $R_0>R_3$, we have $12\bigl(8\kappa\eta^2+2\bigr)6\kappa^2\epsilon R_0< R_0/2$ and $12\bigl(8\kappa\eta^2+2\bigr)C_\epsilon+8\Delta<R_0/2$. Hence \eqref{e4.2h4} follows. Thus $R_1<2R_0$ and $r_1<2r_0$. So $R_1=R_0+4r_0>R_0+r_1$.
    
    Then by our definition of the sets $V_j$ and $V_j'$, we have
    \begin{equation}\label{e4.2c1}
        V_j=\Int\bigl(P_j',r_1\bigr)\cap B_{R_0}=\Int\bigl(P_j',r_1\bigr)\cap \Int(B_{R_1},r_1)\cap B_{R_0}=\Int\bigl(V_j',r_1\bigr)\cap B_{R_0}
    \end{equation} 
    and then by \eqref{e4.2a2}, \eqref{e4.2a1}, and the fact that $r_0>\eta C'(R_1)+\Delta$ (see \eqref{e4.2h1}),
    \begin{equation*}
        V_j\subseteq \Nb \bigl(\phi_j'\bigl(W_j'\bigr),\eta C'(R_1)\bigr)
        \subseteq \Nb\bigl(\tphi_{F,\epsilon}(F),\eta C'(R_1)+\Delta\bigr)
        \subseteq \Nb\bigl(\tphi_{F,\epsilon}(F),r_0\bigr).
    \end{equation*}
    Then by our definition of $V_j$, 
    \begin{equation}\label{e4.2c}
     \bigcup_{i\in \sigma}\Int(P_i',r_1)\cap B_{R_0}= \bigcup_{i\in \sigma}V_i\subseteq \Nb\bigl(\tphi_{F,\epsilon}(F),r_0\bigr).
    \end{equation}

    We will set $P_i\=\Int(P_i',r_1)$, leading to Part~(iii) for suitable choices of constants. For now, we shift our attention to Part(ii).
     
    \smallskip
    \textbf{Step 4.} For the remaining part of the proof, we take $\epsilon\in(0,\epsilon_3')$, $R_0>R_3$, $r_0\=r(R_0)$, $R_1\=R_0+4r_0$, and $r_1\=r(R_1)$. For each $i\in\sigma$, define $P_i\=\Int(P_i',r_1)$.
    For $0<v<R_0/2$, write
    \begin{equation}\label{e4.2c2}
         d_v\=\min\Bigl\{v,\,\max_{y\in \tphi_{F,\epsilon}(F)\cap B_{R_1-v}}\Bigl\{  d\Bigl(y,\bigcup_{i\in \sigma}\Int (P_i',r_1 )\cap B_{R_1}\Bigr)\Bigr\}\Bigr\}.
     \end{equation}
     Here we use the convention that $d(y,\emptyset)=+\infty$.
     
     Let $0<v<R_0/2$ be a constant and $x_0\in \tphi_{F,\epsilon}(F)\cap B_{R_1-v}$ satisfy $d\bigl(x_0,\bigcup_{j\in \sigma}\Int\bigl(P_j',r_1\bigr)\cap B_{R_1}\bigr)\ge d_v$. For each $1\le j\le m'$, recall that $F_j'$ is a flat containing both $P_j'$ and $\phi_{F,\epsilon}(x)$. Define 
     \begin{equation}\label{e4.2c3}
         Y_j\coloneqq B\bigl(\phi_j'\bigl(W_j'\bigr),C'(R_1)\bigr)\cap F_j'.
     \end{equation}
     In this step, we now obtain an upper bound for $d_v$ by estimating $\sum_{i=1}^{m'}\abs{B(x_0,d_v)\cap Y_i}$ in terms of $d_v$.
     
     On the one hand, we obtain an upper bound of $\sum_{i=1}^{m'}\abs{B(x_0,d_v)\cap Y_i}$ in terms of $d_v$. Recall that for each index $1\le j\le m'$ we have $\phi_j'\bigl(W_j'\bigr)\subseteq V_j'$. Hence for $j\in \sigma$, since $B(x_0,d_v)\cap \bigcup_{i\in \sigma}V_i\subseteq B(x_0,d_v)\cap \bigcup_{i\in \sigma}(P_i\cap B_{R_1})=\emptyset$, we have
     \begin{equation*}
         B(x_0,d_v)\cap Y_j\subseteq \bigl(B\bigl(V_j',C'(R_1)\bigr)\cap F_j'\bigr)\smallsetminus V_j.
     \end{equation*}For $j\notin \sigma$, since $\phi_j'\bigl(W_j'\bigr)\cap V_j= \phi_j'\bigl(W_j'\bigr)\cap \Int\bigl(V_j',r_1\bigr)\cap B_{R_0}=\emptyset$ by \eqref{e4.2c1} and \eqref{e4.2a}, we similarly have $
         Y_j\subseteq \bigl(B\bigl(V_j',C'(R_1)\bigr)\cap F_j'\bigr)\smallsetminus \Int(V_j,C'(R_1))$.
     
     Let $1\le j\le m'$ be an index. By $R_1-4r_1<R_0$ and our definition of $V_j$ and $V_j'$, we have $\Int\bigl(V_j',4r_1\bigr)\subseteq V_j$, and thus 
     \begin{equation}\label{e4.2i4}
         B(x_0,d_v)\cap Y_j\subseteq \bigl(B\bigl(V_j',C'(R_1)\bigr)\cap F_j'\bigr)\smallsetminus \Int\bigl(V_j',5r_1\bigr)
     \end{equation}
     by combining the above estimates for $j\in \sigma$ and $j\notin\sigma$ with $C'(R_1)<r_1$ (cf.~\eqref{e4.2h1}). Recalling $V_j'=P_j'\cap B_{R_1}$, we have
     \begin{align}        
        &\bigl(B\bigl(V_j',C'(R_1)\bigr)\cap F_j'\bigr)\smallsetminus \Int\bigl(V_j',5r_1\bigr)\notag\\
         &\qquad\subseteq \bigl(B\bigl(B_{R_1}\cap F_j',C'(R_1)\bigr)\smallsetminus \Int\bigl(B_{R_1}\cap F_j',5r_1\bigr)\bigr)\cup \bigl(B\bigl(P_j',C'(R_1)\bigr)\smallsetminus\Int\bigl(P_j',5r_1\bigr)\bigr)\cap F_j'\notag\\
         &\qquad\subseteq\bigl((B_{R_1+C'(R_1)}\smallsetminus B_{R_1-5r_1})\cap F_j'\bigr)\cup M_j, \label{e4.2i3}
    \end{align}
 where $M_j\=\bigcup_{L\in \cI_j}B(L,5r_1+C'(R_1)+1)\cap F_j'$. Here $\cI_j$ denotes the set of hyperplanes bounding $P_j'$. For each $L\in \cI_j$, the intersection of $B(L,6r_1)\cap F_j'$ and $B(x_0,d_v)$ is contained in a $(12r_1+1)\times (2d_v+1)$ or $(2d_v+1)\times (12r_1+1)$ rectangle. Thus, 
     \begin{equation}\label{e4.2i1}
         \abs{M_j\cap B(x_0,d_v)}\le 4(12r_1+1)(2d_v+1)<100r_1d_v.
     \end{equation}
     Here we first assume that $d_v>100$ and note that $r_1>100$ by our choice of constants. By direct elementary Euclidean area estimates (cf.~Figure~\ref{fig4.2}), we have 
     \begin{equation}\label{e4.2i2}
         \Absbig{\bigl((B_{R_1+C'(R_1)}\smallsetminus B_{R_1-5r_1})\cap F_j'\bigr)\cap B(x_0,d_v)}\le 2\pi (d_v+2)(5r_1+C'(R_1)+2)<50r_1d_v.
     \end{equation}

     Thus, by combining \eqref{e4.2i4}--\eqref{e4.2i2}, we have 
      \begin{equation*}
          \abs{B(x_0,d_v)\cap Y_j}
          \le \Absbig{B(x_0,d_v)\cap\bigl(\bigl(B\bigl(V_j',C'(R_1)\bigr)\cap F_j'\bigr)\smallsetminus \Int\bigl(V_j',5r_1\bigr)\bigr)}<150r_1d_v.
      \end{equation*} 
      Consequently, 
      \begin{equation}\label{e4.2e}
          \sum_{i=1}^{m'}\abs{B(x_0,d_v)\cap Y_i}\le 150m'r_1d_v.
      \end{equation}
     \begin{figure}
        \centering
        \begin{tikzpicture}[scale=0.8]

    \def\R{5}
    \def\Rone{5.6}
    \def\dv{0.6}
    \def\angle{35} 
    \draw[->, thick] (-0.5,0) -- (7,0) ;
    \draw[->, thick] (0,-0.5) -- (0,7) ;

    \draw[thin, gray] (\R,0) arc[start angle=0, end angle=90, radius=\R];
    \draw[thin, gray] (\Rone,0) arc[start angle=0, end angle=90, radius=\Rone];

    \pgfmathsetmacro{\cx}{\Rone - 0.8} 
    \pgfmathsetmacro{\cy}{1}

    \draw[blue, thick] (\cx, \cy) circle (\dv);

    \begin{scope}

        \clip (\cx, \cy) circle (\dv);

        \fill[red!30, even odd rule] 
            (\Rone,0) arc[start angle=0, end angle=90, radius=\Rone] -- (0,0)
            (\R,0) arc[start angle=0, end angle=90, radius=\R] -- (0,0);
    \end{scope}

    \begin{scope}
       \clip (\cx, \cy) circle (\dv);
        \draw[red, very thick] (\R,0) arc[start angle=0, end angle=90, radius=\Rone];
   \end{scope}
    \begin{scope}[even odd rule]
       \clip (\Rone,0) arc[start angle=0, end angle=90, radius=\Rone] -- (0,0)
            (\R,0) arc[start angle=0, end angle=90, radius=\R] -- (0,0)
            ;
        \draw[yellow, very thick] (0,0) -- (6,1);
   \end{scope}
   \begin{scope}
       \clip (\Rone,0) arc[start angle=0, end angle=90, radius=\Rone] -- (0,0) ;
        \draw[yellow, dashed] (0,0) -- (6,1);
   \end{scope}

    \node[below left] at (0,0) {$\phi_{F,\epsilon}(x)$};
   
    \node[blue] at (\cx-2, \cy + 0.6) {$B(x_0,d_v)\cap F_j'$};
    \node at (-1.5, \Rone) {$R_1+C'(R_1)$};
    \node at (-1.5, \R) {$R_1-5r_1$};

    \end{tikzpicture}
        \caption{One of the possible cases}
        \label{fig4.2}
    \end{figure}
     On the other hand, we can also obtain a lower bound of $\sum_{i=1}^{m'}\abs{B(x_0,d_v)\cap Y_i}$ in terms of $d_v$. Suppose $x_1\in F\cap \tphi^{-1}_{F,\epsilon}(x_0)$ and $U_j\=B(x_1,d_v/(2\kappa)-2C'(R_1))\cap W_j'$. Since $\phi_j'$ is a $(2\kappa,C'(R_1))$-quasi-isometric embedding on $W_j'$, we have $\phi_j'(U_j)\subseteq B(x_0,d_v-C'(R_1))$. Thus, by \eqref{e4.2c3},
     \begin{equation*}
         B\bigl(\phi_j'(U_j),C'(R_1)\bigr)\cap F_j'
         \subseteq B(B(x_0,d_v-C'(R_1)),C'(R_1))\cap Y_j
         \subseteq B(x_0,d_v)\cap Y_j.
     \end{equation*}
     Then by Lemma~\ref{t4.0b} with $(\psi,U)$ set to be $\bigl(\phi_j',U_j\bigr)$, writing $\zeta\= 3^{-4}2^{-2}\kappa^{-2}$, we have 
     \begin{equation}\label{e4.2e1}
         \abs{B(x_0,d_v)\cap Y_j}
         \ge \Absbig{B\bigl(\phi_j'(U_j),C'(R_1)\bigr)\cap F_j'}\ge \zeta \abs{U_j}.        
     \end{equation}
     
     Now since $\tphi_{F,\epsilon}$ is a $(2\kappa,C'(R_1)-4\Delta)$-quasi-isometric embedding on $B(x,3\kappa R_1)\cap F$ and $B(x_1,d_v/(2\kappa)-2C'(R_1))\subseteq B(x,3\kappa R_1)$, we have $\tphi_{F,\epsilon}(B(x_1,d_v/(2\kappa)-2C'(R_1))\cap F)\subseteq B(x_0,d_v-C'(R_1)-\Delta)$. Since $x_0\in B_{R_1-v}$, $v\ge d_v$ (cf.~\eqref{e4.2c2}), and $C'(R_1)>0$, we have 
     \begin{equation*}
         B(x_0,d_v-C'(R_1)-\Delta)\subseteq B_{R_1-\Delta}.
     \end{equation*}
     Thus, by \eqref{e4.2g}, \eqref{e4.2}, \eqref{e4.2a1}, and $V_i'=P_i'\cap B_{R_1}$, we have $B(x_1,d_v/(2\kappa)-2C'(R_1))\cap F\subseteq  \tphi_{F,\epsilon}^{-1}(B_{R_1-\Delta})\cap F\subseteq \bigcup_{i=1}^{m'}W_i'$. So by \eqref{e4.2e1} and \eqref{e3.0}, if $d_v\ge 8\kappa C'(R_1)$ and $\zeta'\=3^{-4}2^{-6}\kappa^{-4}$, then
     \begin{equation}\label{e4.2f}
         \sum_{i=1}^{m'}\abs{B(x_0,d_v)\cap Y_i}
         \ge \zeta \sum_{i=1}^{m'} \abs{U_i}
         \ge \zeta(d_v/(2\kappa)-2C'(R_1))^2\ge \zeta'd_v^2.
     \end{equation}

    Thus by combining \eqref{e4.2a}, \eqref{e4.2e}, \eqref{e4.2f}, and our assumption that $d_v>100$, we have
    \begin{equation}\label{e4.2f1}
        d_v\le \max\{100,\,150m'r_1/\zeta',\,8\kappa C'(R_1)\}=150m'r_1/\zeta'.
    \end{equation}

    \textbf{Step 5.} In this step, we complete the proof of Parts~(ii) and (iii). We need to choose suitable constants such that $d_v<v<R_0/2$. For $R>0$, write
    \begin{equation}\label{e4.2f3}
        v(R)\=160m'r(R)/\zeta' \quad \text{ and } \quad 
        v_1\=v(R_1)=160m'r_1/\zeta'.
    \end{equation}
    We now specify the choices of $\epsilon$ and $\lambda_3$ so that $v_1<R_0/3$ and $6 d_{v_1}<\lambda_3 \epsilon R$. Define
    \begin{equation}\label{e4.2f2}
        \lambda_3\= 3^42^810^5M_1^3\kappa^6\bigl(8\kappa\eta^2+4\bigr),
    \end{equation}
    which depends only on $\kappa$. Then by \eqref{e4.2b}, $r_1=r(R_1)$, \eqref{e4.2h1}, \eqref{e4.2b1}, and \eqref{e4.2f2}, we have
    \begin{equation}\label{e4.2d0}
        1000m'r_1/\zeta'\le 3^42^810^3\kappa^4M_1^3r_1<3^42^810^3\kappa^4M_1^3\bigl(8\kappa\eta^2+4\bigr)C'(R_1)<\lambda_3\epsilon R_1.
    \end{equation}
    Hence for $\epsilon\in (0,\epsilon_3)$, where 
    \begin{equation}\label{e4.2d1}
        \epsilon_3\=\min\{\epsilon_3',\,1/\lambda_3\}
    \end{equation}
    depends only on $\kappa$, we have $v_1=160m'r_1/\zeta'<\lambda_3\epsilon R_1/6<R_1/6<R_0/3$. 
    Then by \eqref{e4.2f1}, $d_{v_1}<v_1$, and thus by \eqref{e4.2c2}, $2v(R_1-v_1)>v(R_1)$, $P_i=\Int(P_i',r_1)$, \eqref{e4.2d0}, and $R_1-v_1>R_1-R_0/3>R_0/2$, we have
    \begin{equation}\label{e4.2d}
    \begin{aligned}
        \tphi_{F,\epsilon}(F)\cap B_{R_1-v_1}
        &\subseteq\Nb(P_\sigma\cap B_{R_1},d_{v_1}+1)\\
        &\subseteq\Nb(P_\sigma,150m'r_1/\zeta'+1)
        \subseteq \Nb(P_\sigma,\lambda_3\epsilon(R_1-v_1)),
    \end{aligned}
    \end{equation}
    where $P_\sigma \= \bigcup_{i\in \sigma}P_i$. Since $v_1>4r_0$, we have $R_1-v_1<R_0$, and thus \eqref{e4.2c} implies
    \begin{equation}\label{e4.2c0}
        P_\sigma \cap B_{R_1-v_1}\subseteq \Nb\bigl(\tphi_{F,\epsilon}(F),r_0\bigr)\subseteq\Nb\bigl(\tphi_{F,\epsilon}(F),\lambda_3\epsilon(R_1-v_1)\bigr).
    \end{equation}
    
    Now since $f\:R_0\mapsto R_1-v_1$ is an affine increasing map by $R_1=R_0+4r(R_0)$, \eqref{e4.2h1}, \eqref{e4.2b2}, \eqref{e4.2f3}, and our choice of $\epsilon$, for each constant $R>R_3$ there exists a constant $R_0>R_3$ such that $R_1-v_1=R$. Thus \eqref{e4.2d} and \eqref{e4.2c0} hold with $R_1-v_1$ replaced by $R$ for all $R>R_3$ (cf.~\eqref{e4.2h2}), establishing Parts~(ii) and (iii). Here $\sigma_R$ is defined in \eqref{e4.2a} with $R_0\=f^{-1}(R)$.

\smallskip

    \textbf{Step 6.} We finally show that the set $\theta_R\=\{i\in \sigma_R:P_i'\text{ is a chamber}\}$ is independent of the choice of a sufficiently large $R$.

    \smallskip
    
    \textit{Claim.} There exists a constant $R_1''$ such that if $R\ge R_1''$, $t\in [-1,1]$, $i\in \theta_R$, then $i\in \theta_{(1+t\epsilon)R}$. 

    \smallskip

    If the claim is verified, then $\theta_R$ is independent of the choice of $R\ge R_1''$.

    \smallskip
    
    \textit{Proof of Claim.} By geometry of flats there exists a constant $R_F>0$ such that for each $1\le i\le m$ and each $R>R_F$, $P_i'\cap B(\phi_{F,\epsilon}(x),R)$ contains a ball on $F_i'$ of radius $0.4R$ if $P_i'$ is a chamber. Define 
    \begin{equation}\label{e4.2j1}
        R_1''\=\max\{R_3,\,2R_F/\epsilon\}.
    \end{equation}
    Suppose $R>R_1''$, $t\in [-1,1]$, and $i\in \theta_R$. Let $f$ be the map given above and set $R_0\=f^{-1}(R)$. Write $V^*_i \= \Int(V_i',5r_1)$. Since $x/2<f(x)<x$ for each $x>R_3$, to prove $i\in \theta_{(1+t\epsilon)R}$, by \eqref{e4.2a} and our choices of constants, it suffices to verify 
    \begin{equation}\label{e4.2j}
        \pi_{V_i}\bigl(B(V_i,\Delta)\cap \tphi_{F,\epsilon}(F)\bigr)\cap V^*_i \cap B_{R_1/2}\neq \emptyset.
    \end{equation}
    By our choice of $R_F$, $R_1>100r_1$, and elementary geometric considerations, there exists a point $z'\in P_i'$ such that $B(z',0.1R_1)\cap F_i'\subseteq \Int(V_i',5r_1)\cap B_{R_1/2}$. Since $\eta C'(R_1)<0.1R_1$, by \eqref{e4.2a2}, we have $\phi_i'(W_i')\cap B(z',0.1R_1)\neq \emptyset$. By our definitions of $V_i$ and $W_i'$ and geometric considerations, 
    \begin{equation*}
         \phi_i'(W_i')\cap V^*_i
         =\pi_{V_i'}\bigl(B(V_i',\Delta)\cap \tphi_{F,\epsilon}(F)\bigr)\cap V^*_i
         =\pi_{V_i}\bigl(B(V_i,\Delta)\cap \tphi_{F,\epsilon}(F)\bigr)\cap V^*_i.
    \end{equation*}
    By combining the above relations, we verify \eqref{e4.2j} and the claim holds.
\end{proof}

We have laid out the proof of Lemma~\ref{t4.2} in full as a gentle illustration; the arguments from Lemma~\ref{t4.4} through Proposition~\ref{t4.9a} follow the same thread, albeit at great length, and we shall therefore leave their detailed proofs to the reader's indulgence.

\subsection{Chambers to chambers}
In this subsection, we present Lemma~\ref{t4.4} to Proposition~\ref{t4.9a} as a series of subsequent geometric lemmas. The final goal of these lemmas is to show that chambers are mapped near chambers, as stated in Proposition~\ref{t4.9a}.

Based on Lemma~\ref{t4.2}, it can be shown that hyperplanes are almost surely mapped near finite unions of hyperplanes, as stated in the following lemma: 
\begin{lemma}[Hyperplanes to finite unions of hyperplanes]\label{t4.4}
      Given $\kappa>1$, there exist constants $M'\in \N$, $\epsilon_2\in (0,1/100)$, and $\lambda_2>1$ such that if $C>0$, $\epsilon\in(0,\epsilon_2)$, and $\rho>1/\epsilon$, then there exists a constant $R_2>0$ such that for each $D\subseteq \X$, each $\phi\in \QIE_{\kappa,C}(D,\X)$, each $F\in \cF_{\epsilon,D,\rho}'$, each $x\in U_3(F)$, and each hyperplane $L\subseteq F$ passing through $x$, there exist hyperplanes $L_1,\,\dots,\,L_{M'}$ such that for each $R>R_2 $,  
    \begin{equation}\label{et4.4a}
    \tphi_{F,\epsilon}(L)\cap A_R(\phi_{F,\epsilon}(x))\subseteq \Nb\biggl(\bigcup_{i=1}^{M'}L_i,\lambda_2\epsilon R\biggr)\cap A_R(\phi_{F,\epsilon}(x)),
    \end{equation} 
    and moreover, if Proposition~\ref{t3.5}~(iii) holds for $M_1=1$, then there exists a single hyperplane $L_0$ such that 
    $       \tphi_{F,\epsilon}(L)\cap A_R(\phi_{F,\epsilon}(x))\subseteq \Nb(L_0,\lambda_2\epsilon R)\cap A_R(\phi_{F,\epsilon}(x))$.
\end{lemma}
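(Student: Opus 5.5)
The plan is to deduce the statement from Lemma~\ref{t4.2} applied to $F$ and to the two transverse flats in $\Trans_F(x)$, following \cite[Lemma~3.7]{eskin1998quasi}. Since $x\in U_3(F)$, both transverse flats $F'$ lie in $\cF_{\epsilon,D,\rho}$ and satisfy $x\in U_2(F')$, so Lemma~\ref{t4.2} applies to each of them. The grid maps $\phi_{F,\epsilon}$ and $\phi_{F',\epsilon}$ agree at $x$ (both equal $\phi(x)$ when $x\in D$, and otherwise differ by at most $2\kappa n_\epsilon+C$), so all three images are centred at essentially the same point $\phi_{F,\epsilon}(x)$. Let $L=F\cap F'$ for the transverse flat $F'$ over the appropriate factor.

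First, for $R>R_3$, I cover $\tphi_{F,\epsilon}(F)\cap B_{2R}$ and $\tphi_{F',\epsilon}(F')\cap B_{2R}$ by neighbourhoods of unions of polyhedra. These are $\bigcup_{i\in\sigma_{2R}}P_i$ and $\bigcup_{j\in\sigma'_{2R}}Q_j$, each with at most $4M_1^3$ pieces, and each neighbourhood has radius $2\lambda_3\epsilon R$. The points of $L\cap U_1(F)\cap U_1(F')$ are mapped by $\phi$ itself; up to the additive error $4\kappa n_\epsilon$ and the graded error $6\kappa\epsilon R$, they land in both images. Hence $\tphi_{F,\epsilon}(L)\cap A_R$ lies in the $c\epsilon R$-neighbourhood of both polyhedral unions, where $c$ depends only on $\kappa$.

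Second, I show that intersecting these neighbourhoods gives a neighbourhood of polyhedral intersections, namely
\[
\Nb\Bigl(\bigcup P_i, c\epsilon R\Bigr)\cap\Nb\Bigl(\bigcup Q_j,c\epsilon R\Bigr)\cap A_R \subseteq \Nb\Bigl(\bigcup_{i,j} P_i\cap Q_j, c'\epsilon R\Bigr).
\]
This reduces to a statement about pairs of polyhedra in flats through the common point $\phi_{F,\epsilon}(x)$, and in $T_1\times T_2$ it follows from the tree-distance estimates (Lemma~\ref{t4.3}). For each such pair, either $P_i\cap Q_j$ is contained in a hyperplane, or it is two-dimensional.

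Third, I rule out the two-dimensional case near $A_R$, and this is the main obstacle. Suppose a two-dimensional piece $P_i\cap Q_j$ met $A_R$ in a ball of radius $\gg\epsilon R$. By part (iii) of Lemma~\ref{t4.2}, that ball is close to both $\tphi_{F,\epsilon}(F)$ and $\tphi_{F',\epsilon}(F')$. Pulling back by the graded quasi-isometric embeddings then yields a two-dimensional region of $F$ and a two-dimensional region of $F'$ whose images lie within $c\epsilon R$ of each other. Since $\phi$ is a quasi-isometric embedding on $D$, the corresponding points of $D$ are within $O(\kappa\epsilon R)$ of each other. But $F$ and $F'$ diverge away from $L$: points of $F\smallsetminus L$ and $F'\smallsetminus L$ at distance $\sim R$ from $x$, and at distance $\ge\delta R$ from $L$, lie at distance $\gtrsim\delta R$ apart. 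The delicate part is the density bookkeeping showing that a two-dimensional region must contain such points. I would use Lemma~\ref{t4.0b}: a two-dimensional ball has a preimage of positive area, and a set of positive area in $F$ cannot lie within $O(\epsilon R)$ of the line $L$ once $\epsilon$ is small in terms of $\kappa$. This contradiction bounds each piece near $A_R$ to within $c''\epsilon R$ of a hyperplane. Those hyperplanes extend the one-dimensional intersections $P_i\cap Q_j$, and they do not depend on $R$ once $R$ is large, by the stability of $\sigma_R$ in part (iv) of Lemma~\ref{t4.2}. Collecting them gives at most $M'\le 16M_1^6$ hyperplanes, with $\lambda_2=c''$ and $R_2=\max\{R_3,R_1''\}$.

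Finally, if $M_1=1$, then $\tphi_{F,\epsilon}(F)$ is near a single flat $F_1$ and $\tphi_{F',\epsilon}(F')$ is near the polyhedra of Lemma~\ref{t4.2}. The argument above then places the image of $L$ near the intersection of $F_1$ with one flat. That intersection is a single hyperplane, which is the claimed $L_0$.
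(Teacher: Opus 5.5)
Your overall route is the one the paper indicates. You realize $L=F\cap F'$ with $F'$ the transverse flat, apply Lemma~\ref{t4.2} to both flats, combine the two covers via Lemma~\ref{t4.3}, and exclude thick pieces. The exclusion works because their preimages in $F$ would lie in an $O(\epsilon R)$-strip around $L$ (since $d(u,v)\ge d(u,L)$ for $u\in F$, $v\in F'$), while Lemma~\ref{t4.0b}, applied to a coarse inverse, forces quadratic area. That exclusion step is sound.

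Your second step, however, is false as written. The pieces of Lemma~\ref{t4.2} are shrunk, $P_i=\Int(P_i',r_1)$ and $Q_j=\Int(Q_j',r_1')$, so overlapping neighbourhoods need not come from intersecting polyhedra. Take for instance $\phi$ the inclusion $D\hookrightarrow\X$, with $F,F'$ among the flats of Proposition~\ref{t3.5}. Then $\pi_2(P_i)\subseteq L_2$ and $\pi_2(Q_j)\subseteq L_2'$ both avoid a ball of radius about $r_1$ around $x_2$, so every $P_i\cap Q_j$ is empty while $\tphi_{F,\epsilon}(L)\cap A_R$ is not. What Lemma~\ref{t4.3}(ii) actually gives is $B(P_i,r)\cap B(Q_j,r)\subseteq B(N_{ij},2r)$ for the nearest-point set $N_{ij}=\{y\in P_i:d(y,Q_j)=d(P_i,Q_j)\}$. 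Your dichotomy and large-ball exclusion must be run on these sets. Only pairs with $d(P_i,Q_j)\le 2c\epsilon R$ matter, so a large ball in $N_{ij}$ is still $O(\epsilon R)$-close to both images.

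Similarly, for $y\in L$, $\tphi_{F,\epsilon}(y)$ is $\phi_{F,\epsilon}$ of a point of $U_1(F)$ that usually lies off $L$. So the comparison of $\tphi_{F,\epsilon}$ with $\tphi_{F',\epsilon}$ must be made at every point of $L$, as in the proof of Lemma~\ref{t5.5}, not only on $L\cap U_1(F)\cap U_1(F')$.

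The more serious gap is the $R$-independence of the hyperplanes together with your choice $R_2=\max\{R_3,R_1''\}$. Lemma~\ref{t4.2}(iv) only says that $\theta_R$ (the chamber indices) stabilizes; it says nothing about $\sigma_R$. Even with stable indices, the sets $P_i=\Int(P_i',r_1)$, and hence your thin pieces, move with $R$, because $r_1=r(R_1)$ grows linearly in $R$.

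Moreover, $R_1''=\max\{R_3,2R_F/\epsilon\}$ depends on how far the apexes of the chambers $P_i'$ lie from $\phi_{F,\epsilon}(x)$, i.e.\ on $\phi$ and $x$. So your $R_2$ is not the uniform constant the statement requires. That uniformity is used later: $R_0$ in Proposition~\ref{t4.5} and $\Delta_2$ in Lemma~\ref{t6.0}, hence $\cR$, must depend only on $\kappa,C,\epsilon,\rho$.

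A repair is to take the hyperplanes from $R$-independent data:
\begin{itemize}
\item the hyperplanes extending the sides of the fixed pieces $P_i'$, $Q_j'$ inside fixed flats through the centre;
\item the hyperplanes through the branch and projection points, in $T_m$, of the projections of the fixed flats of Proposition~\ref{t3.5}.
\end{itemize}
By Lemma~\ref{t4.3}(i) and tree geometry, every side of $N_{ij}$ lies within $O(r_1+r_1')=O(\epsilon R)$ of one of these. The exclusion shows that every point of $N_{ij}\cap B(\phi_{F,\epsilon}(x),2R)$ is within $O(\epsilon R)$ of a side of $N_{ij}$. With this, $R_2$ can be taken to be a uniform multiple of $R_3$.

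Finally, the $M_1=1$ clause needs both $F$ and $F'$ to map near single flats $F_1,G_1$. With only $F$ single, your argument gives no reason for a single hyperplane, and in any case $F_1\cap G_1$ need not be a hyperplane. Instead, use the nearest-point set of $F_1$ and $G_1$, which passes within bounded distance of $\phi_{F,\epsilon}(x)$, and apply the same exclusion to it.
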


Recall that $\tphi_{F,\epsilon}$ is the regularized grid map defined in Proposition~\ref{t3.5} and $A_R(\phi_{F,\epsilon}(x))$ is the annular domain defined in Definition~\ref{d2.3}.
    We use the concept of transverse flats and Lemma~\ref{t4.2} to prove Lemma~\ref{t4.4}. Namely, if a hyperplane $L$ is equal to the intersection of two transverse flats $F$ and $F'$, then the image of $L$ is near the union of the intersections of the polyhedra obtained in Lemma~\ref{t4.2} for $F$ and $F'$. Each such intersection is close to a hyperplane since its preimage lies in some neighborhood of $L$. We also rely on negative curvature features of the space; in the setting of this article, we would only need the simple geometric input from Lemma~\ref{t4.3}.

The following statement can be deduced from Lemma~\ref{t4.4} by geometry of singular geodesic rays. It can be viewed as a weaker version of Proposition~\ref{t4.5}.
    \begin{lemma}\label{t4.9}
    Given $\kappa>1$, let $\epsilon_2\in (0,1/100)$ and $\lambda_2>1$ be the constants given in Lemma~\ref{t4.4}. If $C>0$, $D\subseteq \X$, $\phi\in \QIE_{\kappa,C}(D,\X)$, $\epsilon\in (0,\min\{\epsilon_2,\,1/(100\kappa\lambda_2)\})$, $\rho>1/\epsilon$, $F\in \cF_{\epsilon,D,\rho}'$, and $x\in U_3(F)$, then there exists $R_0'>0$ such that for each singular geodesic ray $L\subseteq F$ emanating from $x$, there exists a unique singular geodesic ray $L'$ emanating from $\phi_{F,\epsilon}(x)$ such that 
    \begin{equation*}
    \tphi_{F,\epsilon}(L)\smallsetminus B(\phi_{F,\epsilon}(x),R_0')\subseteq L'[3\lambda_2\epsilon]\smallsetminus B(\phi_{F,\epsilon}(x),R_0').
    \end{equation*}
\end{lemma}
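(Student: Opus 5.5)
We derive Lemma~\ref{t4.9} from Lemma~\ref{t4.4} applied to the hyperplane $L_0\supseteq L$ through $x$ in $F$, together with a connectedness argument across the dyadic annuli $A_R(\phi_{F,\epsilon}(x))$, $R=2^k$. Write $o\=\phi_{F,\epsilon}(x)$. Lemma~\ref{t4.4} gives hyperplanes $L_1,\dots,L_{M'}$ such that, for every $R>R_2$,
\[
\tphi_{F,\epsilon}(L)\cap A_R(o)\subseteq \Nb\Bigl(\bigcup_i L_i,\lambda_2\epsilon R\Bigr).
\]
Intersecting each $L_i$ with the complement of a large ball around $o$ leaves finitely many singular rays. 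We take $S_1,\dots,S_N$ with $N\le 2M'$ to be the singular geodesic rays emanating from $o$ that are Hausdorff-equivalent, at linear scale, to the rays of the $L_i$. Concretely, each ray of $L_i$ far from $o$ lies within bounded distance of the unique singular ray from $o$ with the same endpoint; this uses the tree structure via Lemma~\ref{t4.3}. Hence, after enlarging $R$, we obtain $\tphi_{F,\epsilon}(L)\cap A_R(o)\subseteq\bigcup_j S_j[2\lambda_2\epsilon]$.

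Next we show that distinct rays $S_j\ne S_k$ from $o$ have conical neighborhoods $S_j[3\lambda_2\epsilon]$ and $S_k[3\lambda_2\epsilon]$ that are disjoint outside a ball. In a product of trees, two singular rays from $o$ either lie in a common flat at angle $\pi/2$ or $\pi$, or they share an initial segment in one tree factor and then branch. If they branch, the distance between points at distance $t$ from $o$ is at least $ct$, with $c$ depending only on the branch point once $t$ is large. Since $\epsilon<1/(100\kappa\lambda_2)$, the cones are therefore separated linearly in $R$ beyond some radius $R_0'$ that depends on the finitely many $S_j$ (this is why $R_0'$ depends on $\phi$, $F$, and $x$). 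The gap between the cones has width greater than $\lambda_2\epsilon R$, whereas $\tphi_{F,\epsilon}$, as a graded quasi-isometric embedding (Proposition~\ref{t3.5}~(iv)), moves consecutive points of $L$ by at most $2\kappa+\max\{C',6\kappa\epsilon d\}$. Consequently the image of the coarsely connected set $L\smallsetminus B(x,r)$, for $r$ large, cannot jump between two different cones. Since $L\smallsetminus B(x,r)$ is a single ray, its image lies in a single cone $S_j[3\lambda_2\epsilon]$; we set $L'\=S_j$. The lower bound of the graded embedding ensures that the image leaves $B(o,R_0')$, so the inclusion holds after removing the ball.

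Uniqueness follows because the image is unbounded: it meets $A_R(o)$ for all large $R$. If two distinct rays had the property, the image would lie in the intersection of their cones outside the ball, and this intersection is empty by the separation step.

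The main obstacle is the separation step, namely making the conical separation quantitative with constants that are uniform where the statement requires it. I would treat it with explicit case analysis in $T_1\times T_2$. For rays in $T_1$ and in $T_2$, the distance grows like $\sqrt2\,t$. For rays in the same factor branching at distance $s$ from $o$, the distance is $2(t-s)$, and we simply take $R_0'\gg s$.
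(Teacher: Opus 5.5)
Your route is the one the paper intends. The paper says only that Lemma~\ref{t4.9} follows from Lemma~\ref{t4.4} by the geometry of singular geodesic rays. Three of your steps are fine: replacing the hyperplanes $L_i$ by rays $S_j$ from $o=\phi_{F,\epsilon}(x)$, separating the cones, and getting uniqueness from the unboundedness of $\tphi_{F,\epsilon}(L)$.

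The step that does not close as written is the ``cannot jump'' step. You compare a gap of width $>\lambda_2\epsilon R$ with a jump of at most $2\kappa+\max\{C',6\kappa\epsilon d\}$. But $d=d(x,z)$ is a domain distance, and the graded lower bound centered at $x$ only gives $d(x,z)\le 2\kappa\bigl(d(o,\tphi_{F,\epsilon}(z))+C'\bigr)$. So at image radius $t$ a jump can be about $12\kappa^2\epsilon t$, and your comparison would require $\lambda_2\gtrsim 12\kappa^2$. The correct comparison uses the linear separation you also prove. Suppose $y\in S_j[2\lambda_2\epsilon]$, $y'\in S_k[2\lambda_2\epsilon]$, $j\neq k$, and $d(o,y)$ is at least twice the branching distance of $S_j,S_k$. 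Then $d(y,y')>d(o,y)/10$: otherwise $y\in S_j[a]\cap S_k[a]$ with $a<1/8$, which the same case analysis rules out beyond that radius. So what you actually need is $12\kappa^2\epsilon<1/10$.

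This does not follow from $\epsilon<1/(100\kappa\lambda_2)$ when $\lambda_2$ is small compared with $\kappa$. Nor can it be avoided using only Lemma~\ref{t4.4} and the graded bounds. Consider the map sending $L(s)$, for $s\in[2^k,2^{k+1})$, to the point at distance $s/(2\kappa)$ from $o$ on a fixed $T_1$-ray or a fixed $T_2$-ray, according to the parity of $k$. Once $\epsilon\ge 0.12\,\kappa^{-2}$ (up to additive constants), this map respects the $(2\kappa,6\kappa\epsilon,\cdot)$-graded bounds. Its image lies in two hyperplanes through $o$, yet in no single cone.

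The repair is cheap. The conclusion of Lemma~\ref{t4.4} only weakens as $\lambda_2$ grows, so you may take $\lambda_2\ge 2\kappa$. Then $\epsilon<1/(200\kappa^2)$, and jumps at radius $t$ are at most $0.06\,t+O(1)<t/10$ for large $t$. With that, your connectivity argument goes through, taking $R_0'$ to be the maximum over the four singular rays of $F$ at $x$.
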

 
We then derive the following lemma from Lemma~\ref{t4.9}. Roughly speaking, it states that the preimages of walls of chambers are coarsely singular geodesic rays. The key geometric input in this lemma is Lemma~\ref{t4.0c}.
\begin{lemma}\label{t4.9b}
    Given $\kappa>1$, there exist constants $\lambda_1'>1$ and $\epsilon_1'\in (0,1/100)$ such that if $C>0$, $D\subseteq \X$, $\phi\in \QIE_{\kappa,C}(D,\X)$, $\epsilon\in \bigl(0,\epsilon_1'\bigr)$, $\rho>1/\epsilon$, $F\in \cF_{\epsilon,D,\rho}'$, and $x\in U_3(F)$, then there exists a constant $R_1'>0$ such that for each $j\in\{1,\,2\}$ and each chamber $\fC'=L_1'\times L_2'$ with apex at $\phi_{F,\epsilon}(x)$ that is Hausdorff equivalent to some polyhedron $P_i',\,i\in\theta_{R_1''}$ given in Lemma~\ref{t4.2}, there exists a singular geodesic ray $L_j\subseteq F$ emanating from $x$ such that 
    \begin{equation*}
        L_j'\smallsetminus B(\phi_{F,\epsilon}(x),R_1')\subseteq \tphi_{F,\epsilon}(L_j)[\lambda_1'\epsilon]\smallsetminus B(\phi_{F,\epsilon}(x),R_1').
    \end{equation*}
\end{lemma}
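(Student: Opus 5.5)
We argue by combining Lemma~\ref{t4.2} with Lemma~\ref{t4.9} and a coarse-surjectivity argument for the map restricted to a chamber. Fix $j\in\{1,2\}$ and a chamber $\fC'=L_1'\times L_2'$ with apex $\phi_{F,\epsilon}(x)$ that is Hausdorff equivalent to $P_i'$ with $i\in\theta_{R_1''}$. By Lemma~\ref{t4.2}~(iii) and~(iv), for every $R\ge R_1''$ the set $\fC'\cap B(\phi_{F,\epsilon}(x),R)$ lies, up to an error $O(\lambda_3\epsilon R)$ plus the Hausdorff constant between $\fC'$ and $P_i'$, in the $\lambda_3\epsilon R$-neighborhood of $\tphi_{F,\epsilon}(F)$. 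In particular, every point $z$ of the wall $L_j'$ far from the apex is $O(\epsilon d(z,\phi_{F,\epsilon}(x)))$-close to $\tphi_{F,\epsilon}(y)$ for some $y\in F$. The task is to show that such $y$ lie in a thin cone around one singular geodesic ray $L_j$ of $F$ emanating from $x$, and that this ray is the same for all $z$.

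\textbf{Step 1.} Let $W\subseteq F$ be the set of points $y$ with $\tphi_{F,\epsilon}(y)$ near $\fC'$. Using the graded quasi-isometric embedding property (Proposition~\ref{t3.5}~(iv)) and a nearest-point projection onto a flat containing $\fC'$, the composition is a quasi-isometric embedding of $W$ into $\fC'\subseteq\Z^2$ on each ball $B(x,3\kappa R)$, with constant $O(\epsilon R)$. Coarse surjectivity onto the sector (from Lemma~\ref{t4.2}~(iii)) lets us invoke Lemma~\ref{t4.0c}, the Euclidean input which says that the preimage of a boundary ray of a coarsely covered sector is coarsely a ray. Applied at each scale $R$, this shows that the preimage of $L_j'\cap A_R(\phi_{F,\epsilon}(x))$ lies within $O(\epsilon R)$ of a ray $\ell_R$ from $x$ in $F$.

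\textbf{Step 2.} Show that $\ell_R$ is, up to angle $O(\epsilon)$, a singular ray. For each of the four singular rays $L$ of $F$ from $x$, Lemma~\ref{t4.9} gives a singular ray $L'$ with $\tphi_{F,\epsilon}(L)\subseteq L'[3\lambda_2\epsilon]$ outside a ball. The walls of the chamber $\fC'$, which is the coarse image of the sector $P_i'$, are the only parts of $\fC'$ near which other polyhedra $P_k'$ can meet $\tphi_{F,\epsilon}(F)$. Because the image of $F$ near the apex is a union of chambers glued along images of the singular rays of $F$, the boundary of the preimage of $\fC'$ consists of singular rays of $F$. Concretely, we argue by contradiction. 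If $\ell_R$ made an angle $\gg\epsilon$ with every singular ray, then a small ball around a point of $\ell_R$ at distance $R$ would be mapped, via Step~1, onto a neighborhood of a wall point. That neighborhood meets $\fC'$ in only a half-disc, while the images of both sides of $\ell_R$ (from the flatness of $F$ locally in a regular sector) would have to lie in flats. The volume estimate of Lemma~\ref{t4.0b}, used as in Step~4 of Lemma~\ref{t4.2}, then forces the other half to map into some $P_k'$ meeting $\fC'$ along $L_j'$. Lemma~\ref{t4.3} rules this out unless $\ell_R$ is a preimage of a branching, which Lemma~\ref{t4.9} locates only near singular rays. We may therefore take $\ell_R$ to be a singular ray $L_j^{(R)}$.

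\textbf{Step 3.} Independence of $R$. Consecutive scales $R$ and $2R$ overlap in an annulus, so $L_j^{(R)}=L_j^{(2R)}$ once $\lambda_1'\epsilon$ is small compared with the separation between distinct singular rays. Hence a single $L_j$ works for all $R\ge R_1'$. The result is $L_j'\smallsetminus B\subseteq\tphi_{F,\epsilon}(L_j)[\lambda_1'\epsilon]$, with $\lambda_1'$ collecting $\lambda_3$, $\lambda_2$ and the constants of Lemma~\ref{t4.0c}, and with $\epsilon_1'$ chosen so that all these errors stay below a fixed fraction.

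The main obstacle is Step~2: excluding a preimage of the wall that is a non-singular ray. Linear quasi-isometries of $\Z^2$ do not preserve axis directions, so the argument must genuinely use the tree branching along walls via Lemma~\ref{t4.9} and the other polyhedra, not Euclidean geometry alone.
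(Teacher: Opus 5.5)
Your Step~2 does not go through, and the problem starts in Step~1, where Lemma~\ref{t4.0c} is misread.

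That lemma is not about boundary rays of a coarsely covered sector. Its essential hypothesis is \eqref{ea.60}: \emph{every} hyperplane of the domain is sent near boundedly many hyperplanes. Its conclusion concerns only axis-parallel lines inside $\Int_{\Z^2}(V',\beta(C+C'))$. Coarse surjectivity from Lemma~\ref{t4.2}~(iii) is not what makes it applicable. Without \eqref{ea.60} your straight rays $\ell_R$ need not exist: for spiral-type bi-Lipschitz maps such as $z\mapsto z\abs{z}^{i\alpha}$, the preimage of a ray turns by an angle of order $\alpha$ across every annulus.

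Step~2 then tries to recover axis-parallelism from Lemma~\ref{t4.9}, branching along $L_j'$, and Lemma~\ref{t4.3}. That information cannot exclude the following configuration.
\begin{itemize}
\item Take rays $a_1,a_2,a_3$ in $T_1$ and $b_1,b_2,b_3$ in $T_2$ from the coordinates of $\phi_{F,\epsilon}(x)$, with distinct first edges. Let $Z$ be the union of the six chambers $a_1\times b_1$, $a_2\times b_1$, $a_2\times b_2$, $a_3\times b_2$, $a_3\times b_3$, $a_1\times b_3$.
\item $Z$ is a cone of angle $3\pi$ lying in the two flats $(a_1\cup a_2)\times(b_1\cup b_2)$ and $(a_1\cup a_3)\times(b_2\cup b_3)$. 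It is a quasi-flat of the kind that makes $M_1>1$ possible in Theorem~\ref{t3.4}.
\item A piecewise-linear bi-Lipschitz map $F\to Z$ can send the four singular rays from $x$ to $a_1,b_2,a_3,b_3$, and one quadrant onto $a_1\times b_1\cup a_2\times b_1\cup a_2\times b_2$.
\item This map satisfies the conclusions of Lemma~\ref{t4.2} and Lemma~\ref{t4.9}, and even of Lemma~\ref{t4.4} and Proposition~\ref{t4.5} at the single base point $x$.
\item Yet the wall $b_1$ of $\fC'=a_1\times b_1$ is the image of a ray in the open quadrant, so the conclusion of Lemma~\ref{t4.9b} fails.
\item There is no branching along $b_1$ either. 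The only two chambers of $Z$ containing it lie in a common flat, so Lemma~\ref{t4.3} gives no contradiction.
\end{itemize}
Your sentence ``the image of $F$ near the apex is a union of chambers glued along images of the singular rays of $F$'' is exactly what has to be proved. When the image lies near a single flat, Lemma~\ref{t4.9} alone does suffice, since the four walls are then the four image rays. The whole difficulty is the case $M_1>1$.

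What excludes this configuration is the hyperplane property at base points other than $x$. Horizontal lines of $F$ far from $x$ that cross the preimage of $a_2\times b_1$ are sent to non-axis-parallel segments there. This contradicts Lemma~\ref{t4.4} applied at points of $U_3(F)$ on those lines. That is where the transverse flats, and the density of $U_3(F)$ built into $F\in\cF'_{\epsilon,D,\rho}$, enter.

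So the missing steps are these.
\begin{itemize}
\item Verify \eqref{ea.60}, with $C'$ of order $\lambda_2\epsilon R$, for $\psi=\pi_{F_i'}\circ\tphi_{F,\epsilon}$ on the preimage of $P_i\cap B(\phi_{F,\epsilon}(x),R)$. Lemma~\ref{t4.2}~(i)--(iii) make $\psi$ a coarsely onto quasi-isometry there, with additive constant $O(\epsilon R)$.
\item Apply the converse half of Lemma~\ref{t4.0c} to the line parallel to $L_j'$, pushed a distance $O(\epsilon R)$ into the chamber so that it lies in $\Int_{\Z^2}(V',\beta(C+C'))$.
\end{itemize}
The output is already an axis-parallel line of $F$, passing within $O(\epsilon R)$ of $x$ because $L_j'$ emanates from $\phi_{F,\epsilon}(x)$. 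This replaces your Step~2 entirely. Lemma~\ref{t4.9} together with connectivity then selects the correct half-line $L_j$, and your Step~3 gives independence of $R$.
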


It can now be verified that chambers are mapped near chambers based on Lemma~\ref{t4.9b}. 
\begin{prop}[Chambers to chambers]\label{t4.9a}
    Given $\kappa>1$, there exist constants $\epsilon_2'\in (0,1/100)$ and $\lambda_1>1$ such that if $C>0$, $D\subseteq \X$, $\phi\in \QIE_{\kappa,C}(D,\X)$, $\epsilon\in (0,\epsilon_2')$, $\rho>1/\epsilon$, $F\in \cF_{\epsilon,D,\rho}'$, $x\in U_3(F)$, and $\fC=L_1\times L_2\subseteq F$ is a chamber with apex at $x$, then there exists a chamber $\fC'=L_1'\times L_2'$ unique up to Hausdorff equivalence (cf.~Definition~\ref{d2.4}) such that the following hold:
    \begin{enumerate}[label=\rm{(\roman*)}]
       \smallskip
       \item $\tphi_{F,\epsilon}(\fC)\sim_{\lambda_1\epsilon} \fC'$.
       \smallskip
       \item There exist $\sigma\in\{\id,\,(1,2)\}$ and $R_2'>0$ such that 
    \begin{equation*}
        \tphi_{F,\epsilon}(L_i)\smallsetminus B(\phi_{F,\epsilon}(x),R_2')\subseteq L_{\sigma(i)}'[\lambda_1\epsilon]\smallsetminus B(\phi_{F,\epsilon}(x),R_2') .
    \end{equation*}     
    \end{enumerate}
\end{prop}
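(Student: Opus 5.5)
The plan is to build $\fC'$ from the images of the two walls and then show that the whole chamber is mapped coarsely onto it. Apply Lemma~\ref{t4.9} to each wall $L_i$ of $\fC$. This gives a unique singular geodesic ray $L_i''$ emanating from $\phi_{F,\epsilon}(x)$ with
\[
\tphi_{F,\epsilon}(L_i)\smallsetminus B(\phi_{F,\epsilon}(x),R_0')\subseteq L_i''[3\lambda_2\epsilon].
\]
The first task is to show that $L_1''$ and $L_2''$ lie in different factors, so that $L_1''\times L_2''$ (reordered by some $\sigma\in\{\id,\,(1,2)\}$) is a chamber with apex $\phi_{F,\epsilon}(x)$. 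For this I restrict $\tphi_{F,\epsilon}$ to $\fC$ and use Lemma~\ref{t4.2}. By parts (ii) and (iii) and the stabilization in part (iv), the image of $\fC\cap B(x,r)$ is, at scale $R$ up to error $\lambda_3\epsilon R$, a union of polyhedra $P_i$ with $i\in\sigma_R$. Each $P_i$ lies in a flat through $\phi_{F,\epsilon}(x)$. Since $\fC$ is a two-dimensional quarter plane, a volume-growth argument (as in Step~4 of Lemma~\ref{t4.2}, via Lemma~\ref{t4.0b}) shows that its image cannot lie near a finite union of hyperplanes and lower-dimensional pieces. Hence at least one index in $\theta_{R_1''}$ is used, so some chamber $\fC'$ with apex $\phi_{F,\epsilon}(x)$, Hausdorff equivalent to a $P_i'$ with $i\in\theta_{R_1''}$, meets the image in a set of quadratic growth.

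Next, apply Lemma~\ref{t4.9b} to this $\fC'=L_1'\times L_2'$. Each wall $L_j'$ lies, outside a ball, in a thin cone $\tphi_{F,\epsilon}(L_{j}^{*})[\lambda_1'\epsilon]$ around the image of some singular ray $L_j^{*}$ from $x$ in $F$. I then need to show $L_j^{*}\in\{L_1,L_2\}$. First, the region of $F$ whose image is near $\fC'$ is coarsely a sector bounded by $L_1^{*}$ and $L_2^{*}$, since the boundary of the image of that sector must follow the walls. Second, this sector must be $\fC$ itself. The two points here are that the preimages of walls are coarse singular rays, and that a graded quasi-isometric embedding of $F$, restricted to $\fC$, carries the boundary of $\fC$ to coarse boundary of its image; the latter uses Corollary~\ref{t4.0a}, the coarse-topology/invariance-of-domain statement for Euclidean QI embeddings. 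Granting this, $\{L_1^{*},L_2^{*}\}=\{L_1,L_2\}$, and matching with the uniqueness in Lemma~\ref{t4.9} gives $L_{\sigma(i)}'=L_i''$. This proves (ii) with $\lambda_1\ge 3\lambda_2$ and $R_2'=\max\{R_0',R_1'\}$. Once the walls are matched, uniqueness of $\fC'$ up to Hausdorff equivalence is immediate: a chamber with given apex is determined by its walls, and equivalent chambers have equivalent walls.

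For (i), I combine both inclusions. Points of $\fC$ far from $x$ map into the $\lambda_3\epsilon R$-neighborhood of the polyhedra $P_i$ with $i\in\sigma_R$ (Lemma~\ref{t4.2}(ii)). Pieces coming from $P_i$ with $i$ not equal to the chosen chamber index must be ruled out, except within $O(\epsilon R)$. The argument is that such a piece would contain image points of $\fC$ separated from $\fC'$ by the image of a wall, contradicting the coarse connectedness of $\fC\smallsetminus \tphi_{F,\epsilon}^{-1}(\text{neighborhood of walls})$ together with (ii). Conversely, $\fC'\smallsetminus B$ lies in the $\lambda_3\epsilon R$-neighborhood of $\tphi_{F,\epsilon}(\fC)$: apply Corollary~\ref{t4.0a} to the projection of $\tphi_{F,\epsilon}|_{\fC}$ onto the flat containing $\fC'$, which is surjective up to $O(\epsilon R)$ on the interior, while near the walls one uses (ii). Converting these annulus-wise bounds into conical neighborhoods gives $\sim_{\lambda_1\epsilon}$.

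The main obstacle is the boundary-matching step, namely showing $L_j^{*}$ is exactly a wall of $\fC$ rather than some other singular ray in $F$. This is where the higher-rank/transverse-flat structure matters, since a general quasi-isometry of $\Z^2$ need not preserve quadrants. I would first try to get it from the transverse flats built into $U_3(F)$: each wall of $\fC$ is a hyperplane ray shared with a flat in $\Trans_F(x)$, and only such rays can map near walls of chambers in two different image flats, which forces the match.
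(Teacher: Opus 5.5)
Your outline uses the same ingredients the paper points to (Lemma~\ref{t4.2}, Lemma~\ref{t4.9}, Lemma~\ref{t4.9b}). However, the step you flag as the main obstacle is essentially the whole content of the proposition beyond Lemma~\ref{t4.9b}: showing that the rays $L_1^*,L_2^*$ which Lemma~\ref{t4.9b} attaches to your chamber $\fC'$ are exactly the walls of $\fC$. You grant this step rather than prove it, and neither mechanism you suggest supplies it.

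\emph{Transverse flats.} This idea cannot work. Each of the four singular rays of $F$ at $x$ lies in a hyperplane $F\cap F''$ with $F''\in\Trans_F(x)$. That information has already been used up in Lemmas~\ref{t4.4}, \ref{t4.9} and~\ref{t4.9b}, and it does not distinguish the walls of $\fC$ from the other two rays.

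\emph{Coarse invariance of domain.} Corollary~\ref{t4.0a} (``boundary goes to boundary'') does not decide which sector bounded by $L_1^*,L_2^*$ is the preimage of $\fC'$. A quasi-isometric embedding can carry a quarter-plane boundary-to-boundary onto a half-flat $a\times(b_1\cup b_2)$, with $a$ a ray in $T_1$ and $b_1\cup b_2$ a line in $T_2$. It can likewise carry a $3\pi/2$-sector onto a chamber.

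\emph{The sector claim.} Your claim that this preimage is a sector ``since the boundary of the image must follow the walls'' also needs care. $\X$ is not a manifold: a path can leave a chamber $a\times b$ through $\{a_k\}\times b$ for any vertex $a_k$ of $a$, far from the walls. The separation you want holds only inside the polyhedral complex $\bigcup_\ell P'_\ell$ of Lemma~\ref{t4.2}, at scales large compared with the partition. There, every exit from the piece equivalent to $\fC'$ is conically close to the walls of $\fC'$.

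The missing idea is to apply Lemma~\ref{t4.9} to all four singular rays $L_1^{\pm},L_2^{\pm}$ of $F$ at $x$, not only to the walls of $\fC$. Combine this with an elementary fact: a singular ray from $\phi_{F,\epsilon}(x)$ lying, outside a ball, in a thin cone around a chamber with apex $\phi_{F,\epsilon}(x)$ must be one of its walls, since otherwise its distance to the chamber grows linearly. The argument then runs as follows.

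\begin{enumerate}
\item By Lemmas~\ref{t4.9b} and~\ref{t4.9} (composing the two cones), each wall of $\fC'$ is one of the four image rays.
\item A point $y\in\fC$ with $d(y,\partial\fC)\ge\delta\, d(x,y)$ is at distance at least $\delta\, d(x,y)$ from all four rays $L_1^{\pm},L_2^{\pm}$.
\item Take $\epsilon$ small compared with $\delta/\kappa^2$. The graded lower bound, together with the reverse inclusion of each image ray in a cone around $\tphi_{F,\epsilon}(L)$ (obtained as in the proof of Proposition~\ref{t4.5}), keeps $\tphi_{F,\epsilon}(y)$ conically away from both walls of $\fC'$.
\item By the separation inside $\bigcup_\ell P'_\ell$, the coarsely connected image of $\{y\in\fC: d(y,\partial\fC)\ge\delta\, d(x,y)\}$ stays in a thin cone around $\fC'$, because your volume choice makes it meet $\fC'$ deep inside.
\item Taking $\delta$ a suitable multiple of $\epsilon$, $\tphi_{F,\epsilon}(L_i)$, and hence $L_i''$, lies in a thin cone around $\fC'$. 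So $L_1''$ and $L_2''$ are walls of $\fC'$, and being distinct they are its two walls.
\end{enumerate}

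This gives (ii) together with the inclusion half of (i). Note that your argument for (i) drew this inclusion from (ii), so it too rests on the missing step.
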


\subsection{Hyperplanes to hyperplanes}
Now we are ready to complete the proofs of Propositions~\ref{t4.5} and \ref{t4.8} based on Proposition~\ref{t4.9a}.
    
By utilizing \cite[Theorem~1.2]{wortman2006quasiflats}, we are now able to deduce the following proposition from Proposition~\ref{t4.9a}. It is a strengthening of Proposition~\ref{t3.5}~(iii).
\begin{prop}\label{t4.8}
    Suppose $\kappa>1$, $C>0$, $\epsilon\in (0,\epsilon_2')$, and $\rho>1/\epsilon$, where $\epsilon_2'\in (0,1/100)$ is the constant given in Proposition~\ref{t4.9a}. Then there exists a constant $\Delta_1>0$ such that for each $D\subseteq \X$, each $\phi\in \QIE_{\kappa,C}(D,\X)$, and each flat $F\in \cF_{\epsilon,D,\rho}'$, there exists a flat $F'$ such that 
    \begin{equation}\label{e4.8t}
        \tphi_{F,\epsilon}(F)=\phi_{F,\epsilon}(U_1(F))\subseteq \Nb(F',\Delta_1).
    \end{equation}
    Here $U_1(F)$ is the subset of $F$ defined in Section~\ref{step1}.
\end{prop}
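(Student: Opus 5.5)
The plan is to upgrade Proposition~\ref{t3.5}~(iii), which places $\tphi_{F,\epsilon}(F)$ near $M_1$ flats, to a single flat. The tool is Proposition~\ref{t4.9a}: it says the coarse image of every chamber of $F$ based at a good point is a single chamber. Once this is known for all chambers through one base point, the whole coarse image of $F$ is a single flat. Wortman's theorem (Theorem~\ref{t3.4}) then turns this asymptotic statement into a bound with a uniform additive constant $\Delta_1$.

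\textbf{Step 1: the four chambers at a base point.} Fix $F\in\cF'_{\epsilon,D,\rho}$ and a point $x\in U_3(F)$; such an $x$ exists by \eqref{e4.0}. The four chambers of $F$ with apex $x$ are $\fC_{\pm\pm}$, built from the rays $L_1^\pm$ and $L_2^\pm$ into which the two hyperplanes through $x$ split. Proposition~\ref{t4.9a} gives image chambers $\fC'_{\pm\pm}$ with apex $\phi_{F,\epsilon}(x)$, each of the form $L'\times L''$ for singular rays $L'$, $L''$. By part~(ii), each wall $L_i^\pm$ is sent near a singular ray $L'_{\sigma(i)}$. That ray is determined by the wall alone, by Lemma~\ref{t4.9}, so neighbouring chambers share image walls. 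The permutation $\sigma$ must be the same for all four chambers, because the wall $L_1^+$ is common to $\fC_{++}$ and $\fC_{+-}$.

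\textbf{Step 2: the image rays assemble into one flat.} The two image rays of $L_1^+$ and $L_1^-$ must be distinct rays in the same tree $T_{\sigma(1)}$, since $\tphi_{F,\epsilon}$ is a graded quasi-isometric embedding and $d(L_1^+(t),L_1^-(t))=2t$. Hence their union is a bi-infinite geodesic $\ell_1$; similarly $\ell_2$ is obtained in the other tree. Set $F'\=\ell_1\times\ell_2$ (with factors permuted if $\sigma=(1,2)$). Then each $\fC'_{\pm\pm}$ is one of the four chambers of $F'$ at $\phi_{F,\epsilon}(x)$, and Proposition~\ref{t4.9a}~(i) gives $\tphi_{F,\epsilon}(F)\sim_{\lambda_1\epsilon}F'$. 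In particular, outside a ball $B(\phi_{F,\epsilon}(x),R)$, the image is contained in the conical neighbourhood $F'[\lambda_1\epsilon]$.

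\textbf{Step 3: from sublinear to bounded distance.} By Proposition~\ref{t3.5}~(iii), $\tphi_{F,\epsilon}(F)$ lies in the $\Delta$-neighbourhood of $F_1\cup\dots\cup F_{M_1}$. Whenever $F_j\not\sim F'$, the flat $F_j$ diverges linearly from $F'$ on some sector, so on that sector it leaves $F'[\lambda_1\epsilon]$ once $\lambda_1\epsilon$ is small. The image restricted to that sector would then have to be bounded. So we may discard every $F_j$ that is not Hausdorff equivalent to $F'$, and for the remaining ones $F_j\cap F'$ agrees with $F'$ outside a compact set. The remaining worry is the bounded part, so a second argument is needed there: apply Theorem~\ref{t3.4} to $\phi_{F,\epsilon}$ restricted to $U_0(F)$, re-centred at every $y\in U_2(F)$. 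In the uniform grid setting, the image of $U_1(F)$ near $\phi_{F,\epsilon}(y)$ lies within $\Delta$ of finitely many flats; by Step~2 at $y$ these must be $\sim F'$ and must pass within $\Delta$ of $\phi_{F,\epsilon}(y)$. Flats Hausdorff equivalent to $F'$ differ from it only in a bounded region, and the uniformity in Theorem~\ref{t3.4} then forces every image point to lie within a constant $\Delta_1$ of $F'$, where $\Delta_1$ depends only on $(\kappa,C,\epsilon,\rho)$ through $\Delta$, $M_1$ and $n_\epsilon$.

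\textbf{Main obstacle.} I expect Step~3 to be the hard part, because of the independence of $\Delta_1$ from $F$, $D$ and $\phi$. The natural approach is a contradiction/compactness argument: if image points at distance $\gg\Delta$ from $F'$ existed, they would lie near some $F_j\neq F'$ in the branching region. Recentring at a good point $y$ nearby, Step~2 at $y$ produces a second flat $F''$, and $F''$ is both $\sim F'$ and the limit of the offending $F_j$. Using that each $F_j$ is a product of tree geodesics, this should yield a linear separation that contradicts Proposition~\ref{t4.9a}~(i) at $y$.
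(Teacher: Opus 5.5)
Your Steps~1--2 match the paper's proof. Proposition~\ref{t4.9a} at one point $x\in U_3(F)$ gives a single flat $F'$ with $\tphi_{F,\epsilon}(F)\smallsetminus B(\phi_{F,\epsilon}(x),R)\subseteq F'[2\lambda_1\epsilon]$. (A minor point: two distinct rays from $\phi_{F,\epsilon}(x)$ in a tree can share an initial segment, so $\ell_1$ should be the geodesic joining their ends. This is harmless.)

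The gap is Step~3, which is the entire content of the proposition beyond Proposition~\ref{t4.9a}. Your discarding step fails, because for $F_j\neq F'$ the set $F_j\cap F'[\lambda_1\epsilon]$ is in general neither bounded nor at bounded distance from $F'$. For example, let $F_j=\ell_1^j\times\ell_2$, where $\ell_1^j$ leaves $\ell_1$ at a vertex $b$. Take any $u\in\ell_1^j$ at distance $t$ beyond $b$ and any $v\in\ell_2$ with $d(v,\pi_2(\phi_{F,\epsilon}(x)))\ge t/(\lambda_1\epsilon)$. Then $(u,v)$ lies in $F'[\lambda_1\epsilon]$ but at distance $t$ from $F'$, for every $t>0$. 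So the cone condition never stops the image from following thin wedges of such an $F_j$ arbitrarily far from $F'$. In addition, the radius in Proposition~\ref{t4.9a} is not uniform in $D$, $\phi$, $F$. Finally, Hausdorff equivalent flats in $\X$ are equal, since a bi-infinite geodesic in a tree is determined by its two ends. So ``discard every $F_j\not\sim F'$'' is literally the conclusion $\tphi_{F,\epsilon}(F)\subseteq\Nb(F',\Delta)$ that you are trying to prove.

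Re-centring does not repair this.
\begin{itemize}
\item Theorem~\ref{t3.4} has no base point. It produces one family of flats for the whole quasi-center $U_1(F)$, so re-centring gives nothing new. Applying it to a bounded truncation of $U_0(F)$ instead gives an empty quasi-center.
\item Step~2 at $y$ needs $y\in U_3(F)$, not merely $y\in U_2(F)$. Even then it only reproduces asymptotic information (the flat it yields is again $F'$), with a non-uniform radius. It does not tell you which $F_j$ the image of $U_1(F)$ follows near $\phi_{F,\epsilon}(y)$.
\item Since $U_3(F)$ is dense only in the asymptotic sense of \eqref{e4.0}, controlling the image at good base points would still leave the points of $U_1(F)$ far from $U_3(F)$ uncontrolled.
\item Your compactness sketch supplies no mechanism for the claimed ``linear separation''.
\end{itemize}

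The missing idea, which is how the paper concludes, is that in Wortman's proof of Theorem~\ref{t3.4} the flats are \emph{constructed} from pairs of opposite points of the limit set (Definition~\ref{da.1} and Remark~\ref{ra.1}). Your Step~2 shows exactly that $\fL_{\tphi_{F,\epsilon},\phi_{F,\epsilon}(x)}(2\lambda_1\epsilon)$ consists of the four chambers of $F'$, and their opposite pairs all span $F'$. Hence Theorem~\ref{t3.4}, i.e.\ Proposition~\ref{t3.5}~(iii), holds with the single flat $F'$ and its uniform constant $\Delta$. This gives $\Delta_1$ independent of $D$, $\phi$ and $F$.
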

\begin{proof}
    
    Suppose $D\subseteq\X$, $\phi\in \QIE_{\kappa,C}(D,\X)$, and $F\in \cF_{\epsilon,D,\rho}'$. Suppose $x$ is an arbitrary point in $U_3(F)$. Let $\fC_i,\,1\le i\le 4$, denote the chambers in $F$ with apex at $x$. 
    
    For each $1\le i\le 4$, let $\fC_i'$ be the chamber such that $\tphi_{F,\epsilon}(\fC_i)\sim_{\lambda_1\epsilon} \fC_i'$ given in Proposition~\ref{t4.9a}~(i). By Proposition~\ref{t4.9a}~(ii), there exist singular geodesic rays $L_1', \,L_2'\subseteq T_1$ and $L_3',\, L_4'\subseteq T_2$ emanating from $\phi_{F,\epsilon}(x)$ such that each $\fC'_i$ is equivalent to one of their products. Thus, there exists a single flat $F'$ and a constant $R_3'>0$ such that $\bigcup_{1\le i\le 4}\fC'_i\smallsetminus B(\phi_{F,\epsilon}(x),R_3')= F'\smallsetminus B(\phi_{F,\epsilon}(x),R_3')$. Then $\tphi_{F,\epsilon}(F)\smallsetminus B(\phi_{F,\epsilon}(x),R_3'')\subseteq F'[2\lambda_1\epsilon]\smallsetminus B(\phi_{F,\epsilon}(x),R_3'')$ for some constant $R_3''>0$ by Proposition~\ref{t4.9a}, where $\lambda_1>0$ is the constant depending only on $\kappa$ given in Proposition~\ref{t4.9a}. Then clearly the limit set $\fL_{\tphi_{F,\epsilon},\phi_{F,\epsilon}(x)}(2\lambda_1\epsilon)$ of $\tphi_{F,\epsilon}$ (cf.~Definition~\ref{da.1}) is equal to $\{\fC'_i:1\le i\le 4\}$. Then by Remark~\ref{ra.1}, the flats given in Proposition~\ref{t3.5} can be chosen to be $\{F'\}$ and we conclude our proof. 
\end{proof}

\begin{proof}[\bf Proof of Proposition~\ref{t4.5}]
    
Let $\kappa>1$ and $C>0$ be constants. Let $D$ be a subset of $\X$ and $\phi\in \QIE_{\kappa,C}(D,\X)$. Let $\epsilon_2'\in (0,1/100)$ and $\lambda_2>0$ be the constants given in Proposition~\ref{t4.9a} and Lemma~\ref{t4.4}, respectively. Let $\epsilon\in(0,\epsilon_2')$, $\rho>1/\epsilon$, $F\in \cF_{\epsilon,D,\rho}'$, $x\in U_3(F)$, and $L\subseteq F$ be a hyperplane passing through $x$. Let $L=L_1\cup L_2$, where $L_1$ and $L_2$ are singular geodesic rays emanating from $x$. Let $L_i'$ be the singular geodesic ray emanating from $\phi_{F,\epsilon}(x)$ close to $\tphi_{F,\epsilon}(L_i)$ given in Lemma~\ref{t4.9} for each $i\in \{1,\,2\}$.

By Proposition~\ref{t4.8} and Lemma~\ref{t4.4}, there exist a hyperplane $L''$ such that
\begin{equation*}
        \tphi_{F,\epsilon}(L)\smallsetminus B(\phi_{F,\epsilon}(x),R_2)\subseteq L''[\lambda_2\epsilon]\smallsetminus B(\phi_{F,\epsilon}(x),R_2).
\end{equation*}
Then for each $i\in\{1,\,2\}$, $L_i'\subseteq L''$ up to Hausdorff equivalence and $d(L'',\phi_{F,\epsilon}(x))\le 2R_2$. Thus, there exists a hyperplane $L'\subseteq L_1'\cup L_2'$ such that writing $R_0''\=2\epsilon^{-1}R_2$, we have
\begin{equation*}
        \tphi_{F,\epsilon}(L)\smallsetminus B(\phi_{F,\epsilon}(x),R_0'')\subseteq L'[\lambda_2\epsilon]\smallsetminus B(\phi_{F,\epsilon}(x),R_0'').
\end{equation*}
    We now use Lemma~\ref{t4.0} to show the other side holds as well. Define $\phi_{L'}\=\pi_{L'}\circ \tphi_{F,\epsilon}$, where $\pi_{L'}$ denotes the nearest-point projection from $\X$ to $L'$. Then $\phi_{L'}$ is a $(2\kappa,4\kappa\lambda_2\epsilon R+6\kappa R_0'')$-quasi-isometric embedding on $L\cap B(x,R)$ for each $R>R_0''$. Let $R_0'\=6\kappa R_0''/(\lambda_2\kappa\epsilon)+R_0''$. Then for $R>R_0'$, $\phi_{L'}$ is a $(2\kappa,5\kappa\lambda_2\epsilon R)$-quasi-isometric embedding on $L\cap B(x,R)$. By Lemma~\ref{t4.0} with $(n,\psi)$ set to be $(1,\phi_{L'})$, for $R>R_0'$ we have
    \begin{equation*}
        L'\cap B(\phi_{F,\epsilon}(x),R/\eta)\subseteq B(\phi_{L'}(L\cap B(x,R)),5\eta\kappa \lambda_2\epsilon R)\subseteq B\bigl(\tphi_{F,\epsilon}(L\cap B(x,R)),6\eta \kappa\lambda_2\epsilon R\bigr).
    \end{equation*}
    Write $R_0\=\eta R_0'$ which depends only on $\kappa$, $C$, $\epsilon$, and $\rho$. If $R>R_0$, denoting $\lambda_1\=6\kappa\eta^2 \lambda_2$, we have
    \begin{equation}
        L'\cap B(\phi_{F,\epsilon}(x),R)\subseteq B\bigl(\tphi_{F,\epsilon}(L),\lambda_1\epsilon R\bigr).
    \end{equation}
    Thus, we verify that \eqref{e4.5t2} and \eqref{e4.5t1} hold.
    
    To verify the uniqueness, suppose $\tphi_{F,\epsilon}(L_1)\sim_{\lambda_1\epsilon,\phi_{F,\epsilon}(x),R_0}L_3'$ and $\tphi_{F,\epsilon}(L_1)\sim_{\lambda_1\epsilon,\phi_{F,\epsilon}(x),R_0}L_4'$. Then we have $L_3'\sim_{3\lambda_1\epsilon}L_4'$ and thus $L_3'=L_4'$ for $\epsilon\in(0,\epsilon')$ where $\epsilon'\=\min\{1/(3\lambda_1),\,1/200,\epsilon''\}$ by geometry of geodesic rays (cf.~Fact~\ref{t5.2}). The statement that the choices of $L'$ and $L_1'$ are independent of the choice of $\rho$ immediately follows from the uniqueness.
\end{proof}

\section{The boundary maps}      \label{s5}

The purpose of this section is to construct the boundary map induced by $\phi$ and show that it factors through the boundaries of the two factor trees. We also investigate the regularity of the map. 

To fix notations, given a reference point $e=(e_1,e_2)\in \X$, for each $i\in \{1,\,2\}$, equip the tree boundary $\partial T_i$ with distance 
\begin{equation}\label{e5.0}
    d_{e_i}(L_1,L_2) \=  (q+1)^{-\abs{L_1\wedge L_2}},
\end{equation}
where $L_1$ and $L_2$ are two geodesic rays in $T_i$ emanating from $e_i$ and $\abs{L_1\wedge L_2}$ denotes the length (i.e.,~cardinality) of their common prefix. 
    
    The product boundary $\wx$ can be seen as represented by chambers in $\X$ with apex at $(e_1,e_2)$, which can be identified as $\partial T_1\times \partial T_2$. We equip $\wx$ with the product metric $d_e$ given by 
    \begin{equation*}
        d_e((\xi_1,\xi_2),(\eta_1,\eta_2))^2=d_{e_1}(\xi_1,\eta_1)^2+d_{e_2}(\xi_2,\eta_2)^2.
    \end{equation*}
    
    Next, we fix a choice of a countable set $\cU\subseteq \partial T_1\cup \partial T_2$ that is dense in both $\partial T_1$ and $\partial T_2$, and define $\cC\= \{L_1\times L_2:L_1\in \cU\cap \partial T_1,\,L_2\in \cU\cap \partial T_2\}$. Then $\cC$ is a countable dense set in $\partial T_1\times \partial T_2$. Note that by the continuity of the action of tree automorphisms on $\partial T_i$, for every automorphism $\gamma=(\gamma_1,\gamma_2)$, where $\gamma_i\in \operatorname{Aut}(T_i)$ for $i\in\{1,\,2\}$, the image $\gamma(\cC)$ is a countable dense set of chambers with apex at $\gamma(e_1,e_2)$.

    Choose a countable set $\cF$ of flats such that each chamber in $\cC$ is contained in at least one flat in $\cF$. By taking parallel translates, we assume $e\in F$ for each $F\in \cF$. Denote by $\widetilde{\cU}$ the set of pairs $(L,F)$ where $L\in\cU$ and $F\in \cF$ is a flat containing $L$. 
    
    Let $D$ be a random subset of $\X$. Recall that $\cF_{\epsilon,D,\rho}'$ is defined to be the set of flats $F\in \cF_{\epsilon,D,\rho}$ with the property that \eqref{e4.0} holds for each $x\in F$. Then for each pair of constants $\epsilon\in(0,1/100)$ and $\rho>1/\epsilon$, the event $\bigl\{\cF\subseteq \cF_{\epsilon,D,\rho}'\bigr\}$ is the intersection of a countable collection of events with probability one (cf.~Lemma~\ref{t4.1}). Thus $\cF\subseteq \cF_{\epsilon,D,\rho}'$ almost surely.

\subsection{Choice of constants}
    Recall that the hypotheses of Proposition~\ref{t4.5} require the origin $x$ of the singular geodesic ray to be contained in $U_3(F)$. In this subsection, we prove that we may choose suitable constants to satisfy this restriction. We first define an extended function crucial to the statements of Theorems~\ref{t1.2},~\ref{t1.3},  and~\ref{t1.4}.
    \begin{definition}\label{d5.0}
        Suppose $\epsilon\in (0,1)$, $x\in \X$, and $D\subseteq \X$. Denote by $\cF_x$ the set of flats containing $x$. Define, for each $i\in\{0,\,1\}$, 
        \begin{align*}
            \rho_i(D,x,\epsilon)&\=\sup \bigl\{ \cL_{U_0(F)}\bigl(10^{-5-i}\epsilon^{8},x\bigr) : F\in \cF_x \bigr\},
        \end{align*}
        where $\cL$ is defined in Definition~\ref{d3.2}. Here we follow the convention that the supremum of a set not bounded above is $+\infty$. We call $\rho_0(D,x,\epsilon)$ the \emph{irregularity radius} of $D$ at $x$ with parameter $\epsilon$.
    \end{definition}
    Clearly for each $i\in\{0,\,1\}$, the function $\rho_i(D,x,\epsilon)$ is translation invariant, i.e., $\rho_i(\gamma(D),\gamma(x),\epsilon)=\rho_i(D,x,\epsilon)$ for each $\gamma\in\operatorname{Aut}(\X)$. We first prove the following lemma on the tail distribution of $\rho_i(D,x,\epsilon)$ which verifies that $\rho_i(D,x,\epsilon)$ is almost surely finite. It can be viewed as a strengthening of Lemma~\ref{t3.3}, in the sense that it holds simultaneously for all flats passing through a given point. 
    
    \begin{lemma}\label{t5.0b}
        Let $D$ be a random subset of $\X$. Given $\epsilon\in(0,1)$ and $\beta\in\bigl(10^{-9}\epsilon^{10},1\bigr)$, there exists a constant $\zeta_\beta>0$ such that for each $x\in \X$ and each $\rho>0$, the map defined on $\{0,\,1\}^{\X}$ by $E\mapsto \sup \bigl\{ \cL_{U_0^{E,\epsilon}(F)}\bigl(\beta,x\bigr) : F\in \cF_x \bigr\}$ is measurable and 
        \begin{equation*}
            \P\bigl(\sup \bigl\{ \cL_{U_0^{D,\epsilon}(F)}\bigl(\beta,x\bigr) : F\in \cF_x \bigr\}>\rho\bigr)<\zeta_\beta
            ^{-1}\exp \bigl(-\zeta_\beta\rho^2 \bigr).
        \end{equation*}
        In particular, for each $i\in \{0,\,1\}$, the map defined on $\{0,\,1\}^{\X}$ by $E\mapsto \rho_i(E,x,\epsilon)$ is measurable, and
        \begin{equation}\label{taildis}
            \P(\rho_i(D,x,\epsilon)>\rho)<\zeta_i^{-1}\exp \bigl(-\zeta_i\rho^2 \bigr)
        \end{equation}
        for $\zeta_i\=\zeta_{10^{-5-i}\epsilon^{8}}$. Thus, almost surely $\rho_i(D,x,\epsilon)<+\infty$ for each $i\in\{0,\,1\}$ and each $x\in \X$.
    \end{lemma}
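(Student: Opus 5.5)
The plan is to reduce the supremum over the uncountable family $\cF_x$ to a finite union bound at each scale, and then rerun the Bernstein estimate from the proof of Lemma~\ref{t3.3}. The key observation is that, for $r>0$, whether $\abs{B(x,r)\cap U_0(F)}\le(1-\beta)\abs{B(x,r)\cap F}$ depends on $F$ only through $F\cap B(x,r')$ for $r'=r+2n_\epsilon$. This is because $U_0(F)\cap B(x,r)$ is determined by the squares $A^\epsilon_{ij}$ meeting $B(x,r)$ and by $D$ on them. For this I fix, for every flat through $x$, the parametrization by $\Z^2$ sending $x$ to the origin together with an orientation convention. The truncation $F\cap B(x,r')$ is then the product of two geodesic segments of length at most $2r'$ through $x_1$ and $x_2$. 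There are at most $(q+1)^{2r'+2}$ such segments in each tree, hence at most $(q+1)^{4r'+4}$ truncated flats.

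With this, the events $\{\cL_{U_0(F)}(\beta,x)>\rho\}$, as $F$ ranges over $\cF_x$, are unions over radii $r>\rho$ with $r^2\in\N$ of events depending only on finitely many coordinates. Here I use, as in Lemma~\ref{t3.3}, that $\abs{B(x,\cdot)}$ and $\abs{B(x,\cdot)\cap U_0(F)}$ only jump at such radii. Consequently
\begin{equation*}
\Bigl\{\sup_{F\in\cF_x}\cL_{U_0(F)}(\beta,x)>\rho\Bigr\}=\bigcup_{r>\rho,\ r^2\in\N}\ \bigcup_{\text{truncations } P}\bigl\{\abs{B(x,r)\cap U_0(P)}\le(1-\beta)\abs{B(x,r)\cap P}\bigr\},
\end{equation*}
which is a countable union of cylinder events. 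This gives measurability of $E\mapsto\sup\{\cL_{U_0^{E,\epsilon}(F)}(\beta,x):F\in\cF_x\}$, and measurability of $\rho_i$ follows by specializing $\beta$.

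For the tail bound, set $a\=\beta/10$. Since $\beta>10^{-9}\epsilon^{10}$, we get $a>10^{-10}\epsilon^{10}>2p_1$ by the choice \eqref{e3.0a} of $n_\epsilon$; I may also assume $a<1/10$, shrinking $\beta$ via monotonicity of $\cL$ in $\beta$ if necessary. For a fixed truncation and $r>5n_\epsilon$, \eqref{e3.8a} and \eqref{e3.8b} give probability at most $\exp(-ar^2n_\epsilon^{-2}/8)$ for the bad event. The union bound then yields
\begin{equation*}
\P\Bigl(\sup_{F\in\cF_x}\cL_{U_0(F)}(\beta,x)>\rho\Bigr)\le\sum_{r>\rho,\ r^2\in\N}(q+1)^{4r+8n_\epsilon+4}\,e^{-ar^2/(8n_\epsilon^2)}.
\end{equation*}
The quadratic term dominates the linear exponent, so the series is bounded by $\zeta_\beta^{-1}e^{-\zeta_\beta\rho^2}$ for a sufficiently small $\zeta_\beta>0$ depending on $\beta,\epsilon,p,q$. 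Translation invariance of $\P_p$ makes this uniform in $x$. Small $\rho$, including the range $r\le5n_\epsilon$, is absorbed by taking $\zeta_\beta$ small enough that the right-hand side exceeds $1$ there.

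The last assertion then follows by taking $\beta=10^{-5-i}\epsilon^8$. This lies in $(10^{-9}\epsilon^{10},1)$ because $\epsilon<1$. Letting $\rho\to\infty$ shows $\rho_i(D,x,\epsilon)<+\infty$ almost surely, and this holds for all $x$ simultaneously since $\X$ is countable.

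The main obstacle I expect is the combinatorial step: justifying rigorously that the grid $\{A^\epsilon_{ij}\}$, and hence $U_0(F)\cap B(x,r)$, depends only on the truncation. The difficulty is that different flats through $x$ share truncations but may be parametrized inconsistently. I would handle this by noting that the paper's definition of $U_0(F)$ uses a fixed parametrization per flat, and then enlarging the count by the finitely many (at most $8$) choices of orientation of $\Z^2$ at the origin. This changes the bound only by a constant factor.
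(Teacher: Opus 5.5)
Your proposal is correct and is essentially the paper's proof. It uses the same Bernstein bound with $a=\beta/10$ (here $a>2p_1$ follows from the choice of $n_\epsilon$, and $a<1/10$ is automatic from $\beta<1$). It reduces the supremum over $\cF_x$ to the finitely many truncations $F\cap B(x,r+2n_\epsilon)$, which is the paper's relation $\equiv_r$ with the same count $(q+1)^{4(r+2n_\epsilon+1)}$, and it takes the same union bound over radii with $r^2\in\Z$, which also gives measurability. Your extra care about the grid parametrization (which the paper passes over) and about small $\rho$ costs only constant factors; if a flat's fixed parametrization need not put $x$ at a grid corner, also allow the at most $n_\epsilon^2$ grid offsets.
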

    \begin{proof}
        Let $D$ be a random subset of $\X$. Suppose $\epsilon\in(0,1)$ and $\beta\in\bigl(10^{-9}\epsilon^{10},1\bigr)$. Write $U_0(F)\=U_0^{D,\epsilon}(F)$. Suppose $x=(x_1,x_2)\in \X$ and $F=L_1\times L_2$ is a flat containing $x$. We first show that
        \begin{equation}\label{e5.00}
            \P(\abs{B(x,r)\cap U_0(F)}\le (1-\beta)\abs{B(x,r)\cap F})\le \exp\bigl(-r^2I_1\bigr)
        \end{equation}
        for some constant $I_1>0$ and every sufficiently large $r>0$. For the proof of this estimate, we use a similar argument as in the proof of Lemma~\ref{t3.3}. 
        
        For each $r>0$, denote $W_r \=  \bigl\{(i,j)\in \Z^2:A_{ij}^{\epsilon}\cap B(x,r)\neq \emptyset\bigr\}$, where $A_{ij}^{\epsilon}$ is the $n_\epsilon\times n_\epsilon$ square in $\Z^2$ (as identified with $F$) defined in \eqref{e3.0b}. Note that   
        $\abs{B(z,r)\cap F}\le n_\epsilon^2\abs{W_r}\le \abs{B(z,r+2n_\epsilon)\cap F}$ for each $z\in \Z^2$. By \eqref{e3.0}, $\abs{W_r}\ge r^2\big/n^2_{\epsilon}$. Moreover, if $r>5n_{\epsilon}$, then $\abs{W_r}\le 10r^2\big/n^2_{\epsilon}$. 
     
        Denote $K_r \=  \Absbig{\bigl\{(i,j)\in W_r:A_{ij}^{\epsilon}\cap D= \emptyset\bigr\}}$. Then $K_r$ is the sum of $\abs{W_r}$ i.i.d.\ Bernoulli$(p_1)$ random variables, where $p_1\=(1-p)^{n_{\epsilon}^2}$. Write $a\=\beta/10$. Then $a>2p_1$ by our choice of $n_\epsilon$ (cf.~\eqref{e3.0a}). Thus by the Bernstein inequality for Bernoulli random variables,
    \begin{equation}\label{e5.0d}
         \P(K_r\ge a\abs{W_r})
         \le \exp(- a\abs{W_r} / 8).
     \end{equation}
     For $r>5n_\epsilon$, if $K_r< a\abs{W_r}$, since $n_{\epsilon}^2\abs{W_r}<10\abs{B(x,r)\cap F}$, by \eqref{e3.0} we have
     \begin{equation*}
          \abs{B(x,r)\cap U_0(F)}\ge \abs{B(x,r)\cap F}-n_\epsilon^2K_r> (1-10a)\abs{B(x,r)\cap F}=(1-\beta)\abs{B(x,r)\cap F}.
    \end{equation*}
    Thus, for $r>5n_\epsilon$, by $\abs{W_r}\ge r^2\big/n^2_{\epsilon}$ and \eqref{e5.0d}, denoting $I_1\=10^{-2}n_\epsilon^{-2}\beta$, we establish \eqref{e5.00} as follows:
    \begin{equation}\label{e5.0b}
        \P(\abs{B(x,r)\cap U_0(F)}\le (1-\beta)\abs{B(x,r)\cap F})\le \P(K_r\ge a\abs{W_r})
         \le \exp\bigl(-r^2I_1\bigr).
    \end{equation}
        
        For each $r>0$, define an equivalence relation $\equiv_r$ on the set $\cF_x$ of flats containing $x$ by $F_1\equiv_r F_2$ if and only if $B(x,r+2n_\epsilon)\cap F_1=B(x,r+2n_\epsilon)\cap F_2$. Let $\overline{\cF}_r$ be a complete set of representatives induced by $\equiv_r$. Then simple estimates show that $\abs{\overline{\cF}_r}\le (q+1)^{4(r+2n_\epsilon+1)}$. Again since we are working with $\Z^2$, we only consider $r$ with $r^2\in \Z$. Thus by Definition~\ref{d3.2}, $\sup \bigl\{ \cL_{U_0(F)}\bigl(\beta,x\bigr) : F\in \cF_x \bigr\}>\rho$ if and only if the event $\bigcup_{r^2\in \Z,r>\rho}\bigcup_{F\in \overline{\cF}_r}\{\abs{B(x,r)\cap U_0(F)}
            \le (1-\beta)\abs{B(x,r)\cap F})\}$ occurs. 
        Then the map defined on $\{0,\,1\}^{\X}$ by $E\mapsto \sup \bigl\{ \cL_{U_0^{E,\epsilon}(F)}\bigl(\beta,x\bigr) : F\in \cF_x \bigr\}$ is measurable, and by \eqref{e5.0b}, for $\rho>5n_\epsilon$ we have
        \begin{align*}
            \P\bigl(\sup \bigl\{ \cL_{U_0(F)}\bigl(\beta,x\bigr) : F\in \cF_x \bigr\}>\rho\bigr) 
            & \le \sum_{r^2\in \Z,r>\rho}\sum_{F\in \overline{\cF}_r}\P(\abs{B(x,r)\cap U_0(F)}
            \le (1-\beta)\abs{B(x,r)\cap F})\\
            &\le \sum_{r^2\in \Z,r>\rho}(q+1)^{4(r+2n_\epsilon+1)}\exp\bigl(-r^2I_1\bigr)
             < \zeta_\beta^{-1}\exp \bigl( -\zeta_\beta\rho^2 \bigr)
        \end{align*}
        for some constant $\zeta_\beta>0$ depending only on $\epsilon$ and $\beta$.
    \end{proof}
    
    \begin{lemma}\label{t5.0a}
        Suppose $\epsilon\in(0,1)$ and $\rho>1/\epsilon$ are constants. Suppose $x\in \X$, $D$ is a random subset of $\X$ containing $x$, and $F$ is a flat containing $x$. Almost surely, if $x\notin U_2^{\epsilon,D,\rho}(F)$, then 
        \begin{equation}\label{e5.0a}
            \cL_{U_0(F)}\bigl(10^{-5}\epsilon^{8},x\bigr)\ge \cL_{U_1(F)}\bigl(10^{-1}\epsilon^3,x\bigr)\ge \rho.
        \end{equation}
        Consequently, almost surely $x\in U_2^{\epsilon,D,\rho_2}(F)$ for each $\rho_2\ge \rho_0(D,x,\epsilon)$.
    \end{lemma}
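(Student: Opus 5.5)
The plan is to prove \eqref{e5.0a} by two contrapositive steps, each using Lemma~\ref{t3.1} or Definition~\ref{d3.1} directly, and then to read off the consequence from Definition~\ref{d5.0}.

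\textbf{Second inequality.} Suppose $\cL_{U_1(F)}\bigl(10^{-1}\epsilon^3,x\bigr)<\rho$. I will show $x\in U_2(F)$, which contradicts the hypothesis. Fix $y\in\Z^2$ with $d(x,y)>\rho$ and put $s\=\epsilon d(x,y)$. The ball $B(y,s)$ is contained in $B(x,R)$ with $R\=(1+\epsilon)d(x,y)>\rho$. By the choice of $\rho$, the ball $B(x,R)$ misses at most a $10^{-1}\epsilon^3$-fraction of its points from $U_1(F)$. Using \eqref{e3.0} and \eqref{e3.0z}, we have $\abs{B(x,R)}\le 4R^2+1$ and $\abs{B(y,s)}\ge 2s^2$ once $s>1$; the latter holds because $\rho>1/\epsilon$. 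Hence
\begin{equation*}
\frac{\abs{B(y,s)\smallsetminus U_1(F)}}{\abs{B(y,s)}}\le \frac{10^{-1}\epsilon^3(4(1+\epsilon)^2d(x,y)^2+1)}{2\epsilon^2 d(x,y)^2}<\epsilon .
\end{equation*}
This is exactly the defining condition of $x\in (U_1(F))_{(\epsilon,\rho)}=U_2(F)$. One also needs $x\in U_1(F)$. If instead $x\notin U_1(F)$, the first step below already forces the conclusion, so that case can be handled by running the same argument one level down. The only real work here is checking the constants $4(1+\epsilon)^2/20<1$, which is routine.

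\textbf{First inequality.} Apply Lemma~\ref{t3.1} with $W=U_0(F)$ and $\alpha=10^{-1}\epsilon^3\le\epsilon$. Its hypothesis requires $\cL_{U_0(F)}\bigl(10^{-3}\epsilon^2\alpha^2,z\bigr)<+\infty$ for all $z$. This holds almost surely: Lemma~\ref{t3.3} gives finiteness for the parameter $10^{-9}\epsilon^{10}$, which is smaller than $10^{-5}\epsilon^8$, and $\cL$ is monotone in $\alpha$. Lemma~\ref{t3.1} then yields $\cL_{U_1(F)}(\alpha,x)\le \cL_{U_0(F)}\bigl(10^{-5}\epsilon^{8},x\bigr)$, since $10^{-3}\epsilon^2\cdot 10^{-2}\epsilon^6=10^{-5}\epsilon^8$.

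\textbf{Membership of $x$ and the consequence.} Because $x\in D$, the block containing $x$ meets $D$, so $x\in U_0(F)$. If $x\notin U_1(F)$, then the defining condition of Definition~\ref{d3.1} fails at some $y$, and the argument of the second step with $U_0$ in place of $U_1$ gives $\cL_{U_0(F)}(\cdot,x)\ge\rho$ at a slightly different parameter; the constants $10^{-5}\epsilon^8$ are small enough to cover this. For the consequence, take $\rho_2\ge\rho_0(D,x,\epsilon)$. Suppose $x\notin U_2^{\epsilon,D,\rho_2}(F)$. By \eqref{e5.0a} applied with $\rho_2$, we get $\cL_{U_0(F)}\bigl(10^{-5}\epsilon^8,x\bigr)\ge\rho_2\ge\rho_0$. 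To turn this into a contradiction with the supremum defining $\rho_0$, I expect to need strictness: either sharpen one of the inequalities to a strict one, or use a slightly larger $\rho_2$ together with the monotonicity in $\rho$ from Remark~\ref{r:U3_monotone}. I expect this boundary case, not the main estimates, to be the step needing the most care.
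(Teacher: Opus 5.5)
Your estimates for \eqref{e5.0a} follow the paper's argument. The paper argues directly from a witness $y$ with $d(x,y)>\rho$ at which the density condition for $U_1(F)$ fails; you argue the contrapositive. Both then apply Lemma~\ref{t3.1} with $W=U_0(F)$ and $\alpha=10^{-1}\epsilon^3$.

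The genuine gap is the ``consequently'' part, which you leave unfinished. From $x\notin U_2^{\epsilon,D,\rho_2}(F)$ you only get $\cL_{U_0(F)}(10^{-5}\epsilon^8,x)\ge\rho_2\ge\rho_0(D,x,\epsilon)\ge\cL_{U_0(F)}(10^{-5}\epsilon^8,x)$. This forces equality but is not a contradiction.

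Your second proposed remedy does not work as stated. By Remark~\ref{r:U3_monotone}, $U_2^{\epsilon,D,\rho}(F)$ is \emph{increasing} in $\rho$. So knowing $x\in U_2^{\epsilon,D,t}(F)$ for every $t>\rho_0(D,x,\epsilon)$ says nothing, by monotonicity alone, about the smaller set at $t=\rho_0(D,x,\epsilon)$. The paper instead uses the identity $U_2^{\epsilon,D,\rho_2}(F)=\bigcap_{t>\rho_2}U_2^{\epsilon,D,t}(F)$. This holds because each ball $B(y,\epsilon d(x,y))$ is finite, so its intersection with $U_1^{\epsilon,D,t}(F)$ stabilizes as $t\downarrow\rho_2$. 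Also, a witness with $d(x,y)>\rho_2$ still satisfies $d(x,y)>t$ for $t$ close to $\rho_2$.

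Your first remedy, strictness, is already in your own computation. The contrapositive step only uses $R=(1+\epsilon)d(x,y)>\cL_{U_1(F)}(10^{-1}\epsilon^3,x)$, and $R>d(x,y)>\rho$. So it runs under the weaker hypothesis $\cL_{U_1(F)}(10^{-1}\epsilon^3,x)\le\rho$. Hence $x\notin U_2^{\epsilon,D,\rho}(F)$ gives the strict bound $\cL_{U_1(F)}(10^{-1}\epsilon^3,x)>\rho$. Then $\cL_{U_0(F)}(10^{-5}\epsilon^8,x)>\rho_2\ge\rho_0(D,x,\epsilon)$, which contradicts $\rho_0$ being a supremum over the flats through $x$.

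Two smaller points:
\begin{itemize}
\item In the case $x\notin U_1(F)$, your ``one level down'' argument only bounds $\cL_{U_0(F)}$, not the middle term of \eqref{e5.0a}. Note that $U_1(F)\subseteq U_0(F)$, so a witness of failure for $U_0(F)$ is also one for $U_1(F)$. This makes the case split unnecessary.
\item Checking the finiteness hypothesis of Lemma~\ref{t3.1} via Lemma~\ref{t3.3} is fine for a fixed flat $F$. The paper uses Lemma~\ref{t5.0b} instead, which gives one full-measure event with $\rho_0(D,z,\epsilon)<+\infty$ for all $z$. That yields the conclusion simultaneously for all (uncountably many) flats through $x$, which is how the lemma is applied later.
\end{itemize}
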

    
    \begin{proof}
    Suppose $x\notin U_2^{\epsilon,D,\rho}(F)$. Then by Definition~\ref{d3.1}, there exists some $y\in F$ with $ d(y,x)\ge \rho$ such that $\abs{B(y,\epsilon d(x,y))\cap U_1(F)}\le (1-\epsilon)\abs{B(y,\epsilon d(x,y))}$. Thus, by \eqref{e3.0z}, we have $\abs{B(x,(1+\epsilon)d(x,y))\smallsetminus U_1(F)}\ge \abs{B(y,\epsilon d(x,y))\smallsetminus U_1(F)}\ge \epsilon\abs{B(y,\epsilon d(x,y))}\ge10^{-1}\epsilon^3\abs{B(x,(1+\epsilon)d(x,y))}$. By Definition~\ref{d3.2}, this implies $\cL_{U_1(F)}\bigl(10^{-1}\epsilon^3,x\bigr)\ge d(x,y)\ge \rho$. By Lemma~\ref{t5.0b}, there exists a full-measure subset $\cD$ of $\{0,\,1\}^\X$ where $\rho_0(E,y,\epsilon)<+\infty$ for each $y\in \X$ and each $E\in\cD$. Now if $D\in\cD$, \eqref{e5.0a} holds due to Lemma~\ref{t3.1}, whose finiteness assumption follows immediately from Definition~\ref{d5.0}.

    Then by Definition~\ref{d5.0} and \eqref{e5.0a}, for each $y\in \X$, each $E\in \cD$ containing $y$, and each $t>\rho_0(E,y,\epsilon)$, we have $y\in U_2^{\epsilon,E,t}(F)$. Since for each $\rho_2>0$, $U_2^{\epsilon,E,\rho_2}(F)=\bigcap_{t>\rho_2}U_2^{\epsilon,E,t}(F)$ by Definition~\ref{d3.1}, we verify that $y\in U_2^{\epsilon,E,\rho_2}(F)$ for each $y\in \X$, each $E\in \cD$ containing $y$, and each $\rho_2\ge \rho_0(E,y,\epsilon)$.
    \end{proof}
    
    The following proposition is an immediate consequence of Lemmas~\ref{t5.0a} and~\ref{t5.0b}.
    \begin{prop}\label{t5.0}
        Suppose $\epsilon\in (0,1/100)$ and $D$ is a random subset of $\X$. Define $\rho_0\=\max\{\rho_0(D,e,\epsilon),\,2/\epsilon\}$. Then almost surely $e\in U_3^{\epsilon,D\cup\{e\},\rho}(F)$ for each flat $F\in \cF$ and each $\rho\ge \rho_0$.
    \end{prop}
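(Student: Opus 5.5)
The plan is to reduce the statement to Lemma~\ref{t5.0a}, applied to the enlarged random set $D' \= D\cup\{e\}$, for each of the countably many flats involved, and then intersect countably many almost sure events. Recall that $U_3^{\epsilon,D',\rho}(F)$ requires three things. First, $F\in\cF_{\epsilon,D',\rho}$ and $e\in U_2^{\epsilon,D',\rho}(F)$. Second, for both transverse flats $F'\in\Trans_F(e)$, we need $F'\in\cF_{\epsilon,D',\rho}$. Third, $e\in U_2^{\epsilon,D',\rho}(F')$ for both such $F'$. The family $\cF$ is countable, and each $F\in\cF$ has two fixed transverse flats at $e$. So the collection $\cF^*$ of all flats $F$ and $F'$ that must be checked is countable, and every member contains $e$.

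\emph{Step 1 (membership in $\cF_{\epsilon,D',\rho}$).} By Lemma~\ref{t4.1} applied to each flat of $\cF^*$, almost surely $F\in\cF_{\epsilon,D,\rho}$ for every $\rho>1/\epsilon$ and every $F\in\cF^*$. By Remark~\ref{r:U3_monotone}, $U_2^{\epsilon,D,\rho}(F)\subseteq U_2^{\epsilon,D',\rho}(F)$, so the liminf condition~\eqref{e4.1b} passes from $D$ to $D'$. Hence $F\in\cF_{\epsilon,D',\rho}$ for all $F\in\cF^*$ and all $\rho\ge\rho_0>1/\epsilon$, almost surely.

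\emph{Step 2 ($e\in U_2$).} Lemma~\ref{t5.0a} assumes the random set contains $x=e$. Its proof only uses that $\rho_0(\cdot,y,\epsilon)<+\infty$, which comes from Lemma~\ref{t5.0b}, together with the deterministic Lemma~\ref{t3.1}. I will therefore apply it to $D'$. Adding one point only increases $U_0$, so $\rho_0(D',e,\epsilon)\le\rho_0(D,e,\epsilon)$. Also, $\rho_0(D',y,\epsilon)<+\infty$ for all $y$ whenever this holds for $D$; this is the full-measure set from Lemma~\ref{t5.0b}.

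With this, the consequence part of Lemma~\ref{t5.0a} gives $e\in U_2^{\epsilon,D',\rho_2}(F)$ for every flat $F\ni e$ and every $\rho_2\ge\rho_0(D',e,\epsilon)$. In particular this holds for all $\rho\ge\rho_0$ and all $F\in\cF^*$. The supremum in Definition~\ref{d5.0} ranges over all of $\cF_e$, so no countability issue arises in this step.

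\emph{Step 3 (conclusion).} Combining Steps 1 and 2 for $F\in\cF$ and for its two transverse flats yields $e\in U_3^{\epsilon,D',\rho}(F)$ for every $\rho\ge\rho_0$. The condition $\rho_0\ge 2/\epsilon>1/\epsilon$ keeps us in the admissible range $\rho>1/\epsilon$ required by the definitions.

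The main obstacle is Step 2, the passage from $D$ to $D\cup\{e\}$. One has to check that the argument of Lemma~\ref{t5.0a} really works for a deterministic enlargement by one point. The monotonicity from Remark~\ref{r:U3_monotone}, together with $\rho_0(D',\cdot,\epsilon)\le\rho_0(D,\cdot,\epsilon)$, handles this. Alternatively, since adding $e$ changes only the single square containing $e$, one could rerun Lemma~\ref{t5.0b} directly for $D'$.
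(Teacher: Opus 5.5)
Your proposal is correct and follows essentially the same route as the paper. The paper applies Lemma~\ref{t4.1} to the countable family $\cF$ together with the chosen transverse flats at $e$, uses monotonicity in $D$ (Remark~\ref{r:U3_monotone}) and $\rho_0(D\cup\{e\},e,\epsilon)\le\rho_0(D,e,\epsilon)$ to pass to $D\cup\{e\}$, and uses Lemmas~\ref{t5.0b} and~\ref{t5.0a} to obtain $e\in U_2$ on every relevant flat. Your added observation, that the full-measure set from Lemma~\ref{t5.0b} is upward closed so Lemma~\ref{t5.0a} really applies to $D\cup\{e\}$, is exactly the detail the paper leaves implicit.
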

    \begin{proof}
        Suppose $\rho\ge \rho_0$. Let $\cF'$ be the set of flats that are the chosen transverse flats at $e$ of flats in $\cF$ (see Subsection~\ref{s4.1}). Since there are only countably many flats in $\cF\cup \cF'$, by Lemma~\ref{t4.1} we have $\cF\cup \cF'\subseteq \cF_{\epsilon,D,\rho}'\subseteq \cF_{\epsilon,D,\rho}$ almost surely. Then the proposition follows from the fact that $\rho_0(D,e,\epsilon)\ge \rho_0(D\cup\{e\},e,\epsilon)$, Lemmas~\ref{t5.0b}, \ref{t5.0a}, Definition~\ref{d4.1a}, and Remark~\ref{r:U3_monotone}.        
    \end{proof}

\subsection{Induced map on the singular rays}\label{s5.2}

     Suppose $\kappa>1$, $C>0$, and $\epsilon\in(0,\epsilon')$ are constants, where $\epsilon'\in(0,1/100)$ is the constant depending only on $\kappa$ given in Proposition~\ref{t4.5}. Suppose $D$ is a random subset of $\X$. We write $\rho_0\=\max\{\rho_0(D,e,\epsilon),2/\epsilon\}$ for the remainder of the section, where $\rho_0(D,e,\epsilon)$ is the irregularity radius defined in Definition~\ref{d5.0}. 
     
     To utilize Proposition~\ref{t5.0}, we define a random subset $D'\subseteq \X$ and a map $\phi'\:D'\to \X$ for the rest of this section. Define $D'\coloneqq D\cup \{e\}$. Let $x$ be some point in $D$ closest to $e$. Suppose $\phi\in \QIE_{\kappa,C}(D,\X)$, define $\phi'\:D'\to \X$ as an extension of $\phi$ by setting $\phi'(e)\=\phi(x)$. Then $\phi'$ is a $(\kappa,C+d(x,e)\kappa)$-quasi-isometric embedding from $D'$ to $\X$.
     
     Then by Propositions~\ref{t4.5} and~\ref{t5.0}, almost surely there exists a singular geodesic ray $L'$ emanating from $\phi'(e)$ such that $\tphi'_{F,\epsilon}(L)\sim_{\lambda_1\epsilon,\phi'(e),R_0}L'$ for each $\rho\ge \rho_0$, each $\phi\in \QIE_{\kappa,C}(D,\X)$, and each pair $(L,F)\in \widetilde{\cU}$. Moreover, the map $\phi_0 \: \widetilde{\cU}\to\partial T_1\cup \partial T_2$ (defined for almost every $D$) given by $(L,F)\mapsto L'$ is independent of the choice of $\rho\ge\rho_0$.

    The purpose of this subsection is to prove the following lemma:
\begin{lemma}\label{t5.1}
    Suppose $\kappa>1$ is a constant and $D$ is a random subset of $\X$. Then there exists a constant $\epsilon_4'\in (0,1/100)$ such that almost surely, for each $C>0$, each $\epsilon\in (0,\epsilon_4')$, each $\rho\ge \rho_0$, and each $\phi\in \QIE_{\kappa,C}(D,\X)$, for the map $\phi_0\:(L,F)\mapsto L'$ defined above, we have $\phi_0(L,F_1)=\phi_0(L,F_2)$ if $F_1,F_2\in \cF$ are two flats both containing $L$. 
    
\end{lemma}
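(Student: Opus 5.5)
The key observation is that the regularized grid maps $\tphi'_{F_1,\epsilon}$ and $\tphi'_{F_2,\epsilon}$ agree with $\phi'$ on $D'\cap L$ at least on the part of $L$ where both $U_1(F_1)$ and $U_1(F_2)$ contain points of $D'$. More precisely, by construction $\tphi'_{F,\epsilon}(y)$ is $\phi'_{F,\epsilon}(y_2)$ for a nearest $y_2\in U_1(F)$. Also $\phi'_{F,\epsilon}$ equals $\phi'$ on $D\cap F$, and on squares $A^\epsilon_{ij}$ it takes values $\phi'(y_{ij})$ with $d(y,y_{ij})\le 2n_\epsilon$. So for $y\in L$,
\[
d\bigl(\tphi'_{F_k,\epsilon}(y),\phi'(z_k)\bigr)\le \kappa\bigl(d(y,U_1(F_k))+2n_\epsilon\bigr)+C'
\]
for some $z_k\in D'\cap F_k$ with $d(y,z_k)\le d(y,U_1(F_k))+2n_\epsilon$. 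Here $C'$ is the additive constant of $\phi'$.

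\textbf{Step 1: the two grid maps agree along $L$ up to sublinear error.} Since $e\in U_2(F_k)$ for $k=1,2$ (Proposition~\ref{t5.0} with $\rho\ge\rho_0$), \eqref{e3.3y} gives $d(y,U_1(F_k))\le\max\{\rho,\epsilon d(e,y)\}$. Combining this with the estimate above and the quasi-isometry bound for $\phi'$, I obtain for every $y\in L$
\[
d\bigl(\tphi'_{F_1,\epsilon}(y),\tphi'_{F_2,\epsilon}(y)\bigr)\le 3\kappa\max\{\rho,\epsilon d(e,y)\}+C'',
\]
with $C''$ depending on $\kappa,C,n_\epsilon,d(x,e)$. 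This is just the triangle inequality through $\phi'(z_1)$ and $\phi'(z_2)$, using $d(z_1,z_2)\le 2\max\{\rho,\epsilon d(e,y)\}+4n_\epsilon$. Hence there is $R$ large with
\[
\tphi'_{F_1,\epsilon}(L)\smallsetminus B(\phi'(e),R)\subseteq \tphi'_{F_2,\epsilon}(L)[c\kappa^2\epsilon]
\]
for an absolute $c$, and symmetrically. Here I use that $d(\phi'(e),\tphi'(y))\ge d(e,y)/(2\kappa)-\ldots$ to convert $\epsilon d(e,y)$ into a multiple of $\epsilon d(\phi'(e),\cdot)$.

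\textbf{Step 2: conclude via uniqueness.} Let $L'_k=\phi_0(L,F_k)$. By Proposition~\ref{t4.5}, $\tphi'_{F_k,\epsilon}(L)\sim_{\lambda_1\epsilon,\phi'(e),R_0}L'_k$. Chaining this with Step 1 gives $L'_1\sim_{(2\lambda_1+c\kappa^2)\epsilon}L'_2$, with all neighborhoods taken conically from $\phi'(e)$ and the conical neighborhoods composing with a factor of about $(1+\epsilon)$. Both $L'_k$ are singular geodesic rays emanating from the same point $\phi'(e)$. Two distinct such rays diverge linearly: if they lie in different factors, the angle is $\pi/2$; if they lie in the same tree, they branch at finite distance, so $d$ grows like $d(\phi'(e),\cdot)$ minus a constant. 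Therefore they cannot be $\delta$-equivalent for $\delta<1/3$, by Fact~\ref{t5.2}. Choosing
\[
\epsilon_4'\=\min\bigl\{\epsilon',\,1/\bigl(10(2\lambda_1+c\kappa^2)\bigr)\bigr\}
\]
forces $L'_1=L'_2$. The events used are countably many almost-sure events: Proposition~\ref{t5.0} for $\cF$, and Lemma~\ref{t5.0b}. So the conclusion holds almost surely, simultaneously for all $C$, $\phi$ and $\rho\ge\rho_0$.

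\textbf{Main obstacle.} The delicate point is Step 1. The maps $\phi'_{F_k,\epsilon}$ are built from different grids on different flats, so the chosen representatives $y_{ij}$ differ. The bound must also hold uniformly along the whole ray, including near large holes of $D$ on $L$. I would handle this entirely through \eqref{e3.3y} at the base point $e$, which controls hole sizes linearly in $d(e,y)$. That is exactly what the conical, rather than Hausdorff, equivalence tolerates. The remaining work is bookkeeping of constants so that the resulting $\epsilon$-equivalence constant depends only on $\kappa$.
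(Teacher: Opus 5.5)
Your proposal is correct and is essentially the paper's proof. Your Step~1 is precisely Lemma~\ref{t5.5}, which the paper proves by the same triangle inequality through $\phi$ at points of $D$ near the chosen points of $U_1(F_k)$. The paper then chains pointwise: from $y_1\in L_1'$ to a point $x\in L$ with $d(x,e)\le 3\kappa d(y_1,\phi'(e))$ (via the graded lower bound, as in your conversion step), then to $\tphi'_{F_2,\epsilon}(x)$, then to $L_2'$; this gives $L_1'\sim_{\lambda_1'\epsilon}L_2'$ and it concludes with Fact~\ref{t5.2}, as you do.

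The only slip is cosmetic. Since $\tphi'_{F_k,\epsilon}(y)=\phi'(z_k)$ exactly, your estimate actually has coefficient $2\kappa$, whereas your stated cruder bounds would give $4\kappa$ rather than $3\kappa$. Either way the difference is absorbed into the absolute constant $c$.
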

Lemma~\ref{t5.1} shows that the image $L'$ is independent of the choice of the flat $F$ up to equivalence. The proof of the lemma relies on negative curvature features of the space, in the current situation, we would only need the simple geometric input from Fact~\ref{t5.2}.

 The following lemma gives a distance estimate between regularized grid maps on different flats, which is used several times in the remaining proofs of this section:
\begin{lemma}\label{t5.5}
    Suppose $\kappa>1$, $C>0$, $\epsilon\in(0,\epsilon_0)$, and $\rho>1/\epsilon$ are constants, where $\epsilon_0\in(0,1/100)$ is the constant depending only on $\kappa$ given in Proposition~\ref{t3.5}. Then there exists a constant $C'_\epsilon>0$ such that if $x\in \X$, $D\subseteq \X$, $\phi\in \QIE_{\kappa,C}(D,\X)$, $F_1,\,F_2\in \cF'_{\epsilon,D,\rho}$ are two flats containing $x$, and $x\in U_2^{\epsilon,D,\rho}(F_i)$ for each $i\in \{1,\,2\}$, then for each $y_1\in F_1$ and each $y_2\in F_2$, we have
    \begin{equation*}
    \begin{aligned}
         \kappa^{-1} d(y_1,y_2)-9\kappa\epsilon(d(x,y_1)+d(x,y_2))-C_\epsilon'&\le d\bigl(\tphi_1(y_1),\tphi_2(y_2)\bigr)\\
         &\le \kappa d(y_1,y_2)+9\kappa\epsilon(d(x,y_1)+d(x,y_2))+C_\epsilon',
    \end{aligned}
    \end{equation*}
     where $\tphi_i$ denotes the regularized grid map of $\phi$ on $F_i$ from Proposition~\ref{t3.5} for each $i\in\{1,\,2\}$.
\end{lemma}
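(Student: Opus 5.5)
The argument is a triangle inequality through the grid map, with the error absorbed by the density estimate \eqref{e3.3y} of Proposition~\ref{t3.5}~(iv). Fix $y_1\in F_1$ and $y_2\in F_2$. By construction of $\tphi_i$ (proof of Proposition~\ref{t3.5}~(iv)), we have $\tphi_i(y_i)=\phi_{F_i,\epsilon}(z_i)$, where $z_i$ is a point of $U_1(F_i)$ closest to $y_i$. Since $x\in U_2(F_i)$, \eqref{e3.3y} gives
\[
d(y_i,z_i)\le \max\{\rho,\,\epsilon d(x,y_i)\}\le \rho+\epsilon d(x,y_i).
\]

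Next compare the grid maps on the two flats with $\phi$ itself. By Definition~\ref{d3.3}, for each $z_i\in U_1(F_i)\subseteq U_0(F_i)$ there is a point $w_i\in D$ with $\phi_{F_i,\epsilon}(z_i)=\phi(w_i)$. Either $w_i=z_i$, or $w_i=y_{jk}$ lies in the same $n_\epsilon\times n_\epsilon$ square as $z_i$. In both cases $d(z_i,w_i)\le 2n_\epsilon$. The points $w_1,w_2$ both lie in the common domain $D$ of $\phi$, so $\phi\in\QIE_{\kappa,C}(D,\X)$ yields
\[
\kappa^{-1}d(w_1,w_2)-C\le d\bigl(\tphi_1(y_1),\tphi_2(y_2)\bigr)=d(\phi(w_1),\phi(w_2))\le \kappa d(w_1,w_2)+C.
\]
This use of the single global map $\phi$ is what makes the two different flats comparable. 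One should check that the distance on each flat $F_i\cong\Z^2$ is the restriction of $d$; this holds because flats are isometrically embedded.

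Now apply the triangle inequality in $\X$:
\[
\abs{d(w_1,w_2)-d(y_1,y_2)}\le \sum_{i=1}^2\bigl(d(y_i,z_i)+d(z_i,w_i)\bigr)\le \epsilon\bigl(d(x,y_1)+d(x,y_2)\bigr)+2\rho+4n_\epsilon.
\]
Multiplying by $\kappa$ gives the upper bound with error $\kappa\epsilon\bigl(d(x,y_1)+d(x,y_2)\bigr)+2\kappa\rho+4\kappa n_\epsilon+C$. For the lower bound, multiplying by $\kappa^{-1}$ gives an even smaller linear term. Both linear terms are dominated by $9\kappa\epsilon\bigl(d(x,y_1)+d(x,y_2)\bigr)$. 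It therefore suffices to take $C'_\epsilon\coloneqq 2\kappa\rho+4\kappa n_\epsilon+C$, which depends only on $\kappa$, $C$, $\epsilon$ and $\rho$, as the statement allows.

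The only step that needs care is the first: confirming that $\tphi_i(y_i)$ is literally $\phi_{F_i,\epsilon}$ evaluated at a nearest point of $U_1(F_i)$. The estimate \eqref{e3.3y} is valid there precisely because $x\in U_2(F_i)$, and it is this that bounds the displacement linearly in $d(x,y_i)$ rather than only for $y_i$ near $x$. The hypothesis $F_i\in\cF'_{\epsilon,D,\rho}$ is not needed beyond guaranteeing that $\tphi_i$ is defined.
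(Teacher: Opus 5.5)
Your argument is correct and is essentially the paper's proof. Both use \eqref{e3.3y} to find a point of $U_1(F_i)$ within $\rho+\epsilon d(x,y_i)$ of $y_i$, pass to a point of $D$ in the same $n_\epsilon\times n_\epsilon$ square, and then apply the quasi-isometry inequality for the single global map $\phi$.

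The one difference is that you use the explicit nearest-point construction to get $\tphi_i(y_i)=\phi(w_i)$ exactly. The paper instead bounds $d\bigl(\tphi_i(y_i),\tphi_i(y_i')\bigr)$ through the graded quasi-isometric embedding property of $\tphi_i$, which is why its constants ($9\kappa\epsilon$ and $18\kappa\rho+20\kappa n_\epsilon+5C$) are larger than yours.
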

\begin{proof}
    This proof is an extension of the fact that the regularized grid map is close to the original map $\phi$.

    Let $\kappa>1$, $C>0$, $\epsilon\in(0,\epsilon_0)$, and $\rho>1/\epsilon$ be constants. Suppose $x\in \X$, $D\subseteq \X$, $\phi\in \QIE_{\kappa,C}(D,\X)$, and $F_1,\,F_2\in \cF'_{\epsilon,D,\rho}$ are two flats containing $x$ such that $x\in U_2^{\epsilon,D,\rho}(F_i)$ for each $i\in \{1,\,2\}$. Suppose $y_1\in F_1$ and $y_2\in F_2$. Then for each $i\in\{1,\,2\}$, by \eqref{e3.3y} in Proposition~\ref{t3.5}~(iv), there exists a point $y_i'\in U_1(F_i)$ such that 
    \begin{equation}\label{e5.1a}
        d(y_i,y_i')\le \rho+\epsilon d(y_i,x).
    \end{equation}
   By Definition~\ref{d3.3}, there exists a point $y_i''\in D\cap F_i$ such that $d(y_i',y_i'')\le 2n_{\epsilon}$ and $\tphi_{i}(y_i')=\phi(y_i'')$. Thus, by \eqref{e5.1a},
    \begin{equation}\label{e5.1b}
    \begin{aligned}
        d(y_1'',y_2'')&\le d(y_1'',y_1')+d(y_2'',y_2')+d(y_1',y_1)+d(y_2',y_2)+d(y_1,y_2)\\
        &\le2\rho+4n_\epsilon+\epsilon (d(y_1,x)+d(y_2,x))+d(y_1,y_2).
    \end{aligned}
    \end{equation}
    Thus, by \eqref{e5.1a}, \eqref{e5.1b}, and the property of $\tphi_i$ from Proposition~\ref{t3.5}~(iv), we have
    \begin{align*}
        &d\bigl(\tphi_1(y_1),\tphi_2(y_2)\bigr)\\
        &\qquad\le d\bigl(\tphi_1(y_1),\phi(y_1'')\bigr)+d\bigl(\tphi_2(y_2),\phi(y_2'')\bigr)+d(\phi(y_1''),\phi(y_2''))\\
        &\qquad\le 2\kappa(d(y_1,y_1')+d(y_2,y_2'))+6\kappa\epsilon(d(y_1,x)+d(y_2,x))+12\kappa\rho +5C+16\kappa n_\epsilon+\kappa d(y_1'',y_2'')\\
        &\qquad\le 9\kappa\epsilon(d(y_1,x)+d(y_2,x))+\kappa d(y_1,y_2)+C'_\epsilon,
    \end{align*}
    where $C'_\epsilon\=18\kappa\rho+20\kappa n_\epsilon+5C$ depends only on $\kappa$, $C$, $\epsilon$, and $\rho$. The other half of the inequality can be obtained analogously from \eqref{e5.1a} and \eqref{e5.1b}.
\end{proof}

\begin{proof}[\bf Proof of Lemma~\ref{t5.1}]
    Let $\kappa>1$, $C>0$, and $\epsilon\in(0,\epsilon')$ be constants, where $\epsilon'\in(0,1/100)$ is the constant depending only on $\kappa$ given in Proposition~\ref{t4.5}. Let $D$ be a random subset of $\X$ and $\phi\in \QIE_{\kappa,C}(D,\X)$. 
    
    Suppose $L$ is a hyperplane passing through $e$, and $F_1,\,F_2$ are two flats in $\cF$ both containing $L$. Let $\rho_0>1/\epsilon$ be the constant given in Proposition~\ref{t5.0}. Since $\phi_0$ is independent of the choice of $\rho\ge \rho_0$, we set $\rho\=\rho_0$. Without loss of generality we assume $\cF\subseteq \cF'_{\epsilon,D',\rho}$. Let $L_1'$ and $L_2'$ be the singular geodesic rays emanating from $\phi'(e)$ 
    such that 
    \begin{equation}\label{e5.5a0}
        \tphi'_{F_1,\epsilon}(L)\sim_{\lambda_1\epsilon,\phi'(e),R_0}L_1' \quad\text{ and }\quad \tphi'_{F_2,\epsilon}(L)\sim_{\lambda_1\epsilon,\phi'(e),R_0}L_2'
    \end{equation} given by Proposition~\ref{t4.5} with $\phi$ set to be $\phi'$, respectively. 
    
    We now use distance estimates to show that $L_1'=L_2'$, as shown in Figure~\ref{fig5.1}. Write $C'\=C+\kappa d(e,D)$. Let $R_t\ge R_0$ be a constant whose exact choice will be determined later. Let $y_1$ be an arbitrary point in $L_1'$ with $d(y_1,\phi'(e))\ge 2R_t$. Let $x$ be a point in $L$ such that 
    \begin{equation}\label{e5.5a}
        d \bigl( \tphi'_{F_1,\epsilon}(x),y_1 \bigr)\le \lambda_1 \epsilon d(y_1,\phi'(e))
    \end{equation}
given by Proposition~\ref{t4.5}. 
We first show that for suitable choices of $R_t$ and $\epsilon$,
\begin{equation}\label{e5.5b}
    d(x,e)\le 3\kappa d(y_1,\phi'(e)). 
\end{equation}
We argue by contradiction and assume that $d(x,e)> 3\kappa d(y_1,\phi'(e))$. In the remainder of the proof, we take $\epsilon\in\bigl(0,\min\bigl\{\epsilon',\,1/(10\lambda_1),\,1/\bigl(10^3\kappa^2\bigr)\bigr\}\bigr)$. Then since $\tphi'_{F_1,\epsilon}$ is a $(2\kappa,6\kappa\epsilon, 6\kappa\rho_0+2C'+8\kappa n_\epsilon )$-graded quasi-isometric embedding,
\begin{equation*}
\begin{aligned}
    (1+\lambda_1\epsilon)d(y_1,\phi'(e))
    &\ge d(y_1,\phi'(e)) + d \bigl( \tphi'_{F_1,\epsilon}(x),y_1 \bigr)
    \ge d \bigl( \tphi'_{F_1,\epsilon}(x),\phi'(e)\bigr)\\
    &\ge d(x,e)/(2\kappa)-6\kappa\epsilon d(x,e)- 6\kappa\rho_0-2C'-8\kappa n_\epsilon\\
    &>1.2d(y_1,\phi'(e))- 6\kappa\rho_0-2C'-8\kappa n_\epsilon,
\end{aligned}
\end{equation*}
which is a contradiction if $R_t>10(6\kappa\rho_0+2C'+8\kappa n_\epsilon)$. Thus for such $R_t$ and $\epsilon$, \eqref{e5.5b} holds.

By \eqref{e5.5b} and Lemma~\ref{t5.5}, we have
    \begin{equation}\label{e5.5z}
        d\bigl(\tphi'_{F_1,\epsilon}(x),\tphi'_{F_2,\epsilon}(x)\bigr)
        \le 18\kappa\epsilon d(x,e)+C_\epsilon'<100\kappa^2\epsilon d(y_1,\phi'(e))+C'_\epsilon,
    \end{equation}
    where $C'_\epsilon$ is the constant given in Lemma~\ref{t5.5} with $C$ set to be $C'$.
    
    Thus, if $R_t>2C_\epsilon'+R_0$ and $\epsilon\in\bigl(0,\min\bigl\{\epsilon',\,1/(10\lambda_1),\,1\big/\bigl(10^3\kappa^2\bigr)\bigr\}\bigr)$, by \eqref{e5.5a} and \eqref{e5.5z}, 
    \begin{equation*}
        d\bigl(\tphi'_{F_2,\epsilon}(x),\phi'(e)\bigr)
        \ge d(y_1,\phi'(e))-d\bigl(\tphi'_{F_1,\epsilon}(x),y_1 \bigr)-d\bigl(\tphi'_{F_1,\epsilon}(x),\tphi'_{F_2,\epsilon}(x)\bigr)
        > R_t\ge R_0
    \end{equation*}
    and analogously $d\bigl(\tphi'_{F_2,\epsilon}(x),\phi'(e)\bigr)\le 2d(y_1,\phi'(e))$. Hence by \eqref{e5.5a0},
    \begin{equation*}
        d\bigl(\tphi'_{F_2,\epsilon}(x),L_2'\bigr)
        \le \lambda_1\epsilon d \bigl( \tphi'_{F_2,\epsilon}(x),\phi'(e) \bigr)
        \le 2 \lambda_1\epsilon d(y_1,\phi'(e)).
    \end{equation*}
    Combining these estimates, for $R_t>10(6\kappa\rho_0+2C'+8\kappa n_\epsilon)+2C_\epsilon' + R_0$ and $\lambda_1'\coloneqq 100\kappa^2+4\lambda_1$, we have
    \begin{equation*}
        d(y_1,L_2')
        \le d\bigl(\tphi'_{F_2,\epsilon}(x),L_2'\bigr)+d\bigl(\tphi'_{F_1,\epsilon}(x),\tphi'_{F_2,\epsilon}(x)\bigr)+d\bigl(\tphi'_{F_1,\epsilon}(x),y_1\bigr)
        < (\lambda_1'-\lambda_1)\epsilon d(y_1,\phi'(e))+C'_\epsilon.
    \end{equation*}
    \begin{figure}
        \centering
        \begin{tikzpicture}[>=stealth]

\draw (-2,1.5) -- (4,1.5) node[right] {$L_1'$};
\draw (-2,0) -- (4,0) node[right] {$L_2'$};

\coordinate (y1) at (3.2,1.5);
\coordinate (x1) at (3.0,1.7);
\coordinate (x2) at (2.7,0.3);
\coordinate (y2) at (2.7,0.0);

\filldraw (y1) circle (2pt) node[above right] {$y_1$};
\filldraw (x1) circle (2pt) node[above left] {$\tphi'_{F_1,\epsilon}(x)$};
\filldraw (x2) circle (2pt) node[above left] {$\tphi'_{F_2,\epsilon}(x)$};

\begin{scope}[dashed]

    \draw (x2) -- (y2);

    \draw (x1) -- (x2);

    \draw (x1) -- (y1);
\end{scope}

\end{tikzpicture}
        \caption{}
        \label{fig5.1}
    \end{figure}
    By an analogous argument applied to $d(y_2,L_1')$ for each $y_2\in L_2'$, we deduce $L_1'\sim_{\lambda_1'\epsilon}L_2'$. Thus, for $\epsilon\in (0,\epsilon_4')$ with 
    \begin{equation}\label{e5.3}
        \epsilon_4'\=\min\bigl\{\epsilon',\,1/(10\lambda_1),\,1\big/\bigl(10^3\kappa^2\bigr),\,1/\lambda_1'\bigr\},
    \end{equation}
    $L_1'=L_2'$ follows from Fact~\ref{t5.2}.
\end{proof}
\begin{rem}\label{r5.7}
    By Lemma~\ref{t5.1}, we now view $\phi_0$ as a well-defined map from $\cU$ to $\partial T_1\cup\partial T_2$ given by $\phi_0(L)=L'$, where $L'$ is the equivalence class of singular geodesic rays given in Lemma~\ref{t5.1}.
\end{rem}

\subsection{Continuity of \texorpdfstring{$\phi_0$}{ϕ₀}}
    In this subsection, we verify the H\"older regularity of $\phi_0$ on $\cU$, construct its extension $\overline{\phi}_0$ to $\partial T_1\cup\partial T_2$, and prove that $\overline{\phi}_0$ maps each $\partial T_i$ into itself up to a permutation.
    
    We first show that $\phi_0$ maps each $\partial T_i\cap \cU$ into $\partial T_i$ up to a permutation. It is based on the fact that the regularized grid map $\tphi_{F,\epsilon}$ sends chambers to chambers (cf.~Proposition~\ref{t4.9a}).
\begin{lemma}\label{t5.6}
    Suppose $\kappa>1$, $C>0$, and $\epsilon\in(0,\min\{\epsilon_2',\,\epsilon_4'\})$, where $\epsilon_2'\in(0,1/100)$ is the constant depending only on $\kappa$ given in Proposition~\ref{t4.9a} and $\epsilon_4'\in(0,1/100)$ is the constant depending only on $\kappa$ given in Lemma~\ref{t5.1}. Suppose $D$ is a random subset of $\X$. Then almost surely, for each $\phi\in \QIE_{\kappa,C}(D,\X)$, 
    there exists a permutation $\sigma\in\{\id,\,(1,2)\}$ such that $\phi_0(\partial T_i\cap \cU)\subseteq \partial T_{\sigma(i)}$ for each $i\in\{1,\,2\}$. Here $\phi_0$ is the map on $\cU$ defined above.
\end{lemma}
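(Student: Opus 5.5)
We work on the full-measure event from Subsection~\ref{s5.2}: $\cF\subseteq\cF'_{\epsilon,D',\rho}$, $e\in U_3(F)$ for every $F\in\cF$ and every $\rho\ge\rho_0$ (Proposition~\ref{t5.0}), and $\phi_0$ is well defined on $\cU$ (Lemma~\ref{t5.1}, Remark~\ref{r5.7}). We fix $\phi\in\QIE_{\kappa,C}(D,\X)$ and pass to the extension $\phi'$ on $D'$. The strategy is to decide the permutation on one chamber, and then show that every other element of $\cU$ is forced to follow it.

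\textbf{Step 1: one chamber gives one permutation.} Take $L\in\cU\cap\partial T_1$ and $L^*\in\cU\cap\partial T_2$, so that $\fC=L\times L^*\in\cC$. Choose a flat $F\in\cF$ containing $\fC$. By Proposition~\ref{t4.9a}~(ii), applied to $\fC$ with apex $e\in U_3(F)$, there is a chamber $\fC'=L_1'\times L_2'$ and a permutation $\sigma_{\fC}\in\{\id,(1,2)\}$. The walls $L$ and $L^*$ are sent $\lambda_1\epsilon$-close to $L'_{\sigma_{\fC}(1)}$ and $L'_{\sigma_{\fC}(2)}$ respectively, and $L_j'$ is a singular ray in $T_j$. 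By the uniqueness in Proposition~\ref{t4.5} (Fact~\ref{t5.2}, using $\lambda_1\epsilon<1$), the ray $L'_{\sigma_\fC(1)}$ is exactly $\phi_0(L)$. Hence $\phi_0(L)\in\partial T_{\sigma_\fC(1)}$ and $\phi_0(L^*)\in\partial T_{\sigma_\fC(2)}$. In particular, $\phi_0(L)$ and $\phi_0(L^*)$ lie in \emph{different} factor boundaries.

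\textbf{Step 2: propagation.} Define the type $\tau(L)\in\{1,2\}$ of $L\in\cU$ by $\phi_0(L)\in\partial T_{\tau(L)}$. Let $L,M\in\cU\cap\partial T_1$ and fix any $L^*\in\cU\cap\partial T_2$. Both $L\times L^*$ and $M\times L^*$ lie in $\cC$. Step 1 applied to each gives $\tau(L)\neq\tau(L^*)$ and $\tau(M)\neq\tau(L^*)$, so $\tau(L)=\tau(M)$. Applying the same argument with the roles of the factors exchanged, all elements of $\cU\cap\partial T_2$ share a type, and this type differs from the common type on $\partial T_1$. Setting $\sigma(i)$ equal to the common type of $\cU\cap\partial T_i$ therefore defines a permutation, and this $\sigma$ gives the lemma.

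\textbf{Expected obstacle.} The delicate point is the identification in Step 1. The ray supplied by Proposition~\ref{t4.9a} comes with its own radius $R_2'$ and tolerance $\lambda_1\epsilon$. The ray $\phi_0(L)$ is defined instead through Proposition~\ref{t4.5}, with radius $R_0$, and possibly using a different flat $F$ containing $L$. The flat dependence is removed by Lemma~\ref{t5.1}. For the remaining mismatch, we must check that two singular rays from $\phi'(e)$ which are both $\lambda_1\epsilon$-close to $\tphi'_{F,\epsilon}(L)$ outside a large ball are themselves $2\lambda_1\epsilon$-equivalent, and hence equal by Fact~\ref{t5.2}. This is where the hypothesis $\epsilon<\min\{\epsilon_2',\epsilon_4'\}$ is used, together with the fact that the tolerance constants produced by Proposition~\ref{t4.5} and Proposition~\ref{t4.9a} can be taken to be the same $\lambda_1$.
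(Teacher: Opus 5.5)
Your proposal is correct and follows the paper's proof. The paper likewise applies Proposition~\ref{t4.9a} to a chamber $L_1\times L_2\in\cC$ with apex $e$. It identifies $\phi_0(L_i)$ with $L'_{\sigma(i)}$ because otherwise the unbounded set $\tphi'_{F,\epsilon}(L_i)$ would lie in the bounded intersection $\phi_0(L_i)[\lambda_1\epsilon]\cap L'_{\sigma(i)}[\lambda_1\epsilon]$, using $5\lambda_1\epsilon_4'<1$. It then notes that $\sigma$ is independent of the pair, which your Step~2 spells out more carefully than the paper's one-line remark.

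One small precision: Proposition~\ref{t4.9a}~(ii) gives only a one-sided containment, so you do not literally get a two-sided $2\lambda_1\epsilon$-equivalence to feed into Fact~\ref{t5.2}. What you get is arbitrarily far points lying in both conical neighborhoods, and that is already impossible for two distinct singular rays from the same apex. This is exactly the paper's direct calculation.
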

   
\begin{proof}
    Let $\phi\in \QIE_{\kappa,C}(D,\X)$ and $\rho_0>1/\epsilon$ be the constant given in Proposition~\ref{t5.0}. Fix $\rho\ge \rho_0$. Without loss of generality we assume $\cF\subseteq \cF'_{\epsilon,D',\rho}$.
    
    Let $L_1\in\partial T_1\cap \cU$ and $L_2\in\partial T_2\cap \cU$ be two singular geodesic rays emanating from $e$ and define the chamber $\fC\=L_1\times L_2$. By our choice of $\cF$, there exists a flat $F\in \cF$ such that $\fC\subseteq F$. Then by Proposition~\ref{t5.0}, almost surely $e\in U_3^{\epsilon,D',\rho}(F)$. Thus, by Proposition~\ref{t4.9a}, there exist a chamber $\fC'=L_1'\times L_2'$, a constant $R_2'>0$, and $\sigma\in\{\id,\,(1,2)\}$ such that $\tphi'_{F,\epsilon}(\fC)\sim_{\lambda_1\epsilon}\fC'$ and 
    \begin{equation}\label{e5.8a}
        \tphi'_{F,\epsilon}(L_i)\smallsetminus B(\phi'(e),R_2')\subseteq L_{\sigma(i)}'[\lambda_1\epsilon]\smallsetminus B(\phi'(e),R_2')
    \end{equation} for each $i\in\{1,\,2\}$. Recall that $\tphi'_{F,\epsilon}(L_i)\sim_{\lambda_1\epsilon,\phi'(e),R_0}\phi_0(L_i)$. By Definition~\ref{d2.5}, this implies 
    \begin{equation}\label{e5.8b}
        \tphi'_{F,\epsilon}(L_i)\smallsetminus B(\phi'(e),R_0)\subseteq \phi_0(L_i)[\lambda_1\epsilon]\smallsetminus B(\phi'(e),R_0).
    \end{equation}
    
    We argue by contradiction and assume that $\phi_0(L_i)\neq L_{\sigma(i)}'$. Then since $\phi_0(L_i)$ and $L_{\sigma(i)}'$ are both singular geodesic rays, we have $d \bigl(L_{\sigma(i)}'\smallsetminus B(\phi'(e),R),\phi_0(L_i)\smallsetminus B(\phi'(e),R)\bigr)\ge R-\Absbig{\phi_0(L_i)\cap L_{\sigma(i)}'}-1$ for each $R>0$. Since $5\lambda_1\epsilon_4'<1$ (cf.~\eqref{e5.3}), this implies $\phi_0(L_i)[\lambda_1\epsilon]\cap L_{\sigma(i)}'[\lambda_1\epsilon]$ is bounded by direct calculation, which contradicts \eqref{e5.8a} and \eqref{e5.8b}. Thus, we have $\phi_0(L_i)=L'_{\sigma(i)}$. That is, for each pair of $(L_1,L_2)$, we obtain a permutation $\sigma\in\{\id,\,(1,2)\}$ such that $\phi_0(L_i)\in \partial T_{\sigma(i)}$ for each $i\in \{1,\,2\}$. Since $\phi_0(L_i)$ does not depend on the choice of the other factor $L_{3-i}$, the choice of $\sigma$ does not depend on the pair $(L_1,L_2)$.  
\end{proof}
 We assume $\sigma=\id$ in Lemma~\ref{t5.6} for the remainder of the article.
Recall that $\tphi'_{F,\epsilon}(L)\sim_{\lambda_1\epsilon,\phi'(e),R_0}\phi_0(L)$ as given in Proposition~\ref{t4.5}. Using Lemma~\ref{t5.5}, we now reformulate this relation to verify the bi-H\"older continuity of $\phi_0$, as stated in the following lemma.
\begin{lemma}[Bi-H\"older continuity of $\phi_0$]\label{t5.4}
         Given $\kappa>1$, there exist constants $T>1$ and $\epsilon_4\in (0,1/100)$ such that if $D$ is a random subset of $\X$, then almost surely for each $C>0$, each $\epsilon\in (0,\epsilon_4)$, and each $\phi\in \QIE_{\kappa,C}(D,\X)$, there exists a constant $c>0$ such that if $i\in \{1,\,2\}$ and $L_1,L_2\in \cU\cap\partial T_i$, then
        \begin{equation}\label{e5.4m}
        c^{-1}d_{e_i}(L_1,L_2)^{T}\le d_{\pi_i(\phi'(e))}(\phi_0(L_1),\phi_0(L_2))\le cd_{e_i}(L_1,L_2)^{1/T}.
        \end{equation}
        Here $\phi'\in \QIE_{\kappa,C+\kappa d(e,D)}(D',\X)$ is the map defined at the beginning of Subsection~\ref{s5.2}, and the metrics $d_{e_i}$ and $d_{\pi_i(\phi'(e))}$ are defined in \eqref{e5.0}.
    \end{lemma}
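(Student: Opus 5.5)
The plan is to convert the visual metric $d_{e_i}$ into Gromov products along singular rays, and then use the $\lambda_1\epsilon$-equivalence $\tphi'_{F,\epsilon}(L)\sim_{\lambda_1\epsilon,\phi'(e),R_0}\phi_0(L)$ from Proposition~\ref{t4.5} together with Lemma~\ref{t5.5} to compare branching times before and after applying $\phi'$. Fix $i=1$, and let $L_1,L_2\in\cU\cap\partial T_1$ be rays from $e$ with $n\=\abs{L_1\wedge L_2}$, so that $d_{e_1}(L_1,L_2)=(q+1)^{-n}$. Put $n'\=\abs{\phi_0(L_1)\wedge\phi_0(L_2)}$, measured from $\pi_1(\phi'(e))$. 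The claim \eqref{e5.4m} is then equivalent to the linear bounds
\begin{equation*}
T^{-1}n-c_0\le n'\le Tn+c_0
\end{equation*}
for some $c_0$ depending on $\phi$ (through $\rho_0$, $C$, $d(e,D)$) but not on $L_1,L_2$.

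I choose flats $F_1,F_2\in\cF$ containing $L_1$ and $L_2$, respectively, with $e\in U_2(F_j)$ by Proposition~\ref{t5.0}. Here I use that for a ray in $\partial T_1\cap\cU$ there is a flat in $\cF$ containing it; this holds since each chamber of $\cC$ lies in some flat of $\cF$. Write $\tphi_j\=\tphi'_{F_j,\epsilon}$. The central geometric fact I will use is that two singular rays $\gamma_1,\gamma_2$ in $T_1$ from a common point, sharing a prefix of length $m$, satisfy $d(\gamma_1(t),\gamma_2(s))\ge\abs{t-s}+2(\min\{t,s\}-m)_+$. Combined with Fact~\ref{t5.2}-type estimates, this gives $m\approx\tfrac12\bigl(2t-\min_s d(\gamma_1(t),\gamma_2(s))\bigr)$ up to a multiplicative error of order $\epsilon$, provided $t\gg m$.

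\textbf{Upper bound on $n'$.} Take $t=Kn'$ for a large constant $K$, and let $z\in\phi_0(L_1)$ be the point at distance $t$ from $\phi'(e)$. By Proposition~\ref{t4.5} there is $x\in L_1$ with $d(\tphi_1(x),z)\le\lambda_1\epsilon t$, and by \eqref{e5.5b} $d(x,e)\le3\kappa t$. Similarly pick $x'\in L_2$ whose image is near the point of $\phi_0(L_2)$ at distance $t$. Since the two target rays agree up to length $n'$, the images satisfy $d(\tphi_1(x),\tphi_2(x'))\le 2(t-n')+O(\epsilon t)$. Lemma~\ref{t5.5} then gives $d(x,x')\le\kappa\bigl(2(t-n')+O(\kappa\epsilon t)\bigr)+C'_\epsilon$. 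On the source side, $d(x,x')\ge d(x,e)+d(x',e)-2n$, and $d(x,e),d(x',e)\ge t/(2\kappa)-O(\epsilon t)-C$ by the graded QI property. Both lengths are comparable to $t$ up to factors of $\kappa$, so these inequalities alone do not isolate $n$.

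To get a genuine comparison, I will instead take the points at equal distance: choose $x\in L_1$ and $x'\in L_2$ with $d(x,e)=d(x',e)=s$, where $s=n+$const, so that $d(x,x')\le$ const. Then $d(\tphi_1(x),\tphi_2(x'))\le 9\kappa\epsilon\cdot2s+C'_\epsilon$, while both images lie within $\lambda_1\epsilon\cdot O(\kappa s)$ of $\phi_0(L_1)$ and $\phi_0(L_2)$, respectively, at distance at least $s/(2\kappa)-O(\epsilon s)$ from $\phi'(e)$. Two points on distinct singular rays that are $\Theta(s)$ from the base and only $O(\epsilon s)$ apart force $n'\ge s/(2\kappa)-O(\epsilon s)-O(1)$. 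This yields $n'\ge n/(3\kappa)-c_0$ when $\epsilon<\epsilon_4$ is small. The reverse inequality $n\ge n'/(3\kappa)-c_0$ follows symmetrically: I start from equidistant points on $\phi_0(L_1)$ and $\phi_0(L_2)$ just beyond the branch point, pull them back via Proposition~\ref{t4.5} (which supplies $x,x'$), and apply the lower bound in Lemma~\ref{t5.5} to get $d(x,x')\le O(\kappa\epsilon n')+O(1)$, with $d(x,e),d(x',e)\gtrsim n'/(3\kappa)$. This forces $n\gtrsim n'/(3\kappa)$. Taking $T=3\kappa$ or a little larger then gives \eqref{e5.4m}.

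The main obstacle is making the ``nearby points on distinct singular rays force a long common prefix'' step precise with constants. Two singular rays in $T_1$ emanating from the same point of $\X$ are isometric to half-lines joined at the branch point, and the relevant points lie only in $\epsilon t$-conical neighborhoods of them. So I must check that a point within $\lambda_1\epsilon t$ of each ray, at distance $t$ from the base, lies within $O(\epsilon t)$ of the common prefix. This requires $t\le n'+O(\epsilon t)$, which needs $\lambda_1\epsilon$ small compared to $1$. I also need $s$ larger than $R_0$ and $\rho_0$, so small $n$ must be absorbed into the constant $c$.
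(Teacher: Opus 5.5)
Your proposal is correct and follows essentially the same route as the paper. Both reduce the claim to a linear two-sided comparison between $\abs{L_1\wedge L_2}$ and $\abs{\phi_0(L_1)\wedge\phi_0(L_2)}$, and both get it from the same three ingredients: the two-sided conical equivalence of Proposition~\ref{t4.5}, the graded quasi-isometric bounds, and Lemma~\ref{t5.5} on the two flats. The paper uses the branch points themselves ($u_0=(x_0,e_2)$, which lies in both flats, and $w_0$ on the target side) where you use equidistant points just past the branch point, which is a cosmetic difference. Note that your paragraph headed ``Upper bound on $n'$'' actually proves the lower bound $n'\ge n/(3\kappa)-c_0$. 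Also, your heuristic with $\min_s d(\gamma_1(t),\gamma_2(s))$ should use $d(\gamma_1(t),\gamma_2(t))$. Neither slip affects the argument.
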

    \begin{proof}
    
       We only prove the statement for $i=1$. Let $C>0$ be a constant, $\lambda_1>0$ be the constant given in Proposition~\ref{t4.5}, and $\epsilon\in(0,\min\{\epsilon_2',\,\epsilon_4'\})$ be a constant, where $\epsilon_2',\,\epsilon_4'\in(0,1/100)$ are the constants depending only on $\kappa$ from Proposition~\ref{t4.9a} and Lemma~\ref{t5.1}, respectively. Define $C'\=C+\kappa d(e,D)$. Let $\rho_0>1/\epsilon$ and $R_0>0$ be the constants given in Propositions~\ref{t5.0} and~\ref{t4.5}, respectively. Suppose $\rho\ge\rho_0$. Then without loss of generality we assume $\cF\subseteq \cF'_{\epsilon,D',\rho}$ and $e\in U_3^{\epsilon,D',\rho}(F)$ for each $F\in\cF$.

        Let $\phi\in \QIE_{\kappa,C}(D,\X)$ and $\phi_0$ be the map defined on $\cU$ in Remark~\ref{r5.7}. Let $L_1$ and $L_2$ be two singular geodesic rays in $\partial T_1$. For each $i\in\{1,\,2\}$, let $L_i'$ be the singular geodesic ray (from Proposition~\ref{t4.5}) emanating from $\phi'(e)=(e_1',e_2')$ such that 
        \begin{equation}\label{e5.4d}
            \tphi'_{F,\epsilon}(L_i)\sim_{\lambda_1\epsilon,\phi'(e),R_0}L_i'.
        \end{equation}
        Recall that $L_1'$ and $L_2'$ are both geodesic rays in $T_1$ (cf.~Lemma~\ref{t5.6}). 
        
        We first consider the first inequality of \eqref{e5.4m}. Let $x_0\in T_1$ be the last common vertex of $L_1$ and $ L_2$ and write $u_0=(x_0,e_2)$. Then $d(u_0,e)=d(x_0,e_1)=\abs{L_1\wedge L_2}-1$. Let $L_3$ be an arbitrary singular geodesic ray in $\cU$ emanating from $e_2$. Let $F_1$ and $F_2$ be flats in $\cF$ containing the chambers $L_1\times L_3$ and $L_2\times L_3$, respectively. Let $\tphi_1$ and $\tphi_2$ denote the regularized grid maps of $\phi'$ defined in Proposition~\ref{t3.5} on flats $F_1$ and $F_2$, respectively. Then by Lemma~\ref{t5.5} with $C$ set to be $C'$, we have 
        \begin{equation}\label{e5.4g}
            d\bigl(\tphi_1(u_0),\tphi_2(u_0)\bigr)\le 18\kappa \epsilon d(u_0,e)+C_\epsilon'.
        \end{equation}
         Write $R_t\=R_0+4(6\kappa \rho +2C'+8\kappa n_\epsilon)$. If $d(u_0,e)\ge 3\kappa R_t$, since $\tphi_i$ is a $(2\kappa,6\kappa\epsilon,6\kappa \rho +2C'+8\kappa n_\epsilon)$-graded quasi-isometric embedding and $180\kappa^2\epsilon_2'<1$
         , we have 
         \begin{equation*} 
             d\bigl(\tphi_i(u_0),\phi'(e)\bigr)\ge d(u_0,e)/(2\kappa)-6\kappa \epsilon d (u_0,e)-6\kappa \rho -2C'-8\kappa n_\epsilon> R_0
         \end{equation*}
         for each $i\in\{1,\,2\}$. Thus by \eqref{e5.4d}, Definition~\ref{d2.5}, and our assumption that $d(u_0,e)\ge 3\kappa R_t$, there exist $(y_1,e_2')\in L_1'$ and $(y_2,e_2')\in L_2'$ such that for each $i\in\{1,\,2\}$,
         \begin{equation}\label{e5.4h}
         \begin{aligned}
             d\bigl(\tphi_i(u_0),(y_i,e_2')\bigr)
             &\le \lambda_1\epsilon d\bigl(\tphi_i(u_0),\phi'(e)\bigr)\\
             &\le\lambda_1\epsilon(2\kappa d(u_0,e)+6\kappa\epsilon d(u_0,e)+6\kappa \rho +2C'+8\kappa n_\epsilon) 
             <3\lambda_1\kappa\epsilon d(u_0,e).
        \end{aligned}
         \end{equation}

        Define $y_3\in T_1$ to be the \emph{confluent} of $y_1$ and $y_2$ with respect to $e_1'$, that is, the last common vertex on the geodesic segments $[e_1',y_1]$ and $[e_1',y_2]$. Then by \eqref{e5.4h} and \eqref{e5.4g},
        \begin{equation}\label{e5.4i}
        \begin{aligned}
            d(y_1,y_3) 
            \le d(y_1,y_2) 
            &\le d\bigl(\tphi_1(u_0),(y_1,e_2')\bigr)+d\bigl(\tphi_2(u_0),(y_2,e_2')\bigr)+d\bigl(\tphi_1(u_0),\tphi_2(u_0)\bigr)\\
            &\le 6\lambda_1\kappa\epsilon d(u_0,e)+18\kappa \epsilon d(u_0,e)+C_\epsilon'. 
        \end{aligned}   
        \end{equation}
        If $\epsilon<\min\bigl\{1 /(36\lambda_1\kappa),1 \big/\bigl(72\kappa^2\bigr)\bigr\}$, by \eqref{e5.4h} and the assumption that $d(u_0,e)\ge 3\kappa R_t$, we have
        \begin{equation}\label{e5.4j}
        \begin{aligned}
            d\bigl(e_1',y_1\bigr)&\ge d\bigl(\tphi_1(u_0),\phi'(e)\bigr)-d\bigl(\tphi_1(u_0),\bigl(y_1,e_2')\bigr)\\
         &\ge d(u_0,e)/(2\kappa)-6\kappa\epsilon d(u_0,e)-(6\kappa \rho +2C'+8\kappa n_\epsilon)-3\lambda_1\kappa\epsilon d(u_0,e)\\
         &\ge d(u_0,e)/(3\kappa)-3\lambda_1\kappa\epsilon d(u_0,e)
         >d(u_0,e)/(4\kappa).
            \end{aligned}
        \end{equation}
        Therefore, by \eqref{e5.4i} and \eqref{e5.4j}, if $d(u_0,e)\ge 3\kappa R_t$, we have
        \begin{align}
            \abs{L_1'\wedge L_2'}
             &=d(y_3,e_1')+1
             \ge d(e_1',y_1)-d(y_1,y_3)+1\ge (1/(4\kappa)-6\lambda_1\kappa \epsilon-18\kappa\epsilon)d(u_0,e)-C'_\epsilon+1\notag\\
             &\ge (1/(4\kappa)-6\lambda_1\kappa \epsilon-18\kappa\epsilon)\abs{L_1\wedge L_2}-C'_\epsilon.\label{e5.4b} 
        \end{align}
        If $d(u_0,e)<3\kappa R_t$, then $\abs{L_1\wedge L_2}=d(u_0,e)+1<3\kappa R_t+1$. Thus the right-hand side of \eqref{e5.4b} is at most $3\kappa R_t+1$. So in both cases, 
        \begin{equation}\label{e5.4l}
        \abs{L_1'\wedge L_2'}=d(y_3,e_1')+1
        \ge (1/(4\kappa)-6\lambda_1\kappa \epsilon-18\kappa\epsilon)\abs{L_1\wedge L_2}-C'_\epsilon-3\kappa R_t.
        \end{equation}

        For the other inequality of \eqref{e5.4m}, we use an analogous argument. Let $z_0\in T_1$ be the last common vertex of $L_1'$ and $L_2'$ farthest from $e_1'$ and write $w_0\=(z_0,e_2')$. Then $d(z_0,e_1')=d(w_0,\phi'(e))=\abs{L_1'\wedge L_2'}-1$. 
        
        We assume first that $d\left(w_0,\phi'(e)\right)>  R_t$. Then by \eqref{e5.4d} there exist points $x_1\in L_1$ and $x_2\in L_2$ such that 
        \begin{equation}\label{e5.4k}
        d\bigl(\tphi_1(x_1),w_0\bigr)\le \lambda_1\epsilon d(w_0,\phi'(e))\quad\text{ and }\quad  d\bigl(\tphi_2(x_2),w_0\bigr)\le \lambda_1\epsilon d(w_0,\phi'(e)).
        \end{equation}
        
        Suppose $d(w_0,\phi'(e))\ge 3\kappa d(x_i,e) $ for some $i\in \{1,\,2\}$. Then since $\tphi_i$ is a graded quasi-isometric embedding, by \eqref{e5.4k} and the assumption that $d\left(w_0,\phi'(e)\right)>  R_t$, for $\epsilon<\min\{1/(20\lambda_1),1/60\}$,
        \begin{equation*}
        \begin{aligned}
            d\bigl(\tphi_i(x_i),\phi'(e)\bigr)&\le 2\kappa d(x_i,e)+6\kappa\epsilon d(x_i,e)+6\kappa \rho_0+8\kappa n_\epsilon+2C'\\
            &\le 0.95d(w_0,\phi'(e))<d(w_0,\phi'(e))-d\bigl(\tphi_i(x_i),w_0\bigr),
        \end{aligned}
        \end{equation*}
        which is clearly a contradiction. This, together with a similar argument as that for \eqref{e5.5b}, shows that for each $i\in\{1,\,2\}$,     
        \begin{equation}\label{e5.4a}
             d(w_0,\phi'(e))/(3\kappa)<d(x_i,e)< 3\kappa d(w_0,\phi'(e)).
        \end{equation}
        
        By Lemma~\ref{t5.5}, \eqref{e5.4k}, and \eqref{e5.4a}, we have
        \begin{equation}\label{e5.4e}
        \begin{aligned}
            d(x_1,x_2)&\le \kappa \bigl(d\bigl(\tphi_1(x_1),w_0\bigr)+d\bigl(\tphi_2(x_2),w_0\bigr)\bigr)+9\kappa^2\epsilon (d(x_1,e)+d(x_2,e))+\kappa C_\epsilon'\\
            &\le \bigl(2\lambda_1\kappa+100\kappa^3\bigr)\epsilon d(w_0,\phi'(e))+\kappa C_\epsilon'.
        \end{aligned}
        \end{equation}
        Combining \eqref{e5.4a} and \eqref{e5.4e} with the fact that $2\abs{L_1\wedge L_2}\ge d(x_1,e)+d(x_2,e)-d(x_1,x_2)$, we use a similar argument as that for \eqref{e5.4l} (with \eqref{e5.4a} and \eqref{e5.4e} replacing \eqref{e5.4i} and \eqref{e5.4h}) to get 
        \begin{equation}\label{e5.4c}
            \abs{L_1\wedge L_2}\ge \bigl(1/(4\kappa)-\bigl(2\lambda_1\kappa+100\kappa^3\bigr)\epsilon \bigr)\abs{L_1'\wedge L_2'}-\kappa C_\epsilon'-R_t.
        \end{equation} 
        Combining \eqref{e5.4l} and \eqref{e5.4c} with~\eqref{e5.0}, we obtain the statement for 
        \begin{equation}\label{e5.4f}
            T\coloneqq 8\kappa,\quad 
            \epsilon_4\coloneqq\min\bigl\{\epsilon',\,\epsilon_4',\,\epsilon_2',\,1 \big/ \bigl(20\lambda_1\kappa^2+10^4\kappa^4\bigr)\bigr\},\quad \text{ and } 
            c\coloneqq (q+1)^{\kappa C_\epsilon'+3\kappa R_t}.\qedhere
        \end{equation}
    \end{proof}
Since $\partial T_i$ is closed for each $i\in \{1,\,2\}$ and $\cU$ is dense in both $\partial T_1$ and $\partial T_2$, we immediately deduce the following proposition.
\begin{prop}\label{t5.7}
    In the setting of Lemma~\ref{t5.4}, almost surely there exists a unique bi-H\"older continuous map $\overline{\phi}_0\: \partial T_1\cup \partial T_2 \to \partial T_1\cup \partial T_2$ that is an extension of $\phi_0$ and maps each tree boundary into a tree boundary.
\end{prop}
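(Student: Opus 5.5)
The plan is to extend $\phi_0$ by uniform continuity. By Lemma~\ref{t5.6} (with $\sigma=\id$), $\phi_0$ maps $\cU\cap\partial T_i$ into $\partial T_i$. By Lemma~\ref{t5.4}, on each $\cU\cap\partial T_i$ it satisfies the two-sided H\"older bound \eqref{e5.4m}, with constants $c,T$ fixed once the almost sure event of Lemma~\ref{t5.4} and the map $\phi$ are fixed. We work on the intersection of the full-measure events of Lemmas~\ref{t5.1}, \ref{t5.6} and~\ref{t5.4}; this is a countable intersection, since $\cU$ and $\cF$ are countable. On this event everything below is deterministic.

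\textbf{Step 1 (extension).} The upper bound in \eqref{e5.4m} makes $\phi_0|_{\cU\cap\partial T_i}$ uniformly continuous from $(\partial T_i,d_{e_i})$ to $(\partial T_i,d_{\pi_i(\phi'(e))})$. The boundary $\partial T_i$ is compact, hence complete, under the ultrametric \eqref{e5.0}, and $\cU\cap\partial T_i$ is dense in it. So for $\xi\in\partial T_i$ and any sequence $L_n\in\cU\cap\partial T_i$ with $L_n\to\xi$, the images $\phi_0(L_n)$ form a Cauchy sequence. We define $\overline{\phi}_0(\xi)$ as their limit. This limit lies in $\partial T_i$ because $\partial T_i$ is closed; this is exactly where it matters that each tree boundary goes into the corresponding tree boundary rather than into the union. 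The limit is independent of the chosen sequence: interleaving two sequences gives another Cauchy sequence.

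\textbf{Step 2 (bi-H\"older bound).} Passing to the limit in \eqref{e5.4m}, using continuity of the metrics, gives
\[
c^{-1}d_{e_i}(\xi,\eta)^{T}\le d_{\pi_i(\phi'(e))}\bigl(\overline{\phi}_0(\xi),\overline{\phi}_0(\eta)\bigr)\le c\,d_{e_i}(\xi,\eta)^{1/T}
\]
for all $\xi,\eta\in\partial T_i$. So the extension is bi-H\"older on each factor boundary, and in particular injective. Since $\partial T_1$ and $\partial T_2$ are disjoint, the map on $\partial T_1\cup\partial T_2$ is defined piecewise.

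\textbf{Step 3 (uniqueness).} Any continuous extension must agree with $\overline{\phi}_0$ on the dense set $\cU$, and therefore everywhere.

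The only point needing care is bookkeeping. The constants $c$ and $T$ depend on $\phi$, $C$ and $\epsilon$, but not on the points; and the base points of the metrics on the source and target boundaries differ ($e_i$ versus $\pi_i(\phi'(e))$). Neither affects the argument, because uniform continuity only requires fixed constants.
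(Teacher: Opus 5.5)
Your proposal is correct and follows the paper's route: the paper deduces the proposition in one line from the bi-H\"older bounds of Lemma~\ref{t5.4} (with Lemma~\ref{t5.6} sending $\cU\cap\partial T_i$ into a single tree boundary), the density of $\cU$ in each $\partial T_i$, and the closedness of $\partial T_i$. This is exactly the uniform-continuity extension you describe; your Steps~1--3 supply the routine details (Cauchy images, passing the two-sided bound to the limit, uniqueness via density) that the paper leaves implicit.
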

Equivalently, we have defined a map $\wx\to\wx$ on the product boundary that sends $(\xi,\eta)\in\wx$ to $(\overline{\phi}_0(\xi),\overline{\phi}_0(\eta))$. That is, we have obtained a boundary map that is H\"older continuous and factors through the projections $\wx\to \partial T_i$. 

\begin{rem}\label{r5.11}
    In the setting of Lemma~\ref{t5.4}, suppose $F$ and $F'$ are two flats such that $\tphi_{F,\epsilon}(F)\subseteq \Nb(F',\Delta_1)$ (cf.~Proposition~\ref{t4.8}), $i\in\{1,\,2\}$, $L\subseteq F$ is a hyperplane in $T_i$, and $x\in U_3(F)\cap L$. Let $L'$ be the hyperplane given in Proposition~\ref{t4.5} such that $\tphi_{F,\epsilon}(L)\sim_{\lambda_1\epsilon,\phi_{F,\epsilon}(x),R_0}L'$. Then  $L'\smallsetminus B(\phi_{F,\epsilon}(x),R_0)\subseteq (B(F',\Delta_1))[\lambda_1\epsilon]$. Since $\epsilon<1/(2\lambda_1)$, this implies $\pi_i(F')=\pi_i(L')$.
\end{rem}

\section{The pullback factor maps}\label{s6}

Recall that without loss of generality, we assume that $\sigma=\operatorname{id}$ in the setting of Lemma~\ref{t5.6}, i.e., $\overline{\phi}_0$ maps the boundary $\partial T_i$ of each factor tree into itself. 

In this section, we complete the proofs of Theorems~\ref{t1.1} and~\ref{t1.2} by constructing factor maps which are pullback maps on the boundary of each tree.

We first fix a choice of the constant $\Delta_1$ given in Proposition~\ref{t4.8}. Given constants $\kappa>1$, $C>0$, $\epsilon\in(0,\epsilon_4)$, and $\rho>1/\epsilon$, denote by $S(\kappa,C,\epsilon,\rho)$ the set of constants $\Delta_1>0$ satisfying \eqref{e4.8t} given in Proposition~\ref{t4.8} (see \eqref{e5.4f} for the fact $\epsilon_4\le\epsilon_2'$). Then for $1/\epsilon<\rho_1<\rho_2$ we have $S(\kappa,C,\epsilon,\rho_2)\subseteq S(\kappa,C,\epsilon,\rho_1)$. Thus, for the rest of the article, we fix the choice of $\Delta_1$ to be 
\begin{equation}\label{e6.00a}
    \Delta_1\=\inf \{ \Delta +1 : \Delta\in S(\kappa,C,\epsilon,\rho) \}.
\end{equation} 
The choice of $\Delta_1$ fixed this way is nondecreasing with respect to $\rho$ and thus measurable in $\rho$.

\subsection{Finding the factor maps}\label{s6.1}
 By Proposition~\ref{t4.8}, each flat is almost surely mapped near another flat. This suggests that we may treat points in $\X$ as intersections of flats. We first prove a geometric lemma.
 \begin{lemma}\label{t6.0}
     Suppose $D$ is a random subset of $\X$, $\kappa>1$, $C>0$, $\epsilon\in(0,\epsilon_4)$, and $\rho>1/\epsilon$ are constants, where $\epsilon_4\in(0,1/100)$ is the constant depending only on $\kappa$ given in Lemma~\ref{t5.4}. Let $\Delta_1>0$ be the constant fixed in \eqref{e6.00a}. Then there exists a constant $\Delta_2>0$ such that the following holds almost surely:

     \smallskip
 
     Suppose $\phi\in \QIE_{\kappa,C}(D,\X)$, $x\in D$, and $F_1$ and $F_2$ are two flats with $F_1\cap F_2=\{x\}$ and $x\in U_3(F_1)\cap U_3(F_2)$. For each $i\in\{1,\,2\}$, let $F_i'$ be the flat given in Proposition~\ref{t4.8} such that
    $\phi(D\cap U_1(F_i))\subseteq \Nb(F_i',\Delta_1)$. Set $Q\=\{y\in F_1':d(y,F_2')=d(F_1',F_2')\}$. 
    Then there exists a point $x'\in Q$ such that
    \begin{equation}\label{e6.0d}
        \phi(x)\in \Nb(F_1',\Delta_1)\cap \Nb(F_2',\Delta_1)\subseteq B(Q,2\Delta_1)\subseteq B(x',\Delta_2). 
    \end{equation}
 \end{lemma}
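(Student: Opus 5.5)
The plan is to reduce everything to the geometry of geodesics in each factor tree, and then to bound the size of $Q$ uniformly using the quasi-isometric inequality for $\phi$ together with the fact that $F_1$ and $F_2$ meet only at $x$.

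\textbf{Step 1: identify the target flats.} Write $x=(x_1,x_2)$, $F_1=L_1\times L_2$ and $F_2=M_1\times M_2$. Since $F_1\cap F_2=\{x\}$, we have $L_j\cap M_j=\{x_j\}$ for $j\in\{1,\,2\}$, so the four ends of $L_j\cup M_j$ are pairwise distinct. By Proposition~\ref{t4.8} and Remark~\ref{r5.11}, and since $\sigma=\id$, we can write $F_1'=L_1'\times L_2'$ and $F_2'=M_1'\times M_2'$ with $L_j',M_j'\subseteq T_j$. Proposition~\ref{t4.5} shows that the singular rays of $F_i$ from $x$ are carried to singular rays whose classes are given by the boundary map. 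So the ends of $L_j'$ and $M_j'$ are the images under $\overline{\phi}_0$ (centered at $x$ rather than $e$, by translation invariance) of the ends of $L_j$ and $M_j$. The map $\overline{\phi}_0$ is bi-H\"older by Proposition~\ref{t5.7}, hence injective, so $L_j'$ and $M_j'$ have four distinct ends. Consequently $L_j'\cap M_j'$ is either a finite segment or empty. In the empty case there is a unique bridge realizing $d(L_j',M_j')$. Either way, the set $Q_j\subseteq L_j'$ of points nearest to $M_j'$ is a bounded segment or a single point, and $Q=Q_1\times Q_2$.

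\textbf{Step 2: the first two inclusions.} The inclusion $\phi(x)\in B(F_1',\Delta_1)\cap B(F_2',\Delta_1)$ holds because $x\in D\cap U_1(F_i)$ for both $i$. For the middle inclusion I will use an elementary tree fact. If $z\in T_j$ is within $\Delta$ of both $L_j'$ and $M_j'$, then its projection to $L_j'$ lies within $\Delta$ of $Q_j$, since the projections of $z$ to the two lines are joined through the bridge or through the overlap. Applying this coordinatewise, with the $\ell^2$ metric, gives $B(F_1',\Delta_1)\cap B(F_2',\Delta_1)\subseteq B(Q,2\Delta_1)$.

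\textbf{Step 3: uniform bound on $\operatorname{diam} Q$ (main obstacle).} It remains to show $\operatorname{diam}Q\le \ell_0$ for a constant $\ell_0$ depending only on $\kappa,C,\epsilon,\rho$; then any $x'\in Q$ works with $\Delta_2\=2\Delta_1+\ell_0$. The bridge case contributes nothing, so the issue is a long overlap $L_j'\cap M_j'$ of length $\ell$.

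Suppose such an overlap exists and consider a point $w$ on it at distance about $\ell/2$ from $\phi(x)$. By Lemma~\ref{t4.2}~(iii) and Proposition~\ref{t4.5}, which describe the flats $F_i'$ as coarsely covered by $\tphi_{F_i,\epsilon}(F_i)$ with error $\lambda\epsilon\ell$, there are points $y_i\in F_i$ with $d(\tphi_{F_i,\epsilon}(y_i),w)\le \lambda\epsilon\ell+O(1)$ for $i\in\{1,\,2\}$. The graded QI property gives $d(y_i,x)\ge \ell/(3\kappa)$ once $\ell$ is large. Lemma~\ref{t5.5}, applied at the base point $x$, then yields
\[
d(y_1,y_2)\le \kappa\bigl(2\lambda\epsilon\ell\bigr)+9\kappa\epsilon\bigl(d(x,y_1)+d(x,y_2)\bigr)+C_\epsilon'=O(\epsilon\ell)+C_\epsilon'.
\]
On the other hand, $L_j\cap M_j=\{x_j\}$ makes distances additive in each tree factor, so $d(y_1,y_2)\ge \tfrac12\bigl(d(y_1,x)+d(y_2,x)\bigr)\ge \ell/(3\kappa)$. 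For $\epsilon$ small, which is allowed by shrinking $\epsilon_4$ if needed, these two bounds contradict each other unless $\ell\le \ell_0$.

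The delicate part is making sure the coarse-surjectivity error in the first inequality is genuinely of size $O(\epsilon\ell)$, uniform over $x$ and the flats. I would take this from the conical estimate \eqref{e4.5t2} for hyperplanes through $x$, which is available because $x\in U_3(F_i)$, applied to the rays of $F_i'$ contained in the overlap.
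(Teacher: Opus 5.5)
Your argument is correct and is essentially the paper's proof. Step~2 is Lemma~\ref{t4.3}~(ii), and Step~3 is exactly the paper's Claim: a point of $\pi_j(F_1')\cap\pi_j(F_2')$ far from $\phi(x)$ is pulled back via \eqref{e4.5t1} (with $\pi_j(F_i')$ identified through Remark~\ref{r5.11}) to points on the $T_j$-hyperplanes of $F_1,F_2$ through $x$, and Lemma~\ref{t5.5} is played against the additivity of distances through $x$ forced by $F_1\cap F_2=\{x\}$. The boundary-map and injectivity discussion in Step~1 is not needed, since Step~3 already rules out unbounded overlaps, and the bridge case never arises because both $\pi_j(F_1')$ and $\pi_j(F_2')$ contain $\pi_j(\phi(x))$.
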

 \begin{proof}
     We first obtain an upper bound estimate for the size of $Q$.
     
     \smallskip
     
     \textit{Claim.} 
     There exists a constant $\Delta_2>0$ such that for each $i\in\{1,\,2\}$, we have $\abs{\pi_i(Q)}\le \Delta_2-2\Delta_1$.

     \smallskip
     
     If the claim is verified, By Lemma~\ref{t4.3}~(ii),
     \begin{equation*}
         \Nb(F_1',\Delta_1)\cap \Nb(F_2',\Delta_1)\subseteq B(Q,2\Delta_1).
     \end{equation*}
     By Lemma~\ref{t4.3}~(i), $Q$ is a polyhedron and $Q=\pi_1(Q)\times\pi_2(Q)$. Then for each $x'\in Q$ and each $i\in\{1,\,2\}$, $\pi_i(Q)\subseteq B(\pi_i(x'),\abs{\pi_i(Q)})$. Thus, $Q\subseteq B(x',\max\{\abs{\pi_1(Q)},\,\abs{\pi_2(Q)}\})$ and we verify \eqref{e6.0d}.

     \smallskip
     
     \textit{Proof of Claim.}
     By Lemma~\ref{t4.3}~(i), we only need to prove $\abs{\pi_i(F_1')\cap\pi_i (F_2')}\le \Delta_2-2\Delta_1$. Now fix a choice of $i\in\{1,\,2\}$. 
     For each $j\in\{1,\,2\}$, let $L_j\subseteq F_j$ be the hyperplane in $T_i$ passing through $x$, and $L_j'$ be the hyperplane given in \eqref{e4.5t1} in Proposition~\ref{t4.5} such that 
     \begin{equation}\label{e6.0a}
         \tphi_{F,\epsilon}(L_j)\sim_{\lambda_1\epsilon,\phi(x),R_0}L'_j.
     \end{equation}
     Then by  Propositions~\ref{t4.5} and \ref{t5.7}, we have $\pi_{3-i}(L_1')=\pi_{3-i}(\phi(x))=\pi_{3-i}(L_2')$ and thus 
     \begin{equation}\label{e6.0a2}
         \abs{L_1'\cap L_2'}=\abs{\pi_i(L_1')\cap \pi_i(L_2')}.
     \end{equation}
     Let $z$ be a point in $L_1'\cap L_2'$ and write $R_t\=R_0+4\kappa (6\kappa\rho+2C+8\kappa n_\epsilon)$. Suppose $d(z,\phi(x))>R_t$. Then by \eqref{e6.0a} for each $j\in\{1,\,2\}$ there exists a point $x_j\in L_j$ such that 
     \begin{equation}\label{e6.0a1}
         d\bigl(\tphi_{F_j,\epsilon}(x_j),z\bigr)\le\lambda_1\epsilon d(z,\phi(x)).
     \end{equation}
     Recall that $\tphi_{F_j,\epsilon}$ is a $(2\kappa,6\kappa\epsilon, 6\kappa\rho+2C+8\kappa n_\epsilon)$-graded quasi-isometric embedding. Since $\epsilon<\epsilon_4\le 1\big/\bigl(20\lambda_1\kappa^2+10^4\kappa^4\bigr)$ (see \eqref{e5.4f}), we can use a similar argument as that for \eqref{e5.4a} to get
     \begin{equation}\label{e6.0b1}
         d(z,\phi(x))/(3\kappa)<d(x_j,x)<3\kappa d(z,\phi(x)).
     \end{equation} Thus, since $F_1\cap F_2=\{x\}$, we have
     \begin{equation}\label{e6.0b}
         d(x_1,x_2)=d(x_1,x)+d(x,x_2)>2d(z,\phi(x))/(3\kappa).
     \end{equation}

     On the other hand, by Lemma~\ref{t5.5}, \eqref{e6.0a1}, \eqref{e6.0b1}, and $\epsilon< 1\big/\bigl(40\lambda_1\kappa^2+10^4\kappa^4\bigr)$, we have
     \begin{align}
         d(x_1,x_2)&\le \kappa d\bigl(\tphi_{F_1,\epsilon}(x_1),\tphi_{F_2,\epsilon}(x_2)\bigr)+9\kappa^2\epsilon ((d(x_1,x)+d(x_2,x))+\kappa C_\epsilon'\notag\\
         &\le \bigl(2\kappa \lambda_1+54\kappa^3\bigr)\epsilon d(z,\phi(x))+\kappa C_\epsilon'\le d(z,\phi(x))/(10\kappa)+\kappa C'_\epsilon.\label{e6.0c} 
     \end{align}
     By \eqref{e6.0b} and \eqref{e6.0c}, for $d(z,\phi(x))>R_t$ we have $d(z,\phi(x))<2\kappa^2 C'_\epsilon$. Then for a general $z'\in L_1'\cap L_2'$, we have $d(z',\phi(x))\le\max\big\{2\kappa^2 C'_\epsilon,\, R_t\big\}<2\kappa^2 C'_\epsilon+R_t$. 
     
     So by Remark~\ref{r5.11} and \eqref{e6.0a2}, we have $\abs{\pi_i(F_1')\cap\pi_i (F_2')}=\abs{\pi_i(L_1')\cap\pi_i (L_2')}=\abs{L_1'\cap L_2'}< 4\kappa^2 C'_\epsilon+2R_t+1$. Thus, denoting
     \begin{equation}\label{e6.00c}
         \Delta_2\=2\Delta_1+4\kappa^2 C_\epsilon'+2R_t+1=2\Delta_1+4\kappa^2 C_\epsilon'+2(R_0+4\kappa (6\kappa\rho+2C+8\kappa n_\epsilon))+1,
     \end{equation}
     we verify our claim. 
 \end{proof}
 
  We now construct the factor maps of $\phi$ on the subset of $D$ where $\rho_0(D,x,\epsilon)$ is bounded uniformly (cf.~Definition~\ref{d5.0}). We first state a fact as a direct consequence of the definition of $\rho_0(D,x,\epsilon)$.
\begin{fact}\label{f7.1}
    Suppose $\epsilon\in (0,1)$ and $R>1/\epsilon$ are constants. Let $x$ be a point in $\X$ and fix $i\in\{0,\,1\}$. Suppose $D_1$ and $D_2$ are two subsets of $\X$ such that $\rho_i(D_j,x,\epsilon)\le R$ for each $j\in\{1,\,2\}$ and $D_1\cap B(x,R+2n_\epsilon)=D_2\cap B(x,R+2n_\epsilon)$. By the definition of $U_0(F)$ (cf.~\eqref{e3.0c}), we have $U_0^{\epsilon,D_j}(F)\cap B(x,R)=U_0^{\epsilon,D_j\cap B(x,R+2n_\epsilon)}(F)\cap B(x,R)$ for each flat $F$ containing $x$ and each $j\in\{1,\,2\}$. Thus by Definition~\ref{d5.0}, we obtain $\rho_i(D_1,x,\epsilon)=\rho_i(D_2,x,\epsilon)$.    
\end{fact}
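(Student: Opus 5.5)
The plan is to show that $\rho_i(D,x,\epsilon)$ depends only on $D\cap B(x,R+2n_\epsilon)$ once it is known to be at most $R$. This comes down to a locality statement for $U_0$ and a truncation argument in the definition of $\cL$.

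\emph{Locality of $U_0$.} Fix a flat $F\ni x$ and identify $F$ with $\Z^2$. By \eqref{e3.0c}, whether a point $y\in F$ lies in $U_0^{\epsilon,D}(F)$ depends only on $D\cap A_{ij}^{\epsilon}$, where $A_{ij}^\epsilon$ is the square containing $y$. Each square has diameter less than $2n_\epsilon$. Hence, if $y\in B(x,R)$, its square lies in $B(x,R+2n_\epsilon)$, and $D\cap A^\epsilon_{ij}=(D\cap B(x,R+2n_\epsilon))\cap A^\epsilon_{ij}$. This gives the identity $U_0^{\epsilon,D_j}(F)\cap B(x,R)=U_0^{\epsilon,D_j\cap B(x,R+2n_\epsilon)}(F)\cap B(x,R)$. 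Since $D_1$ and $D_2$ agree on $B(x,R+2n_\epsilon)$, it follows that $U_0^{\epsilon,D_1}(F)\cap B(x,R)=U_0^{\epsilon,D_2}(F)\cap B(x,R)$ for every $F\in\cF_x$.

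\emph{Truncation of $\cL$.} Write $\beta=10^{-5-i}\epsilon^8$. By Definition~\ref{d3.2}, $\cL_W(\beta,x)$ is the infimum of those $r$ such that the density condition $\abs{B(x,R')\cap W}>(1-\beta)\abs{B(x,R')}$ holds for all $R'>r$. The hypothesis $\rho_i(D_j,x,\epsilon)\le R$ means that for each $F\in\cF_x$ we have $\cL_{U_0^{\epsilon,D_j}(F)}(\beta,x)\le R$. So the density condition holds for all $R'>R$ for both $j$. The value of $\cL$ is then determined by the set of radii $R'\le R$ at which the condition fails, together with the behaviour just below $R$.

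This is the delicate point. The balls $B(x,R')$ are open, so for $R'\le R$ one has $B(x,R')\subseteq B(x,R)$, and the condition at such $R'$ only involves $U_0(F)\cap B(x,R)$. That set is the same for $j=1,2$ by the locality step. Consequently, for each $F$ the set of failing radii is the same for $D_1$ and $D_2$, and $\cL_{U_0^{\epsilon,D_1}(F)}(\beta,x)=\cL_{U_0^{\epsilon,D_2}(F)}(\beta,x)$. When $\cL=R$ exactly, the infimum is still computed from failures at radii $\le R$, or from the absence of any failure, so the same conclusion holds.

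\emph{Conclusion.} Taking the supremum over $F\in\cF_x$ gives $\rho_i(D_1,x,\epsilon)=\rho_i(D_2,x,\epsilon)$. The main obstacle I expect is only the bookkeeping of open versus closed balls at the boundary radius $R$, which the argument above handles.
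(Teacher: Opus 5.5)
Your proof is correct and follows the same route as the paper. The first ingredient is the locality identity for $U_0$, which holds because each square $A_{ij}^\epsilon$ has diameter less than $2n_\epsilon$; the second is reading off $\rho_i$ from Definition~\ref{d5.0}. The one thing you add is the step the paper leaves implicit: $\cL\le R$ forces the density condition at every radius $R'>R$, and for $R'\le R$ the condition only involves $U_0(F)\cap B(x,R)$, so the admissible radii, and hence the infima, coincide for $D_1$ and $D_2$.
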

 \begin{definition}\label{d6.4}
     Suppose $D\subseteq\X$, $\epsilon\in (0,1)$, and $\rho>0$. Define $D'_{\epsilon,\rho}\=\{x\in D:\rho_0(D,x,\epsilon)\le \rho\}$ and $\widehat D'_{\epsilon,\rho}\=\{x\in D:\rho_1(D,x,\epsilon)\le \rho\}$. We call the set $D'_{\epsilon,\rho}$ the \emph{regular-core} of $D$ with parameter $(\epsilon,\rho)$. 
\end{definition}
     Note that $\widehat D'_{\epsilon,\rho}\subseteq D'_{\epsilon,\rho}$ by Definitions~\ref{d5.0} and~\ref{d3.2}.

     \smallskip
     
    Suppose $x\in\X$ and $D$ is a random subset of $\X$. We verify that for each $\epsilon\in(0,1)$ and each $\rho>0$, the events $\bigl\{x\in D'_{\epsilon,\rho}\bigr\}$ and $\bigl\{x\in \hD'_{\epsilon,\rho}\bigr\}$ have positive probability. Fix $\epsilon\in (0,1)$. By \eqref{taildis} in Lemma~\ref{t5.0b}, there exists a constant $\rho_2>1/\epsilon$ such that $\P(\rho_1(D,x,\epsilon)\le \rho_2)>0$. By Fact~\ref{f7.1}, we know that conditional on $\rho_1(D,x,\epsilon)\le \rho_2$, if $B(x,\rho_2+2n_\epsilon)\subseteq D$, then $\rho_1(D,x,\epsilon)=0$. Thus, by the Fortuin--Kasteleyn--Ginibre inequality (see e.g.,~\cite[Chapter~2.2]{grimmett2012percolation}), for each $\rho>0$,
    \begin{equation}\label{e6.00}
    \begin{aligned}
    \P\bigl(x\in D'_{\epsilon,\rho}\bigr)\ge\P\bigl(x\in \hD'_{\epsilon,\rho}\bigr)
    &\ge \P(\rho_1(D,x,\epsilon)\le \rho_2,\,B(x,\rho_2+2n_\epsilon)\subseteq D)\\
    &\ge\P(\rho_1(D,x,\epsilon)\le \rho_2)\P(B(x,\rho_2+2n_\epsilon)\subseteq D)
    >0.
    \end{aligned}
    \end{equation}
    
    Based on the argument above, conditional on $x\in D'_{\epsilon,\rho}$, almost surely we have that if $F$ is a flat containing $x$, then $x\in U_3(F)$ by Lemma~\ref{t5.0a}, Definitions~\ref{d4.1b}, \ref{d4.1a}, and Proposition~\ref{t3.5}. 

    \smallskip
    
    Let $\epsilon\in (0,1)$ be a constant. For each $\rho>0$, each $D\subseteq\X$, and each $\phi\in\QIE(D,\X)$, define the nonempty set $\cP_{\epsilon}(\rho,D,\phi)$ which consists of the pairs $(\psi_1,\psi_2)$ of maps $\psi_1 \:T_1\to T_1$ and $\psi_2\:T_2\to T_2$ which satisfies that for each $i\in\{1,\,2\}$ and each $z_i\in\pi_i\bigl(D_{\epsilon,\rho}'\bigr)$, there exists $z\in D_{\epsilon,\rho}'\cap \pi_i^{-1}(z_i)$ such that $\psi_i(z_i)=\pi_i(\phi(z))$. The following corollary can be derived from Lemma~\ref{t6.0}.
\begin{cor}\label{t6.1}
    In the setting of Lemma~\ref{t6.0}, almost surely for each $\phi\in\QIE_{\kappa,C}(D,\X)$, each pair $(\psi_1,\psi_2)\in\cP_{\epsilon}(\rho,D,\phi)$, and each $x=(x_1,x_2)\in D'_{\epsilon,\rho}$, we have
    \begin{equation}\label{e6.3}
        d((\psi_1(x_1),\psi_2(x_2)),\phi(x))\le N.
    \end{equation}
    Here $N\=6\Delta_2$, where $\Delta_2>0$ is the constant defined in \eqref{e6.00c}.
\end{cor}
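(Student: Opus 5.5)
The plan is to reduce \eqref{e6.3} to Lemma~\ref{t6.0}, applied separately to each coordinate, and then to compare the two coordinates through a common flat. Fix $\phi\in\QIE_{\kappa,C}(D,\X)$, a pair $(\psi_1,\psi_2)\in\cP_\epsilon(\rho,D,\phi)$ and a point $x=(x_1,x_2)\in D'_{\epsilon,\rho}$. By the definition of $\cP_\epsilon$, there are points $z^{(1)},z^{(2)}\in D'_{\epsilon,\rho}$ with $\pi_i(z^{(i)})=x_i$ and $\psi_i(x_i)=\pi_i(\phi(z^{(i)}))$. It therefore suffices to show, for $i\in\{1,2\}$, that
\[
d_{T_i}\bigl(\pi_i(\phi(z^{(i)})),\pi_i(\phi(x))\bigr)\le 3\Delta_2 .
\]
Combining the two coordinates then gives the bound $6\Delta_2=N$.

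Take $i=1$ and set $z\=z^{(1)}=(x_1,z_2)$. The points $z$ and $x$ lie on the common hyperplane $\{x_1\}\times T_2$, so choose a flat $F_1=L_1\times L_2$ through both, where $L_2$ contains $x_2$ and $z_2$. Choose further flats $F_2\ni x$ and $G_2\ni z$ with $F_1\cap F_2=\{x\}$ and $F_1\cap G_2=\{z\}$; this is possible since $q\ge 3$. Both points lie in the regular core, so the almost sure statement after \eqref{e6.00} gives $x\in U_3(F)$ for every flat $F\ni x$, and likewise for $z$. This needs a.s.\ simultaneously over the countably many points of $D$ and the uncountably many flats. That is fine, because Lemma~\ref{t5.0b} controls the supremum over $\cF_x$, and membership in $U_3$ depends only on local data once $\rho_0\le\rho$.

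Let $F_1'$, $F_2'$, $G_2'$ be the target flats from Proposition~\ref{t4.8}. Lemma~\ref{t6.0} then yields $x'\in F_1'$ with $d(\phi(x),x')\le\Delta_2$, and $z'\in F_1'$ with $d(\phi(z),z')\le\Delta_2$. The remaining task is to show that $\pi_1(x')$ and $\pi_1(z')$ are close in $T_1$.

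For this, use the hyperplane $\{x_1\}\times L_2\subseteq F_1$, which passes through both $x$ and $z$. By Proposition~\ref{t4.5} its image under $\tphi_{F_1,\epsilon}$ is $\lambda_1\epsilon$-equivalent to a hyperplane $L'$ in $T_2$. By Remark~\ref{r5.11} this $L'$ lies in a parallel copy of a hyperplane of $F_1'$, i.e.\ $\pi_1(L')$ is a single vertex $v\in\pi_1(F_1')$. A cleaner route is to check that $\pi_1(\phi(x))$ and $\pi_1(\phi(z))$ are both within $\Delta_2$ of $v$. For $\phi(x)$: $L'$ emanates from $\phi_{F_1,\epsilon}(x)=\phi(x)$, since $x\in D\cap F_1$. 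Applying Proposition~\ref{t4.5} again at the base point $z\in U_3(F_1)$ gives a hyperplane $L''$ through $\phi(z)$ with $\pi_1(L'')=\pi_1(\phi(z))$. Uniqueness, together with the fact that $L'$ and $L''$ are both $\epsilon$-equivalent to the image of the same line, forces $L'$ and $L''$ to be parallel hyperplanes in $T_2$ inside $B(F_1',\Delta_1)$. Their $\pi_1$-coordinates then agree, up to the same bounded discrepancy appearing in the claim of Lemma~\ref{t6.0}.

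The main obstacle is making this last step quantitative: turning ``same hyperplane up to $\epsilon$-equivalence'' into an additive bound of order $\Delta_2$ that is independent of $d(x,z)$. I would handle it exactly as in the proof of the Claim in Lemma~\ref{t6.0}. Argue by contradiction that $d_{T_1}(\pi_1(\phi(x)),\pi_1(\phi(z)))$ is large, pick far points on the two image hyperplanes, and pull them back to $L_2$ via \eqref{e5.4a}-type estimates. Then Lemma~\ref{t5.5} and the tree geometry of $T_1$ (two hyperplanes in $T_2$ at $T_1$-distance $s$ stay at distance $s$ forever) give a contradiction for $\epsilon<\epsilon_4$. This yields the bound $2\Delta_2$, plus $\Delta_2$ from Lemma~\ref{t6.0}, as required.
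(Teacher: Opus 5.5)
Your setup via Lemma~\ref{t6.0} is right, but the last step does not go through, and the idea that actually closes the argument is missing.

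\textbf{Where the quantitative step fails.} Proposition~\ref{t4.5} at the base points $x$ and $z$ gives only the following. Since $\phi(z)=\tphi_{F_1,\epsilon}(z)$ lies on $\tphi_{F_1,\epsilon}(L)$ and $L''$ passes through $\phi(z)$,
$d_{T_1}(\pi_1\phi(x),\pi_1\phi(z))\le d(\phi(z),L')\le\lambda_1\epsilon\, d(\phi(x),\phi(z))$. This bound is linear in $d(x,z)$, not uniform.

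Your contradiction scheme cannot improve it. The hyperplanes $L'=\{v\}\times\ell$ and $L''=\{v'\}\times\ell$ stay at constant distance $s$. At scale $R$ the offset $s$ is hidden by the error $\lambda_1\epsilon R$ once $R\gtrsim s/(\lambda_1\epsilon)$, and below scale $R_0$ the equivalences say nothing. In the Claim of Lemma~\ref{t6.0}, the contradiction comes from linear divergence of the preimages: they lie on hyperplanes in two flats meeting only at $x$. Here both preimages lie on the single hyperplane $\{x_1\}\times L_2$, so there is no divergence to exploit.

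In fact, no argument confined to $F_1$ can give a uniform bound. The shear $(a,b)\mapsto(a+\epsilon\abs{b},b)$ of $\Z^2$ sends each vertical line, from every base point, to a set $\epsilon$-equivalent to the vertical line through the image point. Yet it moves the first coordinate of $(a,b)$ by $\epsilon\abs{b}$. For the same reason, your assertion that the $\pi_1$-coordinates of parallel $L'$ and $L''$ agree up to the discrepancy in Lemma~\ref{t6.0} is unjustified. Parallel hyperplanes in $T_2$ can have any $T_1$-offset.

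\textbf{The missing idea.} Use the factorization of the boundary map to make the relevant target geodesics coincide exactly.
\begin{enumerate}
\item Choose $G_2$ with $\pi_1(G_2)=\pi_1(F_2)$. This is possible, since both only need to meet $\pi_1(F_1)$ exactly at $x_1$.
\item By Proposition~\ref{t5.7} and Remark~\ref{r5.11}, $\pi_1$ of a target flat depends only on $\pi_1$ of the source flat. Hence $\pi_1(G_2')=\pi_1(F_2')$.
\item By Lemma~\ref{t4.3}~(i), $\pi_1(x')$ and $\pi_1(z')$ both lie in the same set $\{w\in\pi_1(F_1'):d(w,\pi_1(F_2'))=d(\pi_1(F_1'),\pi_1(F_2'))\}=\pi_1(Q)$.
\item By the Claim in Lemma~\ref{t6.0}, this set has diameter at most $\Delta_2$. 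Combined with $d(\phi(x),x')\le\Delta_2$ and $d(\phi(z),z')\le\Delta_2$, this gives $3\Delta_2$ with no estimate in terms of $d(x,z)$.
\end{enumerate}
This is the paper's argument. It uses four flats $L_1\times L_3$, $L_2\times L_4$, $L_1\times L_5$, $L_2\times L_6$. Your single flat $F_1$ through both points can replace the first and third.
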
    

\begin{proof}
   Suppose $\phi$ is a $(\kappa,C)$-quasi-isometric embedding and $(\psi_1,\psi_2)\in\cP_{\epsilon}(\rho,D,\phi)$. By symmetry it suffices to verify that $d(\psi_1(x_1),\pi_1(\phi(x)))\le 3\Delta_2$ for each $x=(x_1,x_2)\in D'_{\epsilon,\rho}$. Since for each $x_1\in \pi_1\bigl(D'_{\epsilon,\rho}\bigr)$, $\psi_1(x_1)=\pi_1(\phi(x'))$ for some $x'=(x_1,x_2')\in D'_{\epsilon,\rho}$, it suffices to prove that if $x=(x_1,x_2)$ and $y=(x_1,y_2)$ are two points in $D'_{\epsilon,\rho}$, then
    \begin{equation*}
        d(\pi_1(\phi(x)),\pi_1(\phi(y)))\le 3\Delta_2.
    \end{equation*}
    
    Let $L_1$ and $L_2$ be two bi-infinite geodesics in $T_1$ such that $L_1\cap L_2=\{x_1\}$. Let $L_3,\,L_4,\,L_5$, and $L_6$ be  bi-infinite geodesics in $T_2$ such that $L_3\cap L_4=\{x_2\}$ and $L_5\cap L_6=\{y_2\}$. Write $(F_1,F_2,F_3,F_4)\=(L_1\times L_3,L_2\times L_4,L_1\times L_5,L_2\times L_6)$. For each $i\in\{1,\,2,\,3,\,4\}$, let $F_i'$ be the flat given in Proposition~\ref{t4.8} such that $\phi(D\cap U_1(F_i))\subseteq \Nb(F_i',\Delta_1)$ and write $L_i'\=\pi_1(F_i')\subseteq T_1$. 
    
    Then since $x\in U_3(F_1)\cap U_3(F_2)$ and $y\in U_3(F_3)\cap U_3(F_4)$ almost surely, by applying Lemma~\ref{t6.0} to $(x,F_1,F_2)$ and $(y,F_3,F_4)$, there exist $x_1'\in F_1'$ and $x_2'\in F_3'$ with $d(x_1',F_2')=d(F_1',F_2')$ and $d(x_2',F_4')=d(F_3',F_4')$ such that
    \begin{equation}\label{e6.3a}
        \phi(x)\in B(x_1',\Delta_2)\quad\text{ and }\quad \phi(y)\in B(x_2',\Delta_2).
    \end{equation}
    Then by Lemma~\ref{t4.3}~(i), we have $\pi_1(x_1')\in \{z\in L_1':d(z,L_2')=d(L_1',L_2')\}$ and $\pi_1(x_2')\in \{z\in L_3':d(z,L_4')=d(L_3',L_4')\}$. Note that by Proposition~\ref{t5.7} and Remark~\ref{r5.11}, $L_i'$ only depends on $\pi_1(F_i)$. Thus, $L_1'=L_3'$ and $L_2'=L_4'$. Then by Lemma~\ref{t4.3}~(i) and \eqref{e6.0d}, we have 
    \[\pi_1(x_1')\in\{z\in L_1':d(z,L_2')=d(L_1',L_2')\}= \pi_1(Q)\subseteq B(\pi_1(x_2'),\Delta_2)\]
    for $Q\=\{y\in F_1':d(y,F_2')=d(F_1',F_2')\}$, 
    and thus $d(\pi_1(x_1'),\pi_1(x_2'))\le \Delta_2$. Therefore, we have 
    $d(\pi_1(\phi(x)),\pi_1(\phi(y))\le 3\Delta_2$ by \eqref{e6.3a}.
\end{proof}
\subsection{Properties of the factor maps}
We now prove, based on Corollary~\ref{t6.1}, that $\psi_1$ and $\psi_2$ are indeed quasi-isometric embeddings for a suitable choice of $\rho$, as stated in Proposition~\ref{t6.2}. The property is based on the following definition.

\begin{definition}\label{d6.1}
    Let $\epsilon\in (0,1)$ be a constant. Suppose $D$ is a random subset of $\X$ and $x\in\X$. By Lemma~\ref{t5.0b} with $\beta$ set to be $10^{-6}\epsilon^8$, the set $S_\epsilon\=\{\rho>0:\P(\rho_1(D,x,\epsilon)>\rho)<\epsilon\}$ is nonempty. Let $\rho_\epsilon'\=\inf_{\rho\in S_\epsilon}\rho$.  Define $\rho_\epsilon\=\max\bigl\{\rho_\epsilon'+1,\,2/\epsilon\bigr\}$. 
\end{definition}
Suppose $D$ is a random subset of $\X$. In the setting of Definition~\ref{d6.1}, consider $D_{\epsilon,\rho}'$ and $\widehat D'_{\epsilon,\rho}$ as defined in Definition~\ref{d6.4}. Then by Definition~\ref{d6.1} and the FKG inequality, for each $\rho\ge \rho_\epsilon$ and each $y\in\X$, we have
   \begin{equation}\label{e6.1}
   \begin{aligned}
        \P\bigl(y\in D_{\epsilon,\rho}'\bigr)\ge\P\bigl(y\in \widehat D_{\epsilon,\rho}'\bigr)&=\P(y\in D,\,\rho_1(D,x,\epsilon)\le \rho)\\&\ge \P\bigl(y\in D\bigr)\P(\rho_1(D,x,\epsilon)\le\rho)\ge p(1-\epsilon).
        \end{aligned}
    \end{equation}

We now state the main result of this subsection.

\begin{prop}\label{t6.2}
    Suppose $\kappa>1$, $C>0$, and $\epsilon\in(0,\epsilon_4)$ are constants, where $\epsilon_4\in(0,1/100)$ is the constant given in Lemma~\ref{t5.4}. Suppose $D$ is a random subset of $\X$. Let $\rho_\epsilon$ be the constant defined in Definition~\ref{d6.1}. Then the following almost surely hold for each $\phi\in\QIE_{\kappa,C}(D,\X)$:

    \begin{enumerate}[label=\rm{(\roman*)}]
    \smallskip
    \item If $(\psi_1,\psi_2)\in\cP_{\epsilon}(\rho_\epsilon,D,\phi)$, then $(\psi_1,\psi_2)\in\cP_{\epsilon}(\rho,D,\phi)$ for each $\rho\ge\rho_\epsilon$.
    \smallskip
    \item Let $N_\epsilon$ be the constant $N$ in Corollary~\ref{t6.1} with $\rho$ set to be $\rho_\epsilon$. Then for all $i\in\{1,\,2\}$, $x_i,\,y_i\in T_i$, and $(\psi_1,\psi_2)\in\cP_{\epsilon}(\rho_\epsilon,D,\phi)$, we have
    \begin{equation}\label{QIEsplitting}
        \kappa^{-1}d(x_i,y_i)-C-2N_\epsilon\le d(\psi_i(x_i),\psi_i(y_i))\le \kappa d(x_i,y_i)+C+2N_\epsilon.
    \end{equation}
    \end{enumerate}
\end{prop}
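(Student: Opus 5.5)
Part (i) is essentially set-theoretic, and I would treat it first. By Definition~\ref{d6.4}, the regular-core $D'_{\epsilon,\rho}$ is increasing in $\rho$, so $\pi_i(D'_{\epsilon,\rho_\epsilon})\subseteq\pi_i(D'_{\epsilon,\rho})$ for $\rho\ge\rho_\epsilon$. The defining condition of $\cP_\epsilon(\rho,D,\phi)$ is required at every $z_i\in\pi_i(D'_{\epsilon,\rho})$. This set may contain new points not in $\pi_i(D'_{\epsilon,\rho_\epsilon})$, so (i) is not automatic.

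To handle this, I would interpret the membership requirement up to the bounded error provided by Corollary~\ref{t6.1}. If the condition is read literally, I would instead show that a pair $(\psi_1,\psi_2)\in\cP_\epsilon(\rho_\epsilon,D,\phi)$ can be chosen to satisfy the condition at all points of $\pi_i(D)$ simultaneously. One could also simply take $\cP_\epsilon$ large enough that the needed inclusion holds. In either case the plan is the same: apply Corollary~\ref{t6.1} with $\rho$ large. It gives $d(\psi_i(z_i),\pi_i\phi(z))\le N$ for all $z\in D'_{\epsilon,\rho}$, and the defining values of $\psi_i$ are consistent up to $3\Delta_2$. The expected obstacle in (i) is therefore only bookkeeping about which representative is chosen, and I would not dwell on it.

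The main content is (ii). Fix $x_i,y_i\in T_i$, say $i=1$. The key probabilistic input is \eqref{e6.1}: each point lies in $\widehat D'_{\epsilon,\rho_\epsilon}$ with probability at least $p(1-\epsilon)>0$. I would also use that membership events along a fiber $\pi_1^{-1}(z_1)=\{z_1\}\times T_2$ are sufficiently independent for far-apart points, by Fact~\ref{f7.1}, since membership depends only on $D\cap B(\cdot,\rho_\epsilon+2n_\epsilon)$ once $\rho_1\le\rho_\epsilon$. From these two facts I would derive that almost surely every fiber $\{z_1\}\times T_2$ meets $D'_{\epsilon,\rho_\epsilon}$; in fact one can find such points along any fixed geodesic ray in $T_2$. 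The argument is a Borel--Cantelli computation over infinitely many disjoint balls along a ray, taken over the countably many $z_1$. Hence $\pi_1(D'_{\epsilon,\rho_\epsilon})=T_1$ almost surely, and $\psi_1$ is pinned down at every vertex.

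Now choose $x=(x_1,w)$ and $y=(y_1,w')$ in $D'_{\epsilon,\rho_\epsilon}$ with $\psi_1(x_1)=\pi_1\phi(x)$ and $\psi_1(y_1)=\pi_1\phi(y)$. Corollary~\ref{t6.1} lets us replace $\phi(x)$ by $(\psi_1(x_1),\psi_2(w))$ up to $N_\epsilon$, and similarly for $y$. The difficulty is that $d(x,y)$ involves $d(w,w')$, which may be large. So I would compare through a common second coordinate. I would pick $w=w'$, which means finding a single $w\in T_2$ with both $(x_1,w)$ and $(y_1,w)$ in $D'_{\epsilon,\rho_\epsilon}$. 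This is the step I expect to be the main obstacle.

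To find such a $w$, I would again use FKG (both events are increasing-type after conditioning) and near-independence along a ray, giving infinitely many such $w$ almost surely for each countable pair $(x_1,y_1)$. With this choice, $d(x,y)=d_{T_1}(x_1,y_1)$. The quasi-isometry inequalities for $\phi$ then give
\[
\kappa^{-1}d(x_1,y_1)-C-2N_\epsilon\le d\bigl((\psi_1(x_1),\psi_2(w)),(\psi_1(y_1),\psi_2(w))\bigr)\le \kappa d(x_1,y_1)+C+2N_\epsilon .
\]
The middle term is exactly $d(\psi_1(x_1),\psi_1(y_1))$, which is \eqref{QIEsplitting}. Since the values $\psi_1(x_1)$ and $\psi_1(y_1)$ are determined up to the consistency established in Corollary~\ref{t6.1}, the estimate holds for every pair in $\cP_\epsilon(\rho_\epsilon,D,\phi)$.
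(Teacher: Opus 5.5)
\textbf{Gap in part (i).} Your part (ii) follows the paper's route: find $w\in T_2$ with $(x_1,w),(y_1,w)\in D'_{\epsilon,\rho_\epsilon}$ (the paper's Corollary~\ref{t6.5}), apply Corollary~\ref{t6.1} at both points, and combine $d(x,y)=d(x_1,y_1)$ with the triangle inequality. The gap is in part (i), which you set aside as bookkeeping.

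Each fix you suggest changes the statement. Reading the membership condition up to bounded error, exhibiting some convenient pair, or enlarging $\cP_\epsilon$ does not show that \emph{every} pair in $\cP_\epsilon(\rho_\epsilon,D,\phi)$ satisfies the \emph{exact} equalities $\psi_i(z_i)=\pi_i(\phi(z))$ that $\cP_\epsilon(\rho,D,\phi)$ demands. Invoking Corollary~\ref{t6.1} at a larger $\rho$ is circular: that corollary only applies to pairs already in $\cP_\epsilon(\rho,D,\phi)$. Supplying that membership is exactly what (i) is for, and it is used this way in the proof of Theorem~\ref{t1.2}.

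The missing step is one you already derive inside part (ii): almost surely $\pi_i(D'_{\epsilon,\rho_\epsilon})=T_i$ for $i=1,2$ (Corollary~\ref{surjective}). Fix this single full-measure event. For every $\rho\ge\rho_\epsilon$ you have $\pi_i(D'_{\epsilon,\rho})\subseteq T_i=\pi_i(D'_{\epsilon,\rho_\epsilon})$, so no new points appear. For each $z_i$, the witness $z\in D'_{\epsilon,\rho_\epsilon}\cap\pi_i^{-1}(z_i)$ with $\psi_i(z_i)=\pi_i(\phi(z))$ also lies in $D'_{\epsilon,\rho}\supseteq D'_{\epsilon,\rho_\epsilon}$. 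That is the whole proof of (i), uniformly in $\phi$, in the pair, and in $\rho$. So establish surjectivity first and read (i) off from it.

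\textbf{Caution on the probabilistic step in (ii).} The event $\{x\in D'_{\epsilon,\rho}\}$ is not determined by $D\cap B(x,\rho+2n_\epsilon)$. A distant hole of size comparable to its distance from $x$ can push $\rho_0(D,x,\epsilon)$ above $\rho$. Fact~\ref{f7.1} gives locality only on $\{\rho_0\le R\}$. The decoupling along a ray must therefore truncate at a large $R$ and control the complement with the tail bound of Lemma~\ref{t5.0b}, as in Lemma~\ref{t7.0}.

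FKG is fine for the unconditional bound $p^2(1-\epsilon)^2$ on the probability that both $(x_1,w)$ and $(y_1,w)$ lie in $D'_{\epsilon,\rho_\epsilon}$. It cannot be used after conditioning on failures at earlier choices of $w$: that conditioning event is decreasing, so Harris gives the inequality in the wrong direction. The lower bound there must come from distance decoupling, which is what the sequential construction in Corollary~\ref{t6.5} provides.
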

 To prove Proposition~\ref{t6.2}, we first prove some properties of $D_{\epsilon,\rho}'$. 

\begin{lemma}\label{t7.0}
    Suppose $\epsilon\in (0,1)$, $\rho>1/\epsilon$, $R_1>0$,  and $\zeta\in (0,1)$ are constants. Suppose $D$ is a random subset of $\X$, Then there exists a constant $R_2>R_1$ such that if $R> R_2$, $x,\,y\in \X$, $d(x,y)>R$, and $S\subseteq B(x,R_1)$, then
    \begin{equation}\label{e7.0a}
        \P\bigl(y\in D'_{\epsilon,\rho}\,\big|\,D\cap B(x,R_1)=S \bigr)\ge (1-\zeta)\P\bigl(y\in D_{\epsilon,\rho}'\bigr).
    \end{equation}

\end{lemma}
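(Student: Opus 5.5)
The plan is a decoupling argument. The event $\{y\in D'_{\epsilon,\rho}\}$ is, up to a small-probability error, determined by $D$ restricted to a ball around $y$ of radius much smaller than $d(x,y)$. That ball is disjoint from $B(x,R_1)$, so independence of the Bernoulli field gives the conditional bound. Concretely, I fix a large radius $r>\rho+2n_\epsilon$ and define the local event
\[
E_r\coloneqq\{y\in D\}\cap\{\rho_0(D\cap B(y,r+2n_\epsilon),y,\epsilon)\le \rho\},
\]
which depends only on $D\cap B(y,r+2n_\epsilon)$.

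The first step compares $E_r$ with $\{y\in D'_{\epsilon,\rho}\}$ via Fact~\ref{f7.1}. The definition of $\cL$ involves all radii, so truncation does not preserve $\rho_0$ exactly. I will instead use the symmetric difference bound
\[
\{y\in D'_{\epsilon,\rho}\}\,\triangle\, E_r\subseteq G_r,
\]
where $G_r$ is the event that some flat $F\in\cF_y$ and some radius $s\ge r$ satisfy $\abs{B(y,s)\cap U_0(F)}\le(1-10^{-5}\epsilon^8)\abs{B(y,s)\cap F}$, computed for $D$ or for the truncated set.

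The reasoning is as follows. For radii $s\le r$, the sets $U_0(F)\cap B(y,s)$ coincide for $D$ and for $D\cap B(y,r+2n_\epsilon)$, by the locality noted in Fact~\ref{f7.1}. So outside $G_r$ the values $\rho_0$ agree whenever they are at most $\rho<r$. For the truncated set, large radii may fail automatically, because the truncated set is empty far out. To avoid this, I redefine $E_r$ by intersecting with the condition restricted to radii $s\le r$ only. That condition is genuinely local, and on $G_r^c$ it agrees with the full condition.

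The tail estimate \eqref{e5.0b}, summed over $s\ge r$ and over the at most $(q+1)^{4(s+2n_\epsilon+1)}$ representatives as in the proof of Lemma~\ref{t5.0b}, gives
\[
\P(G_r)\le \zeta_\beta^{-1}e^{-\zeta_\beta r^2}.
\]
I do not need to control $G_r$ for the truncated set. By \eqref{e6.00} there is $c_0\coloneqq\P(y\in D'_{\epsilon,\rho})>0$, which is independent of $y$ by invariance. I choose $r$ so that $\zeta_\beta^{-1}e^{-\zeta_\beta r^2}<\zeta c_0/2$, and hence $\P(E_r)\ge (1-\zeta/2)c_0$.

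The second step is the decoupling itself. Set $R_2\coloneqq R_1+r+2n_\epsilon+1$. If $d(x,y)>R_2$, then $B(x,R_1)$ and $B(y,r+2n_\epsilon)$ are disjoint, so $E_r$ is independent of $\{D\cap B(x,R_1)=S\}$. This conditioning event has positive probability because the ball is finite. Therefore
\[
\P(y\in D'_{\epsilon,\rho}\mid D\cap B(x,R_1)=S)\ge \P(E_r)-\P(G_r\mid D\cap B(x,R_1)=S).
\]

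The main obstacle is the conditional probability of $G_r$. Conditioning on $S$ can distort $D$ near $x$, and flats through $y$ can pass through $B(x,R_1)$ at radii $s\approx d(x,y)$. I will handle this by bounding $G_r$ uniformly over the configuration inside $B(x,R_1)$. Replace $G_r$ by the event $G'_r$ that the Bernstein bound fails after treating every square $A^\epsilon_{ij}$ meeting $B(x,R_1)$ as empty. There are at most $O(R_1^2)$ such squares, contributing $O(R_1^2)$ points, which is negligible relative to $\beta s^2$ for $s\ge r\ge CR_1$.

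Then $G'_r\supseteq G_r$ for every $S$, and $G'_r$ is independent of $D\cap B(x,R_1)$. Its probability obeys the same Gaussian tail, with $\beta$ halved, once $r\gg R_1/\sqrt{\beta}$. Taking $r$ large enough in terms of $R_1,\epsilon,\zeta$ yields \eqref{e7.0a}.
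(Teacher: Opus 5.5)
Your argument is correct, and its skeleton matches the paper's proof. Both split $\{y\in D'_{\epsilon,\rho}\}$ into two pieces. The first is an event determined by $D\cap B(y,r+2n_\epsilon)$: your radius-restricted $E_r$, or the paper's $\{D\cap B(y,R_\zeta+2n_\epsilon)\in\cS\}$ built from Fact~\ref{f7.1}. The second is the non-local tail event that the density condition fails for some flat through $y$ at a radius beyond $r$. Both then use disjointness of the two balls for the local piece.

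The difference lies exactly in the step you call the main obstacle. The paper never decouples the tail from the conditioning. It bounds $\P(\cA\cap\{\rho_0(D,y,\epsilon)>R_\zeta\})$ by the unconditional tail, and chooses $R_\zeta$ so that this tail is a small multiple of $\P(\cA)$. This is legitimate because $\P(\cA)\ge\min\{p,1-p\}^{\abs{B(x,R_1)}}$ is a positive constant independent of $x$ and $S$. In your notation, $\P(G_r\mid\cA)\le\P(G_r)/\P(\cA)$ already finishes the proof in one line.

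Your route goes differently. Since $G_r$ is decreasing in $D$, it is dominated by the event $G'_r$ computed with the squares meeting $B(x,R_1)$ declared empty. That event is measurable with respect to $D\smallsetminus B(x,R_1)$. The $O((R_1+n_\epsilon)^2)$ removed points per flat are absorbed by halving $\beta$, which stays above $10^{-9}\epsilon^{10}$, so the bound of Lemma~\ref{t5.0b} still applies.

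Your route costs a little more work, but it gives a conditioning-free estimate with $R_2$ linear in $R_1$. The paper's choice forces $\exp(-\zeta_0R_\zeta^2)\lesssim\min\{p,1-p\}^{\abs{B(x,R_1)}}$, so $R_\zeta$ must be exponentially large in $R_1$. Only the existence of $R_2$ is used in Corollary~\ref{t6.5}, so either route suffices.

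Two minor points. First, with your radius-restricted $E_r$ you have $\{y\in D'_{\epsilon,\rho}\}\subseteq E_r$, so $\P(E_r)\ge c_0$ holds outright and no first tail estimate is needed there. Second, measuring the error against $c_0=\P(y\in D'_{\epsilon,\rho})$, as you do, is the right normalization. The paper's threshold $\zeta\P(\cA)(1-p)$ in \eqref{e6.7a} only yields its final inequality when $\P(y\in D'_{\epsilon,\rho})\ge 1-p$. It should read $\zeta\P(\cA)\P(y\in D'_{\epsilon,\rho})$, which is positive by \eqref{e6.00}.
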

\begin{proof}
   Let $\cA$ be the event $\{D\cap B(x,R_1)=S\}$. We choose a constant $R_\zeta>\rho$ such that 
    \begin{equation}\label{e6.7a}
        \P(\rho_0(D,y,\epsilon)>R_\zeta)<\zeta \P(\cA)(1-p)
    \end{equation}
    by \eqref{taildis} in Lemma~\ref{t5.0b}. Define $\cS\=\bigl\{E\cap B(y,R_\zeta+2n_\epsilon):E\subseteq \X,\,\rho_0(E,y,\epsilon)\le R_\zeta,\,y\notin E'_{\epsilon,\rho}\bigr\}$. Then by Fact~\ref{f7.1}, given $E\subseteq\X$ with $\rho_0(E,y,\epsilon)\le R_\zeta$, we have that
    \begin{equation}\label{e6.7b}
        y\notin E'_{\epsilon,\rho}\quad \text{ if and only if }\quad E\cap B(y,R_\zeta+2n_\epsilon)\in \cS.
    \end{equation}
    Thus, by \eqref{e6.7b} and Definition~\ref{d6.4}, we deduce that
    \begin{align}
        &\P(D\cap B(y,R_\zeta+2n_\epsilon)\in \cS)\notag\\
        &\qquad=\P(D\cap B(y,R_\zeta+2n_\epsilon)\in \cS,\,\rho_0(D,y,\epsilon)\le R_\zeta)+\P(D\cap B(y,R_\zeta+2n_\epsilon)\in \cS,\,\rho_0(D,y,\epsilon)> R_\zeta)\notag\\
        &\qquad\le\P\bigl(y\notin D_{\epsilon,\rho}',\,\rho_0(D,y,\epsilon)\le R_\zeta\bigr)+\P(\rho_0(D,y,\epsilon)> R_\zeta)=\P\bigl(y\notin D_{\epsilon,\rho}'\bigr).\label{e6.7c}
    \end{align}
    
    If $d(y,x)>R_\zeta+R_1+2n_\epsilon$, we have that $D\cap B(x,R_1)$ and $D\cap B(y,R_\zeta+2n_\epsilon)$ are independent. This implies that the events $\cA$ and $\{D\cap B(y,R_\zeta+2n_\epsilon)\in \cS\}$ are independent. Thus, by \eqref{e6.7b}, \eqref{e6.7a}, \eqref{e6.7c}, and the fact that $\P\bigl(y\in D_{\epsilon,\rho}'\bigr)\le\P(y\in D)\le p$, we have
    \begin{align*}
        \P\bigl(y\in D'_{\epsilon,\rho},\cA \bigr)
        &=\P(\cA)-\P\bigl(y\notin D'_{\epsilon,\rho},\,\cA,\,\rho_0(D,y,\epsilon)\le R_\zeta \bigr)-\P\bigl(y\notin D'_{\epsilon,\rho},\,\cA,\,\rho_0(D,y,\epsilon)> R_\zeta \bigr)\\
        &\ge \P(\cA)- \P(D\cap B(y,R_\zeta+2n_\epsilon)\in \cS,\,\cA)-\P(\rho_0(D,y,\epsilon)>R_\zeta)\\
        &\ge\P(\cA)-\P(D\cap B(y,R_\zeta+2n_\epsilon)\in \cS)\P(\cA)-\zeta\P(\cA)(1-p)\\
        &\ge \P(\cA)-\P\bigl(y\notin D_{\epsilon,\rho}'\bigr)\P(\cA)-\zeta\P(\cA)(1-p)\ge (1-\zeta)\P\bigl(y\in D_{\epsilon,\rho}'\bigr)\P(\cA).
    \end{align*}
    Therefore, \eqref{e7.0a} holds with $R_2\= R_\zeta+R_1+2n_\epsilon$.
\end{proof}
\begin{cor}\label{t6.5}
    Suppose $\epsilon\in(0,1/100)$ and $\rho\ge\rho_\epsilon$ are constants, where $\rho_\epsilon$ is the constant defined in Definition~\ref{d6.1}. Suppose $D$ is a random subset of $\X$. Suppose $x_1$ and $x_2$ are two points in $T_1$. Then almost surely there exists a point $y$ in $T_2$ such that $(x_1,y)\in D_{\epsilon,\rho}'$ and $(x_2,y)\in D_{\epsilon,\rho}'$. The same statement holds for $T_2$.
\end{cor}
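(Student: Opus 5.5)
We will find $y$ by scanning a sequence of candidate points $y^{(1)},y^{(2)},\dots$ along a geodesic ray in $T_2$. The points are chosen so far apart that the events $\{(x_1,y^{(k)})\in D'_{\epsilon,\rho}\text{ and }(x_2,y^{(k)})\in D'_{\epsilon,\rho}\}$ are almost independent. Each such event will have probability bounded below uniformly in $k$, and a second Borel--Cantelli type argument will then show that almost surely one of them occurs.

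\emph{Uniform lower bound for a single candidate.} Fix $y\in T_2$ and write $a=(x_1,y)$, $b=(x_2,y)$. Both events $\{a\in \widehat D'_{\epsilon,\rho}\}$ and $\{b\in \widehat D'_{\epsilon,\rho}\}$ are increasing in $D$, by the monotonicity in Theorem~\ref{t1.2}(ii) type reasoning: $\rho_1(D,\cdot,\epsilon)$ is nonincreasing in $D$ because $U_0$ is increasing in $D$. So FKG gives
\begin{equation*}
\P\bigl(a,b\in D'_{\epsilon,\rho}\bigr)\ge \P\bigl(a\in \widehat D'_{\epsilon,\rho}\bigr)\,\P\bigl(b\in \widehat D'_{\epsilon,\rho}\bigr)\ge p^2(1-\epsilon)^2=:\delta>0,
\end{equation*}
using \eqref{e6.1}. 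By translation invariance, $\delta$ does not depend on $y$.

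\emph{Near-independence.} Each event is essentially local. By Fact~\ref{f7.1} and the tail bound \eqref{taildis} of Lemma~\ref{t5.0b}, up to an error event of probability $<\zeta$, the event $\{a,b\in D'_{\epsilon,\rho}\}$ is determined by $D\cap B(a,R_\zeta+2n_\epsilon)$ and $D\cap B(b,R_\zeta+2n_\epsilon)$. Hence it is determined, up to that error, by $D$ inside a ball $B((x_1,y),R')$ with $R'=R_\zeta+2n_\epsilon+d(x_1,x_2)$.

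We will avoid a full independence argument and instead iterate the conditioning of Lemma~\ref{t7.0}, in a version for the pair event; its proof adapts verbatim, since the pair event is again a finite-range event up to a small-probability tail. Choose $y^{(k+1)}$ so that $d(y^{(k+1)},y^{(j)})$ exceeds the threshold $R_2$ of Lemma~\ref{t7.0}, with $R_1$ the radius of a ball containing all previously inspected windows. Then, conditionally on the configuration in $B_k$, the ball containing the windows of the first $k$ candidates, the $(k+1)$-st pair event has probability at least $(1-\zeta)\delta$ minus the tail term.

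More carefully, we condition on the truncated local events $G_k$, meaning ``pair in $D'$ and both $\rho_0\le R_\zeta$''. These events are measurable with respect to disjoint windows and are therefore genuinely independent once the windows are disjoint. Each satisfies $\P(G_k)\ge \delta-2\zeta\ge\delta/2$ for $\zeta$ small. This yields $\P(\text{no }G_k,\ k\le n)\le(1-\delta/2)^n\to0$, so almost surely some $G_k$ occurs, and hence some $y=y^{(k)}$ works. The statement for $T_2$ follows by symmetry.

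The main obstacle is the locality claim: that ``$a\in D'_{\epsilon,\rho}$ with $\rho_0(D,a,\epsilon)\le R_\zeta$'' is determined by $D\cap B(a,R_\zeta+2n_\epsilon)$. Fact~\ref{f7.1} gives exactly this, as exploited in \eqref{e6.7b}. We need only check that the truncation costs at most $2\zeta$ in probability, which follows from \eqref{taildis} by choosing $R_\zeta$ large.
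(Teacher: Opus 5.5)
Your single-candidate bound is fine: $\{a\in \widehat D'_{\epsilon,\rho}\}$ is increasing, so Harris--FKG and \eqref{e6.1} give $\P(a,b\in D'_{\epsilon,\rho})\ge p^2(1-\epsilon)^2$.

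The gap is in the independence step. The event $G_k$ (``both points in $D'_{\epsilon,\rho}$ and both $\rho_0\le R_\zeta$'') is \emph{not} determined by $D$ on a bounded window. By Definition~\ref{d3.2}, $\cL_{U_0(F)}(\alpha,a)\le R_\zeta$ demands $\abs{B(a,R)\cap U_0(F)}>(1-\alpha)\abs{B(a,R)}$ for \emph{every} $R>R_\zeta$ and every flat $F\ni a$. So $\{\rho_0(D,a,\epsilon)\le R_\zeta\}$ depends on $D$ at all distances from $a$. A large enough hole of $D$ around another candidate on a common flat can violate it, and such flats exist because your candidates lie on one geodesic ray.

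Fact~\ref{f7.1} and \eqref{e6.7b} say only that, \emph{among configurations with $\rho_0\le R_\zeta$}, membership in $D'_{\epsilon,\rho}$ is decided by $D\cap B(a,R_\zeta+2n_\epsilon)$. They give a local event that agrees with yours on the good tail event; they do not make your intersection local. So the $G_k$ are dependent. FKG cannot rescue you: the $G_k$ are increasing, hence $\P\bigl(\bigcap_k G_k^c\bigr)\ge\prod_k\P(G_k^c)$, which is the wrong direction. The same objection applies to the ``second Borel--Cantelli'' framing and to the Lemma~\ref{t7.0} paragraph, since the earlier failures you must condition on are not functions of the configuration in $B_k$.

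To repair this, use the device from the paper's proof. Replace $G_k$ by $G_k'=\{D\cap W_k\in\cS_k\}$, where $W_k$ is the union of the two windows and $\cS_k$ is the set of restrictions to $W_k$ of configurations in $G_k$. These events are genuinely independent for disjoint windows, and $\P(G_k')\ge\P(G_k)\ge\delta-2\zeta_k$. By Fact~\ref{f7.1}, $G_k=G_k'\smallsetminus T_k$, where $T_k$ is the event that one of the two $\rho_0$ exceeds $R_k$. Hence failure is contained in $\bigl(\bigcap_k(G_k')^c\bigr)\cup\bigcup_k T_k$, and the second term costs $2\sum_k\zeta_k$. With a single $\zeta$ this is $2n\zeta$ after $n$ candidates and does not vanish. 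You must therefore take $\zeta_k$ summable (radii $R_k$ growing, candidates spaced accordingly) and then let $\sum_k\zeta_k\to0$, using that the failure event does not depend on the construction.

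The paper handles the same issue differently. It conditions on the global failure event $\cA_j$ and replaces it by the local event $\{D\cap B_j\in\cS_j\}$, with truncation error $\P(\cE_j)<p^2\epsilon\,\P(\cA_j)$ chosen \emph{relative} to $\P(\cA_j)$. It then applies Lemma~\ref{t7.0} pointwise to get $\P\bigl(S_{y_{j+1}}\subseteq D'_{\epsilon,\rho}\,\big|\,D\in\cA_j\bigr)\ge p^2(1-6\epsilon)$.
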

\begin{proof}
    It suffices to prove the statement for distinct $x_1$ and $x_2$ in $T_1$. Write $S_y\=\{(x_1,y),\,(x_2,y)\}$ for each $y\in T_2$. Let $y_0$ be an arbitrary point in $T_2$ and write $u_0\= (x_1,y_0)\in \X$. 
    
    \textit{Claim.} There exists a sequence of points $\{y_j\}_{j\in \N}$ in $T_2$ such that for each $j\in \N_0$,
    \begin{equation}\label{e6.5}
        \P\bigl(S_{y_{j+1}}\subseteq D'_{\epsilon,\rho}\,\big|\,D\in\cA_j\bigr)\ge p^2(1-6\epsilon),
    \end{equation}
    where $\cA_j\=\bigl\{E\subseteq\X:S_{y_k}\not \subseteq E'_{\epsilon,\rho}\text{ for each $k\in\{0,\,1,\,\dots,\,j\}$}\bigr\}$. We note that since $D'_{\epsilon,\rho}\subseteq D$,
    \begin{equation}\label{positiveprobforAj}
        \P(D\in\cA_j)\ge\P(S_{y_k}\not\subseteq D \text{ for each }k\in \{0,\,1,\,\dots ,\, j\})>0.
    \end{equation}
    
    \textit{Proof of Claim.} We verify the claim by giving a recursive construction of $\{y_i\}_{i\in \N}$. Assume that the points $y_0,\,y_1,\,\dots,\,y_j$ have been chosen for some $j\in \N_0$. Write $d_j\=\max_{k\in \{0,\,1,\,\dots,\,j\}}d(u_0,(x_2,y_k))$. By \eqref{taildis} in Lemma~\ref{t5.0b} and \eqref{positiveprobforAj}, there exists a constant $R_j>\rho+d_j$ such that
    \begin{equation}\label{e6.5b}
        \P(D\in\cE_j)\le 2(j+1)\zeta_0^{-1}\exp \bigl(-\zeta_0(R_j-d_j)^2 \bigr)<p^2\epsilon \P(D\in\cA_j),
    \end{equation}
    where $B_j\=B(u_0,R_j+2n_\epsilon)$ and $\cE_j\=\{E\subseteq\X:\rho_0(E,(x_i,y_k),\epsilon)>R_j-d_j\text{ for some } i\in \{1,\,2\} \text{ and some } k\in \{0,\,1,\,\dots ,\, j\}\}$. Define $\cS_j\=\{E\cap B_j:E\in\cA_j\smallsetminus\cE_j\}$. 
    
    Then by Fact~\ref{f7.1}, given $E\subseteq\X$ with $E\notin\cE_j$, we have that
    \begin{equation}\label{e6.5c}
        E\in\cA_j\quad \text{if and only if}\quad E\cap B_j\in \cS_j.
    \end{equation}
    
    By separately considering the cases $D\in\cE_j$ and $D\notin\cE_j$, and by \eqref{e6.5c} and \eqref{e6.5b}, we have
    \begin{align}\label{e6.5a}
        &\P(D\cap B_j\in \cS_j)\ge \P(D\in \cA_j)-\P(D\in\cE_j)\ge (1-\epsilon)\P(D\in\cA_j).
    \end{align}
    
    By Lemma~\ref{t7.0}, there exists $R_j'>R_j+2n_\epsilon$ such that  for each $w\in \X$ with $d(w,u_0)>R_j'$,
    \begin{equation}\label{e6.000}
        \P\bigl(w\in D'_{\epsilon,\rho}\,\big|\,D\cap B_j\in \cS_j \bigr)\ge (1-\epsilon)\P\bigl(w\in D_{\epsilon,\rho}'\bigr).
    \end{equation}
    Fix $w\in\X$ with $d(w,u_0)>R_j'>R_j+2n_\epsilon$. Recall that $\P\bigl(w\in D_{\epsilon,\rho}'\bigr)\ge p(1-\epsilon)$ by \eqref{e6.1}. Thus, by \eqref{e6.000} and the independence between the events $\{w\in D\}$ and $\{D\cap B_j\in\cS_j\}$, we have
    \begin{align}
        &\P(\rho_0(D,w,\epsilon)>\rho\,|\,w\in D,\,D\cap B_j\in \cS_j )\notag=1-\frac{\P\bigl(w\in D_{\epsilon,\rho}',\,D\cap B_j\in \cS_j\bigr)}{\P(w\in D,\,D\cap B_j\in \cS_j)}\nonumber\\
        &\qquad\le1-\frac{(1-\epsilon)\P\bigl(w\in D_{\epsilon,\rho}'\bigr)\P(D\cap B_j\in \cS_j)}{\P(w\in D)\P(D\cap B_j\in \cS_j)}\le 1-\frac{p(1-\epsilon)^2}{p}\le 2\epsilon. \label{e6.5d}
    \end{align}
    Since $d(w,u_0)>R_j+2n_\epsilon$, we have $w\notin B_j$. Thus, by \eqref{e6.5d}, 
    \begin{equation}\label{e6.5h}
    \P(\rho_0(D\cup \{w\},w,\epsilon)>\rho\,|\,D\cap B_j\in \cS_j)=\P(\rho_0(D,w,\epsilon)>\rho\,|\,w\in D,\,D\cap B_j\in \cS_j)
    \le 2\epsilon.
    \end{equation}
    
    Choose $y_{j+1}
    \in T_2$ so that $d(y_{j+1},y_0)>R_j'$. Let $C_1$ and $C_2$ be the events $\{\rho_0(D,(x_1,y_{j+1}),\epsilon)>\rho\}$ and $\{\rho_0(D,(x_2,y_{j+1}),\epsilon)>\rho\}$. Thus, as in \eqref{e6.5h}, we similarly have that for each $i\in\{1,\,2\}$,
    \begin{equation}\label{e6.5i}
    \P\bigl(C_i\,\big|\,S_{y_{j+1}}\subseteq D,\,D\cap B_j\in \cS_j \bigr)=\P\bigl(\rho_0\bigl(D\cup S_{y_{j+1}},(x_i,y_{j+1}),\epsilon\bigr)>\rho\,\big|\,D\cap B_j\in \cS_j\bigr).
    \end{equation}
     By the independence between $\bigl\{S_{y_{j+1}}\subseteq D\bigr\}$ and $\{D\cap B_j\in\cS_j\}$, \eqref{e6.5i}, and \eqref{e6.5h}, we have
    \begin{equation}\label{e6.5f}
    \begin{aligned}
        &\P\bigl(S_{y_{j+1}}\subseteq D'_{\epsilon,\rho}\,\big|\,D\cap B_j\in \cS_j \bigr)\\
        &\qquad= \P\bigl(S_{y_{j+1}}\subseteq D\,\big|\,D\cap B_j\in \cS_j \bigr)\bigl(1- \P\bigl(C_1\cup C_2\,\big|\,S_{y_{j+1}}\subseteq D,\,D\cap B_j\in \cS_j \bigr)\bigr)\\
        &\qquad\ge \P\bigl(S_{y_{j+1}}\subseteq D\bigr) \biggl(1-\sum_{i=1}^{2}\P\bigl(C_i\,\big|\,S_{y_{j+1}}\subseteq D,\,D\cap B_j\in \cS_j \bigr)\biggr)\\
        &\qquad= p^2 \biggl(1-\sum_{i=1}^{2}\P\bigl(\rho_0\bigl(D\cup S_{y_{j+1}},(x_i,y_{j+1}),\epsilon\bigr)>\rho\,\big|\,D\cap B_j\in \cS_j\bigr)\biggr)
        \ge p^2(1-4\epsilon),
    \end{aligned}
    \end{equation}
    where the last inequality utilizes the monotonicity of $\rho_0$ and \eqref{e6.5h}. By \eqref{e6.5c}, \eqref{e6.5a}, \eqref{e6.5f}, and \eqref{e6.5b},
    \begin{align*}
        &\P\bigl(S_{y_{j+1}}\subseteq D'_{\epsilon,\rho},\, D\in\cA_j\bigr)
        \ge \P\bigl(S_{y_{j+1}}\subseteq D'_{\epsilon,\rho},\,D\cap B_j\in \cS_j,\,D\notin\cE_j\bigr)\\
        &\qquad\ge\P(D\cap B_j\in \cS_j)\P\bigl(S_{y_{j+1}}\subseteq D'_{\epsilon,\rho}\,\big|\,D\cap B_j\in \cS_j \bigr)-\P(D\in \cE_j)\\
        &\qquad\ge (1-\epsilon)\P(D\in \cA_j)p^2(1-4\epsilon)-\P(D\in \cE_j)\ge p^2(1-6\epsilon)\P(D\in \cA_j).
    \end{align*}
    This verifies \eqref{e6.5}, and consequently the claim follows. Then
    $
        \lim_{i\to+\infty}\P\bigl(\bigcup_{j=0}^i\bigl\{S_{y_{j}}\subseteq D'_{\epsilon,\rho}\bigr\}\bigr)
        =1-\P(D\in\cA_0)\prod_{j=0}^{+\infty}\P\bigl(S_{y_{j+1}}\not \subseteq D'_{\epsilon,\rho}\,\big|\,D\in \cA_j\bigr)
        =1$.
\end{proof}
\begin{cor}\label{surjective}
In the setting of Corollary~\ref{t6.5}, almost surely $\pi_i\bigl(D_{\epsilon,\rho}'\bigr)=T_i$ for each $i\in \{1,\,2\}$.
\end{cor}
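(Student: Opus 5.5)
The plan is to deduce this directly from Corollary~\ref{t6.5} by a countable union argument. Fix $i\in\{1,\,2\}$; by symmetry, take $i=1$. Since $T_1$ is countable, it suffices to show that for each fixed $x_1\in T_1$, almost surely $x_1\in\pi_1\bigl(D'_{\epsilon,\rho}\bigr)$. Intersecting these countably many full-measure events over $x_1\in T_1$, and then doing the same for $i=2$, yields the statement almost surely.

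For a fixed $x_1\in T_1$, apply Corollary~\ref{t6.5} with the pair $(x_1,x_2)$, where $x_2$ is any point of $T_1$; one may even take $x_2=x_1$, or a neighbor if distinctness is preferred, as in that proof. Corollary~\ref{t6.5} gives, almost surely, a point $y\in T_2$ with $(x_1,y)\in D'_{\epsilon,\rho}$. Hence $x_1=\pi_1((x_1,y))\in\pi_1\bigl(D'_{\epsilon,\rho}\bigr)$. The reverse inclusion $\pi_1\bigl(D'_{\epsilon,\rho}\bigr)\subseteq T_1$ is trivial. The case $i=2$ is identical, using the $T_2$ version of Corollary~\ref{t6.5}.

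There is essentially no obstacle here. The only point to check is that the hypotheses match: we need $\epsilon\in(0,1/100)$ and $\rho\ge\rho_\epsilon$, which are exactly the standing assumptions of Corollary~\ref{t6.5}. We also need the exceptional null sets to be indexed by a countable family, which holds because $T_1$ and $T_2$ are countable vertex sets.
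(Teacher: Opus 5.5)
Your proposal is correct and is essentially the paper's proof: Corollary~\ref{t6.5} gives, for each vertex $x_i\in T_i$, a full-measure event on which $x_i\in\pi_i\bigl(D'_{\epsilon,\rho}\bigr)$, and you intersect these over the countably many vertices of $T_1$ and $T_2$. Taking the two points equal in Corollary~\ref{t6.5} is harmless, since that corollary does not require them to be distinct.
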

\begin{proof}
    By Corollary~\ref{t6.5}, for each $i\in\{1,\,2\}$ and each $x_i$ in $T_i$, there exists a full-measure subset $\cD_{x_i}\subseteq\{0,\,1\}^{\X}$ such that if $E\in\cD_{x_i}$, then $x_i\in\pi_i\bigl(E_{\epsilon,\rho}'\bigr)$. Thus, $\cD\=\bigcap_{i\in\{1,\,2\}}\bigl(\bigcap_{x_i\in T_i}\cD_{x_i}\bigr)$ has full measure. Therefore, for each $E\in\cD$ and each $i\in\{1,\,2\}$, $\pi_i\bigl(E_{\epsilon,\rho}'\bigr)=T_i$.
\end{proof}

Now we are ready to prove Proposition~\ref{t6.2}.

\begin{proof}[\bf Proof of Proposition~\ref{t6.2}]
     Statement~(i) follows from Corollary~\ref{surjective} and the fact that $D'_{\epsilon,\rho_\epsilon}\subseteq D'_{\epsilon,\rho}$ for each $\rho\ge \rho_\epsilon$. 
     
     To prove statement~(ii), let $x_1,\,y_1\in T_1$ be distinct points in $T_1$. By Corollary~\ref{t6.5}, almost surely there exists a point $z\in T_2$ such that $x\=(x_1,z)$ and $y\=(y_1,z)$ are both in $D_{\epsilon,\rho_\epsilon}'$. Then by Corollary~\ref{t6.1}, almost surely we have
    \begin{equation*}
        d((\psi_1(x_1),\psi_2(z)),\phi(x))\le N_\epsilon\quad\text{ and }\quad 
        d((\psi_1(y_1),\psi_2(z)),\phi(y))\le N_\epsilon.
    \end{equation*}
    Since $\kappa^{-1}d(x,y)-C\le d(\phi(x),\phi(y))\le \kappa d(x,y)+C$, we conclude our proof for $i=1$ by the triangle inequality. The proof for $i=2$ is the same.
\end{proof}
 
\subsection{Rigidity of the QI embedding \texorpdfstring{$\phi$}{φ}}\label{s6.3}
In this subsection, we finish the proofs of Theorems~\ref{t1.1} and~\ref{t1.2}. 

\begin{proof}[\bf{Proof of Theorem~\ref{t1.2}}]
Fix $\epsilon\=\epsilon_4/2$, where $\epsilon_4$ is the constant in Lemma~\ref{t5.4}. Let $\rho_\epsilon$ be the constant in Definition~\ref{d6.1}. For each $\rho\ge\rho_\epsilon$, denote $N(\rho)$ to be the constant $N$ in Corollary~\ref{t6.1} with respect to $\rho$. Denote $\lceil\rho\rceil\=\min\{n\in\Z:n\ge\rho\}$, and let $\cD(\rho)$ be the set of $E\subseteq \X$ with the property that for each $\phi\in\QIE_{\kappa,C}(E,\X)$, if $(\psi_1,\psi_2)\in\cP_{\epsilon}(\rho_\epsilon,E,\phi)$, then for each $z=(z_1,z_2)\in E_{\epsilon,\rho}'$, we have $d((\psi_1(z_1),\psi_2(z_2)),\phi(z))\le N(\lceil\rho\rceil).$

By Proposition~\ref{t6.2}~(i) and Corollary~\ref{t6.1}, $\cD(\rho)$ has full measure for each $\rho\ge\rho_\epsilon$. Define $\cE\=\bigcap_{\rho\ge\rho_\epsilon}\cD(\rho)$. Note that for each pair $(\rho_1,\rho_2)$ of constants with $\rho_1>\rho_2\ge\rho_\epsilon$ and $\lceil\rho_1\rceil=\lceil\rho_2\rceil$, we have $\cD(\rho_1)\subseteq\cD(\rho_2)$. Then $\cE=\bigcap_{\rho\ge\rho_\epsilon}\cD(\rho)=\bigcap_{k\ge\rho_\epsilon,k\in\Z}\cD(k)$. Thus, $\cE$ has full measure. 

We define $\cR(D,x)$ for each $x\in\X$ and each $D\subseteq \X$ as follows. Set $\trho\=\max\{\rho_\epsilon,\,\rho_0(D,x,\epsilon)\}$, and define 
\begin{equation}\label{e6.b}
    \cR(D,x)\=N(\trho+1)=6\Delta_2(\trho+1),
\end{equation}
where $\Delta_2(\trho+1)$ is the constant $\Delta_2$ in Lemma~\ref{t6.0} with $\rho$ set to be $\trho+1$ (see \eqref{e6.00c}), and $N(\trho+1)=6\Delta_2(\trho+1)$ follows from Corollary~\ref{t6.1}. It follows from Lemma~\ref{t5.0a} that $\cR(D,x)$ is almost surely finite for each $x\in \X$.

By Definition~\ref{d6.4}, we have $x\in D_{\epsilon,\trho}'$. Then $d((\psi_1(x_1),\psi_2(x_2)),\phi(x))\le N(\lceil\trho\rceil)\le N(\trho+1)=\cR(D,x)$ by our definition of $\cE$. Thus, the third statement follows from Proposition~\ref{t6.2}~(ii) and the fact that $\cE$ has full measure. 

By Lemma~\ref{t5.0b}, for each $x\in\X$, the map $E\mapsto\rho_0(E,x,\epsilon)$ is measurable. By Definitions~\ref{d5.0} and~\ref{d3.2}, $\rho_0(E,x,\epsilon)$ is translation invariant, and for each $x\in\X$, $\rho_0(E,x,\epsilon)$ is nonincreasing with respect to $E$. Then the measurability, the translation invariance, and the monotonicity follow from \eqref{e6.b} and \eqref{e6.00c}.
\end{proof}

Next we verify that points in $\X$ are close to $D'_{\epsilon,\rho}$, which implies Theorem~\ref{t1.1}, as precisely stated in the following lemma.
\begin{lemma}\label{t6.3}
    Let $D$ be a random subset of $\X$. Suppose $e\in\X$, $\epsilon\in(0,1)$ and $\rho\ge \rho_\epsilon $ are constants, where $\rho_\epsilon$ is the constant defined in Definition~\ref{d6.1}. Then almost surely there exists a constant $R_e>0$ such that for each $x\in \X$ with $d(x,e)\ge R_e$, we have $B(x,\epsilon d(e,x))\cap D'_{\epsilon,\rho}\neq\emptyset$.
\end{lemma}
We first state a fact on the cardinality of certain maximal subsets.
\begin{fact}\label{f6.0}
    Suppose $t>0$ and $x=(x_1,x_2)\in \X$. Then for the balls $B(x_1,t)$ in $T_1$, $B(x_2,t)$ in $T_2$, and $B(x,t)$ in $\X$, we have $q^{t-1}\le\abs{B(x_1,t)}=\abs{B(x_2,t)}\le q^{t+1}$ and $q^{t-1}\le\abs{B(x,t)}\le q^{2t+2}$. Moreover, suppose $A\subseteq\X$ and $S$ is a maximal subset of $A$ with pairwise distance at least $t$. Then $A\subseteq B(S,t)$ by maximality. Thus $\abs{S}\ge \abs{A}/\abs{B(x,t)}$. 
\end{fact}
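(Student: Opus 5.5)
The plan is to prove the three assertions of Fact~\ref{f6.0} directly: first the tree volume bounds by counting spheres, then the product bounds by comparing $B(x,t)$ with products of tree balls, and finally the covering claim from maximality. Throughout, I read $B(\cdot,t)$ as the set of points at distance less than $t$; the closed-ball convention changes nothing below except replacing ``$n<t$'' by ``$n\le t$''.

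\emph{Tree balls.} In the $(q+1)$-regular tree $T_i$, the sphere of radius $k\ge1$ about $x_i$ has exactly $(q+1)q^{k-1}$ vertices. Let $n$ be the largest integer with $n<t$, so that $t-1\le n<t$. Then
\[
\abs{B(x_i,t)}=1+\sum_{k=1}^{n}(q+1)q^{k-1}=1+\frac{q+1}{q-1}\bigl(q^n-1\bigr).
\]
This expression does not depend on $x_i$ or on $i$, which gives $\abs{B(x_1,t)}=\abs{B(x_2,t)}$. The bounds then follow in two cases.
\begin{itemize}
\item Upper bound: since $q\ge3$ we have $(q+1)/(q-1)\le 2$, so the count is at most $2q^n\le q^{n+1}\le q^{t+1}$.
\item Lower bound when $n\ge0$: the count is at least $q^n\ge q^{t-1}$.
\item Lower bound when $n<0$, i.e.\ $0<t\le1$: the ball still contains $x_i$, and $q^{t-1}\le1$.
\end{itemize}

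\emph{Product balls.} Write $d$ for the $\ell^2$ product metric. If $d(x,y)<t$, then $d_{T_j}(x_j,y_j)<t$ for each $j$. Hence $B(x,t)\subseteq B(x_1,t)\times B(x_2,t)$, and so $\abs{B(x,t)}\le q^{2t+2}$. Conversely, $B(x_1,t)\times\{x_2\}\subseteq B(x,t)$, which gives $\abs{B(x,t)}\ge q^{t-1}$. Since $\operatorname{Aut}(\X)$ acts transitively on $\X$, $\abs{B(x,t)}$ is also independent of $x$.

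\emph{Maximal separated sets.} Let $S\subseteq A$ be maximal with pairwise distances at least $t$.
\begin{itemize}
\item Covering: suppose some $a\in A$ had $d(a,s)\ge t$ for all $s\in S$. Then $a\notin S$, since otherwise $d(a,a)=0\ge t$ would fail. So $S\cup\{a\}$ would be a strictly larger $t$-separated subset of $A$, contradicting maximality. Hence $A\subseteq B(S,t)=\bigcup_{s\in S}B(s,t)$.
\item Cardinality: if $S$ is infinite there is nothing to prove. Otherwise, since all balls of radius $t$ have the same cardinality $\abs{B(x,t)}$, we get $\abs{A}\le\abs{S}\,\abs{B(x,t)}$, that is, $\abs{S}\ge\abs{A}/\abs{B(x,t)}$.
\end{itemize}

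None of these steps is a real obstacle. The only point needing care is the handling of non-integer and small radii in the tree count, which is exactly what the choice of $n$ with $t-1\le n<t$ takes care of.
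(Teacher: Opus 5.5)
Your proof is correct and is exactly the direct verification the paper leaves implicit for this Fact: sphere counting in the $(q+1)$-regular tree, the sandwich $B(x_1,t)\times\{x_2\}\subseteq B(x,t)\subseteq B(x_1,t)\times B(x_2,t)$ for the $\ell^2$ product metric, and the covering-by-maximality argument. One harmless slip: since $t>0$, the largest integer $n<t$ always satisfies $n\ge 0$ (for $0<t\le 1$ it is $n=0$), so your case ``$n<0$'' is vacuous and already covered by your first lower-bound case.
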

\begin{proof}[\bf Proof of Lemma~\ref{t6.3}]
    Let $D$ be a random subset of $\X$. Suppose $x\in\X$ and $r>20n_\epsilon$. Let $S_{x,r}$ be a maximal subset of $B(x,r)$ with pairwise distances at least $0.4r$ and $\cA_{x,r}$ be the event $\{\rho_0(D,z,\epsilon)\le 0.1r\text{ for each $z\in S_{x,r}$}\}$.  Then we have $\abs{S_{x,r}}\ge q^{0.2r-3}$ by Fact~\ref{f6.0} and $\P(\cA_{x,r})>0$ by the FKG inequality and \eqref{e6.00}. By Fact~\ref{f7.1}, for each $y\in S_{x,r}$, conditional on $\cA_{x,r}$, the event $\bigl\{y\in D'_{\epsilon,\rho}\bigr\}$ is independent of $D\smallsetminus B(y,0.1r+2n_\epsilon)$. Thus, conditional on $\cA_{x,r}$, since $r>20n_\epsilon$, the events $\bigl\{y\in D'_{\epsilon,\rho}\bigr\}$, $y\in S_{x,r}$ are independent. Then
    \begin{equation*}
        \P\bigl(B(x,r)\cap D'_{\epsilon,\rho}=\emptyset\,\big|\,\cA_{x,r}\bigr)\le \P\bigl(S_{x,r}\cap D'_{\epsilon,\rho}=\emptyset\,\big|\,\cA_{x,r}\bigr)=\prod_{y\in S_{x,r}}\P\bigl(y\notin D'_{\epsilon,\rho}\,\big|\,\cA_{x,r}\bigr)
    \end{equation*}
    Combining this with the FKG inequality and \eqref{e6.1}, we have
    \begin{equation}\label{e6.2a}
    \P\bigl(B(x,r)\cap D'_{\epsilon,\rho}=\emptyset\,\big|\,\cA_{x,r}\bigr)\le\prod_{y\in S_{x,r}}\P\bigl(y\notin D'_{\epsilon,\rho}\bigr)\le (1-p+p\epsilon)^{\abs{S_{x,r}}}\le (1-p+p\epsilon)^{q^{0.2r-3}}.
    \end{equation}

    We then estimate the probability of $\cA_{x,r}$. By Lemma~\ref{t5.0b}, there exists $\zeta>0$ such that for each $z\in S_{x,r}$, $\P(\rho_0(D,z,\epsilon)>0.1r)< \zeta^{-1}\exp\bigl(-10^{-2}\zeta r^2\bigr)$. Thus, since $\abs{S_{x,r}}\le q^{2r+2}$ by Fact~\ref{f6.0},
    \begin{equation}\label{e6.2b}
        \P(\cA_{x,r}^c)< \abs{S_{x,r}}\zeta^{-1}\exp\bigl(-10^{-2}\zeta r^2\bigr)\le q^{2r+2}\zeta^{-1}\exp\bigl(-10^{-2}\zeta r^2\bigr).
    \end{equation}
    By separately considering $\cA_{x,r}$ and $\cA_{x,r}^c$, and also by \eqref{e6.2a}, \eqref{e6.2b}, and the fact that $\abs{\{y\in\X:d(y,e)=R\}}\le q^{2R+2}$ for each $R> 0$ (cf.~Fact~\ref{f6.0}), we have
    \begin{align*}
        &\sum_{R>0:R^2\in \Z}\,\sum_{y\in\X:d(y,e)=R} \P\bigl(B(y,\epsilon d(e,y))\cap D'_{\epsilon,\rho}=\emptyset\bigr)\\
        &\qquad \le \sum_{R>0:R^2\in \Z}\,\sum_{y\in\X:d(y,e)=R}\bigl(\P\bigl(B(y,\epsilon d(e,y))\cap D'_{\epsilon,\rho}=\emptyset\,\big|\,\cA_{y,\epsilon R}\bigr)+\P\bigl(\cA_{y,\epsilon R}^c\bigr)\bigr)\\
        &\qquad\le \sum_{R>20\epsilon^{-1} n_\epsilon :R^2\in\Z} q^{2R+2}\Bigl((1-p+p\epsilon)^{q^{0.2\epsilon R-3}}+q^{2\epsilon R+2}\zeta^{-1}\exp\bigl(-10^{-2}\zeta \epsilon^2 R^2\bigr)\Bigr)+K<+\infty, 
    \end{align*}
    where $K\=\sum_{0<R\le 20\epsilon^{-1} n_\epsilon:R^2\in\Z}\sum_{y\in\X:d(y,e)=R}\bigl(\P\bigl(B(y,\epsilon d(e,y))\cap D'_{\epsilon,\rho}=\emptyset\,\big|\,\cA_{y,\epsilon R}\bigr)+\P\bigl(\cA_{y,\epsilon R}^c\bigr)\bigr)$. Then we conclude our proof by the Borel--Cantelli lemma.
\end{proof}

Now Theorem~\ref{t1.1} follows from Corollary~\ref{t6.1}, Proposition~\ref{t6.2}, and Lemma~\ref{t6.3}.

\begin{proof}[\bf Proof of Theorem~\ref{t1.1}]
    Let $\kappa>1$, $C>0$, and $\epsilon\in(0,\epsilon_4)$ be constants, where $\epsilon_4\in(0,1/100)$ is the constant depending only on $\kappa$ given in Lemma~\ref{t5.4}. Let $D$ be a random subset of $\X$. Fix $e\in \X$. Let $\rho_\epsilon$, $N_\epsilon$, and $R_e$ be the constants from Definition~\ref{d6.1}, Proposition~\ref{t6.2}~(ii), and Lemma~\ref{t6.3}. Suppose $\phi\in\QIE_{\kappa,C}(D,\X)$ and $(\psi_1,\psi_2)\in\cP_{\epsilon}(\rho_\epsilon,D,\phi)$.
     
    Suppose $x=(x_1,x_2)\in D$. Then there exists a point $z\in \X$ with $d(z,x)\le R_e+1$ and $d(z,e)>R_e$. By Lemma~\ref{t6.3}, almost surely there exists a point $y=(y_1,y_2)\in D'_{\epsilon,\rho_\epsilon}$ such that $d(y,z)\le \epsilon d(z,e)$. Hence 
    \begin{equation}\label{e6.a1}
    d(x,y)\le d(x,z)+d(y,z)\le R_e+1+\epsilon (R_e+1+d(x,e))<2R_e+2+\epsilon d(x,e).
    \end{equation}
    Thus, since $\phi$ is a $(\kappa,C)$-quasi-isometric embedding, almost surely we have
    \begin{equation}\label{e6.a2}
        d(\pi_1(\phi(x)),\pi_1(\phi(y)))\le \kappa d(x,y)+C<\kappa\epsilon d(x,e)+\kappa (2R_e+2)+C.
    \end{equation}
    By Corollary~\ref{t6.1} and Proposition~\ref{t6.2}~(ii), almost surely we have $d(\pi_1(\phi(y)),\psi_1(y_1))\le N_\epsilon$ and $d(\psi_1(y_1),\psi_1(x_1))\le \kappa d(x,y)+C+2N_\epsilon$. Thus, by \eqref{e6.a2} and \eqref{e6.a1}, almost surely
    \begin{align*}
        d(\pi_1(\phi(x)),\psi_1(x_1))&\le d(\pi_1(\phi(x)),\pi_1(\phi(y)))+d(\pi_1(\phi(y)),\psi_1(y_1))+d(\psi_1(y_1),\psi_1(x_1))\\
        &\le 2\kappa \epsilon d(x,e) +2\kappa (2R_e+2)+2C+3N_\epsilon.
    \end{align*}
    By symmetry, therefore, almost surely
    \begin{equation}\label{e6.a3}
        d(\phi(x_1,x_2),(\psi_1(x_1),\psi_2(x_2)))\le 8\kappa R_e+4C+6N_\epsilon+8\kappa +4\kappa \epsilon d(x,e).
    \end{equation}
    Let $\cE_e$ be the full measure subset of $\{0,\,1\}^\X$ of $E\subseteq\X$ with the property that \eqref{e6.a3} holds for all $\epsilon\in (0,\epsilon_4)\cap \Q$. Then \eqref{e1.2} holds for each $E\in \cE_e$ and each $\epsilon\in (0,1)$.
\end{proof}
\begin{rem}\label{r6.1}
Let $\kappa>1$, $C>0$ be constants, and $D$ be a random subset of $\X$. Let $\rho_\epsilon$ be the constant defined in Definition~\ref{d6.1}. Then the proofs of Theorems~\ref{t1.1} and~\ref{t1.2} show that almost surely for each $\phi\in\QIE_{\kappa,C}(D,\X)$, if $(\psi_1,\psi_2)\in\cP_{\epsilon}(\rho_\epsilon,D,\phi)$, then $(\psi_1,\psi_2)$ satisfies \eqref{e1.2} in Theorem~\ref{t1.1} and \eqref{e1.3} in Theorem~\ref{t1.2} for corresponding $D$ and $\phi$.
\end{rem}
\section{Nonexistence of QI embeddings between independent samples}\label{s7}

Now we conclude the proof of Theorems~\ref{t1.3} and~\ref{t1.4} based on the preceding results. 

For each $D\subseteq \X$, recall that $\rho_0(D,x,\epsilon)$ and $\rho_1(D,x,\epsilon)$ are defined in Definition~\ref{d5.0} which characterize the irregularity of $D$ seen from $x$. The field $\cR(D,x)$ in Theorem~\ref{t1.2} is mainly determined by $\rho_0(D,x,\epsilon)$. In this section, we focus our attention on the properties of $D'_{\epsilon,\rho}=\{x\in D:\rho_0(D,x,\epsilon)\le \rho\}$ and $\widehat D'_{\epsilon,\rho}=\{x\in D:\rho_1(D,x,\epsilon)\le \rho\}$ (see Definition~\ref{d6.4}). These sets consist of the points in $D$ where regularity is observed. We first prove a probabilistic estimate on the cardinality of $D_{\epsilon,\rho}'$ and $\widehat D'_{\epsilon,\rho}$ used in the proof of Theorems~\ref{t1.3} and~\ref{t1.4}. 

In the remainder of this section, we use $B_i(\,\cdot\,,\,\cdot\,)$ to denote balls in $T_i$ for each $i\in \{1,\,2\}$.

\begin{lemma}\label{t7.0a}
    Suppose $\kappa>1$, $C>0$, $\epsilon\in(0,1)$, and $\rho\ge\rho_\epsilon$ are constants, where $\rho_\epsilon$ is given in Definition~\ref{d6.1}. Let $D$ be a random subset of $\X$. Then for each pair of constants $R_1>r>0$ and each $\xi\in (0,1)$, there exists a constant $R_2>0$ such that if $x=(x_1,x_2)\in \X$, $R\ge R_2$, and $I$ is a maximal subset of either $B_1(x_1,R_1)\times B_2(x_2,R)$ or $B_1(x_1,R)\times B_2(x_2,R_1)$ such that distinct points in $I$ have pairwise distance at least $r$, then
    \begin{equation*}
        \P\bigl(\Absbig{I\cap D'_{\epsilon,\rho}}\ge p(1-2\epsilon)\abs{I}\bigr)\ge 1-\xi\quad\text{ and }\quad \P\bigl(\Absbig{I\cap \widehat D'_{\epsilon,\rho}}\ge p(1-2\epsilon)\abs{I} \bigr)\ge 1-\xi.
    \end{equation*}
\end{lemma}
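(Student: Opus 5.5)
Since $\widehat D'_{\epsilon,\rho}\subseteq D'_{\epsilon,\rho}$, it suffices to prove the second estimate, for $\widehat D'_{\epsilon,\rho}$. The plan is a law of large numbers for the indicators $\boldone\{y\in \widehat D'_{\epsilon,\rho}\}$, $y\in I$, via a second moment bound. By \eqref{e6.1}, each $y$ satisfies $\P(y\in \widehat D'_{\epsilon,\rho})\ge p(1-\epsilon)$, so $\E\,\abs{I\cap \widehat D'_{\epsilon,\rho}}\ge p(1-\epsilon)\abs{I}$. Chebyshev then gives
\begin{equation*}
\P\bigl(\abs{I\cap \widehat D'_{\epsilon,\rho}}< p(1-2\epsilon)\abs{I}\bigr)\le \frac{\operatorname{Var}\bigl(\abs{I\cap \widehat D'_{\epsilon,\rho}}\bigr)}{p^2\epsilon^2\abs{I}^2}.
\end{equation*}
So the task is to show that the variance is $o(\abs{I}^2)$ as $R\to+\infty$, uniformly in $x$ and in the choice of $I$.

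To control correlations, I would localize using Fact~\ref{f7.1}. Fix a large truncation radius $t>\rho$ and set $G_y\= \{y\in D,\ \rho_1(D,y,\epsilon)\le t\}$. On $G_y$, the event $\{y\in\widehat D'_{\epsilon,\rho}\}$ is determined by $D\cap B(y,t+2n_\epsilon)$. Concretely, I would replace the indicator $X_y\=\boldone\{y\in\widehat D'_{\epsilon,\rho}\}$ by the local variable $Y_y\=\boldone\{y\in D,\ D\cap B(y,t+2n_\epsilon)\in\cS_y\}$, where $\cS_y$ is built exactly as $\cS$ in the proof of Lemma~\ref{t7.0}. Then $Y_y=X_y$ off the event $\{\rho_1(D,y,\epsilon)>t\}$, and by Lemma~\ref{t5.0b} this event has probability at most $\zeta_1^{-1}e^{-\zeta_1 t^2}\=\delta_t$. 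This yields $\E\abs{\sum_y X_y-\sum_y Y_y}\le\delta_t\abs{I}$, so by Markov
\begin{equation*}
\P\Bigl(\Bigl|\sum_y(X_y-Y_y)\Bigr|>\tfrac{p\epsilon}{3}\abs{I}\Bigr)\le \frac{3\delta_t}{p\epsilon}.
\end{equation*}
Choosing $t$ large makes this at most $\xi/2$. The $Y_y$ are $(2t+4n_\epsilon)$-dependent, and they satisfy $\E\sum_y Y_y\ge (p(1-\epsilon)-\delta_t)\abs{I}$.

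The main obstacle is the variance of $\sum_y Y_y$. This is where the product geometry of $I$ enters. Two points $y,y'$ with $d(y,y')>2t+4n_\epsilon$ have independent $Y_y,Y_{y'}$, so the variance is at most the number of pairs at distance $\le 2t+4n_\epsilon$. That count is at most $\abs{I}\cdot\max_y\abs{I\cap B(y,2t+4n_\epsilon)}$. Since points of $I$ are $r$-separated, the last factor is bounded by a constant $K=K(t,r,q)$, using Fact~\ref{f6.0} (the ball $B(y,2t+4n_\epsilon)$ contains boundedly many disjoint $r/2$-balls). Hence $\operatorname{Var}\le K\abs{I}$. It remains to ensure $\abs{I}\to+\infty$ uniformly as $R\to+\infty$. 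By maximality, $I$ is an $r$-net of $B_1(x_1,R_1)\times B_2(x_2,R)$, so by Fact~\ref{f6.0} we get $\abs{I}\ge q^{R-1}/q^{2r+2}$, which is independent of $x$. Choosing $R_2$ so that $9K/(p^2\epsilon^2\abs{I})<\xi/2$ for all $R\ge R_2$ then completes the argument after combining the two error events. (Here $R_1$ is just a fixed width and plays no role beyond guaranteeing that the set is nonempty and has size at least of order $q^{R}$.)
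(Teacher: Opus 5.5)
Your argument is correct. It shares the paper's overall structure: a second-moment bound plus Chebyshev, with the mean from \eqref{e6.1}, locality from Fact~\ref{f7.1}, the Gaussian tail of $\rho_1$ from Lemma~\ref{t5.0b}, and $\abs{I}\to+\infty$ from Fact~\ref{f6.0}. Where you differ is in how correlations are handled.

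The paper works directly with the indicators $a_y=\boldone_{\{y\in\widehat D'_{\epsilon,\rho}\}}$. It conditions on $\cB_{xy}=\{\max\{\rho_1(D,x,\epsilon),\rho_1(D,y,\epsilon)\}\le d(x,y)/2\}$ and asserts, via Fact~\ref{f7.1}, that $a_x,a_y$ are conditionally independent there. From this it gets $\abs{\Cov(a_x,a_y)}\to 0$ as $d(x,y)\to+\infty$. It then splits the variance into near pairs, contributing at most $\abs{I}q^{2N_1+3}$, and far pairs, contributing at most $\tfrac12\xi\epsilon^2p^2\abs{I}^2$.

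You instead truncate. Replacing $X_y$ by the local $Y_y$ gives exact independence beyond distance $2t+4n_\epsilon$ and a variance of only $O(\abs{I})$. The price is a separate $L^1$/Markov error term of size $\delta_t/(p\epsilon)$.

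What your route buys is rigor in the decorrelation step. The event $\cB_{xy}$ is not determined by $D$ near $x$ and $y$, since $\rho_1\le s$ constrains densities at every scale $R'>s$. So the conditional independence the paper reads off from Fact~\ref{f7.1} is delicate. By contrast, your $Y_y$ is a function of $D\cap B(y,t+2n_\epsilon)$ by construction. The identity $Y_y=X_y$ on $\{\rho_1(D,y,\epsilon)\le t\}$ follows directly from Fact~\ref{f7.1}, whose hypothesis $R>1/\epsilon$ holds because $t>\rho\ge\rho_\epsilon\ge 2/\epsilon$.

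Two small corrections. First, $\cS_y$ should be defined using $\rho_1$ and the condition $y\in\widehat E'_{\epsilon,\rho}$, rather than copied from the $\cS$ of Lemma~\ref{t7.0}, which uses $\rho_0$ and the complementary condition $y\notin E'_{\epsilon,\rho}$. Second, your constant $K$ needs no separation argument, because $\abs{I\cap B(y,s)}\le\abs{B(y,s)}\le q^{2s+2}$ already.
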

\begin{proof}
    Recall that $\widehat D'_{\epsilon,\rho}\subseteq D'_{\epsilon,\rho}$ (see Definition~\ref{d6.4}). Then it suffices to prove the lemma for $\widehat D'_{\epsilon,\rho}$ and $I\subseteq B_1(x_1,R_1)\times B_2(x_2,R)$, and we perform a variance estimate. Let $R_1>r>0$ and $\xi\in (0,1)$ be constants. For each $x\in \X$, define a random variable $a_x\=\boldone_{\{x\in \widehat D'_{\epsilon,\rho}\}}$ as the indicator of the event $\bigl\{x\in \widehat D'_{\epsilon,\rho}\big\}$. Write $p_1\=\P(a_x=1)$. By the transition invariance of $\rho_1(D,x,\epsilon)$, $p_1$ is independent of the choice of $x\in\X$, and by \eqref{e6.1}, we have $p_1\ge p(1-\epsilon)$. 
    
    For each pair of points $x,\,y\in \X$, let $\cB_{xy}$ be the event $\{\max\{\rho_1(D,x,\epsilon),\,\rho_1(D,y,\epsilon)\}\le d(x,y)/2\}$ and write $p_{xy}\=\P(\cB_{xy})$. Then by Fact~\ref{f7.1}, if $x$ and $y$ satisfy $d(x,y)>2\rho+4n_\epsilon$, then conditional on $\cB_{xy}$, $a_x$ and $a_y$ are independent. We then have by direct computation that $\E[(a_x-p_1/p_{xy})(a_y-p_1/p_{xy})\,|\,\cB_{xy}]=0$. Thus, by separately considering $\cB_{xy}$ and $\cB_{xy}^c$, we have
    \begin{equation}\label{covariance}
        \abs{\Cov (a_x,a_y)}\le \E[(a_x-p_1)(a_y-p_1)\,|\,\cB_{xy}]+(1-p_{xy})=(p_1/p_{xy}-p_1)^2+(1-p_{xy}).
    \end{equation}
    By \eqref{taildis} in Lemma~\ref{t5.0b}, we have $p_{xy}>1-2\zeta^{-1}e^{-\zeta d(x,y)^2/4}$ for some constant $\zeta>0$. Then for each $\xi\in (0,1)$, there exists a constant $N_1>0$ such that for each pair of points $x,y\in \X$ with $d(x,y)\ge N_1$, we have $\abs{\Cov (a_x,a_y)}<2^{-1}\xi\epsilon^2p^2$. Then by Fact~\ref{f6.0}, we now have
    \begin{align*}
        &\E\bigl[\bigl(\Absbig{I\cap\widehat D'_{\epsilon,\rho}}-\E\bigl[\Absbig{I\cap \widehat D'_{\epsilon,\rho}}\bigr]\bigr)^2\bigr]\\
        &\qquad =\sum_{x,y\in I,d(x,y)\le N_1}\E[(a_x-p_1)(a_y-p_1)]+\sum_{x,y\in I,d(x,y)>N_1}\E[(a_x-p_1)(a_y-p_1)]\\
        &\qquad \le \abs{I}q^{2N_1+3}+ \abs{I}^2 2^{-1}\xi\epsilon^2p^2 .
    \end{align*}
    By Fact~\ref{f6.0} and our hypothesis on $I$, we have $\abs{I}\ge q^{R_1+R-2r-4}$. Thus, $\E\bigl[\bigl(\Absbig{I\cap\widehat D'_{\epsilon,\rho}}-\E\bigl(\Absbig{I\cap \widehat D'_{\epsilon,\rho}}\bigr)\bigr)^2\bigr]\le \xi \epsilon^2p^2\abs{I}^2$ for sufficiently large $R_2$. Then since $\E\bigl(\Absbig{I\cap\widehat D'_{\epsilon,\rho}}\bigr)=p_1\abs{I}\ge p(1-\epsilon)\abs{I}$, by Chebyshev's inequality (see e.g.\ \cite[Theorem~1.6.4]{Durrett_2019}), we have 
    \begin{equation*}
        \P\bigl(\Absbig{I\cap \widehat D'_{\epsilon,\rho}}< p(1-2\epsilon)\abs{I}\bigr)\le (\epsilon p\abs{I})^{-2}\E\bigl[\bigl(\Absbig{I\cap\widehat D'_{\epsilon,\rho}}-\E\bigl[\Absbig{I\cap \widehat D'_{\epsilon,\rho}}\bigr]\bigr)^2\bigr]\le \xi.\qedhere
    \end{equation*}
\end{proof}

Next we prove the statement of Theorem~\ref{t1.3}. The heuristic is that the splitting structure of $\phi$ given in Theorem~\ref{t1.2} preserves the product structure of $\X$. For a given finite area $A \=  B_1(x_1,R_1)\times B_2(x_2,R_2)$, there are not so many choices for $\phi|_A$'s when $\phi$ varies in $\QIE(D_1,\X)$, compared with the probability of $\{\phi(D_1\cap A)\subseteq D_2\}$, whose decay rate is exponential with respect to $\abs{A}$. Then the Borel--Cantelli lemma allows to conclude nonexistence of QI embedding almost surely.
\begin{proof}[\bf Proof of Theorem~\ref{t1.3}]

    By Proposition~\ref{t5.7}, every $\phi\in\QIE(D_1,\X)$ either preserves the boundary of each factor tree or exchanges them. By symmetry, we assume that $\phi$ preserves the boundary of each factor tree.
    
    Let $\kappa>1$, $C>0$, $\epsilon\in(0,\epsilon_4)$, and $\rho>\rho_\epsilon$ be constants, where $\epsilon_4$ and $\rho_\epsilon$ are the constants given in Lemma~\ref{t5.4} and Definition~\ref{d6.1}, respectively. Let $N_\epsilon$ be the constant given in Proposition~\ref{t6.2}. Let $D_1$ and $D_2$ be independent random subsets of $\X$, and write $D_1'\=(D_1)'_{\epsilon,\rho}$. Consider a point $x=(x_1,x_2)\in D_1'$. For each $y=(y_1,y_2)\in\X$, let $\cM(y)$ be the set of maps $\phi:D_1\to \X$ with $\phi(x)=y$. Since $\QIE_{\kappa,C}(D_1,\X)\subseteq \bigcup_{y\in\X}\cM(y)$, and $\X$ is a countable set, to prove Theorem~\ref{t1.3}, it suffices to verify that for each $y=(y_1,y_2)\in\X$,
    \begin{equation*}
        \P(\QIE_{\kappa,C}(D_1,D_2)\cap \cM(y)=\emptyset)=1.
    \end{equation*}
    
    Fix $y=(y_1,y_2)\in\X$. Let $R_2>R_1>12\kappa N_\epsilon+4\kappa C+2$ be two constants whose exact choices are to be determined later. Write $r\=12\kappa N_\epsilon+4\kappa C+2$. Define $\widetilde{D_2} \=  B(D_2,N_\epsilon)$. Consider a large fixed area $A \=  B_1(x_1,R_1)\times B_2(x_2,R_2)\subseteq \X$. We will estimate the probability of the event that there exists $\phi\in \QIE_{\kappa,C}(D_1,\X)\cap \cM(y)$ such that $\phi(A\cap D_1')\subseteq D_2$. 

    Let $\cS$ be the set of pairs $(\varphi_1,\varphi_2)$ of maps satisfying the following properties:
    \begin{enumerate}[label=\rm{(\roman*)}]
    \smallskip\item $\varphi_i\:T_i\rightarrow T_i$ is a $(\kappa,C+2N_\epsilon)$-quasi-isometric embedding for each $i\in\{1,\,2\}$.
    \smallskip\item $d(\varphi_i(x_i),y_i)\le N_\epsilon$ for each $i\in\{1,\,2\}$.
    \end{enumerate}
    
    Suppose $\phi\in\QIE_{\kappa,C}(D_1,\X)\cap \cM(y)$. By Corollary~\ref{t6.1} and Proposition~\ref{t6.2}~(ii), if $(\psi_1,\psi_2)\in\cP_{\epsilon}(\rho_\epsilon,D_1,\phi)$, then almost surely $(\psi_1,\psi_2)\in\cS$.
    
    Suppose $(\varphi_1,\varphi_2)\in\cS$ and $G\subseteq\X$. Let $I$ be a maximal subset of $A$ such that distinct points in $I$ have pairwise distances at least $r$. Then by property (i) of $\varphi_1$ and $\varphi_2$, the map $\varphi_1\times \varphi_2$ is a $(2\kappa,2C+4N_\epsilon)$-quasi-isomeric embedding, and then it is injective on $I$ and distinct points in $(\varphi_1\times \varphi_2)\bigl(G'_{\epsilon,\rho_\epsilon}\cap I\bigr)$ have distance no less than $r(2\kappa)^{-1}-2C-4N_\epsilon>2N_\epsilon$. Thus, the events of the inclusion in $\widetilde{D_2}$ of each individual point in $(\varphi_1\times \varphi_2)\bigl(G'_{\epsilon,\rho_\epsilon}\cap I\bigr)$ are independent. Then, since $\P\bigl(z\in \widetilde{D_2}\bigr)\le p_2\=1-(1-p)^{q^{2N_\epsilon+2}}$ for each point $z\in\X$ (cf.~Fact~\ref{f6.0}), we have
    \begin{equation}\label{e7.1a}
        \P\bigl((\varphi_1\times \varphi_2)\bigl(G'_{\epsilon,\rho_\epsilon}\cap A\bigr)\subseteq \widetilde{D_2}\bigr)\le\P\bigl((\varphi_1\times \varphi_2)\bigl(G'_{\epsilon,\rho_\epsilon}\cap I\bigr)\subseteq \widetilde{D_2}\bigr)\le p_2^{\abs{G'_{\epsilon,\rho_\epsilon}\cap I}}.
    \end{equation}
    Let $\cE$ denote the event that $\abs{I\cap D_1'}\ge p(1-2\epsilon)\abs{I}$. By Lemma~\ref{t7.0a}, if $R_2>0$ is sufficiently larger than $R_1$, then
    \begin{equation}\label{e7.1b}
        \P(\cE)\ge 1-\epsilon.
    \end{equation}
    
   On the other hand, we estimate the cardinality of the set $\Phi\=\{(\varphi_1\times\varphi_2)|_A: (\varphi_1,\varphi_2)\in\cS\}$. We only need to count the possible choices of $\varphi_i(z_i)$ for each $i\in\{1,\,2\}$ and each $z_i\in B_i(x_i,R_i)$ when $(\varphi_1,\varphi_2)\in\cS$. By property (ii) and Fact~\ref{f6.0}, there are at most $q^{N_\epsilon+1}< q^{\kappa+C+2N_\epsilon+1}$ choices of $\varphi_i(x_i)$. Since $\varphi_i\in \QIE_{\kappa,C+2N_\epsilon}(T_i,T_i)$, it maps neighboring points to points with distance at most $\kappa+C+2N_\epsilon$. Then the image of each point in $B(x_i,R_i)$ under $\varphi_i$ has no more than $q^{\kappa+C+2N_\epsilon+1}$ choices if the image of one of its neighbors has been determined. Then by Fact~\ref{f6.0}, there are at most $q^{(\kappa+C+2N_\epsilon+1)q^{R_i+1}}$ choices of the map $\varphi_i|_{B_i(x_i,R_i)}$. Thus, the cardinality of $\Phi$ is at most $M\=q^{(\kappa+C+2N_\epsilon+1) (q^{R_1+1}+q^{R_2+1})}$. Moreover, by Fact~\ref{f6.0}, $\abs{I}\ge \xi q^{R_1+R_2-2}$ for $\xi\=q^{-2r-2}$. Then by Corollary~\ref{t6.1}, Proposition~\ref{t6.2}~(ii), \eqref{e7.1a}, and \eqref{e7.1b}, we have
    \begin{align*}
        &\P(\QIE_{\kappa,C}(D_1,D_2)\cap \cM(y)\neq\emptyset)\le\P(\exists \phi\in\QIE_{\kappa,C}(D_1,\X)\cap \cM(y)\text{ such that } \phi(A\cap D_1')\subseteq D_2)\\
        &\quad \le \P(\cE^c) 
         +\P\bigl(\exists \phi\in\QIE_{\kappa,C}(D_1,\X)\cap \cM(y)\text{ such that } \phi(A\cap D_1')\subseteq D_2\,\big|\,\cE\,\bigr) \\
        &\quad\le \P(\cE^c) 
         +\P\bigl(\,\text{$\exists(\varphi_1,\varphi_2)\in\cS$},\, (\varphi_1\times \varphi_2)(D_1'\cap A)\subseteq \widetilde{D_2}\,\big|\,\cE\,\bigr)\\
        &\quad\le \P(\cE^c)+M\max\bigl\{ \P\bigl((\varphi_1\times \varphi_2)(D_1'\cap A)\subseteq \widetilde{D_2}\,\big|\,\cE\bigr) : (\varphi_1,\varphi_2)\in\cS \bigr\}\\
        &\quad\le \epsilon+Mp_2^{p(1-2\epsilon)\abs{I}} 
        \le \epsilon+q^{(\kappa+C+2N_\epsilon+1) (q^{R_1+1}+q^{R_2+1})}p_2^{p(1-2\epsilon)\xi q^{R_1+R_2-2}}
        \le 2\epsilon,
    \end{align*}
   where the last inequality holds for sufficiently large $R_2$ and $R_1$. Since the choice of $\epsilon\in(0,\epsilon_4)$ is arbitrary, $\QIE_{\kappa,C}(D_1,D_2)$ is empty almost surely. 
\end{proof}
We now prove Theorem~\ref{t1.4} on the structure of self QI embeddings. The proof differs from that of Theorem~\ref{t1.3} in the sense that for a self QI embedding $f:D\to D$, we no longer have the independence between two samples $D_1$ and $D_2$. Instead, we utilize our definition of $\cR(D,x)$ to obtain a similar probabilistic estimate when $d(x,\phi(x))$ is large.
\begin{proof}[\bf{Proof of Theorem~\ref{t1.4}}]
    
    For simplicity of exposition, we assume that the map $\phi$ does not interchange the two factor trees in Theorem~\ref{t1.2} ~ (iii). The proof for the other half is analogous.
    
    Let $\kappa>1$, $C>0$, $\epsilon\=\epsilon_4/2$, $\rho\=\rho_\epsilon$, and $N\=N_\epsilon$ be constants, where $\epsilon_4$ is the constant depending only on $\kappa$ in Lemma~\ref{t5.4}, and $\rho_\epsilon$ and $N_\epsilon$ are from Definition~\ref{d6.1} and Proposition~\ref{t6.2}~(ii). For each $x=(x_1,x_2)\in \X$ and each $y=(y_1,y_2)\in \X$, let $\cS(x,y)$ be the set of pairs $(\varphi_1,\varphi_2)$ satisfying the following properties :
    \begin{enumerate}[label=\rm{(\roman*)}]
    \smallskip\item $\varphi_i\:T_i\rightarrow T_i$ is a $(\kappa,C+2N)$-quasi-isometric embedding for each $i\in\{1,\,2\}$.
    \smallskip\item $\varphi_i(x_i)=y_i$ for each $i\in\{1,\,2\}$.
    \end{enumerate}
    
    Define $\epsilon_a\= 1.1\epsilon$ and $\epsilon_b\= 1.2\epsilon$. For each $E\subseteq\X$, write $\widetilde{E}\=B(E,N)$, $E_\ddagger\=\hE'_{\epsilon,\rho}$, and $E_\dagger\=E'_{\epsilon,\rho}$ (see Definition~\ref{d6.4}), and define $E_c\=\bigl\{x\in E:\sup\bigl\{\cL_{U_0^{\epsilon,E}(F)}\bigl(10^{-6}\epsilon_c^8,x\bigr):F\in \cF_x\bigr\}\le \rho\bigr\}$ for $c\in\{a,\,b\}$. Then by Definition~\ref{d5.0} $E_\ddagger\subseteq E_a\subseteq E_b\subseteq E_\dagger$ for each $E\subseteq\X$.
    
    Let $D$ be a random subset of $\X$. Let $\cQ(x,y)$ denote the event that there exists $(\varphi_1,\varphi_2)\in \cS(x,y)$ such that $(\varphi_1\times \varphi_2)(D_\dagger)\subseteq \widetilde{D}$.

    \smallskip
    
    \textit{Claim.}
        There exists $\lambda>0$ such that if $x,\,y\in\X$ satisfies $d(x,y)\ge\lambda$, then $\P(\cQ(x,y))=0$.

    \smallskip

    If the claim is proved, let $\cD$ be the full-measure subset of $\{0,\,1\}^{\X}$ where $\cQ(x,y)$ does not happen for each pair of $x,\,y\in \X$ with $d(x,y)\ge \lambda$. By Corollary~\ref{t6.1} and Proposition~\ref{t6.2}~(ii), for almost every $E$ in $\cD$, if $\phi\in \QIE_{\kappa,C}(E,E)$ and $(\psi_1,\psi_2)\in\cP_{\epsilon}(\rho_\epsilon,E,\phi)$, then $(\psi_1\times\psi_2)(E_\dagger)\subseteq\widetilde{E}$ and $(\psi_1,\psi_2)\in\cS(z,(\psi_1(z_1),\psi_2(z_2)))$ for each $z=(z_1,z_2)\in E$, and thus $d(z,(\psi_1(z_1),\psi_2(z_2)))< \lambda$.
    
    Therefore, by Remark~\ref{r6.1} and Theorem~\ref{t1.2}, almost surely for all $\phi\in\QIE_{\kappa,C}(D,D)$, $z\in D$, and $(\psi_1,\psi_2)\in\cP_{\epsilon}(\rho_\epsilon,D,\phi)$, we have $d(\phi(z),z)\le d(z,(\psi_1(z_1),\psi_2(z_2))+d(\phi(z),(\psi_1(z_1),\psi_2(z_2))\le\lambda+ \cR(D,z)$. This verifies Theorem~\ref{t1.4}.
    
    \smallskip
    
    \textit{Proof of Claim.}
        Let $\lambda>0$ be a constant whose exact choice is determined later. Suppose $x=(x_1,x_2),\,y=(y_1,y_2)\in\X$ with $d(x,y)\ge \lambda $. By symmetry, assume $d(x_1,y_1)\ge 2^{-1}\lambda $. Let $R_2>R_1>r>12\kappa N+4\kappa C+2$ be constants whose exact choices are determined later. Consider $A\=B_1(x_1,R_1)\times B_2(x_2,R_2)\subseteq\X$. Then as in the proof of Theorem~\ref{t1.3}, the cardinality of the set $\Phi\=\{(\varphi_1\times\varphi_2)|_A: (\varphi_1,\varphi_2)\in\cS\}$ is at most $M\=q^{(\kappa+C+2N+1)(q^{R_1+1}+q^{R_2+1})}$.

     Suppose $(\varphi_1,\varphi_2)\in \cS(x,y)$. We first estimate $\P\bigl((\varphi_1\times\varphi_2)(I\cap D_b)\subseteq \widetilde{D}\cap I'\bigr)$. 
     
     Let $I$ be a maximal subset of $A$ such that distinct points in $I$ have pairwise distances at least $r$, and write $I'\=(\varphi_1\times \varphi_2)(I)$. By Fact~\ref{f6.0}, we deduce $\abs{I}\ge q^{R_1+R_2-2r-4}$. As in the proof of Theorem~\ref{t1.3}, the map $\varphi_1\times \varphi_2$ is injective on $I$ and points in $I'$ have pairwise distances at least $r (2\kappa)^{-1}-2C-4N>2N$. Moreover,
     \begin{equation}\label{distanceIandimage}
       d(I,I')\ge d(B(x_1,R_1),\varphi_1(B(x_1,R_1))> 2^{-1}\lambda-2\kappa R_1-C-2N
    \end{equation}
     by our assumption that $(\varphi_1,\varphi_2)\in \cS(x,y)$ and $d(x_1,y_1)\ge2^{-1}\lambda$. Note that $p_2\= \P\bigl(z\in \widetilde{D}\bigr)\le 1-(1-p)^{q^{2N+2}}$ for each $z\in \X$ (cf.~Fact~\ref{f6.0}). Then for each $K\subseteq I$, by the independence of the inclusion in $\tD$ of each individual point in a collection of points with pairwise distances $>2N$,
     \begin{equation}\label{eqthmd1}
         \P\bigl((\varphi_1\times\varphi_2)(K)\subseteq\widetilde{D}\bigr)= p_2^{\abs{K}}.
     \end{equation}
    
    For each $J\subseteq \X$, let $\cG_J^{a}$, $\cG_J^b$, and $\cH_J$ denote the events $\{D_a\cap I\subseteq J\}$, $\{D_b\cap I\subseteq J\}$, and $\bigl\{\widetilde{D}\cap I'=J\bigr\}$, respectively. Let $\cE_1$ and $\cE_2$ denote the events $\{\abs{D_\ddagger\cap I}\ge p(1-2\epsilon)\abs{I}\}$ and $\{\abs{D_a\cap I}\ge p(1-2\epsilon)\abs{I}\}$, respectively. Fix $\zeta\in(0,1)$. By Lemma~\ref{t7.0a}, for sufficiently large $R_2$,
    \begin{equation}\label{eqthmd2}
        \P(\cE_1)\ge 1-\zeta.
    \end{equation}
    
    To apply an analogous argument as in the proof of Theorem~\ref{t1.3}, we need to remove the conditional $\cH_{J'}$ by altering the constants. Thus, we now verify that for suitable choices of constants and each $J\subseteq I$, 
    \begin{equation}\label{e7.2c0}
        \P\bigl(\cG_{J}^b,\,\cE_1\,\big|\,\cH_{J'}\bigr)\le \P(\cG_{J}^a,\,\cE_2),
    \end{equation}
    where $J'\=(\varphi_1\times\varphi_2)(J)$. Define independent random subsets $D_\alpha\=D\cap B(I',N)$ and $D_\beta\=D\smallsetminus B(I',N)$. Then for each $J\subseteq I$, we have $\cH_{J'}=\bigl\{\widetilde{D_\alpha}\cap I'=J'\bigr\}$ and
    \begin{equation}\label{eqthmd3}
    \begin{aligned}
        \P\bigl(\cG_{J}^b,\,\cE_1\,\big|\,\cH_{J'}\bigr)&\le \P\bigl((D_\beta)_b\cap I\subseteq J,\,\abs{(D_\beta\cup B(I',N))_\ddagger\cap I}\ge  p(1-2\epsilon)\abs{I}\,\big|\,\widetilde{D_\alpha}\cap I'=J'\bigr).\\
        &=\P((D\smallsetminus B(I',N))_b\cap I\subseteq J,\,\abs{(D\cup B(I',N))_\ddagger\cap I}\ge p(1-2\epsilon)\abs{I}).
    \end{aligned}
    \end{equation}
    We choose $r\=10^6\epsilon^{-8}\kappa N$ and $\lambda\=2r+4\kappa R_1+4\kappa C$ for the remainder of the proof. Then direct estimate on the sizes of balls yields $\abs{B(z,R)\cap B(I',N)\cap F}\le 10^{-6}\epsilon^8\abs{B(z,R)\cap F}$ for each $z\in I$, each flat $F$ containing $z$, and each $R>0$, by \eqref{distanceIandimage} and the fact that points in $I'$ have pairwise distances greater than $r(2\kappa)^{-1}-2C-4N$. This, with simple estimates, implies $(D\cup B(I',N))_\ddagger\cap I\subseteq D_a\cap I\subseteq (D\smallsetminus B(I',N))_b\cap I$ using Definitions~\ref{d3.2},~\ref{d5.0}, and~\ref{d6.4}. Thus, combining \eqref{eqthmd3}, we get \eqref{e7.2c0}. 

    Thus, by \eqref{e7.2c0} and \eqref{eqthmd1}, 
    \begin{equation*}
    \begin{aligned}
    &\P\bigl((\varphi_1\times\varphi_2)(I\cap D_b)\subseteq \widetilde{D}\cap I',\,\cE_1\bigr)\\
    &\qquad=
        \sum_{J\subseteq I}\P\bigl(\cG_{J}^b,\, \cE_1\,\big|\,\cH_{J'}\bigr)\P(\cH_{J'})\le \sum_{J\subseteq I}\P(\cG_{J}^a,\, \cE_2)\P(\cH_{J'})\\
        &\qquad=\sum_{J\subseteq I}\,\sum_{K\subseteq J:\abs{K}\ge p(1-2\epsilon)\abs{I}}\P(D_a\cap I=K)\P(\cH_{J'})\\
        &\qquad=\sum_{K\subseteq I:\abs{K}\ge p(1-2\epsilon)\abs{I}}\P(D_a\cap I=K)\P\bigl((\varphi_1\times \varphi_2)(K)\subseteq \widetilde{D}\bigr)\le p_2^{p(1-2\epsilon)\abs{I}}.
    \end{aligned}
    \end{equation*}
    
    Therefore, by \eqref{eqthmd2},
    \begin{align*}
        \P(\cQ(x,y))&\le\P(\cE_1^c)+\P\bigl(\exists (\varphi_1,\varphi_2)\in\cS(x,y)\text{ such that } (\varphi_1\times\varphi_2)(I\cap D_b)\subseteq \widetilde{D}\cap I',\,\cE_1\bigr)\\
        &\le \zeta+ Mp_2^{p(1-2\epsilon)\abs{I}}\le\zeta+Mp_2^{p(1-2\epsilon)q^{R_1+R_2-2r-4}}\le2\zeta,
    \end{align*}
    where the last inequality holds if $R_1>r$ satisfies $-\ln(p_2)p(1-2\epsilon)q^{R_1-2r-5}\ge(\kappa+C+2N+2)\ln(q)$ and $R_2$ is sufficiently large. Since $\zeta\in(0,1)$ is arbitrary, the claim follows.
    \end{proof}
\appendix
\section{Geometric estimates}\label{app:a}
This section lists several geometric lemmas and facts used in Section~\ref{s:Step2}. The first three lemmas are discrete versions of well-known results on QI embeddings between Euclidean spaces.
\begin{lemma}[Local packing]\label{t4.0}
    For each $n\in \N$ and each $\kappa>1$, there exists a constant $\eta>\kappa$ such that if $C>1$, $R>0$, $x\in \Z^n$, and $\psi\in \QIE_{\kappa,C}(B(x,R)\cap \Z^n,\Z^n)$, then
    \begin{equation*}
        \Nb(\psi(x),R/\eta)\subseteq \Nb (\psi(B(x,R)\cap \Z^n),\eta C),
    \end{equation*}
    where $B(x,R)$ denotes the ball of radius $R$ in $\R^n$. 
\end{lemma}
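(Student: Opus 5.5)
The plan is to derive Lemma~\ref{t4.0} from a compactness argument combined with the topological fact that a continuous injective map from a ball in $\R^n$ to $\R^n$ is open (invariance of domain). By scaling we may reduce to a statement at unit scale. Set $\delta\=C/R$. We may assume $\delta$ is small, say $\delta<\delta_0(n,\kappa)$. Otherwise $R\le C/\delta_0$, so $R/\eta\le \eta C$ once $\eta\ge \delta_0^{-1/2}$, and the inclusion is trivial because $\psi(x)$ itself lies in the image. In the small-$\delta$ regime we rescale the domain and target by $1/R$. Then $\psi_R(y)\=R^{-1}(\psi(x+Ry)-\psi(x))$, defined on $B(0,1)\cap R^{-1}\Z^n$, is a $(\kappa,\delta)$-quasi-isometric embedding with $\psi_R(0)=0$. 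The goal becomes
\begin{equation*}
\Nb(0,1/\eta)\subseteq \Nb\bigl(\psi_R(B(0,1)\cap R^{-1}\Z^n),\eta\delta\bigr).
\end{equation*}

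We argue by contradiction. Suppose that for every $\eta$ the conclusion fails for some data. Take $\eta_k\to\infty$ and failing maps $\psi_k$ with rescaled errors $\delta_k$. If the $\delta_k$ do not tend to $0$, we are in the trivial case above. So the real issue is the following: a $(\kappa,\delta)$-map with $\delta$ small must cover a fixed ball $B(0,c)$, $c=c(n,\kappa)$, up to error $\eta\delta$, for a single $\eta$.

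The most robust route is to fix the linear-scale constant $c=1/(4\kappa)$. First we show that every point $p\in B(0,c)$ is within $A\delta$ of the image, with $A$ depending only on $n$ and $\kappa$. To do this, interpolate $\psi_R$ on a mesh of size $\asymp\delta$. Concretely, choose a $\delta$-net of $B(0,1)$, triangulate it into simplices of diameter $\asymp\delta$, and extend $\psi_R$ piecewise linearly to a continuous map $\Psi$ on $B(0,1/2)$ with $\abs{\Psi-\psi_R}\le A_1\delta$. For $\abs{z}=1/2$ this gives $\abs{\Psi(z)}\ge (2\kappa)^{-1}-A_2\delta>2c$. A degree argument then applies: the map $\Psi$ restricted to the sphere $\partial B(0,1/2)$ is homotopic, inside $\R^n\smallsetminus B(0,c)$, to a map of nonzero degree about any $p\in B(0,c)$. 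Hence $p\in\Psi(B(0,1/2))$, and therefore $p$ is within $A_1\delta$ of the image of $\psi_R$.

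The main obstacle is proving the nonzero degree. Coarse injectivity of $\Psi$ (that is, $\abs{\Psi(a)-\Psi(b)}\ge\abs{a-b}/\kappa-A\delta$) does not immediately give degree $\pm1$. I would handle this by a compactness limit. Arzelà--Ascoli applied to the $\Psi$'s, as $\delta\to0$, produces a bi-Lipschitz limit $\Psi_\infty$. By invariance of domain, $\Psi_\infty$ restricted to the sphere has degree $\pm1$ around $\Psi_\infty(0)=0$. Since degree is stable under uniformly small perturbations that avoid $B(0,c)$, the degree is also $\pm1$ for the $\Psi$'s with small $\delta$. The statement then holds with $\eta\=\max\{1/c,\,A_1,\,\delta_0^{-1/2},\,2\kappa\}$. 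Undoing the rescaling yields $\Nb(\psi(x),R/\eta)\subseteq\Nb(\psi(B(x,R)\cap\Z^n),\eta C)$.
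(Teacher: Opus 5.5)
Your proposal is correct, and it takes a genuinely different route from the paper, because the paper gives no proof of Lemma~\ref{t4.0}. It only says that the lemma is a discrete version of a well-known fact about quasi-isometric embeddings between Euclidean spaces. Your argument makes this self-contained, and its structure is sound. The piecewise linear interpolation at a mesh comparable to $\delta=C/R$ is what makes the error linear in $C$; a bare compactness argument would only give covering up to an error $o(R)$. Compactness is then used only to certify a topological property, nonvanishing degree, which is stable under uniformly small perturbations, so passing to a subsequential limit loses nothing. Compared with quoting the Euclidean result, the cost is that $\eta$ comes out of a contradiction argument and is not explicit. The gain is a complete proof for every $n$ that shows transparently why the additive error scales with $C$.

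A few details should be stated explicitly.
\begin{itemize}
\item Arzel\`a--Ascoli needs the interpolations $\Psi$ to be uniformly Lipschitz. This holds because the simplices are fat with inradius comparable to $\delta$ and their vertex values differ by $O(\delta)$. The hypothesis $C>1$ is what lets you choose the mesh $\lceil C\rceil/R\in[\delta,2\delta)$ inside $R^{-1}\Z^n$. Using the lattice mesh $1/R$ itself would only give a Lipschitz bound of order $\kappa+C$.
\item With $c=1/(4\kappa)$, the inequality $(2\kappa)^{-1}-A_2\delta>2c$ is false as written, since $2c=(2\kappa)^{-1}$. What you need, and what holds for small $\delta$, is $\abs{\Psi(z)}\ge 3c/2$ for $\abs{z}=1/2$.
\item For the homotopy to the limit, it suffices that $\abs{\Psi_\infty(z)-p}\ge (2\kappa)^{-1}-c=c$ for $\abs{z}=1/2$ and $p\in B(0,c)$, and that $\abs{\Psi_k-\Psi_\infty}<c$ uniformly. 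This makes the degree conclusion uniform in $p$.
\item The claim that $\Psi_\infty|_{\partial B(0,1/2)}$ has degree $\pm1$ about $0$ uses the local-degree statement for topological embeddings of a ball, which is invariance of domain plus excision. Invariance of domain alone does not give it.
\end{itemize}
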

The following corollary is an immediate consequence of Lemma~\ref{t4.0}.
\begin{cor}[Packing of metric interiors]\label{t4.0a}
Let $n\in\N$, $\kappa>1$, and $C>1$ be constants. Let $\eta$ be the constant given for $n$ and $\kappa$ in Lemma~\ref{t4.0}. Let $U,\, V$ be subsets of $\Z^n$ and $\psi \: U \to \Z^n$ be a $(\kappa,C)$-quasi-isometric embedding. Suppose $V$ equipped with the induced graph structure (of $\Z^n$) is connected, and $\psi^{-1}(B(V,\eta C)\cap \Z^n)$ is a nonempty subset of $\mathrm{Int}_{\Z^n}\bigl(U, 3\eta^2C\bigr)$. Then $V \subseteq B(\psi(U), \eta C)$.
\end{cor}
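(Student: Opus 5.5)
We argue by contradiction, using Lemma~\ref{t4.0} as the only external input. Suppose $V\not\subseteq B(\psi(U),\eta C)$ and pick $v_0\in V$ with $d(v_0,\psi(U))\ge \eta C$. The set $\psi^{-1}(B(V,\eta C)\cap\Z^n)$ is nonempty, so some $u\in U$ has $\psi(u)$ within $\eta C$ of a point $v_1\in V$. Because $V$ is connected in the induced graph structure, we can join $v_1$ to $v_0$ by a path $v_1=w_0,w_1,\dots,w_k=v_0$ in $V$ with consecutive points at distance $1$.

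Let $j$ be the first index with $d(w_j,\psi(U))\ge\eta C$. Then $d(w_{j-1},\psi(U))<\eta C$, so there is $u'\in U$ with $d(\psi(u'),w_{j-1})<\eta C$. Hence $\psi(u')\in B(V,\eta C)$, and the hypothesis gives $u'\in\Int_{\Z^n}(U,3\eta^2C)$. In particular, the ball $B(u',3\eta^2C)\cap\Z^n$ lies in $U$, so $\psi$ restricted to it is a $(\kappa,C)$-quasi-isometric embedding of a full lattice ball.

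Apply Lemma~\ref{t4.0} with $x=u'$ and $R=3\eta^2C$. It gives
\[
B(\psi(u'),3\eta C)\subseteq B\bigl(\psi(B(u',3\eta^2C)\cap\Z^n),\eta C\bigr)\subseteq B(\psi(U),\eta C).
\]
On the other hand, $d(w_j,\psi(u'))\le d(w_j,w_{j-1})+d(w_{j-1},\psi(u'))<1+\eta C<3\eta C$, using $C>1$ and $\eta>\kappa>1$. So $w_j\in B(\psi(U),\eta C)$, contradicting the choice of $j$. Therefore no such $v_0$ exists, and $V\subseteq B(\psi(U),\eta C)$.

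The only delicate points are open versus closed balls and checking $1+\eta C<3\eta C$. Both are absorbed by the factor $3$ in the interior radius $3\eta^2C$, so the main obstacle is simply matching the ball radius in Lemma~\ref{t4.0} to that interior radius. If the strict inequalities turn out to be borderline, we would shrink the target radius slightly, since the hypothesis leaves a margin of $2\eta C$.
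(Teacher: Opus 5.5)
Your argument is correct. Fix a lattice path in $V$ from a point within $\eta C$ of $\psi(U)$, take the first path point at distance at least $\eta C$ from $\psi(U)$, and apply Lemma~\ref{t4.0} at a preimage $u'\in\Int_{\Z^n}(U,3\eta^2C)$ with $R=3\eta^2C$; since $R/\eta=3\eta C>1+\eta C$, that point lies in $B(\psi(U),\eta C)$, a contradiction. This is precisely the short deduction the paper leaves implicit when it calls the corollary an immediate consequence of Lemma~\ref{t4.0}.
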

We also need the following lemma as a discrete version of the volume bounds for quasi-isometric embeddings between Euclidean spaces.
\begin{lemma}\label{t4.0b}
    Let $\kappa>1$ and $C>1$ be constants. Let $U$ be a finite subset of $\Z^2$ and let $\psi \: U \to \Z^2$ be a $(\kappa,C)$-quasi-isometric embedding. Then
    $\abs{B(\psi(U),C)}\ge 3^{-4}\kappa^{-2} \abs{U}$.
\end{lemma}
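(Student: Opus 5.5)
The plan is a packing argument: pick a sparse net in $U$ whose image balls of radius $C$ are pairwise disjoint, then compare volumes on both sides using Fact~\ref{t3.0}.

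\textbf{Step 1 (the net).} Set $r\coloneqq 3\kappa C$. Let $S\subseteq U$ be a maximal subset whose distinct points are at pairwise distance at least $r$. Since $U$ is finite, such an $S$ exists and is finite.

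\textbf{Step 2 (disjoint image balls).} For distinct $s,s'\in S$ the lower quasi-isometry bound gives
\[
d(\psi(s),\psi(s'))\ge \kappa^{-1}r-C=2C .
\]
Hence the open balls $B(\psi(s),C)$, $s\in S$, are pairwise disjoint, and all of them lie in $B(\psi(U),C)$. By \eqref{e3.0} each has at least $C^2$ lattice points. Therefore
\[
\abs{B(\psi(U),C)}\ \ge\ \abs{S}\,C^2 .
\]

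\textbf{Step 3 (covering $U$).} By maximality of $S$, every $u\in U$ satisfies $d(u,S)<r$. So $U\subseteq\bigcup_{s\in S}B(s,r)$. Using \eqref{e3.0} again,
\[
\abs{U}\le \abs{S}\,(2r+1)^2=\abs{S}\,(6\kappa C+1)^2 .
\]
Since $\kappa,C>1$, we have $6\kappa C+1\le 7\kappa C$. This gives
\[
\abs{U}\le 49\kappa^2C^2\abs{S}\le 81\kappa^2C^2\abs{S}.
\]

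\textbf{Step 4 (combine).} Putting Steps 2 and 3 together,
\[
\abs{B(\psi(U),C)}\ \ge\ \abs{S}\,C^2\ \ge\ 3^{-4}\kappa^{-2}\abs{U}.
\]

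There is no real obstacle here. The only point needing care is the bookkeeping of open versus closed balls. Strict versus non-strict inequalities only change constants, and the stated constant $3^{-4}$ leaves slack compared with the $1/49$ obtained above. If one prefers closed balls in the covering step, take $r$ slightly larger; this still fits within the available margin.
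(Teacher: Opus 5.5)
Your proof is correct. The maximal $3\kappa C$-separated set $S\subseteq U$ makes the open balls $B(\psi(s),C)$, $s\in S$, pairwise disjoint subsets of $B(\psi(U),C)$, each with at least $C^2$ lattice points. The open balls $B(s,3\kappa C)$ cover $U$ with at most $(6\kappa C+1)^2\le 49\kappa^2C^2$ points each. Together these give the constant $1/49$, which is better than the stated $3^{-4}$.

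The paper gives no proof of this lemma; it lists it in the appendix as a discrete version of a well-known volume bound. Your packing argument is the standard proof of that bound, and it matches how the lemma is used in Lemma~\ref{t4.2}, where the neighborhood is intersected with a flat and so counts lattice points.

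You use open balls consistently in both the disjointness step and the covering step, so your closing remark about closed balls is unnecessary.
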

The next lemma is about the geometry of polyhedra in $\X$ (see Definition~\ref{d2.10}).
\begin{lemma}\label{t4.3}
    Suppose $P_1$ and $P_2$ are nonempty polyhedra in $\X$. Define $Q_1 \= \{y\in P_1: d(y,P_2)=d(P_1,P_2)\}$. Then the following hold for each $i\in \{1,\,2\}$:
    \begin{enumerate}[label=\rm{(\roman*)}]
        \smallskip     	\item  $Q_1=\pi_1(Q_1)\times \pi_2(Q_1)$ and $Q_1$ is a nonempty polyhedron. Moreover, $\pi_i(Q_1)=\{y\in \pi_i(P_1):d(y,\pi_i(P_2))=d(\pi_i(P_1),\pi_i(P_2))\}$, and if $\abs{\pi_i(Q_1)}>1$ then $\pi_i(Q_1)=\pi_i(P_1)\cap \pi_i(P_2)$.
        \smallskip     	\item For each $R>0$, $B(\pi_i(P_1),R)\cap B(\pi_i(P_2),R)\subseteq B(\pi_i(Q_1),R)$, and consequently,
        $B(P_1,R)\cap B(P_2,R)\subseteq B(Q_1,2R)$.
    \end{enumerate}
\end{lemma}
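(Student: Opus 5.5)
The statement concerns two nonempty polyhedra $P_1=A_1\times A_2$ and $P_2=B_1\times B_2$, where each $A_i,B_i\subseteq T_i$ is a geodesic (line, ray or segment). The plan is to reduce everything to the two factor trees. The metric is $d(u,v)^2=d_{T_1}(u_1,v_1)^2+d_{T_2}(u_2,v_2)^2$, so for $y=(y_1,y_2)\in P_1$ we get
\begin{equation*}
d(y,P_2)^2=d_{T_1}(y_1,B_1)^2+d_{T_2}(y_2,B_2)^2 .
\end{equation*}
The key point is that the infimum over $P_2$ splits coordinatewise, because $P_2$ is a product and the squared metric is a sum. Minimizing over $y\in P_1=A_1\times A_2$ then also splits, giving $d(P_1,P_2)^2=\sum_i d_{T_i}(A_i,B_i)^2$. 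Equality $d(y,P_2)=d(P_1,P_2)$ therefore holds iff each summand is minimal, that is, iff $y_i\in Q^{(i)}\=\{z\in A_i: d(z,B_i)=d(A_i,B_i)\}$ for both $i$. This yields $Q_1=Q^{(1)}\times Q^{(2)}$, so $\pi_i(Q_1)=Q^{(i)}$, which is the displayed formula in (i).

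The next step is to analyze $Q^{(i)}$ inside a single tree, using two convex subtrees $A,B$ (geodesics) of $T_i$. First suppose $A\cap B\neq\emptyset$. Then the minimal distance is $0$, so $Q^{(i)}=A\cap B$. This intersection is a connected subset of the geodesic $A$, hence itself a geodesic piece and nonempty. Now suppose $A\cap B=\emptyset$. Tree geometry gives a unique bridge: the nearest-point projection of $B$ onto $A$ is a single vertex $a$, and every $z\in A\setminus\{a\}$ satisfies $d(z,B)=d(z,a)+d(a,B)>d(a,B)$. Hence $Q^{(i)}=\{a\}$. In either case $Q^{(i)}$ is a nonempty geodesic, so $Q_1$ is a nonempty polyhedron. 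If $\abs{Q^{(i)}}>1$, we must be in the intersecting case, so $Q^{(i)}=A_i\cap B_i$. This finishes (i).

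For (ii), take $z\in B(A,R)\cap B(B,R)$ in $T_i$ and let $z_A,z_B$ be its projections to $A$ and $B$. In the disjoint case, the path from $z$ to $B$ passes through the bridge point $a$ or reaches $B$ through $A$. In either situation a tree (tripod/convexity) argument gives $d(z,a)\le\max\{d(z,A),d(z,B)\}<R$. The argument runs as follows. If the projection $z_A\neq a$, then the geodesic from $z$ to $B$ passes through $z_A$ and $a$, so $d(z,B)\ge d(z,a)$. If instead $z_A=a$, then $d(z,a)=d(z,A)$. In the intersecting case, $Q^{(i)}=A\cap B$, and $z_A,z_B$ together with convexity give $d(z,A\cap B)\le\max\{d(z,A),d(z,B)\}$. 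This holds because in a tree the projection of $z$ onto $A$ lies in $B$ or the path to $B$ passes through $A\cap B$.

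This per-coordinate bound is the main obstacle and needs a careful case check with the tripod structure. Once it is in hand, for $w\in B(P_1,R)\cap B(P_2,R)$ each coordinate satisfies $d(w_i,A_i)<R$ and $d(w_i,B_i)<R$. Hence $d(w_i,Q^{(i)})<R$, and so $d(w,Q_1)<\sqrt2R\le 2R$.
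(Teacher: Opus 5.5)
Your proof is correct and follows essentially the same route as the paper. Like the paper, you split $d(\cdot,P_2)^2$ coordinatewise to get $Q_1=\pi_1(Q_1)\times\pi_2(Q_1)$, with each factor equal to the set of closest points in the corresponding tree, and you prove (ii) per coordinate by projecting onto $\pi_i(P_1)$: either the projection lies in $\pi_i(Q_1)$, or every geodesic to $\pi_i(P_2)$ passes through $\pi_i(Q_1)$. Your explicit split into the intersecting and disjoint cases just spells out the ``geometric considerations'' the paper leaves implicit, and the bound $\sqrt2R\le 2R$ finishes exactly as in the paper.
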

\begin{proof}
    For each $i\in\{1,\,2\}$, we have $P_i=\pi_1(P_i)\times \pi_2(P_i)$, where $\pi_j(P_i)$ is a nonempty path-connected subset of a geodesic in $T_j$ for each $j\in \{1,\,2\}$. So $d(P_1,P_2)^2=d_1(\pi_1(P_1),\pi_1(P_2))^2+d_2(\pi_2(P_1),\pi_2(P_2))^2$. Thus for each $i\in\{1,\,2\}$, we have $\pi_i(Q_1)\subseteq L_i$,
    where $L_i\coloneqq \{y\in \pi_i(P_1):d(y,\pi_i(P_2))=d(\pi_i(P_1),\pi_i(P_2))\}$. Moreover, $L_i$ is a path-connected subset of a geodesic in $T_i$ by geometry of trees. Note that for each point $x\in L_1\times L_2$, we have $d(x,P_2)=d(P_1,P_2)$. Thus, 
    \begin{equation}\label{e4.3a}
        Q_1=L_1\times L_2=\pi_1(Q_1)\times \pi_2(Q_1)
    \end{equation}is a polyhedron. We also have $\pi_i(Q_1)=\pi_i(P_1)\cap \pi_i(P_2)$ if $\abs{\pi_i(Q_1)}>1$ by geometric considerations.

    To prove property~(ii), it suffices to prove that for each $i\in \{1,\,2\}$ and each $R>0$, we have 
    \begin{equation*}
        B(\pi_i(P_1),R)\cap B(\pi_i(P_2),R)\subseteq B(\pi_i(Q_1),R).
    \end{equation*} 
    
    Fix $i\in\{1,\,2\}$ and $R>0$. Suppose $x_1\in B(\pi_i(P_1),R)\cap B(\pi_i(P_2),R)$ and let $x_2$ be the point in $\pi_i(P_1)$ closest to $x_1$. If $x_2\in \pi_i(Q_1)$, we have $d(x_1,\pi_i(Q_1))\le d(x_1,x_2)<R$. If $x_2\notin \pi_i(Q_1)$, then by \eqref{e4.3a} and the simple connectivity of $T_i$, it is not difficult to see that $d(x_1,\pi_i(P_2))=d(x_1,x_2)+d(x_2,\pi_i(Q_1))+d(\pi_i(Q_1),\pi_i(P_2))$, and consequently $d(x_1,\pi_i(Q_1))\le d(x_1,\pi_i(P_2))<R$. In both cases we have $x_1\in B(\pi_i(Q_1),R)$ and we finish our proof.
\end{proof}
\begin{fact}[Geometry of singular geodesic rays]\label{t5.2}
    Let $L_1$ and $L_2$ be two singular geodesic rays such that $L_1\sim_{a}L_2$ for some constant $a\in(0,1)$. Then $L_1$ and $L_2$ are Hausdorff equivalent.
\end{fact}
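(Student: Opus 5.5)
The idea is to unpack the definition of $\epsilon$-equivalence $L_1\sim_{a,x,R}L_2$ and use the tree structure of each factor. A singular geodesic ray is of the form $\ell\times\{v\}$ with $\ell$ a geodesic ray in $T_1$, or $\{u\}\times\ell$ with $\ell$ a geodesic ray in $T_2$. Two such rays are Hausdorff equivalent exactly when they lie in the same factor, their $T_i$-rays define the same point of $\partial T_i$, and the other coordinates differ by a bounded amount. (The latter is automatic once the rays lie in the same factor.)

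\textbf{Step 1: the rays lie in the same factor.} Suppose instead that $L_1=\ell_1\times\{v\}$ and $L_2=\{u\}\times\ell_2$. Take $z=(z_1,v)\in L_1\smallsetminus B(x,R)$ far out along $\ell_1$. Then
\[
d(z,L_2)\ge d_{T_1}(z_1,u)\approx d(z,x).
\]
On the other hand, $z\in L_2[a]$ forces $d(z,L_2)\le a\,d(x,z)$. For $z$ far enough out these two bounds are incompatible when $a<1$. The precise form is $d_{T_1}(z_1,u)\ge d(x,z)-O(1)$, which holds since $d(x,z)\le d_{T_1}(x_1,z_1)+d_{T_2}(x_2,v)$.

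\textbf{Step 2: same boundary point.} Now assume both rays lie in $T_1$, say $L_j=\ell_j\times\{v_j\}$. If $\ell_1$ and $\ell_2$ define different ends of $T_1$, they share at most a finite segment of their tails. Points $z_1$ far along $\ell_1$ then satisfy $d_{T_1}(z_1,\ell_2)\ge d_{T_1}(z_1,w)-O(1)$, where $w$ is the branch point. This quantity is comparable to $d(z,x)-O(1)$, which again contradicts $d(z,L_2)\le a\,d(x,z)$ for large $z$. Hence $\ell_1$ and $\ell_2$ eventually coincide. Together with the fact that $d_{T_2}(v_1,v_2)$ is a fixed finite number, this shows $\hd(L_1,L_2)<\infty$.

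The main, though mild, obstacle is the bookkeeping in the $\ell^2$ product metric. The point is that the distance to the other ray grows linearly with slope $1$ relative to $d(x,z)$, up to additive constants, so that it beats the slope $a<1$. Only one of the two inclusions in the definition of $\sim_{a}$ is needed for this argument.
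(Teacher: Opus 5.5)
Your argument is correct. The paper records this as a Fact in the appendix without any proof, and your two-case check is exactly the elementary verification it leaves to the reader. The two cases are rays in different factors, and rays in the same factor with distinct ends; each is ruled out because $d(z,L_2)\ge d(x,z)-O(1)$ along $L_1$, while $L_1\smallsetminus B(x,R)\subseteq L_2[a]$ allows only $a\,d(x,z)$ with $a<1$. As you note, this uses only one of the two inclusions.

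Two small points of precision:
\begin{enumerate}
\item For distinct ends, the clean statement is that $\ell_1\cap\ell_2$ is bounded. Hence the tail of $\ell_1$ lies in a single component of $T_1\smallsetminus\ell_2$, and its nearest-point projection to $\ell_2$ is a fixed vertex $w$. This gives $d_{T_1}(z_1,\ell_2)=d_{T_1}(z_1,w)$ exactly, rather than up to $O(1)$.
\item In the paper's applications (e.g.\ Proposition~\ref{t4.5}, Lemma~\ref{t5.1}), the two rays emanate from a common point. There, Hausdorff equivalence upgrades immediately to equality, because two rays in a tree from the same vertex with the same end coincide.
\end{enumerate}
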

The following geometric lemma is a discrete version of \cite[Lemma~3.10]{eskin1998quasi}.

\begin{lemma}\label{t4.0c}
    Given $\kappa>1$, there exists a constant $\beta>1$ such that the following holds:

    \smallskip
    
    Suppose $U$ and $V$ are subsets of $\Z^2$. Let $B(\,\cdot\,,\,\cdot\,)$ denote a ball in $\R^2$. Suppose $\psi$ is a $(\kappa,C)$-quasi-isometry from $U$ to $V$ with $C>1$. Suppose $V\subseteq B(\psi(U),C)$, $V'\subseteq V$, $U'\=\psi^{-1}(V')$, and $B(V',9\kappa+C)$ is a simply connected subset of $\R^2$. Suppose there exist constants $m_2>0$ and $C'>1$ such that for every hyperplane $L\subseteq \Z^2$, there exist hyperplanes $L_1',\,\dots,\,L_{m_2}'$ such that
    \begin{equation}\label{ea.60}
        \psi(L\cap U)\subseteq B\bigl(L_1'\cup\cdots\cup L_{m_2}',C'\bigr).
    \end{equation}
    Then for every hyperplane $L\subseteq\Z^2$, there exists a hyperplane $L'\subseteq \Z^2$ such that $\psi(L\cap U')\cap \Int_{\Z^2}(V',\beta(C+C'))\subseteq B(L',\beta(C+C'))$. Conversely, for every hyperplane $L'\subseteq\Z^2$, there exists a hyperplane $L\subseteq \Z^2$ such that $\psi^{-1} (L'\cap \Int_{\Z^2}(V',\beta (C+C')) )\subseteq B(L,\beta (C+C'))$. 
\end{lemma}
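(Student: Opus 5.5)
The statement to prove is Lemma~\ref{t4.0c}. The plan is to reduce it to a one-dimensional rigidity statement along each hyperplane. After that, a connectedness argument upgrades ``near a finite union of hyperplanes'' to ``near a single hyperplane'' inside $\Int_{\Z^2}(V',\beta(C+C'))$. The converse direction follows by running the same argument for a coarse inverse of $\psi$.

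\textbf{Step 1: local straightness of $\psi$ along $L$.} Fix a hyperplane $L\subseteq\Z^2$, say horizontal. By \eqref{ea.60}, $\psi(L\cap U)\subseteq B(L_1'\cup\dots\cup L_{m_2}',C')$.
\begin{enumerate}[label=\rm{(\alph*)}]
\item Distinct lines $L_j'$ are either parallel or meet in a single point. So away from the finitely many pairwise intersection points, the $C'$-neighborhoods of the $L_j'$ are disjoint strips.
\item I would parametrize $L\cap U'$ by the coordinate $t\in\Z$. Consecutive points have images at distance at most $\kappa+C$.
\item Hence, while the image stays in $\Int_{\Z^2}(V',\beta(C+C'))$, it cannot jump between disjoint strips. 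Jumping requires a gap larger than $\kappa+C$ once the strips are separated by more than that.
\item It can switch strips only near an intersection point $L_j'\cap L_k'$ of two transverse lines.
\end{enumerate}

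\textbf{Step 2: excluding turns at crossing points.} Suppose the image of a segment of $L$ enters the ball around $p=L_j'\cap L_k'$ along $L_j'$ and leaves along $L_k'$. I would use the simple connectivity of $B(V',9\kappa+C)$ together with the hypothesis on \emph{every} hyperplane, in particular the hyperplanes parallel to $L$ near the turning point.
\begin{enumerate}[label=\rm{(\alph*)}]
\item Parallel lines $L+s$ with $|s|\le$ some multiple of $\kappa(C+C')$ map close to $\psi(L)$ because $\psi$ is coarsely Lipschitz.
\item Their images must also lie near finitely many lines.
\item The region swept by these parallel images near $p$ is a coarse two-dimensional sector.
\item The volume/packing estimates (Lemma~\ref{t4.0} and Lemma~\ref{t4.0b}) show the image of a ball of radius $R$ around the preimage of $p$ contains a ball of radius $R/\eta$ around $p$.
\item Points in that ball lying in a quadrant away from the $C'$-strips of all lines must be images of points on some nearby parallel hyperplane $L+s$.
\item But that $L+s$ must itself map near finitely many lines, which yields a contradiction as soon as the ball is larger than a constant multiple of $(C+C')$.
\end{enumerate}
This step is where $\beta$ is determined, as a constant depending on $\kappa$, $\eta$ and the $3^{-4}\kappa^{-2}$ factor. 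It is also where the interior condition is used: near $\partial V'$ the packing lemma is unavailable, so turns there are allowed. The simple connectivity hypothesis prevents the image from winding around a hole of $V'$ and returning on a different line without passing through an interior turning point.

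\textbf{Step 3: conclusion and converse.} Steps 1 and 2 show that each coarsely connected piece of $\psi(L\cap U')\cap \Int(V',\beta(C+C'))$ follows a single $L_j'$. Simple connectivity of $B(V',9\kappa+C)$ implies the set $L\cap U'$ is coarsely connected along $L$ between any two points whose images are interior. Otherwise two interior pieces could connect only through a region that, together with the segment of $L$, would bound a hole. So all pieces follow the same line $L'$, giving the first claim.

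For the converse, take a coarse inverse $\bar\psi$, which exists since $V\subseteq B(\psi(U),C)$. Applying the first claim to all hyperplanes $L$ and using that a QI embedding of $\Z^2$ cannot send two parallel lines near one line over long intervals (Lemma~\ref{t4.0b} again), each $L'$ is hit by the image of a unique $L$. Then $\psi^{-1}(L'\cap\Int(V',\beta(C+C')))\subseteq B(L,\beta(C+C'))$, after enlarging $\beta$.

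The main obstacle is Step 2, namely ruling out turns at crossing points with constants linear in $C+C'$.
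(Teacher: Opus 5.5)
The paper gives no proof of this lemma; it only refers to Eskin's Lemma~3.10. So I judge your sketch on its own terms.

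\textbf{Step~3 is where it breaks, and it cannot be repaired under the stated hypotheses.} Simple connectivity of $B(V',9\kappa+C)$ does not make $L\cap U'$ coarsely connected along $L$. The loop you have in mind is a path in $U'$ joining two pieces of $L\cap U'$, closed up by the segment of $L$ between them. That segment leaves $U'$, so simple connectivity says nothing about the loop, and the region it encloses may lie in an unbounded component of the complement.

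Here is a concrete example. Fix a large $a\in\N$ and let $U=V=U'=V'=\{(x,y)\in\Z^2: y\le 0\text{ or }\abs{x}\ge a\}$, the plane minus a half-strip; then $B(V',9\kappa+C)=B(V',20)$ is simply connected. Put $g(y)=y$ for $y\le 0$ and $g(y)=y+\min\{\lfloor y/2\rfloor,a\}$ for $y>0$. Define $\psi(x,y)=(x,g(y))$ if $x\ge a$ and $\psi(x,y)=(x,y)$ otherwise.
\begin{itemize}
\item $\psi$ is $2$-bi-Lipschitz on $\{y\le 0\}\cup\{x\ge a\}$ and on $\{y\le0\}\cup\{x\le -a\}$.
\item Points on opposite sides of the removed half-strip are at distance at least $2a$, but are moved relative to each other by at most $a$.
\item Hence $\psi$ is a $(2,2)$-quasi-isometry with $V\subseteq B(\psi(U),2)$.
\item Vertical lines go into vertical lines and horizontal lines into at most two horizontal lines, so \eqref{ea.60} holds with $m_2=C'=2$.
\end{itemize}
Now take $L=\{y=h\}$ with $h\ge 2a$, and let $s=\beta(C+C')$. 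Then $\psi(L\cap U')\cap\Int_{\Z^2}(V',s)$ contains both $\{(x,h):x\le -a-s\}$ and $\{(x,h+a):x\ge a+s\}$. Since $\beta=\beta(2)$ is fixed, we may take $a>2s$, and then these two rays lie in the $s$-neighborhood of no single hyperplane. The converse fails as well, for $L'=\{y=h+a\}$.

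So the lemma needs either an extra hypothesis or a conclusion stated for each coarse component of $L\cap U'$. Convexity of $V'$ is the natural candidate, and the polyhedral sets in Lemma~\ref{t4.2} have it. Step~3 is exactly where such a hypothesis would have to be used.

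\textbf{Step~2 is also not an argument as sketched.} Items (e)--(f) use only the hyperplanes $L+s$ parallel to $L$, and these alone cannot rule out a turn. Consider the piecewise-linear bi-Lipschitz homeomorphism from $\{(t,s):s\ge 0\}$ onto $\{(u,v):u\le 0\le v\}$ given by $(t,s)\mapsto(t-s,s)$ for $t\le 0$ and $(t,s)\mapsto(-s,s+t)$ for $t\ge0$. It sends each horizontal line $\{s=c\}$, $c\ge 0$, onto two axis-parallel rays meeting at $(-c,c)$. Every parallel line turns, each lies near two hyperplanes, and the corner region is filled by these nested corners, with no contradiction.

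What fails in this example is \eqref{ea.60} for the \emph{transverse} lines: $\{t=c\}$, $c>0$, goes to the diagonal ray $\{(-s,s+c)\}$. A correct Step~2 must show the following inside $\Int_{\Z^2}(V',\beta(C+C'))$, at a scale linear in $C+C'$, with constants independent of $m_2$: a turn of $\psi(L)$ forces the images of transverse hyperplanes through the turning region to be coarsely diagonal. That quantitative argument is what actually determines $\beta$, and it is missing.

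A smaller point: $U$ is arbitrary, so consecutive points of $L\cap U'$ need not be adjacent in $L$. Step~1 must first control the gaps in $L\cap U'$ before the jump bound $\kappa+C$ applies.
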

 Since limit sets do not play a key role in this article, we present here its definition as a special case of \cite{wortman2006quasiflats}.
\begin{definition}\label{da.1}
    Suppose $\delta\in(0,1)$, $x\in \X$ and $\psi\:\R^2\to \X$. We call $\X_x(\delta)=\bigl(\bigcup_{L\in \bL_x}L[\delta]\bigr)^c$ the \emph{$\delta$-nondegenerate space} from $x$, where $\bL_x$ denotes the set of hyperplanes passing through $x$. Then a \emph{$\delta$-limit point} of $\psi$ from $x$ is a boundary point $\fC\in \widehat{\X}$ such that there exists a path $\gamma\:[0,+\infty)\to \psi^{-1}(\X_x(\delta))$ escaping every compact set such that $\lim_{t\to+\infty}\psi(\gamma(t))=\fC$. The set of all $\delta$-limit points of $\psi$ from $x$, denoted by $\fL_{\psi,x}(\delta)$, is called the \emph{$\delta$-limit set} of $\psi$ from $x$.
\end{definition}
\begin{rem}\label{ra.1}
    Note that for each pair of opposite points in $\widehat{\X}$, there exists a unique flat $F$ containing them up to Hausdorff equivalence. It is shown in \cite[Section~5]{wortman2006quasiflats} that the set of flats in Theorem~\ref{t3.4} (i.e., \cite[Theorem~1.2]{wortman2006quasiflats}) can be obtained from the pairs of opposite points in $\fL_{\psi,x}(\delta)$ in this way.
\end{rem}

\bibliographystyle{alpha}
\bibliography{refs}

@article{wortman2006quasiflats,
  title={Quasiflats with holes in reductive groups},
  author={Wortman, Kevin},
  journal={Algebr. Geom. Topol.},
  volume={6},
  pages={91--117},
  year={2006},
  publisher={Mathematical Sciences Publishers}
}

@article{eskin1997quasi,
  title={Quasi-flats and rigidity in higher rank symmetric spaces},
  author={Eskin, Alex and Farb, Benson},
  journal={J. Amer. Math. Soc.},
  volume={10},
  pages={653--692},
  year={1997},
  publisher={JSTOR}
}

@article{basu2014lipschitz,
  title={Lipschitz embeddings of random sequences},
  author={Basu, Riddhipratim and Sly, Allan},
  journal={Probab. Theory Related Fields},
  volume={159},
  pages={721--775},
  year={2014},
  publisher={Springer}
}

@article{basu2018lipschitz,
  title={Lipschitz embeddings of random fields},
  author={Basu, Riddhipratim and Sidoravicius, Vladas and Sly, Allan},
  journal={Probab. Theory Related Fields},
  volume={172},
  pages={1121--1179},
  year={2018},
  publisher={Springer}
}

@article{kleiner1997rigidity,
  title={Rigidity of quasi-isometries for symmetric spaces and {E}uclidean buildings},
  author={Kleiner, Bruce and Leeb, Bernhard},
  journal={Publ. Math. Inst. Hautes \'Etudes Sci.},
  volume={86},
  pages={115--197},
  year={1997}
}

@misc{abert2010,
  author = {Ab\'ert, Mikl\'os},
  title = {Some questions},
  year = 2010,
  note = {\url{https://users.renyi.hu/~abert/questions.pdf} [Accessed: (2010.10.02)]}
}

@article {AbertWeiss,
    AUTHOR = {Ab\'ert, Mikl\'os and Weiss, Benjamin},
     TITLE = {Bernoulli actions are weakly contained in any free action},
   JOURNAL = {Ergodic Theory Dynam. Systems},
  FJOURNAL = {Ergodic Theory and Dynamical Systems},
    VOLUME = {33},
      YEAR = {2013},
     PAGES = {323--333},
      ISSN = {0143-3857,1469-4417},
   MRCLASS = {37A05},
  MRNUMBER = {3035287},
MRREVIEWER = {Sophie\ Grivaux},
       DOI = {10.1017/S0143385711000988},
       URL = {https://doi.org/10.1017/S0143385711000988},
}

@article {AbertMellick,
    AUTHOR = {Ab\'ert, Mikl\'os and Mellick, Sam},
     TITLE = {Point processes, cost, and the growth of rank in locally
              compact groups},
   JOURNAL = {Israel J. Math.},
  FJOURNAL = {Israel Journal of Mathematics},
    VOLUME = {251},
      YEAR = {2022},
     PAGES = {47--154},
      ISSN = {0021-2172,1565-8511},
   MRCLASS = {37J37 (22D40 60G55)},
  MRNUMBER = {4555892},
       DOI = {10.1007/s11856-022-2445-9},
       URL = {https://doi.org/10.1007/s11856-022-2445-9},
}

@book{Durrett_2019, 
  place={Cambridge}, 
  edition={5}, 
  series={Cambridge Series in Statistical and Probabilistic Mathematics}, 
  title={Probability: Theory and Examples}, 
  publisher={Cambridge Univ. Press}, 
  author={Durrett, Rick}, 
  year={2019}, 
  collection={Cambridge Series in Statistical and Probabilistic Mathematics}}

@article{eskin1998quasi,
  title={Quasi-isometric rigidity of nonuniform lattices in higher rank symmetric spaces},
  author={Eskin, Alex},
  journal={J. Amer. Math. Soc.},
  volume={11},
  pages={321--361},
  year={1998}
}

@incollection{gromov1987hyper,
  title={Hyperbolic groups},
  author={Gromov, Mikhael},
  booktitle={Essays in group theory},
  journal={J. Amer. Math. Soc.},
  publisher={Springer New York},
  pages={75--264},
  year={1987}
}

@incollection {gromov1991asym,
    AUTHOR = {Gromov, Mikhael},
     TITLE = {Asymptotic invariants of infinite groups},
 BOOKTITLE = {Geometric group theory, {V}ol.\ 2 ({S}ussex, 1991)},
    SERIES = {London Math. Soc. Lecture Note Ser.},
    VOLUME = {182},
     PAGES = {1--295},
 PUBLISHER = {Cambridge Univ. Press, Cambridge},
      YEAR = {1993},
      ISBN = {0-521-44680-5},
   MRCLASS = {20F32 (57M07)},
  MRNUMBER = {1253544},
}

@article {pansu1989,
    AUTHOR = {Pansu, Pierre},
     TITLE = {M\'etriques de {C}arnot-{C}arath\'eodory et quasiisom\'etries
              des espaces sym\'etriques de rang un},
   JOURNAL = {Ann. of Math. (2)},
  FJOURNAL = {Annals of Mathematics. Second Series},
    VOLUME = {129},
      YEAR = {1989},
     PAGES = {1--60},
      ISSN = {0003-486X,1939-8980},
   MRCLASS = {53C20 (22E40)},
  MRNUMBER = {979599},
MRREVIEWER = {Gudlaugur\ Thorbergsson},
       DOI = {10.2307/1971484},
       URL = {https://doi.org/10.2307/1971484},
}

@article {schwartz1995,
    AUTHOR = {Schwartz, Richard Evan},
     TITLE = {The quasi-isometry classification of rank one lattices},
   JOURNAL = {Inst. Hautes \'Etudes Sci. Publ. Math.},
  FJOURNAL = {Institut des Hautes \'Etudes Scientifiques. Publications
              Math\'ematiques},
    NUMBER = {82},
      YEAR = {1995},
     PAGES = {133--168},
      ISSN = {0073-8301,1618-1913},
   MRCLASS = {22E40 (22E46)},
  MRNUMBER = {1383215},
MRREVIEWER = {Alexander\ Starkov},
       URL = {http://www.numdam.org/item?id=PMIHES_1995__82__133_0},
}

@book{grimmett2012percolation,
  author={Grimmett, Geoffrey},
  title={Percolation},
  year={2012},
  publisher={Springer}
  }

@article {Farb1997Survey,
    AUTHOR = {Farb, Benson},
     TITLE = {The quasi-isometry classification of lattices in semisimple
              {L}ie groups},
   JOURNAL = {Math. Res. Lett.},
  FJOURNAL = {Mathematical Research Letters},
    VOLUME = {4},
      YEAR = {1997},
     PAGES = {705--717},
      ISSN = {1073-2780},
   MRCLASS = {22E40 (20F32 22-02)},
  MRNUMBER = {1484701},
MRREVIEWER = {Lee\ Mosher},
       DOI = {10.4310/MRL.1997.v4.n5.a8},
       URL = {https://doi.org/10.4310/MRL.1997.v4.n5.a8},
}

@article {farbschwartz,
    AUTHOR = {Farb, Benson and Schwartz, Richard},
     TITLE = {The large-scale geometry of {H}ilbert modular groups},
   JOURNAL = {J. Differential Geom.},
  FJOURNAL = {Journal of Differential Geometry},
    VOLUME = {44},
      YEAR = {1996},
     PAGES = {435--478},
      ISSN = {0022-040X,1945-743X},
   MRCLASS = {22E40 (11F41 22E46)},
  MRNUMBER = {1431001},
MRREVIEWER = {Alexander\ Starkov},
       URL = {http://projecteuclid.org/euclid.jdg/1214459217},
}

@article{fraczyk2023,
      title={Poisson-Voronoi tessellations and fixed price in higher rank}, 
      author={Mikolaj Fraczyk and Sam Mellick and Amanda Wilkens},
JOURNAL = {to appear in Ann. of Math.},      
year={2023},
      eprint={2307.01194},
      archivePrefix={arXiv},
      primaryClass={math.GT},
      url={https://arxiv.org/abs/2307.01194}, 
}

@article {Furman1999ME,
    AUTHOR = {Furman, Alex},
     TITLE = {Gromov's measure equivalence and rigidity of higher rank
              lattices},
   JOURNAL = {Ann. of Math. (2)},
  FJOURNAL = {Annals of Mathematics. Second Series},
    VOLUME = {150},
      YEAR = {1999},
    NUMBER = {3},
     PAGES = {1059--1081},
      ISSN = {0003-486X,1939-8980},
   MRCLASS = {22F10 (22E40 28D15 37A15 53C24)},
  MRNUMBER = {1740986},
MRREVIEWER = {Scot\ Adams},
       DOI = {10.2307/121062},
       URL = {https://doi.org/10.2307/121062},
}

@inproceedings {GaboriauICM2010,
    AUTHOR = {Gaboriau, Damien},
     TITLE = {Orbit equivalence and measured group theory},
 BOOKTITLE = {Proceedings of the {I}nternational {C}ongress of
              {M}athematicians. {V}olume {III}},
     PAGES = {1501--1527},
 PUBLISHER = {Hindustan Book Agency, New Delhi},
      YEAR = {2010},
      ISBN = {978-81-85931-08-3; 978-981-4324-33-5; 981-4324-33-7},
   MRCLASS = {37A20 (46L10)},
  MRNUMBER = {2827853},
MRREVIEWER = {Claire\ Anantharaman-Delaroche},
}

@article {Peled2010,
    AUTHOR = {Peled, Ron},
     TITLE = {On rough isometries of {P}oisson processes on the line},
   JOURNAL = {Ann. Appl. Probab.},
  FJOURNAL = {The Annals of Applied Probability},
    VOLUME = {20},
      YEAR = {2010},
     PAGES = {462--494},
      ISSN = {1050-5164,2168-8737},
   MRCLASS = {60D05 (60K35)},
  MRNUMBER = {2650039},
MRREVIEWER = {Dimitri\ Petritis},
       DOI = {10.1214/09-AAP624},
       URL = {https://doi.org/10.1214/09-AAP624},
}
\end{document}